\documentclass[final,3p,sort&compress]{elsarticle}

\usepackage{bm}
\usepackage{bbm}
\usepackage{mathrsfs}
\usepackage{amsmath}
\usepackage{amssymb}
\usepackage{amsthm}

\usepackage{graphicx}
\usepackage{xcolor}
\usepackage{array}
\usepackage{needspace}

\usepackage[all]{xy}
\usepackage[hidelinks,bookmarksopen=true,bookmarksnumbered=true]{hyperref}
\hypersetup{
    pdftitle={Uniform Approximation for Operator Learning on Compact Sets},
    pdfkeywords={Operator learning; Uniform approximation property; Nonlinear approximation; Stable linear recovery; Kolmogorov widths; Neural network approximation}
}

\newtheorem{theorem}{Theorem}[section]
\newtheorem{lemma}[theorem]{Lemma}
\newtheorem{proposition}[theorem]{Proposition}
\newtheorem{corollary}[theorem]{Corollary}
\newtheorem{assumption}[theorem]{Assumption}

\theoremstyle{definition}
\newtheorem{definition}[theorem]{Definition}
\newtheorem{example}[theorem]{Example}

\theoremstyle{remark}
\newtheorem{remark}[theorem]{Remark}

\numberwithin{equation}{section}

\allowdisplaybreaks
\makeatletter
\AddToHook{cmd/appendix/after}{
    
    \def\ACHA@prefix@section{\appendixname~}
    \def\@seccntformat#1{\csname ACHA@prefix@#1\endcsname\csname the#1\endcsname\@seccntDot\hskip 0.5em}
}
\makeatother
\journal{Applied and Computational Harmonic Analysis}

\begin{document}


\def\supp{\operatorname{supp}}
\def\Span{\operatorname{span}}
\def\cspan{\overline{\operatorname{span}}}
\def\dist{\operatorname{dist}}

\newcommand{\one}{\mathbbm{1}}
\newcommand{\rmd}{\mathrm{d}}
\newcommand{\N}{\mathbb{N}}
\newcommand{\bI}{\mathbb{I}}
\newcommand{\J}{\mathbb{J}}
\newcommand{\bP}{\mathbb{P}}
\newcommand{\Q}{\mathbb{Q}}
\newcommand{\Z}{\mathbb{Z}}
\newcommand{\R}{\mathbb{R}}
\newcommand{\C}{\mathbb{C}}
\newcommand{\K}{\mathbb{K}}
\newcommand{\T}{\mathbb{T}}
\newcommand{\G}{\mathbb{G}}
\newcommand{\cG}{\mathcal{G}}
\newcommand{\cB}{\mathcal{B}}
\newcommand{\cC}{\mathcal{C}}
\newcommand{\cF}{\mathcal{F}}
\newcommand{\cH}{\mathcal{H}}
\newcommand{\cK}{\mathcal{K}}
\newcommand{\cL}{\mathcal{L}}
\newcommand{\cM}{\mathcal{M}}
\newcommand{\cV}{\mathcal{V}}
\newcommand{\fM}{\mathfrak{M}}
\newcommand{\Lip}[1]{\|{#1}\|_{Lip}}
\newcommand{\inp}[2]{\langle#1,#2\rangle}
\newcommand{\intd}[2]{\int_{#1} #2 \mathrm{d} \mu(t)}
\newcommand{\sset}{\Omega_{n,j}}
\newcommand{\EE}[2]{\mathbb{E}\left[#1 | #2\right]}
\newcommand{\norm}[1]{\left\| #1 \right\|}
\newcommand{\abs}[1]{\left| #1 \right|}
\newcommand{\br}[1]{\left(#1 \right)}

    \begin{frontmatter}
        \title{Uniform Approximation for Operator Learning on Compact Sets}

        \author[nankai]{Rui Liu}
        \ead{ruiliu@nankai.edu.cn}
        \author[nankai]{Jie Shen}
        \ead{1710064@mail.nankai.edu.cn}
        \affiliation[nankai]{organization={School of Mathematical Sciences and LPMC, Nankai University},
            city={Tianjin},
            postcode={300071},
            country={P.R. China}}

        \begin{abstract}
            We study uniform approximation for operator learning on compact subsets of Banach spaces.
            We bound the recovery dimensions and the number of network parameters in terms of compact-set complexity, the target gauge, and recovery stability.
            For bounded linear operators with a separable domain, we characterize gauge approximability by coordinate decompositions with a common vanishing gauge and arbitrarily small additive errors.
            Their partial sums give a fixed family of nonlinear encoder--decoder pairs.
            We prove that nonlinear recovery with a common vanishing gauge and a rank bound depending only on the model dimension is equivalent to the uniform approximation property (UAP).
            We also obtain linear recovery estimates with the same rank bound and an operator version for the linear factors of nonlinear targets.
            For Barron operators taking values in a Banach space \(Y\) of type \(p\in(1,2]\) and admitting a Bochner-integrable density, sampling gives the rate \(N^{-(1-1/p)}\) for the expected \(L^p(\lambda;Y)\) error, where \(\lambda\) is a probability measure on the input compact set.
            For fixed finite-dimensional factors and a class of H\"older maps, we obtain matching upper and lower parameter bounds at fixed sufficiently large depth.
            Classical sampling and truncation examples provide explicit recovery maps, and a nonlinear integral operator illustrates the parameter bounds.
        \end{abstract}
        \begin{keyword}
            Operator learning \sep Uniform approximation property \sep Stable linear recovery \sep Kolmogorov widths \sep Gauge decompositions
            \MSC[2020] Primary 41A65 \sep Secondary 41A46 \sep 46B28 \sep 47A58 \sep 68T07
        \end{keyword}
    \end{frontmatter}

    \section{Introduction}\label{sec:introduction}

A neural network for operator learning acts on finitely many coordinates, while its inputs and outputs belong to Banach spaces.
For a continuous map \(T:X\to Y\), we consider models of the form \(\widehat T=R_q^Y N C_m^X\) as in \cite[Section~2.3]{LanthalerPCANet2023}, where \(C_m^X:X\to\R^m\) is an encoder, \(N:\R^m\to\R^q\) is a neural network, and \(R_q^Y:\R^q\to Y\) is a decoder.
The approximation error is measured on a compact set \(K\Subset X\), but the encoder and decoder are defined on their whole domains.
There are four different sources of difficulty in obtaining an accurate model with controlled dimensions and network size.
\begin{enumerate}
    \item \textbf{Set complexity.}
    The input and output compact sets may require many coordinates for accurate reconstruction.
    This is a question about the finite-dimensional approximation of the sets involved.
    \item \textbf{Operator gauge.}
    The operator gauge may be large at small scales or fail to vanish at zero.
    The resulting estimate may require more accurate reconstruction or fail to ensure convergence.
    \item \textbf{Uniform stable recovery.}
    A finite-dimensional model need not admit a stable recovery with a rank bound determined by the complexity of the compact set.
    Small reconstruction error on a special compact set does not provide a common stability bound for all compact sets with comparable complexity.
    \item \textbf{Network complexity.}
    Even after suitable reductions have been selected, the reduced nonlinear map may require a large network.
\end{enumerate}

These difficulties may occur together.
They lead us to ask when the complexity of the input and output compact sets can be converted into uniform bounds for stable recovery and neural network approximation.
Here uniformity concerns the complexity of compact sets: the recovery maps may depend on the compact set, but their stability and rank bounds should follow from common complexity bounds.

Neural approximation of nonlinear operators goes back to Chen and Chen \cite{ChenChen1995}.
Finite-dimensional input and output representations are used in DeepONets, PCA-based models, and spectral neural operators \cite{LuJinPangZhangKarniadakis2021,LanthalerPCANet2023,LiEtAl2021FNO}; see also \cite{KovachkiEtAl2023,LanthalerMishraKarniadakis2022}.
Once these representations have been fixed, a finite-dimensional universal approximation theorem gives qualitative approximation of the reduced map.
Also, Lanthaler, Li, and Stuart \cite{LanthalerLiStuart2025} prove universality for averaging neural operators, and G\"odeke and Fernsel \cite{GodekeFernsel2025} consider the requirement that one approximating sequence work on every compact set.
Zappala \cite{Zappala2024Projection} uses Leray--Schauder maps and polynomial projection models in \(L^p\), while De Feo \cite{DeFeo2026} studies approximation of differentiable operators and their derivatives on Banach spaces.
Quantitative estimates must also account for the dimensions, the stability of reconstruction, and the effect of the decoder on the network error.
Our purpose is to relate these quantities to compact-set geometry and approximation properties of Banach spaces and operators. 

The first difficulty requires a measure of how much finite-dimensional information is needed at a given accuracy.
We use widths, covering numbers, and the corresponding dimension requirements.
Widths describe approximation by low-dimensional spaces, whereas covering numbers describe approximation by finite sets.
These quantities provide the complexity bounds used throughout the paper, beginning in Section~\ref{sec:Compact sets-widths-reductions}.
Space selection and recovery from measurements also enter reduced-basis greedy algorithms and data assimilation in Banach spaces \cite{DeVorePetrovaWojtaszczyk2013,DeVorePetrovaWojtaszczyk2017}.
Reduced bases are used in neural-network bounds for parametric PDE solution maps \cite{KutyniokPetersenRaslanSchneider2022}.
Nonlinear model reduction is studied through stable manifold widths \cite{CohenDeVorePetrovaWojtaszczyk2022}, Lipschitz widths and entropy \cite{PetrovaWojtaszczyk2023}, and auto-encoder-based operator learning \cite{LiuDahalLaiLiao2025}.
In our setting, the geometric approximation error must be accompanied by a bound for realizing the chosen model through maps defined on the ambient space.

The second difficulty links this geometric error to the target operator.
If the input recovery has error at most \(\delta\) and the variation of \(T\) is bounded by a gauge \(\omega\), then input replacement contributes at most \(\omega(\delta)\) to the output error.
Thus the same compact-set approximation can lead to different accuracy requirements for different operators.
For fixed coders, a local modulus on the compact set containing the original and reconstructed inputs is sufficient.
We use a global vanishing gauge when selecting the coders for a prescribed output error.
Error estimates for fixed encoders and decoders are given in \cite{LanthalerPCANet2023,LanthalerMishraKarniadakis2022,Mhaskar2023LocalOperators}.
Cheng, Fan, Song, and Zhou \cite{ChengFanSongZhou2026} give approximation and generalization bounds for Fr\'echet differentiable operators with prespecified encoders and decoders.

The third difficulty is the main approximation-theoretic question of the paper.
Every compact subset of a Banach space admits arbitrarily accurate finite-dimensional Lipschitz reconstructions, but the Lipschitz constants may depend on the compact set and the accuracy.
Requiring one common bound leads to the Lipschitz bounded approximation property, which is equivalent to the bounded approximation property (BAP) by Godefroy and Kalton \cite{GodefroyKalton2003}.
Approximation with a common gauge was studied by Kalton \cite{Kalton2012UniformStructure}; see also Godefroy \cite{Godefroy2020}.
Liu, Shen, and Zheng \cite{LiuShenZheng2023} characterize operators with the Lipschitz BAP by Lipschitz decompositions.
For bounded linear operators with separable domain, Section~\ref{sec:nonlinear-encoder} characterizes the Lipschitz BAP and gauge approximability by coordinate decompositions.
The coordinate functions and reconstruction vectors can be fixed, giving encoder--decoder pairs whose compositions converge uniformly on every compact set.
However, these decompositions do not give a rank bound uniform over compact sets of a given complexity.

The connection between finite-rank approximation and coordinate representations also appears in Pe\l czy\'nski's complemented-basis theorem \cite{Pelczynski1971} and the Schauder-frame characterization of the BAP by Casazza, Han, and Larson \cite{CasazzaHanLarson1999}.
Beanland, Freeman, and Liu \cite{BeanlandFreemanLiu2015} study associated spaces of Schauder frames, while Han, Hu, Liu, and Liu \cite{HanHuLiuLiu2026} give a comprehensive account of Banach-space frame theory.
Bao, Han, Liu, and Shen \cite{BaoHanLiuShen2025} prove that a Lipschitz framing of a separable Banach space admits a linear framing with the same synthesis operator, and Bao, Liu, and Shen \cite{BaoLiuShen2027} characterize the unconditional BAP through Schauder-frame dilations.

With bounded linear encoders and decoders, the network is the only nonlinear component of the model.
Unrestricted nonlinear coders may themselves carry part of the nonlinear approximation, and the complexity of these coders is not included in the parameter count of the middle network.
For linear reconstruction, fixing a model subspace gives an error bound in terms of the distance to that subspace and the operator norm.
We therefore seek such recoveries with a common norm bound and rank controlled by the model dimension.
These bounds are supplied by the uniform approximation property (UAP), studied by Pe\l czy\'nski and Rosenthal \cite{PelczynskiRosenthal1975}.
We introduce the Gauge-UAP, which requires arbitrarily accurate nonlinear recovery on balls of finite-dimensional subspaces, with a common vanishing gauge and a rank bound depending only on the subspace dimension.
We prove that the Gauge-UAP is equivalent to the UAP.
In fact, Theorem~\ref{thm:common-gauge-recovery-UAP} gives linear recovery with the same rank bound and the following width estimates for every compact set:
\[ d_{f(n)}(K)\leq\mathcal E_{f(n)}^\omega(K)\leq\operatorname{slc}_{f(n)}^L(K;X)\leq(1+L)d_n(K). \]
Here \(d_n\), \(\mathcal E_m^\omega\), and \(\operatorname{slc}_m^L\) denote the Kolmogorov width, the nonlinear recovery error, and the stable linear recovery error, respectively. \(f\) is the uniform rank bound and \(L\) is the asymptotic slope of the gauge \(\omega\).

We use these recovery estimates to bound the error of \(\widehat T\).
Let \(\delta\) bound the error of the input recovery \(R_m^XC_m^X\) on \(K\), and let \(\varepsilon_{\rm out}\) bound the error of the output recovery \(R_q^YC_q^Y\) on \(T(K)\).
Let \(\eta\) bound the network error after decoding.
For a global gauge \(\omega_T\), the error decomposition in Section~\ref{subsec:operator-to-functional-reduction} gives
\[ \sup_{x\in K}\norm{T(x)-\widehat T(x)}_Y\leq\norm{R_q^YC_q^Y}\,\omega_T(\delta)+\varepsilon_{\rm out}+\eta. \]
The gauge determines the input accuracy, widths and stable recovery give norm and rank bounds, and the coordinate maps determine the network accuracy.
Section~\ref{sec:unit-ball-UAP-coders} and the appendices give measurement and reconstruction maps for point sampling, conditional expectations, and wavelet and Gabor truncations, together with their norm and approximation bounds.

For a target \(T_F=LFS\), where \(S,L\) are bounded linear maps and \(F\) is nonlinear, the input enters through \(S\) and the output is obtained by applying \(L\).
We therefore estimate finite-rank approximations of these two factors.
A bounded linear operator may admit such estimates even when its domain and codomain fail the UAP, as shown by the infinite-rank projection in Example~\ref{ex:operator-uap-without-space-uap}.
The operator rank functions defined in Section~\ref{sec:UAP} give the corresponding recovery bounds for \(S\) and \(L\).
We also obtain the corresponding Gauge-UAP characterization for bounded linear operators; see Corollary~\ref{cor:operator-gauge-UAP}(2).
For operator classes, a common gauge bound and compactness of \(\overline{\bigcup_{F\in\mathcal F}F(SK)}\) give common reductions whose output rank is independent of the input recovery.

The fourth difficulty remains after stable reductions have been obtained.
The required number of network parameters still depends on the dimension and the approximation properties of the reduced map.
For targets with a Barron representation, we obtain a sampling upper bound.
Following the sampling approach underlying Barron approximation \cite{Barron1993}, we select input functionals and output vectors directly from a representation of the operator with a Bochner-integrable density.
We describe the space of these operators as a Banach quotient of a nuclear operator space.
For scalar functions, finite restrictions determine the representation norm.
The sampling construction gives finite encoder--decoder realizations, with the representation mass entering the bound on the number of hidden neurons.
Yang and Shi \cite{YangShi2026} study variation spaces for operators between Hilbert spaces and obtain a dimension-independent finite-width term in Bochner norm.
Our sampling estimate applies to output Banach spaces of type \(p\in(1,2]\) and gives the rate \(N^{-(1-1/p)}\) in \(L^p(\lambda;Y)\), for a probability measure \(\lambda\) on the input compact set.
With bounded representation mass, input radius, and type constant, this term has no explicit dependence on a coordinate dimension.
The uniform error bound also contains a term involving the covering numbers of the input compact set.
This argument supplies a structured upper bound without first reconstructing the input compact set.

We also estimate the number of network parameters required for uniform approximation of operator classes.
Lanthaler \cite{Lanthaler2024Information} gives information-theoretic lower bounds for Lipschitz operators with model size measured in encoded bits.
Lanthaler and Stuart \cite{LanthalerStuart2023} establish a parametric complexity barrier for general Lipschitz and \(C^r\) operators, together with positive results for structured Hamilton--Jacobi solution operators.
In Section~\ref{sec:compact-factorization-lower-bound-gauges}, we count trainable real parameter slots in one fixed ReLU graph for the target class.
For fixed finite-dimensional factors and a class of H\"older maps, the upper and lower parameter bounds have the same order at any fixed sufficiently large depth.
The integral-operator example in Appendix~\ref{app:detailed-operator-learning-examples} gives explicit recovery and parameter bounds under assumptions on the linear factors.

The paper is organized as follows.
Section~\ref{sec:Compact sets-widths-reductions} introduces the widths and entropy of compact sets.
Section~\ref{sec:nonlinear-encoder} studies nonlinear approximation with a common gauge.
Section~\ref{sec:UAP} studies uniform stable recovery and its relation to the UAP.
Section~\ref{sec:unit-ball-UAP-coders} gives encoders and recovery maps from classical sampling and truncation methods.
Section~\ref{sec:operator-learning-nn-complexity} proves error estimates and parameter bounds for neural approximation of operators.
Section~\ref{sec:compact-factorization-lower-bound-gauges} gives common reductions and parameter lower bounds for operator classes, including matching bounds for H\"older maps with fixed finite-dimensional factors.
The appendices contain detailed constructions and proofs, together with a nonlinear integral-operator example.

    \section{Widths and entropy of compact sets}\label{sec:Compact sets-widths-reductions}

Without loss of generality, we work with real Banach spaces.
For complex subspaces and complex-linear maps, finite dimensions, codimensions, and ranks are multiplied by the fixed factor two.
Denote \(\N=\{0,1,2,\ldots\}\) and \(\N_+=\{1,2,3,\ldots\}\).

We first measure how much finite-dimensional information is needed to approximate the input and output compact sets.
Widths measure the error of a model subspace, while entropy measures the number of representatives required at a prescribed accuracy.
These quantities provide the geometric inputs for the recovery and network estimates below.
Example~\ref{ex:compact-cube-complexity} shows how the geometry of a compact set affects neural parameter lower bounds.

\begin{definition}\label{def:compact-profiles}
    Let \(K\Subset X\) and let \(\varepsilon>0\).
    \begin{enumerate}
        \item The covering number of \(K\) at \(\varepsilon\) is
        \[ N(K,\varepsilon B_X) = \min\left\{N\in\N_+:K\subseteq \bigcup_{j=1}^N(x_j+\varepsilon B_X) \text{ for some }x_1,\dots,x_N\in X\right\}. \]
        \item The Kolmogorov entropy number, indexed by \(m\in\N_+\), is
        \[ e_m(K;X) = \inf\{\varepsilon>0:N(K,\varepsilon B_X)\leq2^{m-1}\}. \]
        \item The Kolmogorov width, indexed by \(n\in\N\), is
        \[
            d_n(K;X) = \inf_{\substack{E\subseteq X\\
                    \dim E\leq n}} \sup_{x\in K}\dist(x,E).
        \]
        \item If \(K\) is symmetric and \(0\in K\), the Gelfand width, indexed by \(m\in\N_+\), is
        \[
            c_m(K;X) = \inf_{\substack{M\subseteq X\\
                    \operatorname{codim}M<m}} \sup_{x\in K\cap M}\norm{x}.
        \]
        For a general compact set \(K\), the same width can be applied to the difference set \(K-K\).
    \end{enumerate}
\end{definition}

The Gelfand width \(c_m\) measures the part of \(K\) that we cannot distinguish by fewer than \(m\) linear measurements.
When the space \(X\) is clear, we write \(N(K,\varepsilon)\), \(e_m(K)\), \(d_n(K)\), and \(c_m(K)\).
For a relatively compact set, we evaluate these quantities on its norm closure.
In particular, we use this convention for \(T(B_Y)\) when \(T\) is a compact operator.

\Needspace{6\baselineskip}

We record the following standard width--entropy estimates for later use.

\begin{proposition}\label{prop:entropy-width-elementary}
    Let \(K\Subset X\).
    \begin{enumerate}
        \item If \(K\subseteq \bigcup_{j=1}^{N}(x_j+\varepsilon B_X)\), then \(d_N(K)\leq\varepsilon\).
        In particular, \(d_{2^{m-1}}(K)\leq e_m(K).\)
        \item Suppose \(K\subseteq RB_X\), \(d_n(K)<\eta\), and \(\delta>0\).
        Then
        \[ N(K,\eta+\delta) \leq \left(1+\frac{2(R+\eta)}{\delta}\right)^n. \]
        Hence if \((m-1)\log 2\geq n\log\left(1+2(R+\eta)/\delta\right),\) then we have \(e_m(K)\leq \eta+\delta\).
    \end{enumerate}
\end{proposition}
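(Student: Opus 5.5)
For part (1), take \(E=\Span\{x_1,\dots,x_N\}\), which has dimension at most \(N\). Every \(x\in K\) lies in some \(x_j+\varepsilon B_X\), so \(\dist(x,E)\leq\norm{x-x_j}\leq\varepsilon\), and hence \(d_N(K)\leq\varepsilon\). For the entropy statement, fix any \(\varepsilon>e_m(K)\). By definition of the infimum and monotonicity of the covering number in the radius, \(N(K,\varepsilon B_X)\leq 2^{m-1}\). Padding the centers with repeats if needed, \(K\) is covered by \(2^{m-1}\) balls of radius \(\varepsilon\), so the first claim gives \(d_{2^{m-1}}(K)\leq\varepsilon\). Letting \(\varepsilon\downarrow e_m(K)\) yields \(d_{2^{m-1}}(K)\leq e_m(K)\).

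For part (2), since \(d_n(K)<\eta\), choose a subspace \(E\) with \(\dim E\leq n\) and \(\sup_{x\in K}\dist(x,E)<\eta\). For each \(x\in K\) pick \(y_x\in E\) with \(\norm{x-y_x}<\eta\). Then \(\norm{y_x}\leq R+\eta\), so the set \(A=\{y_x:x\in K\}\) lies in \((R+\eta)B_E\). Now apply the standard volumetric bound in the finite-dimensional space \(E\), identified with \(\R^{\dim E}\) and equipped with Lebesgue measure. Take a maximal \(\delta\)-separated subset \(\{z_j\}\) of \(A\), meaning \(\norm{z_i-z_j}>\delta\) for \(i\neq j\). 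The balls \(z_j+\frac\delta2B_E\) are disjoint and contained in \((R+\eta+\frac\delta2)B_E\). Comparing volumes, the number of points is at most
\[ \left(\frac{R+\eta+\delta/2}{\delta/2}\right)^{\dim E}=\left(1+\frac{2(R+\eta)}{\delta}\right)^{\dim E}\leq\left(1+\frac{2(R+\eta)}{\delta}\right)^n. \]
By maximality, every \(y_x\) is within \(\delta\) of some \(z_j\), so every \(x\in K\) satisfies \(\norm{x-z_j}<\eta+\delta\). This proves the covering bound. If \(\dim E=0\), the bound is trivial: all \(y_x=0\), and one ball suffices.

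The step needing the most care is the volumetric argument. It must be carried out in the real space \(E\), or in \(2n\) real dimensions in the complex case, which the paper's convention already accounts for. One also has to keep strict versus nonstrict radii consistent with the closed-ball convention; the slack from the strict inequality \(d_n(K)<\eta\) absorbs this.

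For the consequence, the hypothesis \((m-1)\log2\geq n\log(1+2(R+\eta)/\delta)\) gives \(N(K,\eta+\delta)\leq 2^{m-1}\). Hence \(e_m(K)\leq\eta+\delta\) by the definition of \(e_m\) as an infimum.
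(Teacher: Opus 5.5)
Your proof is correct and follows the paper's argument. In part (1) you take the span of the centers and pass to the infimum over coverings by at most \(2^{m-1}\) balls; in part (2) you take a near-best subspace \(E\), use the nearby points \(y_x\in(R+\eta)B_E\), and compare volumes of disjoint \(\delta/2\)-balls around a maximal \(\delta\)-separated set. The only difference is cosmetic: you take the separated set inside \(\{y_x:x\in K\}\) rather than in all of \((R+\eta)B_E\), which changes nothing in the estimate.
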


\begin{proof}
    For the first assertion, let \(E=\Span\{x_1,\dots,x_N\}\).
    Then we have \(\dim E\leq N\), and \(\dist(x,E) \leq \varepsilon\) for all \(x \in K\).
    By taking \(N\leq2^{m-1}\) and passing to the infimum, we have \(d_{2^{m-1}}(K)\leq e_m(K)\).

    For the second assertion, we choose \(E\subseteq X\) with \(\dim E\leq n\) and \(\sup_{x\in K}\dist(x,E)<\eta\).
    For each \(x\in K\), choose \(y_x\in E\) such that \(\norm{x-y_x}<\eta\).
    Since \(K\subseteq RB_X\), we have \(\norm{y_x}<R+\eta\).
    If \(\{z_1,\ldots,z_M\}\) is a maximal \(\delta\)-separated subset of \((R+\eta)B_E\), then it is a \(\delta\)-net.
    The balls \(z_j+(\delta/2) B_E\) are pairwise disjoint and contained in \((R+\eta+\delta/2) B_E\).
    By comparing their volumes, we have
    \[ M\leq \left(\frac{R+\eta+\delta/2}{\delta/2}\right)^{\dim E} \leq \left(1+\frac{2(R+\eta)}{\delta}\right)^n . \]
    Thus we can cover \(K\) by the same number of balls of radius \(\eta+\delta\).
    Then we obtain the entropy estimate.
\end{proof}

\begin{remark}\label{rem:volume-log-loss}
    In particular, assume that \(K\subseteq RB_X\) and \(d_n(K)\lesssim n^{-s}\) for some \(s>0\).
    By taking \(\eta\) and \(\delta\) of order \(n^{-s}\) in Proposition \ref{prop:entropy-width-elementary}, we obtain the estimate \(e_m(K)\lesssim \left(\log m/m\right)^s .\)
    Thus the single-scale volume argument incurs a logarithmic loss.
\end{remark}

By the definitions and Proposition~\ref{prop:entropy-width-elementary}, we immediately obtain the following estimates.

\begin{proposition}\label{prop:polynomial-rate-inversion}
    Let \(K\Subset X\), let \(s, \varepsilon>0\), and let \(A,a>0\).
    \begin{enumerate}
        \item If \(d_j(K)\leq Aj^{-s}\) for every \(j\in\N_+\), then the integer \(n_\varepsilon=\lceil(A/\varepsilon)^{1/s}\rceil\) satisfies \(d_{n_\varepsilon}(K)\leq\varepsilon\).
        \item If \(e_j(K)\leq Aj^{-s}\) for every \(j\in\N_+\), then \(h_\varepsilon=1+\lceil(A/\varepsilon)^{1/s}\rceil\) satisfies \(\log N(K,\varepsilon)\leq(h_\varepsilon-1)\log2\).
        \item Suppose that \(d_j(K)\geq aj^{-s}\) for every \(j\in\N_+\).
        If \(\ell_\varepsilon=\lfloor(a/(2\varepsilon))^{1/s}\rfloor\geq1\), then \(N(K,\varepsilon)>\ell_\varepsilon\).
        \item Suppose that \(e_j(K)\geq aj^{-s}\) for every \(j\in\N_+\).
        If \(\ell_\varepsilon\geq1\) is as above, then \(N(K,\varepsilon)>2^{\ell_\varepsilon-1}\).
    \end{enumerate}
\end{proposition}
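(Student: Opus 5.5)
We argue each item directly from the definitions, monotonicity of the widths and entropy numbers, and Proposition~\ref{prop:entropy-width-elementary}(1).

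For (1), since \(n_\varepsilon\geq(A/\varepsilon)^{1/s}\geq1\), the hypothesis gives \(d_{n_\varepsilon}(K)\leq An_\varepsilon^{-s}\leq A\cdot(\varepsilon/A)=\varepsilon\).

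For (2), set \(m=h_\varepsilon\). Then \(e_m(K)\leq A(m-1)^{-s}\cdot((m-1)/m)^{s}<A(m-1)^{-s}\leq\varepsilon\), using \(m-1\geq(A/\varepsilon)^{1/s}\). The strict inequality \(e_m(K)<\varepsilon\) matters: by definition of the infimum there is some \(\varepsilon'<\varepsilon\) with \(N(K,\varepsilon'B_X)\leq2^{m-1}\). Covering numbers are nonincreasing in the radius, so \(N(K,\varepsilon)\leq2^{m-1}\), and taking logarithms gives the claim. This use of strictness is the only delicate point. Without it, the infimum in the definition of \(e_m\) might not be attained. To be safe, one can instead note that \(e_m(K)<\varepsilon\) holds whenever \(m>(A/\varepsilon)^{1/s}\), and that \(m=h_\varepsilon\) satisfies this.

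For (3), write \(\ell=\ell_\varepsilon\). Suppose toward a contradiction that \(N(K,\varepsilon)\leq\ell\). By Proposition~\ref{prop:entropy-width-elementary}(1), \(d_{N(K,\varepsilon)}(K)\leq\varepsilon\). Since widths are nonincreasing in the index, \(d_\ell(K)\leq\varepsilon\). On the other hand, \(\ell\leq(a/(2\varepsilon))^{1/s}\) gives \(d_\ell(K)\geq a\ell^{-s}\geq2\varepsilon>\varepsilon\), a contradiction since \(\varepsilon>0\).

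For (4), suppose toward a contradiction that \(N(K,\varepsilon)\leq2^{\ell-1}\). Then \(e_\ell(K)\leq\varepsilon\) by the definition of the infimum. The hypothesis and \(\ell\leq(a/(2\varepsilon))^{1/s}\) give \(e_\ell(K)\geq a\ell^{-s}\geq2\varepsilon>\varepsilon\), again a contradiction. The factor \(2\) in \(\ell_\varepsilon\) supplies the slack that makes both contradictions strict.
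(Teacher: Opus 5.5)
Your proof is correct and matches the paper's route: the paper gives no separate argument and says only that the four items follow immediately from the definitions and Proposition~\ref{prop:entropy-width-elementary}, which is exactly what you carry out. Your remark on strictness in (2) is well placed, since the extra \(+1\) in \(h_\varepsilon\) gives \(e_{h_\varepsilon}(K)<\varepsilon\) and so avoids any question of whether the infimum defining \(e_m\) is attained; likewise the factor \(2\) in \(\ell_\varepsilon\) is what makes the contradictions in (3) and (4) strict.
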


If \(K\) lies in a Hilbert space and \(0<\delta<1\), a Johnson--Lindenstrauss map sends an \(\eta\)-net into \(\R^m\) and preserves all pairwise distances within relative error \(\delta\), where \(m\lesssim\delta^{-2}\log N(K,\eta)\) \cite{JohnsonLindenstrauss1984,Naor2018}.
Thus the chosen net admits finite-dimensional coordinates with the stated distortion and dimension bounds.

For compact sets given as operator images, the same questions can be expressed through operator numbers.
This description allows us to use approximation estimates for the linear factors in an operator-learning problem.

\begin{definition}\label{def:operator-image-realization}
    Let \(T\in\cB(Y,X)\).
    The approximation numbers of \(T\), indexed by \(m\in\N_+\), are
    \[ a_m(T)=\inf\{\norm{T-A}:A\in\cF(Y,X),\ \operatorname{rank}A<m\}. \]
    The Kolmogorov numbers of \(T\), indexed by \(m\in\N\), are
    \[
        d_m(T) = \inf_{\substack{E\subseteq X\\\dim E\leq m}} \sup_{y\in B_Y}\dist(Ty,E)=d_m(\overline{T(B_Y)};X).
    \]
    The Gelfand numbers of \(T\), indexed by \(m\in\N_+\), are
    \[
        c_m(T)= \inf_{\substack{M\subseteq Y\\ \operatorname{codim}M<m}} \norm{T|_M}.
    \]
    If \(T\) is compact,  define its entropy numbers by \(e_m(T)=e_m(\overline{T(B_Y)})\).
\end{definition}

\begin{remark}\label{rem:subspace-versus-realizable}
    Every finite-dimensional subspace is complemented, so we obtain a finite-rank approximation.
    However, \(d_n(K)\) does not control the norm of the projection.
    In contrast, if \(a_m(T)\leq\varepsilon\), then for every \(\zeta>0\) there is a finite-rank operator \(A:Y\to X\) such that \(\operatorname{rank}A<m\) and \(\norm{T-A}<\varepsilon+\zeta\).
\end{remark}

\begin{proposition}\label{prop:operator-image-realizable}
    Let \(T\in\cB(Y,X)\) be compact.
    For every \(m\in\N_+\),
    \[ d_m(T)\leq a_m(T) \qquad\text{and}\qquad c_m(T)\leq a_m(T). \]
\end{proposition}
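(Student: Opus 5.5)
Both inequalities follow by testing the definitions against one finite-rank operator. Fix \(m\in\N_+\) and \(\zeta>0\). By the definition of \(a_m(T)\) (cf. Remark~\ref{rem:subspace-versus-realizable}), choose \(A\in\cF(Y,X)\) with \(\operatorname{rank}A<m\), hence \(\operatorname{rank}A\leq m-1\), and \(\norm{T-A}<a_m(T)+\zeta\). At the end we let \(\zeta\to0\).

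\emph{Kolmogorov numbers.} Set \(E=A(Y)\), which has \(\dim E\leq m-1\leq m\). For \(y\in B_Y\) we have
\[
\dist(Ty,E)\leq\norm{Ty-Ay}\leq\norm{T-A}<a_m(T)+\zeta .
\]
Taking the supremum over \(B_Y\) and then the infimum over admissible \(E\) gives \(d_m(T)\leq a_m(T)+\zeta\). The sup over \(B_Y\) equals the sup over \(\overline{T(B_Y)}\), because \(\dist(\cdot,E)\) is continuous, so this agrees with the set form in Definition~\ref{def:operator-image-realization}. This bound is slightly weaker than what the indexing allows, since the rank is actually \(\leq m-1\), but it suffices.

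\emph{Gelfand numbers.} Set \(M=\ker A\). Since \(Y/\ker A\cong A(Y)\) is finite-dimensional, \(\operatorname{codim}M=\operatorname{rank}A<m\), so \(M\) is admissible. On \(M\) we have \(A=0\), so \(T|_M=(T-A)|_M\) and \(\norm{T|_M}\leq\norm{T-A}<a_m(T)+\zeta\). Therefore \(c_m(T)\leq a_m(T)+\zeta\).

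Letting \(\zeta\to0\) gives both inequalities. Compactness of \(T\) is not needed, and there is no real obstacle. The only points to check are the index conventions: \(\operatorname{rank}A<m\) must translate into \(\dim E\leq m\) and \(\operatorname{codim}M<m\), and the identity \(\operatorname{codim}\ker A=\operatorname{rank}A\) holds for finite-rank \(A\).
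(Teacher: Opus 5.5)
Your proof is correct and follows the paper's argument: you test both definitions against a single finite-rank \(A\) with \(\operatorname{rank}A<m\), using \(E=A(Y)\) for the Kolmogorov number and \(M=\ker A\) for the Gelfand number. Your \(\zeta\)-approximation is equivalent to the paper's direct passage to the infimum over \(A\), and you are right that compactness of \(T\) plays no role.
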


\begin{proof}
    Let \(A\in\cF(Y,X)\) with \(\operatorname{rank}A<m\).
    Since \(A(B_Y)\) is contained in a subspace of dimension at most \(m-1\), we have \(\sup_{y\in B_Y}\dist(Ty,A(Y)) \leq \sup_{y\in B_Y}\norm{Ty-Ay} \leq \norm{T-A}.\)
    By taking the infimum over all such \(A\), we have \(d_m(T)\leq a_m(T)\).
    For the Gelfand number, take \(M=\ker A\).
    Then \(\operatorname{codim}M\leq\operatorname{rank}A<m\), and \(\norm{T|_M}\leq\norm{T-A}\).
    By again taking the infimum over \(A\), we have \(c_m(T)\leq a_m(T)\).
\end{proof}

Carl's inequality relates operator widths to entropy numbers \cite{CarlStephani1990,HinrichsKolleckVybiral2015}.

\begin{lemma}\label{lem:carl-bridge}
    Let \(T\in\cB(Y,X)\) be compact.
    For every \(\alpha>0\) there is a constant \(\gamma_\alpha>0\), depending only on \(\alpha\), such that for every \(n\in\N_+\),
    \[ \sup_{1\leq k\leq n} k^\alpha e_k(T) \leq \gamma_\alpha \max\left\{\norm{T},\sup_{1\leq k\leq n} k^\alpha s_k(T)\right\}, \]
    where \(s_k(T)\) may be taken to be one of the \(a_k(T)\), \(c_k(T)\), or \(d_k(T)\).
\end{lemma}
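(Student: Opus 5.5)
Since \(d_k(T)\leq a_k(T)\) by Proposition~\ref{prop:operator-image-realizable}, a bound in terms of the Kolmogorov numbers implies the bound in terms of the approximation numbers. The plan is therefore to prove the inequality for \(s_k=d_k\) by a dyadic covering argument. The Gelfand case will then be reduced to the approximation-number case by a metric injection.

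\emph{Kolmogorov case.} Set \(M=\max\{\norm{T},\sup_{1\leq k\leq n}k^\alpha d_k(T)\}\) and let \(r\) satisfy \(2^r\leq n<2^{r+1}\).
\begin{enumerate}
\item For \(j=0,\dots,r\), choose subspaces \(E_j\subseteq X\) with \(\dim E_j\leq 2^j\) and \(\sup_{y\in B_Y}\dist(Ty,E_j)\leq 2M2^{-j\alpha}\). For \(j=-1\) take \(E_{-1}=\{0\}\), which is allowed because \(\norm{Ty}\leq M\).
\item For each \(y\in B_Y\), pick \(u_j(y)\in E_j\) with \(\norm{Ty-u_j(y)}\leq 2M2^{-j\alpha}\). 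These choices are nonlinear in \(y\), which is harmless since we only need a covering.
\item Write \(Ty=\sum_{j=0}^r z_j+(Ty-u_r)\), where \(z_j=u_j-u_{j-1}\) lies in \(F_j=E_j+E_{j-1}\). Then \(\dim F_j\leq 2^{j+1}\) and \(\norm{z_j}\leq 6M2^{-(j-1)\alpha}=:R_j\).
\item Cover the ball \(R_jB_{F_j}\) by an \(\varepsilon_j\)-net. By the volume estimate in the proof of Proposition~\ref{prop:entropy-width-elementary}, this uses at most \((1+2R_j/\varepsilon_j)^{2^{j+1}}\) points.
\item Sums of net points, one from each level \(j\), then cover \(T(B_Y)\) at radius \(\sum_j\varepsilon_j+2M2^{-r\alpha}\).
\end{enumerate}

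\emph{Choice of scales.} Take \(\varepsilon_j=M2^{-r\alpha}2^{-(r-j)}\). Then \(\sum_j\varepsilon_j\leq 2M2^{-r\alpha}\). Also \(\log(1+2R_j/\varepsilon_j)\leq c_\alpha(1+r-j)\), so
\[ \log_2\prod_j(\text{net size})\leq c_\alpha'\sum_{j\leq r}2^{j}(1+r-j)\leq C_\alpha 2^r, \]
since \(\sum_{i\geq0}(1+i)2^{-i}<\infty\). This gives \(e_{\lceil C_\alpha2^r\rceil+1}(T)\leq 4M2^{-r\alpha}\).

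\emph{Converting to the stated form.} Monotonicity of \(e_k\) in \(k\), together with \(e_k\leq e_1\leq\norm{T}\), converts the previous bound into \(k^\alpha e_k(T)\leq\gamma_\alpha M\) for every \(k\leq n\). The passage is as follows. If \(k\) is below the constant \(C_\alpha+2\), use \(e_k\leq\norm{T}\). Otherwise apply the previous bound with \(r'\leq r\) chosen so that \(C_\alpha2^{r'}\approx k\). Here we use that \(M\) dominates \(k^\alpha d_k\) only for \(k\leq n\), which is why the subspaces were restricted to \(j\leq r\).

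\emph{Gelfand case.} Let \(J:X\to\ell_\infty(B_{X^*})\) be the canonical isometric embedding. Gelfand numbers are injective, and \(\ell_\infty(B_{X^*})\) has the metric extension property. Hence \(c_k(T)=c_k(JT)=a_k(JT)\), and \(\norm{JT}=\norm{T}\). Entropy numbers change by at most a factor \(2\) under an isometric embedding, because net points can be moved back into the image. Thus \(e_k(T)\leq 2e_k(JT)\), and the approximation-number case applied to \(JT\) finishes the proof.

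\emph{Main obstacle.} The main obstacle is the bookkeeping of the scales \(\varepsilon_j\) so that the total log-count stays \(O(2^r)\) while the total error stays \(O(M2^{-r\alpha})\). The choice above, with geometric decay in \(r-j\), is what I would try first. The facts \(c_k(T)=a_k(JT)\) for \(\ell_\infty\)-valued operators (via Hahn--Banach extension of finite-rank maps from \(M\)) and the factor-\(2\) comparison for entropy under injections are standard, and I would cite them rather than reprove them.
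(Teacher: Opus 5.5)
Your proof is correct, and it takes a different route from the paper. The paper gives no argument of its own; it cites Carl--Stephani and Hinrichs--Kolleck--Vyb\'{\i}ral. In those sources Carl's inequality is classically proved first for approximation numbers, using additivity and multiplicativity of entropy numbers together with the entropy bound for finite-rank operators. It is then transferred to Kolmogorov and Gelfand numbers through the lifting property of \(\ell_1(B_Y)\) and the extension property of \(\ell_\infty(B_{X^*})\).

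You reverse the first step:
\begin{itemize}
\item The nearest-point telescoping \(Ty=\sum_{j\le r}(u_j-u_{j-1})+(Ty-u_r)\) proves the Kolmogorov case directly. It uses only the volume estimate from Proposition~\ref{prop:entropy-width-elementary}.
\item The approximation-number case then follows at once from \(d_k\le a_k\) (Proposition~\ref{prop:operator-image-realizable}).
\item For Gelfand numbers you need only the injective half of the transfer, namely \(c_k(T)=a_k(JT)\) and \(e_k(T)\le 2e_k(JT)\).
\end{itemize}
This gives a self-contained argument with explicit constants. It also works directly with the paper's convention \(\dim E\le m\) for \(d_m\). That convention is shifted by one from the standard \(\dim E<m\) in the cited sources, so quoting them costs an extra factor \(2^\alpha\) plus the remark that the standard first Kolmogorov number equals \(\norm{T}\). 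The citation, in exchange, supplies sharper constants and covers further \(s\)-numbers.

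In the conversion step, state two points explicitly. First, the covering bound holds for every \(r'\) with \(2^{r'}\le n\), not only for the top \(r\). Second, take \(C_\alpha\ge1\) and choose \(r'\) maximal with \(\lceil C_\alpha2^{r'}\rceil+1\le k\). This forces \(r'\le r\) and \(k\le 2(C_\alpha+1)2^{r'}\), so \(k^\alpha e_k(T)\le 4(2C_\alpha+2)^\alpha M\).
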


By Lemma~\ref{lem:carl-bridge} with \(\alpha=s\), we immediately obtain the following polynomial decay estimate, without the logarithmic loss in the volume argument.

\begin{corollary}\label{cor:carl-polynomial}
    Let \(T\in\cB(Y,X)\) be compact, let \(s>0\), and let \(s_k(T)\) be one of \(a_k(T)\), \(c_k(T)\), or \(d_k(T)\).
    Suppose that \(s_k(T)\leq Ak^{-s}\) for \(k\geq1\).
    Then \(e_k(T)\leq\gamma_s\max\{A,\norm{T}\}k^{-s}\) for \(k\geq1\).
\end{corollary}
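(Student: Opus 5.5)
The corollary follows from Lemma~\ref{lem:carl-bridge} applied with \(\alpha=s\). The only work is to bound the supremum on its right-hand side by the hypothesis and then specialize the left-hand side to a single index.

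Fix \(k\geq1\) and take \(n=k\) in Lemma~\ref{lem:carl-bridge}, with \(s_j(T)\) the chosen one of \(a_j(T)\), \(c_j(T)\), \(d_j(T)\). The hypothesis \(s_j(T)\leq Aj^{-s}\) gives \(j^s s_j(T)\leq A\) for every \(1\leq j\leq k\), so
\[
\sup_{1\leq j\leq k} j^s s_j(T)\leq A .
\]
Substituting this into the lemma yields
\[
k^s e_k(T)\leq \sup_{1\leq j\leq k} j^s e_j(T)\leq \gamma_s\max\{\norm{T},A\}.
\]
Dividing by \(k^s\) gives \(e_k(T)\leq\gamma_s\max\{A,\norm{T}\}k^{-s}\). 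Since \(k\) was arbitrary, the bound holds for all \(k\geq1\).

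There is essentially no obstacle. The one point to check is the indexing. For \(d_k\), which is defined for \(k\in\N\), only the indices \(k\geq1\) enter the supremum, and the hypothesis is assumed exactly for these. Compactness of \(T\) is needed only so that \(e_k(T)\) is defined, and it is already part of the assumptions. The constant \(\gamma_s\) depends only on \(s\), as Lemma~\ref{lem:carl-bridge} states.
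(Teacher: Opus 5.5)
Your proposal is correct and matches the paper, which obtains the corollary immediately from Lemma~\ref{lem:carl-bridge} with \(\alpha=s\). Taking \(n=k\), bounding \(\sup_{1\leq j\leq k}j^s s_j(T)\leq A\), and reading off the \(k\)-th term of the left-hand supremum is exactly the intended one-line argument.
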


The following example shows why the recovery dimension and the number of network parameters need separate estimates.
\begin{example}\label{ex:compact-cube-complexity}
    Let \(X=c_0\), let \((e_j)\) be its standard basis, fix \(\alpha>1\), and put
    \[ K=\left\{\sum_{j=1}^{\infty}j^{-\alpha}t_je_j:0\leq t_j\leq1\right\}\subseteq X. \]
    For \(n\in\N_+\), the coordinate truncation \(x\mapsto\sum_{j=1}^n x_je_j\) has norm one and rank \(n\), and
    \[ \sup_{x\in K}\norm{x-\sum_{j=1}^n x_je_j}_\infty=(n+1)^{-\alpha}. \]
    Since \(K\) is closed and its coordinate tails tend to zero uniformly, \(K\) is compact.
    Thus \(d_n(K;X)\leq(n+1)^{-\alpha}\), and stable linear recovery to accuracy \(\delta\) uses \(O(\delta^{-1/\alpha})\) coordinates.
    The standard coordinate functionals have norm one, so \(K\) satisfies the infinite-dimensional cube assumption of Lanthaler and Stuart \cite[Definition~2.3]{LanthalerStuart2023}.
    Fix \(r\in\N_+\) and \(\eta>0\).
    By \cite[Theorem~2.11]{LanthalerStuart2023}, there are a fixed functional \(T\in C^r(X;\R)\) and constants \(c,\varepsilon_0>0\) with the following property.
    Let \(m\in\N_+\), let \(C_m^X:X\to\R^m\) be any bounded linear encoder, and let \(N:\R^m\to\R\) be a ReLU network.
    For \(0<\varepsilon\leq\varepsilon_0\), the estimate \(\sup_{x\in K}\abs{T(x)-N(C_m^Xx)}\leq\varepsilon\) implies that \(N\) has at least \(\exp\!\left(c\varepsilon^{-1/[r(\alpha+1+\eta)]}\right)\) nonzero weights and biases.
    Here \(m\), \(C_m^X\), and \(N\) may depend on \(\varepsilon\).
    Thus, on this compact set, the recovery dimension has a polynomial upper bound and the number of network parameters has an exponential lower bound.
\end{example}

The estimates above determine candidate model dimensions and finite nets from compact-set complexity.
For a width-based model, we need to construct a recovery map on the whole space and control its stability and rank.

    \section{Nonlinear approximation and gauge decompositions}\label{sec:nonlinear-encoder}
Nonlinear reconstruction gives a first answer to the realization question: every compact set admits finite-dimensional Lipschitz approximation.
The difficulty is to keep the stability bound independent of the compact set and the required accuracy.
We therefore study approximants and coordinate decompositions with a common Lipschitz or gauge bound.

Let \(T:X\to Y\) be continuous and let \(K\Subset X\).
For a finite-dimensional space \(Z_m\), let \(\mathcal C_m:X\to Z_m\) and \(\mathcal R_m:Z_m\to X\) be an encoder and a decoder, respectively.
We use the gauge of \(T\) to bound the output error caused by input reconstruction.
The gauge of the target controls error propagation, while a common gauge for the reconstruction maps controls the stability of the reduction itself.

\begin{definition}
    A function \(\omega:[0,\infty) \to [0,\infty)\) is a gauge if it is nondecreasing, subadditive, and \(\omega(0)=0\).
    It is vanishing if \(\omega(t)\to0\) as \(t\downarrow0\).
\end{definition}

\begin{definition}
    Let \(M_1\) and \(M_2\) be metric spaces.
    If \(T:M_1\to M_2\) is a map, then its global modulus is
    \[ \omega_T(t)=\sup\{\dist_{M_2}(Tx,Ty):\dist_{M_1}(x,y)\leq t\}. \]
    For every compact set \(K\subseteq X\), the local modulus is
    \[ \omega_{T,K}(t) = \sup\{\norm{Tx-Ty}:x,y\in K,\ \norm{x-y}\leq t\}. \]
\end{definition}

\begin{example}
    A map \(T:X\to Y\) is Lipschitz if and only if \(\omega_T(t)\leq Lt\) for some \(L>0\) and all \(t\geq0\).
    It is uniformly continuous if and only if \(\omega_T(t)\to0\) as \(t\downarrow0\).
\end{example}

Suppose that \(\mathcal R_m\mathcal C_m|_K\) is continuous.
Then \(K_m=K\cup\mathcal R_m\mathcal C_m(K)\) is compact, \(\omega_{T,K_m}\) vanishes at zero, and \( \sup_{x\in K}\norm{Tx-T\mathcal R_m\mathcal{C}_mx} \leq \omega_{T,K_m}\!\left(\sup_{x\in K}\norm{x-\mathcal R_m\mathcal{C}_mx}\right)\).

In applications, we also need bounds on the stability and dimensions of the encoders and decoders \cite{BenyaminiLindenstrauss2000,GodefroyKalton2003,GodefroyOzawa2014}.

\begin{definition}\label{AP}
    Let \(X,Y\) be Banach spaces and \(T\in\mathcal B(X,Y)\).
    The operator \(T\) has the approximation property (AP) if, for every compact \(K\subseteq X\) and every \(\varepsilon>0\), there is a bounded finite-rank linear operator \(S:X\to Y\) such that \(\sup_{x\in K}\norm{Tx-Sx}<\varepsilon.\)
    It has the Lipschitz approximation property (Lip AP) if \(S\) can instead be chosen Lipschitz with finite-dimensional range.
\end{definition}

In the next lemma, we construct a Lipschitz reconstruction from a finite net; see also \cite{Godefroy2020}.

\begin{lemma}\label{lem:lip-ap}
    Let \(X\) be a Banach space.
    Then \(I_X\) has Lip AP.
\end{lemma}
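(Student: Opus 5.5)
Fix a compact set \(K\subseteq X\) and \(\varepsilon>0\). By Definition~\ref{AP}, it suffices to construct a Lipschitz map \(S:X\to X\) with finite-dimensional range such that \(\sup_{x\in K}\norm{x-Sx}<\varepsilon\). We build \(S\) as a partition of unity subordinate to a finite net.

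\textbf{Step 1 (net).} By compactness, choose a finite \(\varepsilon/2\)-net \(x_1,\dots,x_N\) of \(K\), so that \(K\subseteq\bigcup_{j=1}^N(x_j+(\varepsilon/2)B_X)\).

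\textbf{Step 2 (partition of unity).} For \(j=1,\dots,N\) put
\[
\varphi_j(x)=\max\{0,\varepsilon-\norm{x-x_j}\}.
\]
Each \(\varphi_j\) is \(1\)-Lipschitz on \(X\) and vanishes outside the open ball of radius \(\varepsilon\) about \(x_j\). Write \(\Phi=\sum_j\varphi_j\). For \(x\) in the open \(\varepsilon/4\)-neighbourhood \(U\) of \(K\), some \(x_j\) satisfies \(\norm{x-x_j}<3\varepsilon/4\), so \(\Phi(x)>\varepsilon/4\). To obtain a map defined on all of \(X\) with a bounded denominator, set
\[
Sx=\frac{\sum_j\varphi_j(x)\,x_j}{\max\{\Phi(x),\varepsilon/4\}} .
\]
Then \(S\) takes values in \(\Span\{x_1,\dots,x_N\}\), which is finite-dimensional. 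On \(K\) the denominator equals \(\Phi\), so \(Sx\) is a convex combination of those \(x_j\) with \(\norm{x-x_j}<\varepsilon\). Hence \(\norm{x-Sx}<\varepsilon\) for every \(x\in K\), and the supremum over the compact set \(K\) is attained. To get the strict inequality in Definition~\ref{AP} without worrying about attainment, one can simply run the construction with \(\varepsilon/2\) in place of \(\varepsilon\).

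\textbf{Step 3 (Lipschitz bound).} This is the only step needing care, though it is routine. The numerator \(x\mapsto\sum_j\varphi_j(x)x_j\) is Lipschitz with constant at most \(\sum_j\norm{x_j}\), and it is bounded by \(N\varepsilon\max_j\norm{x_j}\). The denominator \(D(x)=\max\{\Phi(x),\varepsilon/4\}\) is \(N\)-Lipschitz and bounded below by \(\varepsilon/4\). The quotient of a bounded Lipschitz vector-valued map by a Lipschitz scalar function bounded away from zero is Lipschitz, by the estimate
\[
\norm{\frac{u(x)}{D(x)}-\frac{u(y)}{D(y)}}\le\frac{\norm{u(x)-u(y)}}{D(x)}+\norm{u(y)}\,\frac{\abs{D(x)-D(y)}}{D(x)D(y)} .
\]
Hence \(S\) is Lipschitz on \(X\), with a constant depending on \(N\), \(\varepsilon\) and the net. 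No uniform bound on this constant is required, consistent with the remark that the Lipschitz constants may depend on the compact set and on the accuracy. This proves that \(I_X\) has Lip AP.
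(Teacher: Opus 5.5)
Your proof is correct and follows essentially the paper's route: tent functions \(\max\{0,\varepsilon-\norm{x-x_j}\}\) over a finite net, normalized so that on \(K\) the map \(Sx\) is a convex combination of net points within distance \(\varepsilon\) of \(x\). The only difference is how the denominator is kept away from zero on all of \(X\). The paper adds an auxiliary bump \(u_0=\min\{\rho,\dist(x,K)\}\) to the sum, whereas you truncate the denominator to \(\max\{\Phi,\varepsilon/4\}\); the two devices work equally well.
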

\begin{proof}
    Fix a compact \(K\subseteq X\) and \(\varepsilon>0\), and put \(\rho=\min\{\varepsilon/2,1\}\).
    Choose a finite \(\rho/2\)-net \(\{x_i\}_{i=1}^n\subseteq K\), and define
    \[ u_0(x)=\min\{\rho,\dist(x,K)\},\qquad u_i(x)=\max\{\rho-\norm{x-x_i},0\}\quad(1\le i\le n). \]
    Every \(u_i\) is \(1\)-Lipschitz.
    We claim that \(g=\sum_{i=0}^n u_i\geq\rho/4\) on \(X\).
    If \(\dist(x,K)\geq\rho/4\), then we have \(u_0\geq\rho/4\).
    Otherwise, by the choice of the \(\rho/2\)-net, we have \(u_i(x)>\rho/4\) for some \(i\).
    Thus the claim holds.
    Since the \(u_i\) are bounded and Lipschitz, the functions \(\phi_i=u_i/g\), \(1\le i\le n\), are  Lipschitz.
    The map \(Sx=\sum_{i=1}^n\phi_i(x)x_i\) is Lipschitz and has finite-dimensional range.
    If \(x\in K\), then \(u_0(x)=0\), \(\sum_{i=1}^n\phi_i(x)=1\), and \(\phi_i(x)>0\) implies \(\norm{x-x_i}<\rho\).
    Therefore, we have \(\norm{x-Sx}<\rho<\varepsilon\).
    Thus \(I_X\) has Lip AP.
\end{proof}

By Lemma \ref{lem:lip-ap}, we can choose finite-rank Lipschitz approximants on each compact set in every Banach space.
Their centers, rank, and Lipschitz constant may depend on \(K\) and \(\varepsilon\), so we do not obtain uniform stability or a uniform gauge.
In Definition \ref{BAP}, we require pointwise approximation by finite-dimensional maps with a common norm or Lipschitz bound \cite{GodefroyKalton2003}.

\begin{definition}\label{BAP}
    Let \(X,Y\) be Banach spaces, \(\lambda>0\), and \(T\in\mathcal B(X,Y)\).
    The operator \(T\) has the \(\lambda\)-bounded approximation property (\(\lambda\)-BAP) if, for every \(\varepsilon>0\), there is a net \(\{S_\gamma\}_{\gamma\in\Gamma}\subseteq\mathcal B(X,Y)\) such that
    \[ \sup_{\gamma\in\Gamma}\norm{S_\gamma}\leq\lambda+\varepsilon,\qquad \dim S_\gamma(X)<\infty,\qquad S_\gamma x\longrightarrow Tx\quad(x\in X). \]
    It has the BAP if this holds for some \(\lambda\).

    The operator \(T\) has the \(\lambda\)-Lipschitz BAP if, for every \(\varepsilon>0\), there is a net of Lipschitz maps \(S_\gamma:X\to Y\) such that
    \[ \sup_{\gamma\in\Gamma}\norm{S_\gamma}_{\mathrm{Lip}}\leq\lambda+\varepsilon,\qquad \dim\Span S_\gamma(X)<\infty,\qquad S_\gamma x\longrightarrow Tx\quad(x\in X). \]
    It has the Lipschitz BAP if this holds for some \(\lambda\).
\end{definition}

\begin{remark}
    A Banach space \(X\) has the BAP precisely when \(I_X\) has the BAP.
    If \(X\) is separable, then by equicontinuity and a diagonal argument on a countable dense set, we can replace the approximating net in Definition \ref{BAP} by a sequence.
    By passing to the pointwise limit, we have \(\norm{T}_{\mathrm{Lip}}\leq\lambda\) whenever \(T\) has the \(\lambda\)-Lipschitz BAP.
\end{remark}

By the AP, \(T\) admits arbitrarily accurate linear encoder--decoder approximations on each compact set.
The BAP gives a uniform norm bound, and the Lipschitz BAP gives the corresponding Lipschitz bound.
For separable \(X\), the decomposition below fixes a sequence of coordinate functions and reconstruction vectors.
Truncation then gives a fixed family of encoder--decoder pairs while retaining this bound.

\begin{definition}
    Let \(X\) be separable, let \(Y\) be a Banach space, and let \(T\in\mathcal B(X,Y)\).
    The operator \(T\) is \(\lambda\)-Lipschitz decomposable if, for every \(\varepsilon>0\), there are \(y_n\in Y\) and \(f_n\in\operatorname{Lip}(X;\R)\) such that
    \[ Tx=\sum_{n=1}^{\infty}f_n(x)y_n\quad(x\in X),\qquad \sup_{N\geq1}\norm{\sum_{n=1}^N f_n(\cdot)y_n}_{\mathrm{Lip}}\leq\lambda+\varepsilon. \]
    It is decomposable if the \(f_n\) may be chosen in \(X^*\).
\end{definition}

    If \(T\) is \(\lambda\)-Lipschitz decomposable, then  \(\norm{T}_{\mathrm{Lip}}\leq\lambda\) trivially.

\begin{theorem}
    Let \(X\) be a separable Banach space, let \(Y\) be a Banach space, and let \(T\in\mathcal B(X,Y)\).
    Then \(T\) is \(\lambda\)-Lipschitz decomposable if and only if \(T\) has the \(\lambda\)-Lipschitz BAP.
\end{theorem}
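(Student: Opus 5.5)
One direction is immediate. Suppose \(T\) is \(\lambda\)-Lipschitz decomposable and fix \(\varepsilon>0\) with the corresponding \(f_n\) and \(y_n\). Put \(S_N=\sum_{n=1}^N f_n(\cdot)y_n\). Each \(S_N\) is Lipschitz with range in \(\Span\{y_1,\dots,y_N\}\), we have \(\norm{S_N}_{\mathrm{Lip}}\leq\lambda+\varepsilon\), and \(S_Nx\to Tx\) pointwise by convergence of the series. So \((S_N)\) is the required net, and \(T\) has the \(\lambda\)-Lipschitz BAP.

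For the converse, we assume the \(\lambda\)-Lipschitz BAP and fix \(\varepsilon>0\). By the remark after Definition~\ref{BAP}, separability lets us take a sequence \(S_k\) of finite-dimensional-range Lipschitz maps with \(\norm{S_k}_{\mathrm{Lip}}\leq\lambda+\varepsilon/2\) and \(S_kx\to Tx\) for every \(x\). Equicontinuity makes the convergence uniform on compact sets. We may also normalize \(S_k(0)=0\): since \(T0=0\), replacing \(S_k\) by \(S_k-S_k(0)\) keeps the Lipschitz bound and the convergence, and enlarges the range by at most one dimension. We then write \(T\) as the telescoping series \(S_1+\sum_k(S_{k+1}-S_k)\). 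Each block \(D_k=S_{k+1}-S_k\) (with \(D_0=S_1\)) has range in a finite-dimensional space \(F_k\), so \(D_k=\sum_{j}g_{k,j}(\cdot)v_{k,j}\) with \(g_{k,j}\) Lipschitz scalar functions (coordinates in a basis of \(F_k\) composed with \(D_k\)). Listing all these terms in order gives a series whose block partial sums are exactly \(S_{k}\).

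The main obstacle is that the partial sums ending in the middle of a block are not controlled: \(S_k+\sum_{j\leq i}g_{k,j}v_{k,j}\) could have a large Lipschitz constant. I would handle this in the standard way, by splitting each block into many small pieces. Rather than using a basis, we write \(D_k=\frac1{M}\sum_{r=1}^{M}D_k\) and insert \(S_k+\frac rM D_k\) as intermediate steps, so that intermediate partial sums are convex combinations of \(S_k\) and \(S_{k+1}\) and hence still have Lipschitz constant at most \(\lambda+\varepsilon/2\). Each step \(\frac1MD_k\) still has multidimensional range, however. To make every single term rank one, we fix an Auerbach basis \((v_{k,j})_{j\le d_k}\) of \(F_k\) with coordinate functionals of norm one. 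We then choose \(M_k\) so large that adding any sub-collection of the \(d_k\) terms of \(\frac1{M_k}D_k\) costs Lipschitz norm at most \(d_k\cdot 2(\lambda+1)/M_k\leq\varepsilon 2^{-k-1}\). Within one sub-step only one partial-piece error is present at a time, so every partial sum has Lipschitz constant at most \(\lambda+\varepsilon/2+\varepsilon/2\).

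Finally, we check convergence of the full series, not just of its block partial sums. Every partial sum equals \(S_k+\frac rM D_k+E\), where \(\norm{E(x)}\le \varepsilon2^{-k}\norm{x}\) (using \(D_k(0)=0\)) and \(\frac rMD_k(x)\to0\) because \(S_kx\) is Cauchy. Hence the partial sums converge to \(Tx\). Relabeling the rank-one terms as \(f_n(\cdot)y_n\) gives the \(\lambda\)-Lipschitz decomposition.
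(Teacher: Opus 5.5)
Your proposal is correct and follows the paper's proof essentially step for step: normalize \(S_n(0)=0\), telescope into blocks \(S_n-S_{n-1}\), and repeat each block \(M_n\) times in Auerbach coordinates, so that every partial sum is a convex combination of consecutive \(S_n\) plus one small unfinished coordinate term; then use \(S_n(0)=0\) to get pointwise convergence. The differences are cosmetic: you bound the unfinished term by \(d_k\) times the coordinate Lipschitz bound where the paper uses the Auerbach partial-sum constant \(C_n\leq d_n\), and your bound \(\norm{D_k}_{\mathrm{Lip}}\le 2(\lambda+1)\) tacitly assumes \(\varepsilon\le 2\), which is harmless.
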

\begin{proof}
    If \(T\) is \(\lambda\)-Lipschitz decomposable, then we can take its finite partial sums as the required approximating sequence.
    Conversely, assume that \(T\) has the \(\lambda\)-Lipschitz BAP and fix \(\varepsilon>0\).
    Since \(X\) is separable, we can choose Lipschitz maps \(S_n:X\to Y\) whose images lie in finite-dimensional subspaces such that
    \[ S_nx\longrightarrow Tx\quad(x\in X),\qquad \sup_n\norm{S_n}_{\mathrm{Lip}}\leq\lambda+\frac{\varepsilon}{2}. \]
    By replacing \(S_n\) by \(S_n-S_n(0)\), we may also assume that \(S_n(0)=0\).
    Put \(S_0=0\), \(A_n=S_n-S_{n-1}\), \(E_n=\Span A_n(X)\), and \(d_n=\dim E_n\).
    Choose an Auerbach basis \((z_j^n,z_j^{n*})_{j=1}^{d_n}\) of \(E_n\), and put \( C_n=\max_{0\leq j\leq d_n} \norm{\sum_{k=1}^j z_k^{n*}\otimes z_k^n}. \)
    Choose \(M_n\in\N_+\) so that
    \(\frac{C_n}{M_n}\leq \min\left\{\frac{\varepsilon}{4\lambda+2\varepsilon}, \frac{2^{-n}}{2\lambda+\varepsilon}\right\}.\)
    Define
    \[ y_i=z_j^n,\qquad f_i=\frac{z_j^{n*}\circ A_n}{M_n} \qquad\text{if}\qquad i=\sum_{k=1}^{n-1}M_kd_k+rd_n+j, \]
    where \(r=0,\ldots,M_n-1\) and \(j=1,\ldots,d_n\).
    For every \(x\in X\), we have \[\sum_{r=0}^{M_n-1}\sum_{k=1}^{d_n} \frac{1}{M_n} z_k^{n*}(A_nx)z_k^n=A_nx.\]
    For \(1\leq j\leq d_n\), by the choice of \(M_n\), we also have \[\norm{M_n^{-1}\sum_{k=1}^jz_k^{n*}(A_nx)z_k^n}\leq(C_n/M_n)\norm{A_nx}\leq2^{-n}\norm{x}.\]
    Since \(S_{n-1}x,S_nx\to Tx\), we obtain pointwise convergence of the full series to \(T\).

    By the choice of \(M_n\), for every \(i=\sum_{k=1}^{n-1}M_kd_k+rd_n+j\), we have
    \begin{align*}
        \norm{\sum_{\ell=1}^{i}f_\ell\otimes y_\ell}_{\mathrm{Lip}}&=\norm{\left(1-\frac{r}{M_n}\right)S_{n-1}+\frac{r}{M_n}S_n+\frac{1}{M_n}\left(\sum_{k=1}^jz_k^{n*}\otimes z_k^n\right)A_n}_{\mathrm{Lip}}\\
        &\leq\left(1-\frac{r}{M_n}\right)\norm{S_{n-1}}_{\mathrm{Lip}}+\frac{r}{M_n}\norm{S_n}_{\mathrm{Lip}}+\frac{C_n}{M_n}\norm{A_n}_{\mathrm{Lip}}\\
        &\leq\lambda+\frac{\varepsilon}{2}+\frac{C_n}{M_n}(2\lambda+\varepsilon )\leq\lambda+\varepsilon.
    \end{align*}
    Therefore, we obtain a \(\lambda\)-Lipschitz decomposition of \(T\).
\end{proof}

In Kalton's theory, one uses vanishing gauges and almost Lipschitz bounds in place of the linear modulus \(Lt\) \cite{Kalton2012UniformStructure}.
It remains open whether every separable Banach space is approximable in the sense defined below.

\begin{definition}
    Let \(X\) be a Banach space and let \(\omega\) be a vanishing gauge.
    We say that \(X\) is \(\omega\)-approximable if, for every compact set \(K\) and \(\varepsilon>0\), there is a uniformly continuous \(\phi\) such that \(\phi(X)\) is relatively compact, \(\sup_{x \in K} \|x-\phi(x)\| <\varepsilon\) and \(\omega_{\phi} \leq \omega\).
    \(X\) is approximable if \(X\) is \(\omega\)-approximable for some vanishing gauge \(\omega\).
\end{definition}

For the almost Lipschitz version, we replace the Lipschitz gauge \(Lt\) by \(Lt+m\), where \(m>0\) is arbitrary.

\begin{definition}
    Let \(L>0\) and \(m>0\).
    We say that \(X\) is \((L,m)\)-almost Lipschitz approximable if, for every compact set \(K\subseteq X\) and every \(\delta>0\), there exists a uniformly continuous map \(\phi:X\to X\) such that \(\phi(X)\) is relatively compact,
    \( \sup_{x\in K}\|x-\phi(x)\|<\delta\) and \( \omega_\phi(t)\le Lt+m\), for all \( t\ge0\).
    \(X\) is \(L\)-almost Lipschitz approximable if \(X\) is \((L,m)\)-almost Lipschitz approximable for every \(m>0\), and \(X\) is almost Lipschitz approximable if it is \(L\)-almost Lipschitz approximable for some \(L>0\).
\end{definition}
By Proposition~4.3 of \cite{Kalton2012UniformStructure}, we obtain the following sequential approximation result.
\begin{lemma}\label{lem:gauge-sequential-approximation}
    Let \(X\) be a separable Banach space and let \(\omega\) be a vanishing gauge.
    If \(X\) is \(\omega\)-approximable, then for every \(\varepsilon>0\) there exists an equi-uniformly continuous sequence of continuous maps \(\{S_n\}_{n=1}^\infty\) with relatively compact images contained in finite-dimensional subspaces of \(X\) such that \(\{S_n\}_{n=1}^\infty\) converges pointwise to the identity operator and \(\omega_{S_n} \leq \omega+\varepsilon\).
\end{lemma}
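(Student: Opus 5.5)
The plan is to prove the lemma in two stages. First I build a net of admissible maps indexed by pairs (compact set, accuracy). Then I extract a sequence from it using separability, following the scheme of Proposition~4.3 of \cite{Kalton2012UniformStructure}.

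\textbf{Stage 1 (finite-dimensional images).} Fix \(\varepsilon>0\). For a compact \(K\) and \(\delta>0\), \(\omega\)-approximability gives a uniformly continuous \(\phi\) with \(\omega_\phi\le\omega\), relatively compact image, and \(\sup_K\norm{x-\phi(x)}<\delta\). The image need not lie in a finite-dimensional subspace, so I compose \(\phi\) with a Lipschitz map that approximates the identity on the compact set \(\overline{\phi(X)}\). I would try Lemma~\ref{lem:lip-ap} first, but its Lipschitz constant is uncontrolled, so composing with it would destroy the gauge bound. Instead I use a partition-of-unity map built from a finite \(\rho\)-net \(\{z_i\}\) of \(\overline{\phi(X)}\), namely \(\psi(z)=\sum_i\varphi_i(z)z_i\) with \(\varphi_i\) supported where \(\norm{z-z_i}<\rho\). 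Then \(\norm{\psi(z)-z}<\rho\) on \(\overline{\phi(X)}\). I set \(S=\psi\circ\phi\) and estimate \(\norm{Sx-Sy}\le \norm{\phi x-\phi y}+2\rho\le\omega(t)+2\rho\). Taking \(2\rho\le\varepsilon\) gives \(\omega_S\le\omega+\varepsilon\). This is the key point: the additive slack in the conclusion absorbs the projection error without requiring any Lipschitz control on \(\psi\). The map \(S\) is continuous, and its image lies in the finite-dimensional span of the \(z_i\) and is bounded, hence relatively compact. Finally, \(\sup_K\norm{x-Sx}<\delta+\rho\).

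\textbf{Stage 2 (sequence).} \(X\) is separable, so I choose a dense sequence \((x_k)\) and let \(K_n=\{x_1,\dots,x_n\}\). Applying Stage 1 with \(K=K_n\) and accuracy \(1/n\) gives \(S_n\) with \(\omega_{S_n}\le\omega+\varepsilon\) and \(\norm{S_nx_k-x_k}<1/n+\rho_n\) for \(k\le n\), where I take \(\rho_n\le\min\{\varepsilon/2,1/n\}\). Hence \(S_nx_k\to x_k\) for every \(k\).

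\textbf{Passing to all of \(X\) (main obstacle).} I need \(S_n\to I\) pointwise on all of \(X\). Given \(x\) and \(x_k\) with \(\norm{x-x_k}<t\),
\[\norm{S_nx-x}\le \omega(t)+\varepsilon+\norm{S_nx_k-x_k}+t.\]
The constant \(\varepsilon\) does not vanish, so this bound only yields \(\limsup_n\norm{S_nx-x}\le\varepsilon\), not convergence. To remove it I would let the slack shrink with \(n\): choose \(\rho_n\to0\), so that \(\omega_{S_n}(t)\le\omega(t)+2\rho_n\) with \(2\rho_n\le\varepsilon\). Then the estimate becomes \(\norm{S_nx-x}\le\omega(t)+2\rho_n+\norm{S_nx_k-x_k}+t\), and letting \(n\to\infty\) and then \(t\to0\), using that \(\omega\) is vanishing, gives \(S_nx\to x\). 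The same bound \(\omega_{S_n}\le\omega+\varepsilon\), with \(\omega\) vanishing, gives the equi-uniform continuity of \(\{S_n\}\).
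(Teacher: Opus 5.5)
Your argument is correct in substance, and it takes a genuinely different route from the paper. The paper gives no argument of its own here; it simply invokes Kalton's Proposition~4.3. You instead make the lemma self-contained, in two steps.
\begin{enumerate}
\item Stage~1 forces finite-dimensional ranges. You compose $\phi$ with a partition-of-unity map $\psi$ onto a finite net of the compact set $\overline{\phi(X)}$. The estimate $\|Sx-Sy\|\le\|\phi x-\phi y\|+2\rho$ lets the additive slack absorb the discretization error, so no Lipschitz control of $\psi$ is needed.
\item Stage~2 is the same dense-sequence diagonalization with shrinking slack that the paper itself runs in the proof of $(1)\Rightarrow(3)$ of Theorem~\ref{thm:operator-omega-decomposition}. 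There, finite-dimensional ranges are built into the operator-level definition rather than produced.
\end{enumerate}
Your route buys transparency and a slightly stronger conclusion, namely $\omega_{S_n}\le\omega+2\rho_n$ with $\rho_n\to0$. From this bound, the uniform convergence on compact sets used in Corollary~\ref{thm:gauge-frame-characterization} also follows directly by the finite-net estimate. The citation buys brevity, at the cost of leaving the reader to match Kalton's formulation to the present one.

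One step needs repair. Your last sentence claims that $\omega_{S_n}\le\omega+\varepsilon$, together with $\omega$ vanishing, gives equi-uniform continuity. It does not: it only yields $\limsup_{t\downarrow0}\sup_n\omega_{S_n}(t)\le\varepsilon$.

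Use instead the bound you actually constructed. Given $\eta>0$, choose $N$ with $2\rho_n<\eta/2$ for $n\ge N$, and choose $\delta>0$ with $\omega(\delta)<\eta/2$. Then handle the finitely many maps $S_1,\dots,S_{N-1}$ by their individual uniform continuity and shrink $\delta$ accordingly.

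For this you must also observe that each $S_n$ is uniformly continuous, not merely continuous, as you wrote. This holds because $\phi_n$ is uniformly continuous and $\psi_n$ is uniformly continuous on the compact set $\overline{\phi_n(X)}$. In fact $\psi_n$ is Lipschitz there if you take a $\rho_n/2$-net: then $\sum_i u_i\ge\rho_n/2$ on $\overline{\phi_n(X)}$, so the denominator of the partition of unity stays away from zero. This choice of net also guarantees that the supports $\{\|z-z_i\|<\rho_n\}$ actually cover the set.
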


The following definition requires a common gauge bound for the partial sums.

\begin{definition}
    Let \(X\) be a separable Banach space.
    Let \(\varepsilon,L>0\), and let \(\omega\) be a vanishing gauge.
    A sequence \((f_n,x_n)_{n\geq1}\) is an \((\omega+\varepsilon)\)-frame if each \(f_n\) is a continuous scalar function, each \(x_n\in X\), and the series \(\sum_{n=1}^{\infty}f_n(x)x_n\) converges to \(x\) uniformly on every compact subset of \(X\).
    For every \(N\in\N_+\), the map \(\sum_{n=1}^{N}f_n\otimes x_n\) is uniformly continuous, its image is relatively compact, and \(\omega_{\sum_{n=1}^{N}f_n\otimes x_n}\leq\omega+\varepsilon\).

    The sequence is an \((L,\varepsilon)\)-almost Lipschitz frame if each \(f_n\) is continuous, each \(x_n\in X\), and the series \(\sum_{n=1}^{\infty}f_n(x)x_n\) converges to \(x\) uniformly on every compact subset of \(X\).
    For every \(N\in\N_+\), the map \(\sum_{n=1}^{N}f_n\otimes x_n\) has relatively compact image and satisfies \(\omega_{\sum_{n=1}^{N}f_n\otimes x_n}(t)\leq Lt+\varepsilon\) for all \(t\geq0\).
\end{definition}
If the target operator has a bounded linear factor, we need to consider the approximability of that factor.
In the operator form below, we choose approximants with finite-dimensional images in \(Y\) which converge to \(T:X\to Y\).
The approximation condition is imposed on \(T\) itself.
We consider uniform rank estimates for linear realizations separately in Section~\ref{sec:UAP}.

\begin{definition}\label{def:operator-approximability}
    Let \(X\) be a separable Banach space, \(Y\) be a Banach space, \(T \in \mathcal{B}(X,Y)\), and  \(\omega\) be a vanishing gauge.
    \(T\) is \(\omega\)-approximable if for every compact set \(K\subseteq X\), every \(\delta>0\), and every \(\eta>0\), there exists a continuous map \(S:X\to Y\) such that \(S(X)\) is relatively compact and contained in a finite-dimensional subspace of \(Y\), \(\sup_{x\in K}\norm{Tx-Sx}<\delta\), and \(\omega_S\leq\omega+\eta.\)

    We say that \(T\) is \((L,\eta)\)-almost Lipschitz approximable if the same assertion holds with \(\omega_S(t)\leq Lt+\eta.\)
    And \(T\) is \(L\)-almost Lipschitz approximable if it is \((L,\eta)\)-almost Lipschitz approximable for every \(\eta>0\).
\end{definition}

\begin{definition}\label{def:operator-decomposition}
    Let \(X\) be a separable Banach space, \(Y\) be a Banach space, \(T \in \mathcal{B}(X,Y)\), \(\eta,L>0\), and let \(\omega\) be a vanishing gauge.
    A sequence \(\{(f_n,y_n)\}_{n=1}^{\infty}\) is an operator \((\omega+\eta)\)-decomposition for \(T\) if each \(f_n:X\to\R\) is continuous, each \(y_n\in Y\), and the series \(\sum_{n=1}^{\infty}f_n(x)y_n\) converges to \(Tx\) uniformly on every compact subset of \(X\).
    For every \(N\in\N_+\), the map \(\sum_{n=1}^{N}f_n\otimes y_n\) has relatively compact image contained in a finite-dimensional subspace of \(Y\) and satisfies \(\omega_{\sum_{n=1}^{N}f_n\otimes y_n}\leq\omega+\eta\).

    The sequence is an operator \((L,\eta)\)-almost Lipschitz decomposition for \(T\) if each \(f_n:X\to\R\) is continuous, each \(y_n\in Y\), and the series \(\sum_{n=1}^{\infty}f_n(x)y_n\) converges to \(Tx\) uniformly on every compact subset of \(X\).
    For every \(N\in\N_+\), the map \(\sum_{n=1}^{N}f_n\otimes y_n\) has relatively compact image contained in a finite-dimensional subspace of \(Y\) and satisfies \(\omega_{\sum_{n=1}^{N}f_n\otimes y_n}(t)\leq Lt+\eta\) for all \(t\geq0\).
\end{definition}

The next theorem characterizes operator approximability by decompositions with finite-dimensional blocks in \(Y\).
\begin{theorem}\label{thm:operator-omega-decomposition}
    Let \(X\) be a separable Banach space, \(Y\) a Banach space, and \(T\in\mathcal B(X,Y)\).
    The following statements are equivalent.
    \begin{enumerate}
        \item \(T\) is \(\omega\)-approximable for some vanishing gauge \(\omega\).
        \item There exists \(L>0\) such that \(T\) is \(L\)-almost Lipschitz approximable.
        \item There exists a vanishing gauge \(\omega\) such that, for every \(\eta>0\), \(T\) has an operator \((\omega+\eta)\)-decomposition.
        \item There exists \(L>0\) such that, for every \(\eta>0\), \(T\) has an operator \((L,\eta)\)-almost Lipschitz decomposition.
    \end{enumerate}
\end{theorem}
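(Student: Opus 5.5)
We prove the cycle of implications (1)\(\Rightarrow\)(2)\(\Rightarrow\)(4)\(\Rightarrow\)(3)\(\Rightarrow\)(1), mirroring the Lipschitz decomposition theorem above and Kalton's gauge results for the identity.

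\textbf{(1)\(\Rightarrow\)(2).} Let \(\omega\) be a vanishing gauge. Subadditivity and monotonicity give, for every \(\delta>0\),
\[ \omega(t)\leq \omega(\delta)\lceil t/\delta\rceil\leq \frac{\omega(\delta)}{\delta}\,t+\omega(\delta). \]
Hence \(\omega(t)\leq L_\eta t+\eta\) whenever \(\omega(\delta)\leq\eta\). The slope \(\omega(\delta)/\delta\) may blow up as \(\eta\to0\), so this alone does not give a single \(L\). Following Kalton \cite{Kalton2012UniformStructure}, the remedy is a rescaling argument, which uses linearity of \(T\). Fix \(\eta>0\) and a compact \(K\), take \(r>0\) small, and apply (1) to the compact set \(rK\) with accuracy \(r\delta\) to obtain \(S\). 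Then set
\[ S'(x)=r^{-1}S(rx). \]
This map approximates \(T\) on \(K\) within \(\delta\), has relatively compact image in the same finite-dimensional subspace, and satisfies
\[ \omega_{S'}(t)\leq r^{-1}\bigl(\omega(rt)+\eta'\bigr)\leq r^{-1}\Bigl(\tfrac{\omega(\delta_0)}{\delta_0}\,rt+\omega(\delta_0)+\eta'\Bigr) \]
for a fixed \(\delta_0\). This gives slope \(L=\omega(\delta_0)/\delta_0\) and additive term \(r^{-1}(\omega(\delta_0)+\eta')\), which is not small. So the order of the two choices is wrong. Instead, fix \(\delta_0=1\) and \(L=\omega(1)\), so that \(\omega(s)\leq Ls+L\) for all \(s\). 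Applying (1) to \(R^{-1}K\) and then rescaling by \(R\) (that is, \(S'(x)=RS(x/R)\)) gives
\[ \omega_{S'}(t)\leq R\,\omega(t/R)+R\eta'\leq Lt+R\,\omega_{\text{small}}. \]
The main obstacle is making the additive term small at the same time as the slope stays fixed. To handle it, I would use the sharper bound \(\omega(s)\leq Ls+\omega(\theta)\) with \(L=\omega(\theta)/\theta\), combined with choosing \(\eta'\leq\eta/R\). Whether the additive constant can genuinely be driven to zero with a fixed \(L\) is exactly Kalton's Proposition on almost Lipschitz approximability, and I would invoke its operator analogue, adapting the proof of Lemma~\ref{lem:gauge-sequential-approximation} to maps into \(Y\).

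\textbf{(2)\(\Rightarrow\)(4).} First obtain an equi-uniformly continuous sequence \(S_n\) converging pointwise to \(T\), and hence uniformly on compacta, with \(\omega_{S_n}(t)\leq Lt+\eta/2\) and \(S_n(0)=0\). This uses the sequential version as in Lemma~\ref{lem:gauge-sequential-approximation}: run a diagonal argument over an increasing sequence of compact sets \(K_n\), where \(K_n\) is the union of \(\{x:\|x\|\leq n\}\cap\) a countable dense set, closed appropriately; alternatively, use compacts exhausting the given compacts via the standard fact that each compact lies in the closed convex hull of a null sequence. Then copy the Auerbach-splitting construction from the theorem above. Set \(A_n=S_n-S_{n-1}\), expand \(A_n\) in an Auerbach basis of the finite-dimensional span, and repeat each block \(M_n\) times with weight \(1/M_n\). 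Each partial sum is then a convex combination of \(S_{n-1}\) and \(S_n\) plus a block of modulus at most \((C_n/M_n)\,\omega_{A_n}\). Since \(\omega_{A_n}(t)\leq 2Lt+\eta\), choosing \(M_n\) large yields moduli at most \(L't+\eta\). Uniform convergence on compacta follows because the error blocks are bounded by \(2^{-n}\sup_K\|A_n\|\).

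\textbf{(4)\(\Rightarrow\)(3) and (3)\(\Rightarrow\)(1).} For (4)\(\Rightarrow\)(3), take \(\omega(t)=Lt\), which is vanishing. For (3)\(\Rightarrow\)(1), given a compact \(K\), \(\delta\), and \(\eta\), choose an \((\omega+\eta)\)-decomposition and a partial sum of order \(N\) that is \(\delta\)-close to \(T\) on \(K\); this partial sum is the required map \(S\).
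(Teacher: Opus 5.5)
Your cycle (1)$\Rightarrow$(2)$\Rightarrow$(4)$\Rightarrow$(3)$\Rightarrow$(1) is a valid way to organize the proof. The steps (4)$\Rightarrow$(3) (take $\omega(t)=Lt$) and (3)$\Rightarrow$(1) are correct.

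The genuine gap is (1)$\Rightarrow$(2). You never finish it, and instead propose to invoke an operator analogue of Kalton's result by adapting Lemma~\ref{lem:gauge-sequential-approximation}. That lemma only produces sequential approximants of $I_X$ and gives no almost Lipschitz bounds, so this is not a proof. The obstacle you describe is not real, and your own second display settles it once you evaluate it.

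Put $L=\max\{\omega(1),1\}$, so that $\omega(s)\le\omega(1)\lceil s\rceil\le Ls+L$. Given $K$, $\delta>0$ and $\eta>0$, take $S$ from (1) on the compact set $R^{-1}K$, with accuracy $\delta/R$ and slack $1$, and put $S'(x)=RS(x/R)$. Linearity of $T$ gives $\sup_{x\in K}\norm{Tx-S'x}=R\sup_{z\in R^{-1}K}\norm{Tz-Sz}<\delta$. The image $S'(X)=R\,S(X)$ is still relatively compact in a finite-dimensional subspace, and
\[ \omega_{S'}(t)=R\,\omega_S(t/R)\le R\,\omega(t/R)+R\le Lt+R(L+1). \]
This is at most $Lt+\eta$ once $R\le\eta/(L+1)$, with $L$ independent of $\eta$. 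This is exactly the paper's scaling argument with $\lambda=1/R$ large. In your first attempt the only mistake was taking the dilation $r$ small rather than large.

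In (2)$\Rightarrow$(4), the construction of the sequence also needs repair.
\begin{itemize}
\item With a fixed bound $\omega_{S_n}(t)\le Lt+\eta/2$, the $S_n$ are not equicontinuous. The estimate $\norm{S_nx-Tx}\le 2L\norm{x-u_j}+\eta/2+\norm{S_nu_j-Tu_j}$ then only yields $\limsup_n\norm{S_nx-Tx}\le\eta/2$. You need slacks $\theta_n\to0$; the paper uses $\theta_n=\eta2^{-n-2}$.
\item Closures of a ball intersected with a dense set are not compact in infinite dimensions. It suffices to approximate on the finite sets $\{0,u_1,\ldots,u_n\}$, for a dense sequence $(u_j)$, with error $1/(2n)$, and then subtract the value at $0$.
\item Use $\omega_T(t)\le Lt$, obtained by approximating on two-point sets, to get pointwise convergence. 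Then a finite-net argument gives uniform convergence on compacta.
\end{itemize}
Your Auerbach splitting then works, provided you fix $C_n/M_n\le\min\{1/2,2^{-n}\}$ so that the final slope $2L$ does not depend on $\eta$. Alternatively, as in the paper, bound the oscillation of the unfinished block by the constant $C_nB_n/M_n\le\eta/2$, where $B_n$ is the diameter of the bounded set $A_n(X)$; this keeps the slope equal to $L$.
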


\begin{proof}
    By taking the vanishing gauge \(\Omega(t)=Lt\) in Definition \ref{def:operator-approximability}, we obtain \((2)\Rightarrow(1)\).
    For the converse, we use Kalton's scaling argument \cite{Kalton2012UniformStructure}.

    Assume that \(T\) is \(\omega\)-approximable for some vanishing gauge \(\omega\).
    Since \(\omega\) is subadditive and nondecreasing, there exist constants \(L,C>0\) such that
    \( \omega(t)\le Lt+C\) for all \( t\ge 0\). 
    Let \(K\subseteq X\) be a compact set, let \(\delta>0\), and let \(m>0\).
    Choose \(\lambda>0\) so that \(C/\lambda<m/2\).
    Since \(\lambda K\) is compact and \(T\) is \(\omega\)-approximable, there exists a continuous map \(S:X\to Y\) such that \(S(X)\) is relatively compact and contained in a finite-dimensional subspace of \(Y\), \(\sup_{z\in \lambda K}\|Tz-Sz\|<\lambda\delta\), and \(\omega_S\le \omega+\lambda m/2.\)
    Define
    \( \phi(x)=\frac1\lambda S(\lambda x)\) for \(x\in X.\)
    Then \(\phi(X)\) is relatively compact and contained in a finite-dimensional subspace of \(Y\).
    Since \(T\) is linear, for \(x\in K\) we have \(\|Tx-\phi(x)\| = \frac1\lambda\|T(\lambda x)-S(\lambda x)\| <\delta.\)
    Moreover, for \(t\ge0\), we have
    \[ \omega_\phi(t) = \frac1\lambda \omega_S(\lambda t) \le \frac1\lambda\omega(\lambda t)+\frac{m}{2} \le Lt+\frac{C}{\lambda}+\frac{m}{2} < Lt+m. \]
    Thus \(T\) is \((L,m)\)-almost Lipschitz approximable.
    Since \(m>0\) was arbitrary, we conclude that \(T\) is \(L\)-almost Lipschitz approximable.
    Therefore, we obtain \((1)\Rightarrow(2)\).

    We next prove \((1)\Rightarrow(3)\).
    Assume that \(T\) is \(\omega\)-approximable.
    Fix \(\eta>0\), and let \(\{u_j:j\geq1\}\) be a dense subset of \(X\).
    By approximating the two-point compact set \(\{x,y\}\) and then letting both error parameters tend to zero, we obtain \(\omega_T\leq\omega\).
    Put \(\theta_n=\eta2^{-n-2}\).
    We apply the definition on \(\{0,u_1,\ldots,u_n\}\) with error \(1/(2n)\), and subtract the value at zero.
    Thus we may choose a continuous map \(S_n:X\to Y\), with relatively compact image in a finite-dimensional subspace of \(Y\), such that \(S_n(0)=0\),
    \[ \norm{S_nu_j-Tu_j}<\frac1n\quad(1\leq j\leq n), \qquad \omega_{S_n}\leq\omega+\theta_n. \]
    For \(x\in X\) and every \(\sigma>0\), choose \(u_j\) so that \(2\omega(\norm{x-u_j}) < \sigma\).
    Then for all \(n\geq j\), we have
    \[ \norm{S_nx-Tx} \leq \omega_{S_n}(\norm{x-u_j})+\norm{S_nu_j-Tu_j} +\omega_T(\norm{x-u_j}) \leq \sigma+\theta_n+\frac1n. \]
    Thus \(S_nx\to Tx\) for every \(x\in X\).
    For \(K\Subset X\) and \(r>0\), take a finite \(r\)-net \(z_1,\ldots,z_M\) in \(K\).
    Since \(\omega_T\leq\omega\), we have
    \[ \sup_{x\in K}\norm{S_nx-Tx}\leq2\omega(r)+\theta_n+\max_{1\leq j\leq M}\norm{S_nz_j-Tz_j}. \]
    By first letting \(n\to\infty\) and then \(r\to0\), we obtain uniform convergence on \(K\).

    The sequence \((S_n)\) is now fixed on \(X\).
    We next write each difference \(S_n-S_{n-1}\) in scalar coordinates and keep the gauge below \(\omega+\eta\) for every partial sum.
    Let \(S_0=0\), \(A_n=S_n-S_{n-1}\), and \(E_n=\Span\{A_n(X)\}\).
    Then each \(E_n\) is finite dimensional and \(A_n(X)\) is relatively compact.
    Put \(d_n=\dim E_n\).
    It is enough to assume \(d_n>0\). Choose an Auerbach basis \(\{y_1^n,y_2^n,\dots,y_{d_n}^n\}\) of \(E_n\) with biorthogonal functionals \(\{y_1^{n*},y_2^{n*},\dots,y_{d_n}^{n*}\}\), and put \(C_n=\max_{1\leq j\leq d_n}\norm{\sum_{k=1}^j y_k^{n*}\otimes y_k^n}\).
    Since \(A_n(X)\) is bounded, let
    \( B_n=\sup_{x,y\in X}\norm{A_n(x)-A_n(y)}<\infty . \)
    Choose \(M_n\in\N_+\) such that
    \(\frac{C_nB_n}{M_n}\leq\min\{\eta/2,2^{-n}\}.\)
    Define
    \[ y_i=y_j^n \quad\text{ and } \quad f_i=\frac{y_j^{n*}\circ A_n}{M_n}\qquad \text{ if } \quad i=\sum_{k=1}^{n-1}M_kd_k+rd_n+j, \]
    where \(r=0,\dots,M_n-1\) and \(j=1,\dots,d_n\).

    Since \(\sum_{k=1}^{d_n}y_k^{n*}(A_nx)y_k^n=A_nx\), for \(x,y\in X\) and \(1\leq j\leq d_n\), we have
    \begin{align*}
        \norm{\frac{1}{M_n}\sum_{k=1}^{j}y_k^{n*}(A_nx-A_ny)y_k^n}
        \leq\frac{C_n}{M_n}\norm{A_nx-A_ny}
        \leq\frac{C_nB_n}{M_n}\leq\min\{\eta/2,2^{-n}\}.
    \end{align*}
    Thus the choice of \(M_n\) bounds the gauge of every unfinished coordinate sum; taking \(y=0\) also bounds its norm at each point.
    For \(s=\sum_{k=1}^{n-1}M_kd_k+rd_n+j\), the corresponding partial sum is
    \[ \sum_{i=1}^s f_i\otimes y_i= \frac r{M_n}S_n+ \left(1-\frac r{M_n}\right)S_{n-1} + \frac1{M_n}\sum_{k=1}^{j} y_k^{n*}\circ A_n\otimes y_k^n . \]
    Since \(A_n(0)=0\), the last term has norm at most \(C_nB_n/M_n\leq2^{-n}\) for every \(x\in X\).
    Thus, for every \(K\Subset X\), we have
    \[ \sup_{x\in K}\norm{\sum_{i=1}^s f_i(x)y_i-Tx}\leq\max_{k\in\{n-1,n\}}\sup_{x\in K}\norm{S_kx-Tx}+2^{-n}\longrightarrow0. \]
    Hence all partial sums converge to \(T\) uniformly on every compact subset of \(X\).
    Moreover, every finite partial sum has relatively compact image contained in a finite-dimensional subspace of \(Y\).
    By the choice of \(M_n\), for every \(t\geq0\), we have
    \begin{align*}
        \omega_{\sum_{i=1}^s f_i\otimes y_i}(t) &\leq \frac r{M_n}\omega_{S_n}(t) +\left(1-\frac r{M_n}\right)\omega_{S_{n-1}}(t) +\frac{C_n}{M_n}\omega_{A_n}(t) \leq \omega(t)+\frac{\eta}{8}+\frac{C_nB_n}{M_n} \leq \omega(t)+\eta .
    \end{align*}
    Hence \(\{(f_n,y_n)\}_{n=1}^{\infty}\) is an operator \((\omega+\eta)\)-decomposition for \(T\).  For the fixed \(\eta\), all coordinate functions \(f_i\) and vectors \(y_i\) have been chosen independently of any later compact set.
        Taking their first \(s\) terms therefore gives a fixed family of encoder--decoder pairs whose reconstruction maps have gauge at most \(\omega+\eta\).

    To prove \((3)\Rightarrow(1)\), fix a compact set \(K\subseteq X\) and \(\delta,\eta>0\).
    By (3), choose an operator \((\omega+\eta)\)-decomposition with partial sums \(P_N\).
    Since \(P_N\to T\) uniformly on \(K\), we can choose \(N\) such that \(\sup_{x\in K}\norm{P_Nx-Tx}<\delta\).
    The map \(P_N\) has relatively compact finite-dimensional image and satisfies \(\omega_{P_N}\leq\omega+\eta\), so it is the required approximant.

    It remains to prove \((2)\Longleftrightarrow(4)\).
    For \((2)\Rightarrow(4)\), fix \(\eta>0\), put \(\theta_n=\eta2^{-n-2}\), and choose an \((L,\theta_n)\)-almost Lipschitz approximant to \(T\) on \(\{0,u_1,\ldots,u_n\}\) with error less than \(1/(2n)\).
    After subtracting its value at zero, the resulting maps satisfy \(S_nx\to Tx\) and \(\omega_{S_n}(t)\leq Lt+\theta_n\).
    In the preceding coordinate construction, choose \(M_n\) as above.
    For the adjacent block endpoints, we have the bound \(Lt+\eta/8\).
    For the unfinished coordinate term, we have the bound \(\eta/2\).
    By the same finite-net and partial-sum estimates with \(\omega(t)=Lt\), we obtain an operator \((L,\eta)\)-almost Lipschitz decomposition.
    To prove \((4)\Rightarrow(2)\), fix a compact set \(K\subseteq X\) and \(\delta,\eta>0\).
    By (4), choose an operator \((L,\eta)\)-almost Lipschitz decomposition with partial sums \(P_N\).
    Since \(P_N\to T\) uniformly on \(K\), we can choose \(N\) such that \(\sup_{x\in K}\norm{P_Nx-Tx}<\delta\).
    The map \(P_N\) has relatively compact finite-dimensional image and satisfies \(\omega_{P_N}(t)\leq Lt+\eta\), so it is the required approximant.
\end{proof}

For the identity operator, we obtain the following frame characterization.

\begin{corollary}\label{thm:gauge-frame-characterization}
    Let \(X\) be a separable Banach space.
    The following statements are equivalent.
    \begin{enumerate}
        \item \(X\) is \(\omega\)-approximable for some vanishing gauge \(\omega\).
        \item There exists \(L>0\) such that, for every \(m>0\), \(X\) is \((L,m)\)-almost Lipschitz approximable.
        \item There exists a vanishing gauge \(\omega\) such that, for every \(\varepsilon>0\), \(X\) has an \((\omega+\varepsilon)\)-frame.
        \item There exists \(L>0\) such that, for every \(\eta>0\), \(X\) has an \((L,\eta)\)-almost Lipschitz frame.
    \end{enumerate}
\end{corollary}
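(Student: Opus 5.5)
The plan is to specialize Theorem~\ref{thm:operator-omega-decomposition} to \(Y=X\) and \(T=I_X\), and then check that each of the four conditions there becomes the corresponding condition of the corollary. The identity is bounded and linear, and \(X\) is separable, so the theorem applies and gives \((1)\Leftrightarrow(2)\Leftrightarrow(3)\Leftrightarrow(4)\) for its operator versions. What remains is to match the definitions.

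\textbf{Conditions (1) and (2).} We compare \(\omega\)-approximability of \(X\) with that of \(I_X\) in Definition~\ref{def:operator-approximability}. The two differ in two respects.

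\begin{enumerate}
    \item The operator version requires the image \(S(X)\) to lie in a finite-dimensional subspace, while the space version only asks for relatively compact image.
    \item The operator version asks for the gauge bound \(\omega+\eta\) for every \(\eta>0\), while the space version uses \(\omega\) exactly.
\end{enumerate}

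The easy direction is operator \(\Rightarrow\) space. Given \(\eta\), we take the approximant with gauge at most \(\omega+\eta\). Since \(\omega+\eta\) is not a gauge because it does not vanish at \(0\), we replace it by the vanishing gauge \(\omega'=\omega+\min\{t,\eta_0\}\)-type bound. Cleaner still, we note that \(2\)-condition (2) avoids this issue entirely, since \((L,m)\)-almost Lipschitz approximability is formulated with the additive constant \(m\).

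For space \(\Rightarrow\) operator, we need a finite-dimensional image. Given \(\phi\) with relatively compact image, gauge \(\omega\) and error \(\varepsilon/2\) on \(K\), we compose with the map \(S\) from Lemma~\ref{lem:lip-ap} built on the compact set \(\overline{\phi(X)}\). That \(S\) is \(\rho\)-close to the identity on \(\overline{\phi(X)}\). Hence \(S\circ\phi\) has finite-dimensional bounded image, satisfies \(\omega_{S\circ\phi}\le\omega+2\rho\), and is within \(\varepsilon/2+\rho\) of the identity on \(K\).

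\textbf{Main obstacle.} The delicate point is this mismatch between \(\omega\) and \(\omega+\eta\). To get around it, I would route the corollary through condition (2), which tolerates additive constants, using the scaling argument of the proof of \((1)\Rightarrow(2)\).

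\begin{itemize}
    \item Space-(1) \(\Rightarrow\) space-(2) follows verbatim from that scaling argument, with \(Y=X\) and \(T=I_X\).
    \item Space-(2) \(\Rightarrow\) operator-(2) follows by composing with the Lemma~\ref{lem:lip-ap} map as above, which only adds a small constant to the additive term \(m\).
    \item Operator-(2) \(\Rightarrow\) space-(2) is trivial.
    \item Space-(2) \(\Rightarrow\) space-(1) follows by taking \(\omega(t)=Lt\). The additive term is absorbed because an approximant with \(\omega_\phi(t)\le Lt+m\) also satisfies \(\omega_\phi(t)\le 2Lt\) for \(t\ge m/L\). For small \(t\), I expect to need a direct construction such as Lemma~\ref{lem:gauge-sequential-approximation}. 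If that fails, I would read space-(1) with the \(+\eta\) slack, matching the operator definition.
\end{itemize}

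\textbf{Conditions (3) and (4).} An \((\omega+\varepsilon)\)-frame of \(X\) is exactly an operator \((\omega+\varepsilon)\)-decomposition of \(I_X\), except that the operator version additionally demands finite-dimensional images. That requirement holds automatically for finite partial sums \(\sum_{n\le N}f_n\otimes x_n\). The same identification applies to \((L,\eta)\)-almost Lipschitz frames versus operator \((L,\eta)\)-almost Lipschitz decompositions.

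Thus (3) and (4) of the corollary coincide with (3) and (4) of Theorem~\ref{thm:operator-omega-decomposition} for \(T=I_X\). Combining this with the identification of (1) and (2) above yields the corollary.
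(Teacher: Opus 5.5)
Your plan has a genuine gap: nothing in it leads from the operator-level conditions back to (1) or (2).

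\textbf{What works.} (1)$\Rightarrow$(2) by the scaling argument is fine, since it needs only linearity of $I_X$ and $\omega(t)\le Lt+C$. So is (2)$\Rightarrow$ operator-(2) for $I_X$, by composing with the Lipschitz map of Lemma~\ref{lem:lip-ap} built on $\overline{\phi(X)}$. Your identification of (4) with operator-(4) is also correct. Together with Theorem~\ref{thm:operator-omega-decomposition}, this gives (1)$\Rightarrow$(2)$\Rightarrow$(4).

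\textbf{The return step fails.} You propose (2)$\Rightarrow$(1) with $\omega(t)=Lt$, and this does not work for three reasons. The additive constant $m$ in $\omega_\phi(t)\le Lt+m$ cannot be absorbed for small $t$. Lemma~\ref{lem:gauge-sequential-approximation} \emph{assumes} $\omega$-approximability rather than producing it. Reading (1) with a $+\eta$ slack changes the statement, because such a bound gives no control at small scales. Producing one vanishing gauge that bounds the approximants exactly, from almost Lipschitz bounds with arbitrarily small but nonzero additive constants, is exactly the nontrivial content of \cite[Theorem~4.4]{Kalton2012UniformStructure}. The paper does not reprove this; it cites that theorem for (1)$\Leftrightarrow$(2).

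\textbf{Uniform continuity is overlooked.} Conditions (1) and (2), and the definition of an $(\omega+\varepsilon)$-frame, require uniformly continuous maps or partial sums. The operator notions in Theorem~\ref{thm:operator-omega-decomposition} only require continuous maps with $\omega_S\le\omega+\eta$ or $\omega_S(t)\le Lt+\eta$. Such bounds do not force uniform continuity; for example, a bounded oscillating continuous map of amplitude $\eta$ satisfies them. Consequently:
\begin{itemize}
\item your claim that operator-(2)$\Rightarrow$(2) is trivial is false;
\item (3) is not the same as operator-(3) for $I_X$, since a frame gives a decomposition but a decomposition need not be a frame.
\end{itemize}

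\textbf{How the paper handles this.} It does so in two places.
\begin{itemize}
\item For (1)$\Rightarrow$(3), it starts from the equi-uniformly continuous sequence of Lemma~\ref{lem:gauge-sequential-approximation}. Uniform continuity survives the coordinate construction, because each partial sum is a convex combination of $S_{n-1}$ and $S_n$ plus a uniformly continuous small term.
\item For (4)$\Rightarrow$(2), it observes that Kalton's argument for (ii)$\Rightarrow$(v) uses only approximation on finite sets, relative compactness, and the almost Lipschitz bounds. So it applies to the merely continuous partial sums, and (v)$\Rightarrow$(ii) then returns uniformly continuous almost Lipschitz approximants.
\end{itemize}
Without Kalton's theorem, or a substitute for both the gauge-removal step and this uniform-continuity upgrade, your cycle of implications does not close.
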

\begin{proof}
    By \cite[Theorem~4.4]{Kalton2012UniformStructure}, we have \((1)\Longleftrightarrow(2)\).
    By Lemma~\ref{lem:gauge-sequential-approximation} and the coordinate construction in the proof of Theorem~\ref{thm:operator-omega-decomposition}, applied to \(I_X\), we obtain \((1)\Rightarrow(3)\), with uniformly continuous partial sums.
    Since the sequence in the lemma is equi-uniformly continuous and converges pointwise, it converges uniformly on compact sets, and the same holds for the partial sums by the estimate in the theorem.
    Theorem~\ref{thm:operator-omega-decomposition} also gives \((3)\Rightarrow(4)\).
    And under \((4)\), continuous almost Lipschitz approximants to \(I_X\) have relatively compact images.
    The argument for \((ii)\Rightarrow(v)\) in \cite[Theorem~4.4]{Kalton2012UniformStructure} uses only approximation on finite sets, relative compactness, and the almost Lipschitz bounds, so it applies to these maps.
    The implication \((v)\Rightarrow(ii)\) in \cite[Theorem~4.4]{Kalton2012UniformStructure} then gives uniformly continuous almost Lipschitz approximants. So \((4)\Rightarrow(2)\) follows.
\end{proof}

The decompositions above retain a uniform stability bound in finite coordinates.
They provide approximation on compact sets, but do not bound the number of coordinates in terms of the width or entropy of the compact set.

    \section[Stable recovery and the uniform approximation property]{Stable recovery and the uniform approximation property}\label{sec:UAP}

We now ask whether a model dimension chosen from widths determines the rank of a linear recovery with a fixed norm bound.
The recovery may depend on the compact set, but the rank bound must apply to every compact set with the same width bound.
We show that the UAP is equivalent to this uniform recovery estimate.
We first define the \((\omega,f)\)-UAP by nonlinear recovery with a fixed global vanishing gauge \(\omega\) and a fixed rank function \(f\).
The existence of such a pair \((\omega,f)\) gives the Gauge-UAP, which we prove to be equivalent to the UAP.
We also obtain linear recoveries with the same rank bound.

\subsection[Uniform recovery and the UAP]{Uniform recovery and the UAP}\label{subsec:widths-stable-linear-recovery}

\begin{definition}\label{def:stable-linear-coder-error}
    Let \(K\Subset X\), let \(m\in\N\), and let \(\Lambda\geq1\).
    Define
    \[
        \operatorname{slc}_{m}^{\Lambda}(K;X) = \inf_{\substack{A\in\cF(X)\\
                \operatorname{rank}A\leq m\\
                \norm{A}\leq\Lambda}} \sup_{x\in K}\norm{x-Ax}.
    \]
\end{definition}

The letters \(\operatorname{slc}\) stand for stable linear coder.
When the ambient space is clear, we suppress it and write \(d_m(K)\) for \(d_m(K;X)\) and \(\operatorname{slc}_{m}^{\Lambda}(K)\) for \(\operatorname{slc}_{m}^{\Lambda}(K;X)\).
We use the same convention for image widths below.
For \(\varepsilon>0\), we also define \(r_K(\varepsilon)=\min\{n\in\N:d_n(K)\leq\varepsilon\}\).
We always assume that a minimum or an infimum over an empty set is \(+\infty\).
The function \(r_K\) records the width scale.

By considering the range of an admissible coder, we obtain the following lower bound in terms of Kolmogorov widths.

\begin{proposition}\label{prop:slc-width-lower-bound}
    Let \(K\Subset X\), let \(m\in\N\), and let \(\Lambda\geq1\).
    Then \(d_m(K)\leq\operatorname{slc}_m^\Lambda(K).\)
\end{proposition}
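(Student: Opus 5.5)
The inequality \(d_m(K)\leq\operatorname{slc}_m^\Lambda(K)\) compares two infima. We will show that every competitor in the definition of \(\operatorname{slc}_m^\Lambda(K)\) produces a competitor in the definition of \(d_m(K)\) whose error is no larger. This is the same range argument already used for \(d_m(T)\leq a_m(T)\) in Proposition~\ref{prop:operator-image-realizable}.

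First, dispose of the empty case. If no \(A\in\cF(X)\) satisfies \(\operatorname{rank}A\leq m\) and \(\norm{A}\leq\Lambda\), then the right-hand side is \(+\infty\) by convention. In fact this never happens, since \(A=0\) is always admissible: it has rank \(0\leq m\) and norm \(0\leq\Lambda\).

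Now fix an admissible \(A\) and put \(E=A(X)\). Then \(E\) is a subspace of \(X\) with \(\dim E=\operatorname{rank}A\leq m\). For each \(x\in K\), the vector \(Ax\) lies in \(E\), so
\[ \dist(x,E)\leq\norm{x-Ax}. \]
Taking the supremum over \(x\in K\) gives
\[ d_m(K)\leq\sup_{x\in K}\dist(x,E)\leq\sup_{x\in K}\norm{x-Ax}, \]
where the first inequality holds because \(E\) is admissible in the infimum defining \(d_m(K)\).

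Finally, take the infimum over all admissible \(A\) to obtain \(d_m(K)\leq\operatorname{slc}_m^\Lambda(K)\). The norm constraint \(\norm{A}\leq\Lambda\) plays no role, so the bound holds for every \(\Lambda\geq1\).

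There is no genuine obstacle. The only points to check are the convention \(\dim E\leq m\) in the width (as opposed to \(<m\) in the approximation numbers) and the fact that \(\operatorname{rank}A\) is exactly \(\dim A(X)\). Both match the definitions of \(d_m\) and \(\operatorname{slc}_m^\Lambda\) directly.
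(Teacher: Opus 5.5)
Your proof is correct and follows the paper's argument: you take an admissible \(A\), use \(A(X)\) (of dimension at most \(m\)) as a competitor subspace in the width, bound \(\dist(x,A(X))\leq\norm{x-Ax}\) on \(K\), and then take the infimum over admissible \(A\). Your remark that \(A=0\) is always admissible is a harmless addition the paper does not need.
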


\begin{proof}
    Let \(A\in\cF(X)\) be admissible.
    Since \(\dim\operatorname{ran}A\leq m\) and \(Ax\in\operatorname{ran}A\) for every \(x\in K\), we have \(\dist(x,\operatorname{ran}A)\leq\norm{x-Ax}\) for all \(x\in K\).
    Thus we obtain \(d_m(K) \leq \sup_{x\in K}\dist(x,\operatorname{ran}A) \leq \sup_{x\in K}\norm{x-Ax}.\)
    By taking the infimum over all admissible \(A\), we obtain the required inequality.
\end{proof}

If \(X\) is a Hilbert space, orthogonal projections give
\[ \inf_{\mathcal C_m,\mathcal R_m}\sup_{x\in K}\norm{x-\mathcal R_m\mathcal C_mx}=\operatorname{slc}_m^1(K;X)=d_m(K;X), \]
where the first infimum is over all encoders \(\mathcal C_m:X\to\R^m\) and linear decoders \(\mathcal R_m:\R^m\to X\).
Every such recovery takes values in a subspace of dimension at most \(m\), so its error is bounded below by the width.
Orthogonal projections give the reverse inequality by taking the infimum over subspaces of dimension at most \(m\).
Thus linear encoding gives the same optimal error with recovery norm at most one.

We next recall the UAP and define its rank function.

\begin{definition}\label{def:UAP-rank-function}
    We say that a Banach space \(X\) has the UAP if there are \(\Lambda\geq1\) and \(f:\N\to\N\) such that every finite-dimensional subspace \(E\subseteq X\) admits \(A\in\cF(X)\) satisfying
    \[ A|_E=I_E,\quad \norm{A}\leq\Lambda,\quad \operatorname{rank}A\leq f(\dim E). \]
    For fixed \(\Lambda>0\) and \(n\in\N\), define
    \[
        k_X(\Lambda,n) = \sup_{\substack{E\subseteq X\\
                \dim E\leq n}} \inf\left\{ \operatorname{rank}A: A\in\cF(X),\ \norm{A}\leq\Lambda,\ A|_E=I_E \right\}.
    \]
\end{definition}

Since admissible ranks are integers, every finite inner infimum is actually a minimum.
We may replace a UAP rank function by its monotone envelope.
Hence we may assume that \(f\) is nondecreasing and \(f(0)=0\).

Finite-dimensional perturbations are used in approximation-property arguments; see Casazza \cite{Casazza2001Approximation} and Bao, Liu, and Shen \cite[Lemma~2.3]{BaoLiuShen2027}.
We use the following elementary correction.

\begin{lemma}\label{lem:finite-dimensional-correction}
    Let \(T\in\cB(X,Y)\), let \(E\subseteq X\) be \(n\)-dimensional with \(n\geq1\), let \(m\in\N\), let \(C,\varepsilon\geq0\), and let \(S\in\cF(X,Y)\) satisfy
    \[ \norm{(T-S)|_E}\leq\varepsilon,\qquad \operatorname{rank}S\leq m,\qquad \norm{S}\leq C. \]
    Then there is \(A\in\cF(X,Y)\) such that
    \[ A|_E=T|_E,\qquad \operatorname{rank}A\leq m+n,\qquad \norm{A}\leq C+n\varepsilon. \]
\end{lemma}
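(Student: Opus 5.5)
We correct \(S\) on \(E\) by a rank-\(n\) term: set \(A=S+D\), where \(D\) agrees with \(T-S\) on \(E\) and \(\norm{D}\leq n\varepsilon\). Then \(A|_E=S|_E+(T-S)|_E=T|_E\) and \(\operatorname{rank}A\leq\operatorname{rank}S+\operatorname{rank}D\leq m+n\).

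\textbf{Construction of \(D\).} Choose an Auerbach basis \((e_j,e_j^*)_{j=1}^n\) of \(E\). Thus \(\norm{e_j}=1\), \(e_j^*\in E^*\) with \(\norm{e_j^*}=1\), and \(e_i^*(e_j)=\delta_{ij}\). By Hahn--Banach, extend each \(e_j^*\) to \(\phi_j\in X^*\) with \(\norm{\phi_j}=1\). Put
\[ D=\sum_{j=1}^n\phi_j\otimes (T-S)e_j . \]

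\textbf{Verification.} For \(x\in E\) we have \(x=\sum_j e_j^*(x)e_j\), so \(Dx=(T-S)\sum_j e_j^*(x)e_j=(T-S)x\). The range of \(D\) lies in \(\Span\{(T-S)e_j\}\), hence \(\operatorname{rank}D\leq n\). For the norm, \(\norm{D}\leq\sum_j\norm{\phi_j}\,\norm{(T-S)e_j}\leq n\varepsilon\), because each \(e_j\) is a unit vector in \(E\) and \(\norm{(T-S)|_E}\leq\varepsilon\). Consequently \(\norm{A}\leq C+n\varepsilon\).

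Since \(D\) is finite-rank and bounded, \(A\in\cF(X,Y)\). The only step that needs care is obtaining coefficient functionals of norm one, which gives the constant \(n\) rather than something larger. Auerbach's lemma supplies them and Hahn--Banach preserves their norms, so no genuine obstacle remains.
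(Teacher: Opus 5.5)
Your proposal is correct and matches the paper's proof: both take an Auerbach basis of \(E\), extend the coordinate functionals to norm-one functionals on \(X\) by Hahn--Banach, and add the rank-\(n\) correction \(\sum_{j=1}^n\phi_j\otimes(T-S)e_j\), whose norm is at most \(n\varepsilon\), to \(S\).
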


\begin{proof}
    Let \(R=(T-S)|_E:E\to Y\), and choose an Auerbach basis \((e_i,e_i^*)_{i=1}^n\) for \(E\).
    By the Hahn--Banach theorem, we can extend each \(e_i^*\) to \(\widetilde e_i^*\in X^*\) with \(\norm{\widetilde e_i^*}=1\).
    Define \(\widetilde R = \sum_{i=1}^n\widetilde e_i^*\otimes Re_i.\)
    Then \(\widetilde R|_E=R\), \(\operatorname{rank}\widetilde R\leq n\), and \(\norm{\widetilde R}\leq\sum_{i=1}^n\norm{Re_i}\leq n\varepsilon\).
    Therefore, \(A=S+\widetilde R\) has the required properties.
\end{proof}

By the UAP, we obtain the following comparison between the stable recovery error and Kolmogorov widths.

\begin{proposition}\label{prop:UAP-coder-realization}
    Let \(X\) be a Banach space, let \(\Lambda\geq1\), let \(n\in\N\), and suppose that \(k_X(\Lambda,n)<\infty\).
    Then every \(K\Subset X\) satisfies
    \[ d_{k_X(\Lambda,n)}(K) \leq \operatorname{slc}_{k_X(\Lambda,n)}^\Lambda(K) \leq (1+\Lambda)d_n(K). \]
\end{proposition}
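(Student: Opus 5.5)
The left inequality is immediate from Proposition~\ref{prop:slc-width-lower-bound} applied with \(m=k_X(\Lambda,n)\), so the work is in the right inequality. Put \(k=k_X(\Lambda,n)\). I will show that for every \(\eta>d_n(K)\) there is \(A\in\cF(X)\) with \(\operatorname{rank}A\leq k\), \(\norm{A}\leq\Lambda\), and \(\sup_{x\in K}\norm{x-Ax}\leq(1+\Lambda)\eta\). Letting \(\eta\downarrow d_n(K)\) then gives the claim. If \(d_n(K)\) is attained we may even take \(\eta=d_n(K)\), but the strict version avoids any attainment issue.

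Fix \(\eta>d_n(K)\). By the definition of the Kolmogorov width, choose a subspace \(E\subseteq X\) with \(\dim E\leq n\) and \(\sup_{x\in K}\dist(x,E)<\eta\). Since \(\dim E\leq n\), the definition of \(k_X(\Lambda,n)\) as a supremum over such \(E\) shows that the inner infimum for this \(E\) is at most \(k<\infty\). Because admissible ranks are integers, the infimum is attained, as remarked after Definition~\ref{def:UAP-rank-function}. So there is \(A\in\cF(X)\) with \(A|_E=I_E\), \(\norm{A}\leq\Lambda\), and \(\operatorname{rank}A\leq k\).

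Now take \(x\in K\) and pick \(y\in E\) with \(\norm{x-y}<\eta\). Since \(Ay=y\),
\[ \norm{x-Ax}\leq\norm{x-y}+\norm{Ay-Ax}\leq(1+\norm{A})\norm{x-y}<(1+\Lambda)\eta. \]
Taking the supremum over \(K\) and then the infimum over admissible coders gives \(\operatorname{slc}_k^\Lambda(K)\leq(1+\Lambda)\eta\).

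There is no real obstacle here. The only points needing care are the degenerate case \(n=0\), where \(E=\{0\}\) and \(A=0\) works trivially, and making sure that the monotonicity built into the supremum over all \(\dim E\leq n\) handles subspaces of dimension less than \(n\). Both are covered by the definition as stated.
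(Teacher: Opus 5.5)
Your proposal is correct and follows essentially the same route as the paper: the lower bound comes from Proposition~\ref{prop:slc-width-lower-bound}, and the upper bound comes from choosing a near-optimal width subspace \(E\), an exact UAP extension \(A\) on \(E\), and the estimate \(\norm{x-Ax}\leq(1+\Lambda)\norm{x-y}\) with \(y\in E\), followed by a limiting argument. Your parametrization by \(\eta\downarrow d_n(K)\) is the same as the paper's \(d_n(K)+\eta\) with \(\eta\downarrow0\).
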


\begin{proof}
    By Proposition \ref{prop:slc-width-lower-bound}, we obtain the first inequality.
    To prove the second inequality, let \(\eta>0\), and choose \(E\subseteq X\) with \(\dim E\leq n\) such that \(\sup_{x\in K}\dist(x,E)<d_n(K)+\eta.\)
    Choose \(A\in\cF(X)\) with
    \begin{equation}\label{eq:uap-extension-conditions}
        A|_E=I_E,\qquad \norm{A}\leq\Lambda,\qquad \operatorname{rank}A\leq k_X(\Lambda,n).
    \end{equation}
    For every \(x\in K\), choose \(y\in E\) with \(\norm{x-y}<d_n(K)+\eta\).
    By \eqref{eq:uap-extension-conditions}, we have
    \[ \norm{x-Ax} \leq \norm{x-y}+\norm{A(y-x)} \leq (1+\Lambda)(d_n(K)+\eta). \]
    By taking the supremum over \(K\) and then letting \(\eta\downarrow0\), we obtain the upper estimate.
\end{proof}

The estimate in Proposition~\ref{prop:UAP-coder-realization} records both the error factor \(1+\Lambda\) and the recovery rank bound \(k_X(\Lambda,n)\) for a model of dimension \(n\).
For the converse, we require the recovery estimate to hold for every compact set.
Since the unit ball of an \(n\)-dimensional subspace has zero \(n\)-th width, we can approximate the identity on this ball arbitrarily well under the given rank and norm bounds.
An invertible change on the ambient space gives exact agreement on the subspace and preserves the rank.

\begin{theorem}\label{prop:stable-coder-characterization-UAP}
    For a Banach space \(X\), the following are equivalent.
    \begin{enumerate}
        \item \(X\) has the UAP.
        \item There are \(\Lambda\geq1\), \(C\geq0\), and \(f:\N\to\N\) such that for every \(K\Subset X\) and every \(n\in\N\), we have \(\operatorname{slc}_{f(n)}^\Lambda(K) \leq C d_n(K).\)
    \end{enumerate}
    If the second condition holds, then, for every \(\eta>0\) and \(n\in\N\), we have \(k_X(\Lambda+\eta,n)\leq f(n)\).
\end{theorem}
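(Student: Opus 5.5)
The implication \((1)\Rightarrow(2)\) follows directly from Proposition~\ref{prop:UAP-coder-realization}. If \(X\) has the UAP with constant \(\Lambda\) and rank function \(f\), then \(k_X(\Lambda,n)\leq f(n)<\infty\) for every \(n\). Since \(\operatorname{slc}_m^\Lambda(K)\) is nonincreasing in \(m\), we get
\[\operatorname{slc}_{f(n)}^\Lambda(K)\leq\operatorname{slc}_{k_X(\Lambda,n)}^\Lambda(K)\leq(1+\Lambda)d_n(K).\]
So (2) holds with the same \(\Lambda\), with \(C=1+\Lambda\), and with the same \(f\).

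For \((2)\Rightarrow(1)\) we prove the stated rank bound \(k_X(\Lambda+\eta,n)\leq f(n)\) for every \(\eta>0\); this already gives the UAP with constant \(\Lambda+1\) and rank function \(f\). Fix a subspace \(E\) with \(\dim E=n\leq n_0\), and apply (2) with the index \(n\) to the compact set \(K=B_E\). (If \(\dim E<n_0\), use the index \(\dim E\) and then replace \(f\) by its monotone envelope.) Since \(d_n(B_E)=0\), (2) gives \(\operatorname{slc}_{f(n)}^\Lambda(B_E)=0\). Hence for every \(\varepsilon>0\) there is \(S\in\cF(X)\) with
\[\operatorname{rank}S\leq f(n),\qquad \norm{S}\leq\Lambda+\varepsilon',\qquad \norm{(I-S)|_E}\leq\varepsilon,\]
where \(\varepsilon'\) comes from the infimum and can be taken arbitrarily small. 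Note that the constant \(C\) never enters, because the width is zero.

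The main obstacle is to make \(S\) exact on \(E\) without increasing the rank. Lemma~\ref{lem:finite-dimensional-correction} would add \(n\) to the rank, which would only give the bound \(f(n)+n\). Instead we use the invertible change of coordinates indicated before the theorem. Take an Auerbach basis \((e_i,e_i^*)\) of \(E\), extend each \(e_i^*\) by Hahn--Banach to a norm-one functional \(\widetilde e_i^*\) on \(X\), and set
\[V=I+\sum_{i=1}^n\widetilde e_i^*\otimes(Se_i-e_i).\]
Then \(\norm{V-I}\leq n\varepsilon\) and \(V|_E=S|_E\). For \(\varepsilon<1/(2n)\), a Neumann series shows that \(V\) is invertible with
\[\norm{V^{-1}}\leq\frac{1}{1-n\varepsilon}.\]

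Define \(A=V^{-1}S\). It has \(\operatorname{rank}A=\operatorname{rank}S\leq f(n)\), and \(\norm{A}\leq(\Lambda+\varepsilon')/(1-n\varepsilon)\). For \(x\in E\) we have \(Sx=Vx\), so \(Ax=V^{-1}Vx=x\), that is, \(A|_E=I_E\). Given \(\eta>0\), choose \(\varepsilon\) and \(\varepsilon'\) small enough that \((\Lambda+\varepsilon')/(1-n\varepsilon)\leq\Lambda+\eta\). This yields \(k_X(\Lambda+\eta,n)\leq f(n)\) for every \(n\), and hence the UAP.
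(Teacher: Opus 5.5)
Your proof is correct and is essentially the paper's argument: (1)$\Rightarrow$(2) goes through Proposition~\ref{prop:UAP-coder-realization}, and for (2)$\Rightarrow$(1) your operator $V=I+\sum_i\widetilde e_i^*\otimes(Se_i-e_i)$ is exactly the paper's $V=I_X+(S-I_X)Q$ with the Auerbach--Hahn--Banach projection $Q=\sum_i\widetilde e_i^*\otimes e_i$ of norm at most $n$, followed by $A=V^{-1}S$. One small tidy-up: to get $k_X(\Lambda+\eta,n)\leq f(n)$ for an arbitrary, not necessarily monotone, $f$, apply (2) at the index $n$ itself to $B_E$ for every $E$ with $\dim E\leq n$ (the width $d_n(B_E)$ is still zero), rather than at index $\dim E$ followed by a monotone envelope, which only gives $\max_{k\leq n}f(k)$; also, admissibility in $\operatorname{slc}^\Lambda$ already forces $\norm{S}\leq\Lambda$, so the extra $\varepsilon'$ is unnecessary.
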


\begin{proof}
    Suppose first that \(X\) has the UAP.
    Choose \(\Lambda\geq1\) such that \(k_X(\Lambda,n)<\infty\) for every \(n\), and put \(f(n)=k_X(\Lambda,n)\).
    By Proposition~\ref{prop:UAP-coder-realization}, we obtain the second condition with \(C=1+\Lambda\).

    Conversely, assume the second condition, and fix \(\eta>0\), \(n\geq1\), and \(E\subseteq X\) with \(1\leq\dim E\leq n\).
    By choosing an Auerbach basis and applying the Hahn--Banach theorem, we obtain a bounded projection \(Q:X\to E\) with \(\norm Q\leq n\).
    Since \(B_E\) is compact and \(d_n(B_E)=0\), we have \(\operatorname{slc}_{f(n)}^\Lambda(B_E)=0\).

    For every admissible \(S\), the recovery error on \(B_E\) equals \(\norm{(I_X-S)|_E}\).
    Thus this operator norm can be made arbitrarily small while the rank and norm bounds remain \(f(n)\) and \(\Lambda\).
    Choose \(0<\delta<\eta/[n(\Lambda+\eta)]\) and \(S\in\cF(X)\) with \(\operatorname{rank}S\leq f(n)\), \(\norm S\leq\Lambda\), and \(\norm{(S-I_X)|_E}<\delta\).
    Put \(V=I_X+(S-I_X)Q\).
    Since \(Q(X)\subseteq E\), we have \(\norm{V-I_X}\leq\norm{(S-I_X)|_E}\norm Q<n\delta\).
    By \(\norm{V-I_X}<n\delta<1\), we have that \(V\) is invertible and \(\norm{V^{-1}}\leq(1-n\delta)^{-1}\).
    The choice of \(\delta\) then gives \(1-n\delta>\Lambda/(\Lambda+\eta)\).
    
    Define \(A=V^{-1}S\).
    For \(e\in E\), the identity \(Qe=e\) gives \(Ve=e+(S-I_X)e\).
    Also, \(\operatorname{ran}A=V^{-1}(\operatorname{ran}S)\), and \(V^{-1}\) preserves the dimension of this finite-dimensional range.
    Since \(Ve=Se\) for every \(e\in E\), we have
    \[ A|_E=I_E,\qquad \operatorname{rank}A=\operatorname{rank}S\leq f(n),\qquad \norm A\leq\frac{\Lambda}{1-n\delta}<\Lambda+\eta. \]
    Since these bounds hold for every \(E\) of dimension at most \(n\), we obtain \(k_X(\Lambda+\eta,n)\leq f(n)\).
    Therefore, \(X\) has the UAP.
\end{proof}

The converse shows that compact-set geometry alone cannot supply this uniform recovery estimate.

We next define the corresponding property for nonlinear recovery maps.
The gauge is imposed on the whole space, and the rank is measured by the dimension of the linear span of the whole image.

\begin{definition}\label{def:omega-f-UAP}\label{def:omega-UAP}\label{def:gauge-UAP}\label{def:common-gauge-recovery-error}
    Let \(K\Subset X\), let \(m\in\N\), and let \(\omega\) be a vanishing gauge.
    Define
    \[
        \mathcal E_m^\omega(K;X)=\inf\left\{\sup_{x\in K}\norm{x-Px}:\begin{array}{l}
            P:X\to X\text{ uniformly continuous},\\
            P(X)\text{ relatively compact},\ \dim\Span P(X)\leq m,\ \omega_P\leq\omega
        \end{array}\right\}.
    \]
    We write \(\mathcal E_m^\omega(K)\) when the ambient space is clear.

    Let \(f:\N\to\N\) be nondecreasing with \(f(0)=0\).
    We say that \(X\) has the \((\omega,f)\)-uniform approximation property (\((\omega,f)\)-UAP) if \(\mathcal E_{f(\dim E)}^\omega(RB_E;X)=0\) for every finite-dimensional \(E\subseteq X\) and every \(R>0\).
    We say that \(X\) has the \(\omega\)-UAP if it has the \((\omega,f)\)-UAP for some such \(f\).
    It has the gauge uniform approximation property (Gauge-UAP) if it has the \(\omega\)-UAP for some vanishing gauge \(\omega\).
\end{definition}

Here \(\omega\) and \(f\) are independent of \(E,R\), and the approximation accuracy, while the recovery maps may vary.
By considering \(\Span P(X)\), we have \(d_m(K)\leq\mathcal E_m^\omega(K)\).
For a vanishing gauge \(\omega\), by subadditivity and monotonicity, we have
\begin{equation}\label{eq:gauge-asymptotic-slope}
    L=\lim_{t\to\infty}\frac{\omega(t)}{t}=\inf_{t>0}\frac{\omega(t)}{t}<\infty.
\end{equation}
In particular, \(Lt\leq\omega(t)\) for every \(t\geq0\).

The following lemma is a standard consequence of Lipschitz
linearization and local reflexivity.

\begin{lemma}\label{lem:finite-range-linearization}
    Let \(T\in\cB(X,Y)\), let \(E\subseteq X\) be finite-dimensional, identify \(Y\) with its canonical image in \(Y^{**}\), and let \(F\subseteq Y^{**}\) be finite-dimensional with \(T(E)\subseteq F\).
    Suppose that \(Q:X\to F\) is \(\lambda\)-Lipschitz with \(Q|_E=T|_E\), where \(\lambda>0\).
    Then, for every \(\eta>0\), there is \(A\in\cF(X,Y)\) such that
    \[ A|_E=T|_E,\qquad \operatorname{rank}A\leq\dim F,\qquad \norm A\leq\lambda+\eta. \]
\end{lemma}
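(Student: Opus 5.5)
Since \(F\) is a finite-dimensional subspace of \(Y^{**}\), I will first linearize \(Q\) with \(F\)-valued barycenters, and then use local reflexivity to move the resulting operator into \(Y\) without disturbing \(T|_E\).

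\textbf{Linearization.} Put \(Q_0=Q-Q(0)\). Since \(Q(0)=T0=0\), we have \(Q_0=Q\), and \(Q\) is \(\lambda\)-Lipschitz into the finite-dimensional space \(F\). Since \(F\) is finite-dimensional, it has the Radon--Nikod\'ym property and is complemented in every superspace. So the Lipschitz-to-linear averaging argument of Godefroy--Kalton / Benyamini--Lindenstrauss produces a bounded linear operator \(L:X\to F\) with \(\norm{L}\leq\lambda\) that agrees with \(Q\) on \(E\). I would run that argument concretely:
\begin{enumerate}
\item Choose a closed complement \(W\) of \(E\) in \(X\), and treat the finite-dimensional direction \(E\) separately from \(W\).
\item Form an invariant mean over translations by \(W\), \(L(x)=\operatorname{m}_{w\in W}\bigl(Q(x+w)-Q(w)\bigr)\), with values in \(F\) since \(F\) is finite-dimensional, hence weak\(^*\)-closed and reflexive.
\item Check that \(L\) is additive and \(\lambda\)-Lipschitz, hence linear.
\end{enumerate}
To ensure \(L|_E=T|_E\), I would use that \(Q\) is linear on \(E\): for \(e\in E\) the increments \(Q(e+w)-Q(w)\) need not equal \(Te\) in general, so I would instead apply Lemma~\ref{lem:finite-dimensional-correction}. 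An averaged \(L\) with \(\norm{L}\leq\lambda\) and \(\norm{(L-T)|_E}\) small costs an extra \(n\varepsilon\) in norm but no extra rank, because the correction \(\widetilde R\) takes values in \(\Span(L-T)(E)\subseteq F\). The point to verify is that differentiating \(Q\) at a point of Gateaux differentiability works: by Rademacher in the finite-dimensional range with \(F\) RNP, restricted to separable pieces, the derivative \(DQ(x_0)\) is a linear map into \(F\) of norm at most \(\lambda\). On \(E\) it equals \(T\) because \(Q|_E=T|_E\) is linear, provided \(x_0\) is taken in \(E\) along directions of \(E\). Combining these with Lemma~\ref{lem:finite-dimensional-correction}, I obtain \(L\in\cB(X,F)\) with \(L|_E=T|_E\), \(\operatorname{rank}L\leq\dim F\), and \(\norm{L}\leq\lambda+\eta/2\).

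\textbf{Local reflexivity.} Apply the principle of local reflexivity to \(F\subseteq Y^{**}\) and the finite-dimensional space \(G=T(E)\subseteq F\cap Y\). This gives an injective operator \(J:F\to Y\) with \(\norm{J}\leq1+\eta'\) and \(J|_{F\cap Y}=\mathrm{id}\), in particular \(J|_{T(E)}=\mathrm{id}\). Set \(A=JL\). Then \(A|_E=JT|_E=T|_E\), \(\operatorname{rank}A\leq\dim F\), and \(\norm{A}\leq(1+\eta')(\lambda+\eta/2)\leq\lambda+\eta\) for small \(\eta'\).

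\textbf{Main obstacle.} The hard step is the linearization producing an \(F\)-valued linear map exactly extending \(T|_E\) with norm at most \(\lambda+\) small. The differentiability argument needs care, because \(X\) may be nonseparable and Gateaux points must be chosen on \(E\) along all directions. If this proves delicate, I would fall back on the Godefroy--Kalton barycentric linearization through the Lipschitz-free space \(\mathcal F(X)\): the map \(\beta:\mathcal F(X)\to X\) has a linear right inverse on finite-dimensional pieces, and composing with the linear extension \(\widehat Q:\mathcal F(X)\to F\) gives the operator directly. Exactness on \(E\) would then be restored by Lemma~\ref{lem:finite-dimensional-correction} inside \(F\).
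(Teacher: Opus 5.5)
Your two-stage architecture is the paper's. First you build a linear \(L:X\to F\) with \(\norm L\le\lambda\) and \(L|_E=T|_E\), then set \(A=JL\), where \(J:F\to Y\) is a local-reflexivity map fixing \(F\cap Y\supseteq T(E)\). The paper obtains \(L\) simply by citing the Benyamini--Lindenstrauss linearization theorem, and your local-reflexivity step matches it. Had you stopped at the citation in your second sentence, there would be nothing to add. However, the concrete argument you give for the linearization does not work.

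An invariant mean over translations by a complement \(W\) only yields \(L(x+w)=L(x)+L(w)\) for \(w\in W\). It gives neither additivity in the \(E\)-directions nor closeness to \(T\) on \(E\). For example, take \(X=\R^2\) (Euclidean), \(E=\R\times\{0\}\), \(W=\{0\}\times\R\), \(Y=F=\R\), \(T=0\), and \(Q(x,y)=\min\{|x|,|y|\}\), which is \(1\)-Lipschitz and vanishes on \(E\). An invariant mean on \(\R\) kills functions with bounded support, so your formula gives \(L(x,0)=|x|\). Lemma~\ref{lem:finite-dimensional-correction} then has nothing to act on, since it needs a linear \(L\) with \(\norm{(L-T)|_E}\) small.

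The differentiation route fails on the same example. \(E\) is a null set, and this \(Q\) is not G\^ateaux differentiable at any point of \(E\). At G\^ateaux points with \(0<|x|<|y|\), the derivative restricted to \(E\) is \(\pm1\ne T\). The Lipschitz-free fallback has the same defect: composing \(\widehat Q\) with a lifting does not see the linearity of \(Q\) on \(E\). Moreover, a linear lifting defined on all of \(X\) need not exist when \(X\) is nonseparable.

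The missing idea is to use the linearity of \(T\) on \(E\) before averaging over the whole group \(X\). With an invariant mean \(\mathrm m\) on the discrete group \(E\), put \(Q_1(x)=\mathrm m_{e\in E}\bigl(Q(x+e)-Te\bigr)\). This is well defined because \(\norm{Q(x+e)-Te}=\norm{Q(x+e)-Q(e)}\le\lambda\norm x\). It is \(F\)-valued and \(\lambda\)-Lipschitz, and invariance gives \(Q_1(x+e)=Q_1(x)+Te\) for all \(x\in X\), \(e\in E\).

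Then average over all of \(X\), not over a complement: \(L(x)=\mathrm m_{u\in X}\bigl(Q_1(x+u)-Q_1(u)\bigr)\). Translation invariance gives additivity, and the Lipschitz bound gives \(\norm{Lx}\le\lambda\norm x\), hence \(\R\)-linearity. Since \(Q_1(e+u)-Q_1(u)=Te\), we get \(L|_E=T|_E\) exactly, with no correction step and no separability assumption. For separable \(X\), you could instead differentiate \(Q_1\) at any G\^ateaux point; by the equivariance, its derivative restricts to \(T\) on \(E\). This is precisely the content of the cited theorem, after which your \(A=JL\) finishes the proof.
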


\begin{proof}
    By the Lipschitz linearization theorem \cite[Theorem~7.2]{BenyaminiLindenstrauss2000}, there is a bounded linear map \(B:X\to F\) with \(B|_E=T|_E\) and \(\norm B\leq\lambda\).
    By local reflexivity \cite{Ryan2002}, we can choose a linear map \(J:F\to Y\) with \(J|_{F\cap Y}=I_{F\cap Y}\) and \(\norm J\leq1+\eta/\lambda\).
    Then \(A=JB\) is the desired map.
\end{proof}

\Needspace{12\baselineskip}
\begin{theorem}\label{thm:common-gauge-recovery-UAP}
    A Banach space \(X\) has the Gauge-UAP if and only if it has the UAP.
    More precisely, let \(L\) be given by \eqref{eq:gauge-asymptotic-slope}.
    Then \(X\) has the \((\omega,f)\)-UAP if and only if
    \[ k_X(L+\eta,n)\leq f(n)\qquad(\eta>0,\ n\in\N). \]
    In this case, every \(K\Subset X\) and every \(n\in\N\) satisfy
    \[ d_{f(n)}(K)\leq\mathcal E_{f(n)}^\omega(K)\leq\operatorname{slc}_{f(n)}^L(K)\leq(1+L)d_n(K). \]
\end{theorem}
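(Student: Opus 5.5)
The plan is to prove the two implications of the rank characterization separately, and then to read off the chain of width inequalities. In both directions the constant \(L\) from \eqref{eq:gauge-asymptotic-slope} links the nonlinear gauge to linear norm bounds. Two facts about \(\omega\) will be used. First, \(Lt\le\omega(t)\) for all \(t\). Second, subadditivity gives, for every \(s>0\),
\[ \omega(t)\le\frac{\omega(s)}{s}\,t+\omega(s)\qquad(t\ge0). \]

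\emph{From the \((\omega,f)\)-UAP to the rank bound.} Fix \(E\) with \(\dim E=n\geq1\) and \(\eta>0\). Choose \(s\) with \(\omega(s)/s<L+\eta/2\).
\begin{enumerate}
\item For \(R_k\to\infty\) and \(\varepsilon_k\to0\), the hypothesis gives \(P_k\) with \(\dim\Span P_k(X)\le f(n)\), \(\omega_{P_k}\le\omega\), and \(\norm{x-P_kx}<\varepsilon_k\) on \(R_kB_E\).
\item Rescale by \(Q_k(x)=R_k^{-1}P_k(R_kx)\). Then \(Q_k\) is \(\varepsilon_k/R_k\)-close to the identity on \(B_E\), hence on all of \(E\) up to a term that vanishes with \(k\). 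Its modulus satisfies \(\omega_{Q_k}(t)\le(L+\eta/2)t+\omega(s)/R_k\), so it is almost Lipschitz with vanishing additive constant.
\item Pass to a limit along a free ultrafilter \(\mathcal U\) to get \(Q:X\to X^{\mathcal U}\). This map is \((L+\eta/2)\)-Lipschitz, equals the canonical embedding on \(E\), and has range in a subspace \(F\) with \(\dim F\le f(n)\). The limits \(\lim_{\mathcal U}Q_k(x)\) exist after subtracting \(Q_k(0)\), since the moduli are bounded.
\item Apply the argument of Lemma~\ref{lem:finite-range-linearization}. Lipschitz linearization gives a bounded linear \(B:X\to F\) with \(\norm B\le L+\eta/2\) and \(B|_E=I_E\). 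Local reflexivity for ultrapowers, i.e.\ finite-dimensional subspaces of \(X^{\mathcal U}\) containing \(E\) embed almost isometrically into \(X\) fixing \(E\), then pulls \(B\) back into \(\cF(X)\).
\end{enumerate}
The result is an operator with rank at most \(f(n)\), norm below \(L+\eta\), and equal to \(I_E\) on \(E\). If exactness on \(E\) is lost in the pull-back, I will get it back with the invertible perturbation \(V=I_X+(S-I_X)Q\) from the proof of Theorem~\ref{prop:stable-coder-characterization-UAP}, which preserves the rank. This gives \(k_X(L+\eta,n)\le f(n)\).

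I expect step 3 to be the main obstacle: producing an honest Lipschitz map with finite-dimensional range from maps that are only Lipschitz up to an additive constant. The ultrapower limit is what I would try first. A fallback is Kalton's scaling argument as used in Theorem~\ref{thm:operator-omega-decomposition}, combined with Lipschitz linearization on the finite-dimensional target.

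\emph{From the rank bound to the \((\omega,f)\)-UAP.} Given \(E\), \(R\) and \(\varepsilon\), take \(\theta\in(0,1)\) small and \(A\in\cF(X)\) with \(A|_E=I_E\), \(\operatorname{rank}A\le f(\dim E)\), and \(\norm A\le L+\eta\). Here \(\eta\) is chosen so that \((1-\theta)(L+\eta)<L\). This requires \(L>0\), which holds because the hypothesis with \(E\neq\{0\}\) forces \(L\ge1\). Set
\[ P(x)=\psi(\norm{x})(1-\theta)Ax, \]
where \(\psi\) is a \(1\)-Lipschitz cutoff equal to \(1\) on \([0,M]\) and \(0\) on \([2M,\infty)\), with \(M\ge R\). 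Then \(P\) has relatively compact image in \(\operatorname{ran}A\), and its error on \(RB_E\) is \(\theta R<\varepsilon\). For its modulus, the slack \((L-(1-\theta)(L+\eta))t\) absorbs the extra cutoff contribution, which is of order \((L+\eta)t\cdot 2M/M\) only through the Lipschitz constant near radius \(M\). If this estimate needs a further margin, I will use \(\omega(t)\ge Lt\) together with the boundedness of \(P\) for large \(t\). The upshot is \(\omega_P\le\omega\), hence \(\mathcal E^\omega_{f(\dim E)}(RB_E)=0\).

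\emph{The chain of inequalities.} The first inequality \(d_{f(n)}\le\mathcal E^\omega_{f(n)}\) was already noted after Definition~\ref{def:gauge-UAP}. For the last inequality, run Proposition~\ref{prop:UAP-coder-realization} with norm \(L+\eta\) and rank \(f(n)\), and rescale the operator by \(L/(L+\eta)\). Since \(K\) is bounded, this changes the error on \(K\) by at most \(\eta(L+\eta)^{-1}\sup_K\norm{Ax}\to0\), which gives \(\operatorname{slc}^L_{f(n)}(K)\le(1+L)d_n(K)\). For the middle inequality, take an admissible linear \(A\) with \(\norm A\le L\) and apply the same \((1-\theta)\)-damping and radial cutoff at a radius beyond \(K\). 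The resulting map has gauge at most \(\omega\) and changes the error on \(K\) by \(O(\theta)\); letting \(\theta\to0\) gives \(\mathcal E^\omega_{f(n)}(K)\le\operatorname{slc}^L_{f(n)}(K)\).
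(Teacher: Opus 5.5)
\textbf{Genuine gap: the radial cutoff.} The failing step is the cutoff in your converse direction, which you reuse to prove \(\mathcal E^\omega_{f(n)}(K)\leq\operatorname{slc}^L_{f(n)}(K)\).

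\(\omega(t)=Lt\) is an allowed gauge, so in general an admissible \(P\) must be globally \(L\)-Lipschitz. Your damping by \((1-\theta)\) leaves slack of order \(\theta L\) in the Lipschitz constant. A cutoff that drops from \(1\) to \(0\) on \([M,2M]\) costs a factor of order one, not of order \(\theta\).

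Concretely, take \(X\) Hilbert, \(\omega(t)=t\) (so \(L=1\)), \(f(n)=n\), and \(\psi(t)=2-t/M\) on \([M,2M]\). For any admissible \(A\) and any unit vector \(e\in E\) you have \(Ae=e\), so \(P(te)=(1-\theta)t\psi(t)e\) and
\[ \norm{P((2M-h)e)-P(2Me)}=(1-\theta)\Bigl(2h-\frac{h^2}{M}\Bigr)>h=\omega(h) \]
for small \(h>0\) once \(\theta<1/2\). Your own estimate of the extra term, about \((L+\eta)t\), cannot be absorbed by a slack of at most \(\theta Lt\). The fallback using \(\omega(t)\geq Lt\) and the boundedness of \(P\) only controls large \(t\), not small increments near the cutoff sphere. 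Also, the relevant quantity is \(r\abs{\psi'(r)}\), not the Lipschitz constant of \(\psi\).

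\textbf{The fix.} You need a radial cutoff \(h(v)=\rho(\norm v)v\) with \(\Lip{h}\leq1+a\), \(a\to0\), valid in an arbitrary normed space. The available estimate there is
\(\norm{\rho(r)v-\rho(s)v'}\leq\rho(r)\norm{v-v'}+s[\rho(s)-\rho(r)]\) for \(\norm{v'}=s\leq r=\norm v\). Through it, the requirement forces a transition region long compared with \(M\).

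The paper takes \(\rho_a(r)=\max\{1-a\log(r/M),0\}\) for \(r>M\). Then \(s[\rho_a(s)-\rho_a(r)]\leq as\log(r/s)\leq a(r-s)\), so \(\Lip{h_a}\leq1+a\). It sets \(P_a=(1+a)^{-1}h_aB\) with \(\norm B\leq L\) and \(M>LR_K\). This gives \(\omega_{P_a}(t)\leq Lt\leq\omega(t)\), and \(P_a=B/(1+a)\) on \(K\). A linear profile on \([M,(1+1/a)M]\) would also work. With such a cutoff and damping matched so that \((1-\theta)(1+a)(L+\eta)\leq L\), both your converse and your middle inequality go through.

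\textbf{Forward direction.} Your outline is correct. It differs from the paper in using an ultrapower limit instead of a weak-star cluster point in \(X^{**}\), which the paper follows with a determinant argument and the principle of local reflexivity. The ultrapower route makes the dimension bound and the pull-back into \(X\) elementary. The pull-back can be chosen to fix \(E\), so the invertible correction is unnecessary.

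There is one slip in your step 2. With \(P_k\) accurate only on \(R_kB_E\), the map \(Q_k(x)=R_k^{-1}P_k(R_kx)\) is close to the identity only on \(B_E\). For \(\norm e>1\), the gauge bound gives \(\norm{Q_ke-e}\) of order \((L+1)(\norm e-1)\), which does not vanish. The Lipschitz linearization needs \(Q=I\) on all of \(E\). Require accuracy on \(R_k^2B_E\) instead, as the paper does with \(N^2B_E\).

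\textbf{First sentence.} You also never state the deduction of the opening sentence of the theorem, though it is immediate. The UAP with constant \(\Lambda\) gives the \((\omega,f)\)-UAP for \(\omega(t)=\Lambda t\) and \(f=k_X(\Lambda,\cdot)\). Conversely, the Gauge-UAP gives \(k_X(L+1,n)\leq f(n)<\infty\).
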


\begin{proof}
    Suppose that \(X\) has the \((\omega,f)\)-UAP.
    Fix \(n\geq1\) and \(E\subseteq X\) with \(\dim E\leq n\), and put \(m=f(n)\).
    For each \(N\in\N_+\), choose an admissible \(P_N\) with \(\dim\Span P_N(X)\leq f(\dim E)\leq m\) and \(\sup_{e\in N^2B_E}\norm{P_Ne-e}<1\).
    Define
    \[ Q_N(x)=\frac{P_N(Nx)-P_N(0)}{N}. \]
    Since \(P_N(0)\in\Span P_N(X)\), we have \(\dim\Span Q_N(X)\leq m\).
    By subadditivity, we have\( \norm{Q_Nx}\leq\omega(\norm x)\), and \(\norm{Q_Nx-Q_Ny}\leq N^{-1}\omega(N\norm{x-y}).\)
    By the same compactness argument as in the proof of the Banach--Alaoglu theorem,
    we obtain a subnet converging pointwise weak-star to \(Q:X\to X^{**}\), which is \(L\)-Lipschitz by \eqref{eq:gauge-asymptotic-slope}.
    For every  \(e\in E\), we have \(Ne\in N^2B_E\) for large enough \(N\), so \(\norm{Q_Ne-e}<2/N\) and \(Q|_E=I_E\).
    For any \(x_1,\ldots,x_{m+1}\in X\) and \(x_1^*,\ldots,x_{m+1}^*\in X^*\), we have
    \( \det\bigl[\langle Q_Nx_j,x_i^*\rangle\bigr]_{i,j=1}^{m+1}=0. \)
    Thus we obtain \( \det\bigl[\langle Q x_j,x_i^*\rangle\bigr]_{i,j=1}^{m+1}=0. \)
    Since \(X^*\) separates points on every finite-dimensional subspace of \(X^{**}\), we know  \(\dim F= \dim \Span Q(X) \leq m\).
    For every \(\eta>0\), by Lemma~\ref{lem:finite-range-linearization} applied with \(T=I_X\) and \(Y=X\), we can choose \(A\in\cF(X)\) with \(A|_E=I_E\), \(\operatorname{rank}A\leq f(n)\), and \(\norm A\leq L+\eta\).
    Thus \(k_X(L+\eta,n)\leq f(n)\) and \(X\) has the UAP.

    Conversely, assume the stated rank bounds, and let \(K\Subset X\), \(n\in\N\), and \(R_K=\sup_{x\in K}\norm x\).
    For all \(\zeta>0\), choose \(E\subseteq X\), \(\dim E\leq n\), with \(\sup_{x\in K}\dist(x,E)<\delta:=d_n(K)+\zeta\).
    For \(\tau>0\), the assumed rank bound gives \(A_\tau\in\cF(X)\) with \(A_\tau|_E=I_E\), \(\operatorname{rank}A_\tau\leq f(n)\), and \(\norm{A_\tau}\leq L+\tau\).
    Put \(B_\tau=L A_\tau/(L+\tau)\).
    Then \(\norm{B_\tau}\leq L\) and \(\operatorname{rank}B_\tau\leq f(n)\).
    Since \(A_\tau|_E=I_E\), we have \(\norm{x-A_\tau x}\leq(1+L+\tau)\dist(x,E)\) for every \(x\in X\).
    Then we obtain
    \[
        \begin{aligned}
            \operatorname{slc}_{f(n)}^L(K)
            \leq\sup_{x\in K}\norm{x-B_\tau x}\leq\frac{L}{L+\tau}\sup_{x\in K}\norm{x-A_\tau x}+\frac{\tau}{L+\tau}R_K\leq\frac{L(1+L+\tau)}{L+\tau}\delta+\frac{\tau}{L+\tau}R_K.
        \end{aligned}
    \]
    By letting \(\tau\downarrow0\) and then  \(\zeta\downarrow0\), we have
    \( \operatorname{slc}_{f(n)}^L(K)\leq(1+L)d_n(K). \)

    To compare the nonlinear and linear recovery errors, take \(B\in\cF(X)\) with \(\norm B\leq L\) and \(\operatorname{rank}B\leq f(n)\), and choose \(M>LR_K\).
    For \(a>0\) and \(v\in\operatorname{ran}B\), define
    \[
        \rho_a(r)=
        \begin{cases}
            1,&0\leq r\leq M,\\
            \max\{1-a\log(r/M),0\},&r>M,
        \end{cases}
        \qquad h_a(v)=\rho_a(\norm v)v.
    \]
    For \(0<s\leq r\), the bound \(s[\rho_a(s)-\rho_a(r)]\leq as\log(r/s)\leq a(r-s)\) gives \(\Lip{h_a}\leq1+a\).
    Thus \(P_a=(1+a)^{-1}h_aB\) is uniformly continuous, \(\omega_{P_a}(t)\leq Lt\leq\omega(t)\), and \(\dim\Span P_a(X)\leq f(n)\).
    Since \(h_a(v)=0\) for \(\norm v\geq M e^{1/a}\), the image \(P_a(X)\) is bounded in \(\operatorname{ran}B\) and hence relatively compact.
    On \(K\), we have \(P_ax=Bx/(1+a)\), so
    \[ \mathcal E_{f(n)}^\omega(K)\leq\sup_{x\in K}\norm{x-P_ax}\leq\sup_{x\in K}\norm{x-Bx}+\frac{aLR_K}{1+a}. \]
    By letting \(a\downarrow0\) and taking the infimum over \(B\), we obtain \(\mathcal E_{f(n)}^\omega(K)\leq\operatorname{slc}_{f(n)}^L(K)\).

    For every \(P\) admissible in the definition of \(\mathcal E_{f(n)}^\omega(K)\), put \(F=\Span P(X)\).
    Since \(\dim F\leq f(n)\) and \(Px\in F\) for all \(x\in K\), we have\( d_{f(n)}(K)\leq\sup_{x\in K}\dist(x,F)\leq\sup_{x\in K}\norm{x-Px} \).
    Taking the infimum over all such \(P\) gives \(d_{f(n)}(K)\leq\mathcal E_{f(n)}^\omega(K)\).
    For every finite-dimensional \(E\subseteq X\) and \(R>0\), we have \(d_{\dim E}(RB_E)=0\), since \(RB_E\subseteq E\).
    The upper bound therefore gives \( 0\leq\mathcal E_{f(\dim E)}^\omega(RB_E)\leq(1+L)d_{\dim E}(RB_E)=0. \)
    Thus \(X\) has the \((\omega,f)\)-UAP.
    If \(X\) has the UAP, choose \(\Lambda\geq1\) with \(k_X(\Lambda,n)<\infty\) for every \(n\), and take \(f(n)=k_X(\Lambda,n)\) and \(\omega(t)=\Lambda t\).
    The rank condition then holds, so \(X\) has the Gauge-UAP.
\end{proof}

\subsection[Examples and rank growth]{Examples and rank growth}\label{subsec:uap-examples-rank-growth}

The results below show when \(k_X(\Lambda,n)\) is finite and give bounds for its growth, and hence for the rank of stable recovery maps.
If \(X\) fails the AP, then there are \(K\Subset X\) and \(\varepsilon_0>0\) such that
\( \inf_{A\in\cF(X)}\sup_{x\in K}\norm{x-Ax}\geq\varepsilon_0. \)
Thus we can approximate a compact set by finite-dimensional subspaces, but we cannot in general obtain such approximations from bounded finite-rank operators on the whole space.

We next recall an example involving the standard universal spaces.

\begin{example}\label{ex:universal-fd-geometry}
    For \(1\leq p<\infty\), Casazza's survey defines standard universal spaces \(\mathcal C_p\) and \(\mathcal U_p\) from \(\ell_p\)-sums of finite-dimensional spaces; see \cite{Casazza2001Approximation}.
    Their finite-dimensional blocks range over Banach--Mazur dense families.
    Thus they provide fixed spaces in which we can test arbitrary finite-dimensional geometry.
    However, this local universality does not imply the UAP.
    In fact, \(\mathcal C_1\) and \(\mathcal U_1\) fail the UAP.
\end{example}

The next results recall the classical Hilbert and weak-Hilbert rank characterizations \cite{LindenstraussTzafriri1971,JohnsonPisier1992}.
Their role here is to identify spaces in which stable realization requires no increase, or only a proportional increase, in the model dimension.

\begin{proposition}\label{prop:hilbert-rank}
    Let \(X\) be an infinite-dimensional Banach space.
    Then \(X\) is isomorphic to a Hilbert space if and only if there is \(\Lambda\geq1\) such that \(k_X(\Lambda,n)=n\), for each \(n \in \N\).
\end{proposition}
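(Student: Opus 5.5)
The forward direction is a direct construction. Suppose \(X\) is isomorphic to a Hilbert space \(H\) via \(U:X\to H\) with \(\norm U\norm{U^{-1}}\leq\Lambda\). For a finite-dimensional \(E\subseteq X\), let \(P\) be the orthogonal projection of \(H\) onto \(U(E)\). Then \(A=U^{-1}PU\) satisfies \(A|_E=I_E\), \(\operatorname{rank}A=\dim E\) and \(\norm A\leq\Lambda\). Hence \(k_X(\Lambda,n)\leq n\) for every \(n\).

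For the reverse inequality \(k_X(\Lambda,n)\geq n\), I use that \(X\) is infinite-dimensional, so \(X\) contains subspaces of every dimension \(n\). Any \(A\) with \(A|_E=I_E\) has \(\operatorname{rank}A\geq\dim E=n\), which gives equality.

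The converse carries the content. Assume \(k_X(\Lambda,n)=n\) for every \(n\). Then every \(n\)-dimensional subspace \(E\) admits a projection \(A\) onto \(E\) with \(\norm A\leq\Lambda\). Indeed, \(A|_E=I_E\) and \(\operatorname{rank}A\leq n\) force \(\operatorname{ran}A=E\), so \(A\) is a projection onto \(E\). Thus every finite-dimensional subspace of \(X\) is \(\Lambda\)-complemented. By the theorem of Lindenstrauss and Tzafriri \cite{LindenstraussTzafriri1971}, \(X\) is then isomorphic to a Hilbert space, which completes the proof.

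I plan to invoke the complemented subspace theorem as a black box. Reproducing it would require Dvoretzky's theorem and Kwapie\'n's characterization via type \(2\) and cotype \(2\), and that step is the main depth of the argument. The only point I need to check is that the cited version applies. It needs uniform complementation of finite-dimensional subspaces with a single constant, which is exactly what the hypothesis gives, since \(\Lambda\) does not depend on \(n\).

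Every other step is elementary rank bookkeeping. No earlier result of the paper is needed beyond the definition of \(k_X\) in Definition~\ref{def:UAP-rank-function}.
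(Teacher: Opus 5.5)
Your proposal is correct and takes the same route as the paper, which gives no separate argument and simply cites Lindenstrauss--Tzafriri. Your two bookkeeping steps supply exactly the reduction to the uniform-complementation form of their theorem. The first is the forward construction \(U^{-1}PU\). The second is the observation that \(A|_E=I_E\) together with \(\operatorname{rank}A\leq\dim E\) forces \(A\) to be a norm-\(\Lambda\) projection onto \(E\). That form, with one constant for all finite-dimensional subspaces, is the one their Dvoretzky--Kwapie\'n argument actually proves.
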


\begin{definition}\label{def:weak-Hilbert}
    A Banach space \(X\) is a weak Hilbert space if there are constants \(C\geq1\) and \(0<\delta\leq1\) such that every finite-dimensional subspace \(E\subseteq X\) contains a subspace \(F\subseteq E\) for which
    \[ \dim F\geq\delta\dim E,\quad \dist_{BM}(F,\ell_2^{\dim F})\leq C, \]
    and there is a projection \(P:X\to F\) with \(\norm{P}\leq C\).
    Here \(\dist_{BM}\) denotes the Banach--Mazur distance.
\end{definition}

\begin{remark}
    The same proportional Euclidean-subspace condition without the uniformly bounded projection is the weak cotype \(2\) condition of Milman and Pisier \cite{MilmanPisier1986WeakCotype}.
    The projection requirement is therefore essential in Definition \ref{def:weak-Hilbert}; see \cite{Pisier1988WeakHilbert} for equivalent formulations of the weak Hilbert property.
\end{remark}

Johnson and Pisier \cite{JohnsonPisier1992} proved that proportional rank growth characterizes weak Hilbert spaces.

\begin{theorem}\label{prop:weak-Hilbert-rank}
    A Banach space \(X\) is a weak Hilbert space if and only if there are constants \(\Lambda\geq1\) and \(C<\infty\) such that \(k_X(\Lambda,n)\leq Cn\), for each \(n \in \N\).
\end{theorem}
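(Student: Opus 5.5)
This is the Johnson--Pisier characterization, so the plan is to prove both directions separately, using the weak Hilbert structure (Definition~\ref{def:weak-Hilbert}) for sufficiency and a counting argument on projections for necessity.

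\emph{Weak Hilbert \(\Rightarrow\) proportional rank.} Fix \(E\) with \(\dim E=n\). Definition~\ref{def:weak-Hilbert} gives a subspace \(F_1\subseteq E\) with \(\dim F_1\geq\delta n\), \(\dist_{BM}(F_1,\ell_2^{\dim F_1})\leq C\), and a projection \(P_1\) onto \(F_1\) with \(\norm{P_1}\leq C\). We then iterate on a complement of \(F_1\) inside \(E\), for instance \(E_1=(I-P_1)(E)\), which has \(\dim E_1\leq(1-\delta)n\). A naive iteration multiplies the norms, so the rank stays linear but the norm bound blows up. The step I expect to be the real obstacle is avoiding this. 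Following Johnson--Pisier, the key input is that a weak Hilbert space is both weak type 2 and weak cotype 2. Via Maurey-type factorization this yields a projection from \(X\) onto some superspace \(G\supseteq E\) with \(\dim G\leq C'n\) and \(\norm{\cdot}\leq\Lambda\). The rough mechanism is to combine the proportional Euclidean blocks with the Hilbertian extension properties furnished by weak type 2, so that the errors sum geometrically. Such a projection is a finite-rank \(A\) with \(A|_E=I_E\), which gives \(k_X(\Lambda,n)\leq C'n\). I would cite this structural result from \cite{JohnsonPisier1992} rather than reprove it.

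\emph{Proportional rank \(\Rightarrow\) weak Hilbert.} Given \(E\) with \(\dim E=n\), take \(A\in\cF(X)\) with \(A|_E=I_E\), \(\norm A\leq\Lambda\) and \(\operatorname{rank}A\leq Cn\). Put \(G=\operatorname{ran}A\supseteq E\); then \(\dim G\leq Cn\), and \(A\) is a \(\Lambda\)-projection onto \(G\).

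Apply John's theorem in \(G\) together with the Dvoretzky--Rogers/Bourgain--Tzafriri proportional restricted-invertibility estimates. Projections onto \(E\subseteq G\) of dimension proportional to \(\dim G\) with bounded norm force \(G\) to have proportional Euclidean subspaces with good projections. This is exactly the weak cotype 2 plus weak type 2 criterion, via the equivalent formulations in \cite{Pisier1988WeakHilbert}. More concretely, I would verify the Pisier criterion that every \(n\)-dimensional subspace \(E\) satisfies \(\operatorname{d}(E,\ell_2^n)\leq c\sqrt{n}\) proportionally on subspaces, using projection constants \(\lambda(E)\leq\Lambda\sqrt{\dim G}\lesssim\sqrt n\) together with the bounded-rank extension.

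This direction is where I expect the most delicate estimate. The main tool is the volume-ratio/projection-constant comparison, which produces the subspace \(F\) with \(\dim F\geq\delta n\) and a uniformly bounded projection.
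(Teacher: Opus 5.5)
Your forward direction amounts to citing Johnson--Pisier \cite{JohnsonPisier1992}. That is also all the paper does: it states the theorem as their result and proves neither direction. Your converse, however, has a genuine gap.

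The step ``\(A\) is a \(\Lambda\)-projection onto \(G=\operatorname{ran}A\)'' does not follow. \(A|_E=I_E\) says nothing about how \(A\) acts on vectors of \(G\) outside \(E\), so \(A\) need not be idempotent. More importantly, the estimates you propose to verify carry no information. Every \(n\)-dimensional normed space satisfies \(\dist_{BM}(E,\ell_2^n)\leq\sqrt n\) (John) and \(\lambda(E)\leq\sqrt n\) (Kadec--Snobar). Bounds of order \(\sqrt n\) therefore hold in every Banach space. They cannot produce a subspace \(F\subseteq E\) with \(\dim F\geq\delta\dim E\), uniformly bounded distance to \(\ell_2^{\dim F}\), and a uniformly bounded projection, as Definition~\ref{def:weak-Hilbert} requires. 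The appeal to a ``volume-ratio/projection-constant comparison'' does not supply a mechanism for this.

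In fact, no argument that uses only the operator \(A\) attached to the single subspace \(E\) can succeed. In \(X=\ell_1\), let \(E=\ell_1^n\) be spanned by the first \(n\) unit vectors, and let \(A\) be the coordinate projection onto \(E\). Then \(A|_E=I_E\), \(\norm A=1\), \(\operatorname{rank}A=n\), and \(G=E\) is \(1\)-complemented. Now suppose \(F\subseteq\ell_1\) is \(C\)-isomorphic to \(\ell_2^k\) and \(C\)-complemented. Then \(I_{\ell_2^k}\) factors through \(\ell_1\), and Grothendieck's theorem gives \(\sqrt k=\pi_2(I_{\ell_2^k})\leq K_GC^2\), with \(K_G\) Grothendieck's constant. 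So \(E\) has no proportional subspace of the required kind, even though the local data you use are all present.

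The missing idea is global. The rank condition has to be applied to many other subspaces and fed into a characterization of weak Hilbert spaces, for instance Pisier's ``weak type \(2\) and weak cotype \(2\)''. One route for the cotype half is as follows. For \(u:\ell_2^N\to X\), apply Proposition~\ref{prop:UAP-coder-realization} to \(K=u(B_{\ell_2^N})\) to get \(a_{k_X(\Lambda,n)+1}(u)\leq(1+\Lambda)d_n(u)\). Combine this with the Pajor--Tomczak-Jaegermann bound \(\sup_k k^{1/2}d_k(u)\lesssim\ell(u)\), where \(\ell(u)=\mathbb E\norm{\sum_i g_iue_i}_X\) and the \(g_i\) are independent standard Gaussians. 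This gives the Milman--Pisier condition \(\sup_k k^{1/2}a_k(u)\lesssim\ell(u)\) for weak cotype \(2\). The weak type \(2\) half still needs a separate duality argument. Either carry this out, or, as the paper does, cite Johnson--Pisier for this direction as well.
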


Also, the following example shows that a weak Hilbert space need not be isomorphic to a Hilbert space.

\begin{example}\label{ex:convexified-Tsirelson}
    Let \(T^{(2)}\) denote the \(2\)-convexification of the classical Tsirelson space.
    It is a weak Hilbert space but is not isomorphic to a Hilbert space \cite{Pisier1988WeakHilbert,CasazzaNielsen2003WeakCotype}.
    Hence Theorem \ref{prop:weak-Hilbert-rank} gives constants \(\Lambda\geq1\) and \(C<\infty\) such that \(k_{T^{(2)}}(\Lambda,n)\leq Cn\) for all \(n\), while Proposition \ref{prop:hilbert-rank} shows that there is no fixed \(\Lambda\) for which \(k_{T^{(2)}}(\Lambda,n)=n\) for every \(n\).
\end{example}

We use the following disjointization theorem to obtain the UAP for \(L_p\) and \(\ell_p\) \cite{LindenstraussTzafriri1976UAP,CasazzaNielsen2003WeakCotype}.

\begin{theorem}\label{thm:lattice-disjoint-model}
    Let \(0<\varepsilon<1\), let \(L\) be an order-complete Banach lattice, and let \(E\subseteq L\) be \(k\)-dimensional, where \(k\geq1\).
    There are pairwise disjoint elements \(x_1,\ldots,x_N\in L\), with \(N\leq\left(2k^2/\varepsilon\right)^k,\) and a linear map \(V:E\to\Span\{x_1,\ldots,x_N\}\) such that \(\norm{Vx-x}\leq\varepsilon\norm{x}\), for all \(x \in E\).
\end{theorem}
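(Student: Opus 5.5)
The proof has two stages: first approximate \(E\) inside a finite-dimensional sublattice, then discretise that sublattice so that \(E\) is captured by disjoint elements. A natural tool for the first stage is Freudenthal's spectral theorem in the order-complete lattice \(L\).

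\textbf{Step 1 (reduction to a principal ideal).} Fix a normalized Auerbach basis \((e_i,e_i^*)_{i=1}^k\) of \(E\) and put \(u=\sum_{i=1}^k\abs{e_i}\). Every \(x\in E\) lies in the principal ideal \(L_u\). Since \(L\) is order complete, the band projections onto \(\{(x-tu)^+\}^{\perp\perp}\) exist, and Freudenthal's theorem applies in \(L_u\). For each \(e_i\) and any mesh \(\theta>0\) it gives a \(u\)-step function \(s_i=\sum_j t_jP_ju\) with \(\abs{e_i-s_i}\leq\theta u\). Here the \(P_j\) are band projections, and \(\abs{t_j}\leq\norm{e_i}_u\).

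\textbf{Step 2 (common refinement and counting).} For each \(i\), the spectral partition of \([-1,1]\) (or of \([-\norm{e_i}_u,\norm{e_i}_u]\)) at mesh \(\theta\) produces about \(2/\theta\) disjoint band components. Taking intersections across the \(k\) coordinates gives pairwise disjoint band projections \(Q_1,\ldots,Q_N\) with \(N\leq(c/\theta)^k\). Set \(x_\ell=Q_\ell u\); these are pairwise disjoint. Define \(V\) on the basis by \(Ve_i=s_i\), where the \(s_i\) are rewritten in the refined partition, and extend \(V\) linearly, so that \(V\) maps \(E\) into \(\Span\{x_1,\ldots,x_N\}\).

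\textbf{Step 3 (error estimate).} For \(x=\sum a_ie_i\) we have \(\abs{a_i}\leq\norm x\) by the Auerbach property. Hence
\[
\norm{Vx-x}\leq\sum_{i=1}^k\abs{a_i}\,\theta\norm u\leq k\theta\norm u\,\norm x\leq k^2\theta\norm x,
\]
using \(\norm u\leq k\). Choosing \(\theta=\varepsilon/k^2\) gives the stated bound \(\norm{Vx-x}\leq\varepsilon\norm x\). It also gives the count \(N\leq(2k^2/\varepsilon)^k\), provided each coordinate partition uses at most \(2/\theta\) pieces.

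\textbf{Main obstacle.} I expect the difficulty to lie in bounding the number of pieces per coordinate. Specifically, I need \(\abs{e_i}\leq u\) so that each \(e_i\) has \(u\)-norm at most one. The range \([-1,1]\) then requires at most \(2/\theta\) levels at mesh \(\theta\), and this is what produces exactly \((2k^2/\varepsilon)^k\) rather than a cruder constant. I would handle this with the usual one-sided Freudenthal construction, using levels \(t_j=-1+j\theta\) with lower approximation, so that each coordinate contributes at most \(\lceil 2/\theta\rceil\) bands. Intersecting these bands and discarding zero components then yields the count.
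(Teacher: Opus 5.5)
Your argument is correct in substance and follows the standard route to this result. The paper gives no proof of its own and only cites \cite{LindenstraussTzafriri1976UAP,CasazzaNielsen2003WeakCotype}. Your steps are: partition the principal ideal generated by \(u=\sum_i\abs{e_i}\) by commuting band projections (equivalently, by clopen sets in a Stonean \(C(K)\)-representation of that ideal), take the common refinement of the \(k\) partitions, and estimate using the Auerbach coefficients and \(\norm u\leq k\). This yields the error \(k^2\theta\) and a count of \((2/\theta)^k\), which match the stated bound.

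One step needs repair. You say the one-sided construction uses \(\lceil 2/\theta\rceil\) bands per coordinate and that this "yields the count". With \(\theta=\varepsilon/k^2\), however, \(\lceil 2k^2/\varepsilon\rceil^k\) exceeds \((2k^2/\varepsilon)^k\) whenever \(2k^2/\varepsilon\notin\N\). You also cannot reduce the count by enlarging \(\theta\) without breaking \(k^2\theta\leq\varepsilon\).

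The fix is to use midpoint values. Take levels \(t_j=-1+2j\theta\), and on the component \((p_j-p_{j+1})u\), where \(t_ju\leq e_i\leq t_{j+1}u\), let \(s_i\) take the value \((t_j+t_{j+1})/2\). You still get \(\abs{e_i-s_i}\leq\theta u\). Now only \(\lceil 1/\theta\rceil\leq 1/\theta+1<2/\theta\) bands per coordinate are needed, since \(\theta=\varepsilon/k^2<1\). Hence \(N\leq(2k^2/\varepsilon)^k\), and the rest of your argument goes through unchanged.
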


For \(1\leq p\leq\infty\) and a \(\sigma\)-finite measure \(\mu\), finite-dimensional disjoint spans in \(L_p(\mu)\) and \(\ell_p\) are \(1\)-complemented.
By Theorem~\ref{thm:lattice-disjoint-model} and Lemma~\ref{lem:finite-dimensional-correction}, we obtain the metric UAP for these spaces.

The following theorem gives the ultrapower characterization and self-duality of the UAP \cite{Heinrich1980,Mascioni1990UAP,Mascioni1991Duality,Casazza2001Approximation}.
These permanence results extend the recovery estimate to further function spaces used as domains and ranges of operators.

\Needspace{6\baselineskip}
\begin{theorem}\label{thm:Heinrich-UAP-stability}
    Let \(X\) be a Banach space.
    \begin{enumerate}
        \item \(X\) has the UAP if and only if every ultrapower \(X_{\mathcal U}\) has the BAP.
        \item \(X\) has the UAP if and only if \(X^*\) has the UAP.
    \end{enumerate}
    Here \(X_{\mathcal U}\) denotes the ultrapower of \(X\) with respect to \(\mathcal U\).
\end{theorem}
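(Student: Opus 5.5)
The statement collects classical results of Heinrich and Mascioni, so the plan is to reduce both parts to the finite-dimensional, local characterization of the UAP through the rank function \(k_X(\Lambda,n)\) of Definition~\ref{def:UAP-rank-function}.

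\emph{Part~(1), forward direction.} Assume \(k_X(\Lambda,n)\leq f(n)\) for all \(n\). We show that \(X_{\mathcal U}\) has the \((\Lambda+\eta)\)-UAP with the same rank function, which gives the BAP. Let \(E\subseteq X_{\mathcal U}\) have dimension \(n\), with basis \(e_i=(x_i^{\alpha})_{\mathcal U}\). For each index \(\alpha\), put \(E_\alpha=\Span\{x_i^\alpha\}\). Choose \(A_\alpha\in\cF(X)\) with \(A_\alpha|_{E_\alpha}=I\), \(\norm{A_\alpha}\leq\Lambda\), and \(\operatorname{rank}A_\alpha\leq f(n)\). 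Let \(A=(A_\alpha)_{\mathcal U}\). A determinant argument, as in the proof of Theorem~\ref{thm:common-gauge-recovery-UAP}, shows that \(\operatorname{rank}A\leq f(n)\). By construction \(\norm A\leq\Lambda\) and \(A|_E=I_E\).

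\emph{Part~(1), converse.} Argue by contradiction. If the UAP fails, then for some \(n\) and every \(\Lambda\) the quantity \(k_X(\Lambda,n)\) is infinite along a sequence. Choose subspaces \(E_j\) of dimension \(n\) such that no operator of norm at most \(j\) and rank at most \(j\) fixes \(E_j\). Form \(E=(E_j)_{\mathcal U}\subseteq X_{\mathcal U}\), which is \(n\)-dimensional. If \(X_{\mathcal U}\) has the \(\lambda\)-BAP, the finite-rank approximants can be corrected by Lemma~\ref{lem:finite-dimensional-correction} to obtain \(A\in\cF(X_{\mathcal U})\) with \(A|_E=I\), \(\norm A\leq\lambda+1\), and \(\operatorname{rank}A=r\).

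The main obstacle is the next step. We must pull \(A\) back to coordinate operators \(A_j\) on \(X\) with rank at most \(r\) and norm close to \(\lambda+1\) that almost fix \(E_j\). A general operator on \(X_{\mathcal U}\) is not an ultraproduct of operators. To handle this, write \(A=\sum_{k\leq r}\phi_k\otimes u_k\). Replace each \(\phi_k\in(X_{\mathcal U})^*\) by an ultraproduct functional, using the local reflexivity principle for ultrapowers, which says that \((X^*)_{\mathcal U}\) is norming and locally dense in \((X_{\mathcal U})^*\) on finite-dimensional pieces. Use a second application of Lemma~\ref{lem:finite-dimensional-correction} to absorb the error, and then apply a Łoś-type argument. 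This contradicts the choice of \(E_j\) for large \(j\), so it would take real care with the local duality of ultrapowers.

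\emph{Part~(2).} Use the principle of local reflexivity together with duality for finite-rank operators. Suppose \(X\) has the UAP with constants \((\Lambda,f)\), and let \(F\subseteq X^*\) be finite-dimensional. The dual form of the UAP is equivalent, by a Lemma~\ref{lem:finite-dimensional-correction}-type correction with rank cost \(n\), to the following: for each finite-dimensional \(G\subseteq X\) and \(\varepsilon>0\) there is \(A\) with \(\norm{(A-I)|_G}\leq\varepsilon\), controlled norm and rank, and additionally \(\norm{(A^*-I)|_F}\leq\varepsilon\). This two-sided version is the step requiring Mascioni's argument; iterating \(A\mapsto A+B-BA\) gives control on both sides. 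Taking \(A^*\) and correcting on \(F\) then gives the UAP for \(X^*\). For the converse, \(X^*\) having the UAP implies \(X^{**}\) has it. Local reflexivity transfers the operators from \(X^{**}\) back to operators on \(X\) that almost fix a given \(E\), with norm \(\Lambda+\eta\) and the same rank, and a final application of Lemma~\ref{lem:finite-dimensional-correction} yields the UAP for \(X\).
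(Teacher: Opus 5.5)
\textbf{Part (1).} Your forward implication in (1) is fine: $(A_\alpha)_{\mathcal U}$ fixes $E$, has norm at most $\Lambda$ and rank at most $f(n)$, so $X_{\mathcal U}$ has the UAP and hence the BAP.

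The converse, however, starts from a false reduction. Every subspace of dimension at most $n$ admits a projection of norm at most $n$ and rank at most $n$ (Auerbach basis plus Hahn--Banach, as in the proof of Theorem~\ref{prop:stable-coder-characterization-UAP}). Hence $k_X(\Lambda,n)\leq n$ whenever $\Lambda\geq n$. So no $n$ has $k_X(\Lambda,n)=\infty$ for every $\Lambda$, and subspaces $E_j$ of fixed dimension $n$ fixed by no operator of norm and rank at most $j$ do not exist once $j\geq n$.

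You must reverse the quantifiers. First fix a free ultrafilter $\mathcal U$ on $\N$ and $\lambda$ such that $X_{\mathcal U}$ has the $\lambda$-BAP. Only then use the failure of the UAP at level $\lambda+2$ to get one $n$ and subspaces $E_j$ with $\dim E_j\leq n$ such that every $A$ with $A|_{E_j}=I$ and $\norm{A}\leq\lambda+2$ has rank greater than $j$.

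For the pull-back, the norming property of $(X^*)_{\mathcal U}$ is not enough. It handles one functional at a time and gives no bound on $\norm{\sum_k\psi_k\otimes u_k}$. What you need is the local duality of ultraproducts: for finite-dimensional $G\subseteq(X_{\mathcal U})^*$ and $F\subseteq X_{\mathcal U}$ and $\varepsilon>0$, there is a linear $T:G\to(X^*)_{\mathcal U}$ with $\norm{T}\leq1+\varepsilon$ and $\inp{T\phi}{x}=\inp{\phi}{x}$ for $\phi\in G$, $x\in F$. Take $F=E$ and $G=\Span\{\phi_k\}$, and write $T\phi_k=(\psi_k^j)_{\mathcal U}$, $u_k=(u_k^j)_{\mathcal U}$. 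Then $\sum_kT\phi_k\otimes u_k=(A_j)_{\mathcal U}$ with $A_j=\sum_k\psi_k^j\otimes u_k^j$. This operator fixes $E$, has norm at most $(1+\varepsilon)(\lambda+1)<\lambda+2$, and $\operatorname{rank}A_j\leq r$. Since $\norm{(A_j)_{\mathcal U}}=\lim_{\mathcal U}\norm{A_j}$, one application of Lemma~\ref{lem:finite-dimensional-correction} on $E_j$ gives the contradiction for $j\geq r+n$.

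\textbf{Part (2).} Here there is a genuine gap. By local reflexivity, $X^*$ has the UAP exactly when every finite-dimensional $F\subseteq X^*$ admits $B\in\cF(X)$ with $B^*|_F=I_F$ and norm and rank bounded in terms of $\dim F$. To see this, write a finite-rank operator on $X^*$ fixing $F$ as $\sum_kx_k^{**}\otimes\phi_k$ and move the $x_k^{**}$ into $X$ while preserving their action on $F$. So the dual-side half of your two-sided statement is the entire content of the implication.

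The iteration $A\mapsto A+B-BA$, for which $I-(A+B-BA)=(I-B)(I-A)$, only merges an operator that is good on $G$ with one that is already good on $F$. It cannot produce $B$ from the UAP of $X$, and deferring to Mascioni leaves the forward direction unproved. Your converse then applies this unproved direction to $X^*$.

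You can close (2) with the tools of (1):
\begin{itemize}
\item The UAP passes to ultrapowers with the same $(\Lambda,f)$ and to $1$-complemented subspaces. Since $X^{**}$ is $1$-complemented in an ultrapower of $X$ (embed by local reflexivity, project by weak$^*$-limits), $X^{**}$ has the UAP whenever $X$ does.
\item The local duality above, taken along an ultrafilter on the triples $(G,F,\varepsilon)$, embeds $(Y_{\mathcal U})^*$ as a $1$-complemented subspace of an ultrapower of $(Y^*)_{\mathcal U}$, which inherits the UAP of $Y^*$.
\item Hence if $Y^*$ has the UAP, then $(Y_{\mathcal U})^*$ has the BAP. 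Since the BAP passes from a dual to its predual, $Y_{\mathcal U}$ has the BAP for every $\mathcal U$, and $Y$ has the UAP by (1).
\end{itemize}
Taking $Y=X^*$ and then $Y=X$ gives both directions. For comparison, the paper gives no argument of its own here; it cites Heinrich and Mascioni.
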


For \(1\leq p<\infty\), we have isometric identifications \((L_p(\mu))_{\mathcal U}\cong L_p(\nu)\) and \((L_p(\mu;M))_{\mathcal U}\cong L_p(\nu;M)\) for a suitable measure \(\nu\) and every finite-dimensional Banach space \(M\) \cite[Theorem~3.3(ii) and Lemma~9.5]{Heinrich1980}.
By Theorem~\ref{thm:Heinrich-UAP-stability}(1) and a finite-dimensional reduction, we obtain the UAP for \(L_p(\mu;X)\), \(1\leq p\leq\infty\), whenever \(X\) has the UAP; see also \cite[proof of Theorem~9.4]{Heinrich1980}.
For a compact Hausdorff space \(K\), \(C(K)\) has the UAP \cite[Theorems~3.3(i) and 9.1]{Heinrich1980}.
By Theorem~\ref{thm:Heinrich-UAP-stability}(2), we obtain the UAP for \(M(K)=C(K)^*\), the space of finite regular scalar Borel measures on \(K\) with the total variation norm.
Thus the stable recovery estimate in Proposition~\ref{prop:UAP-coder-realization} applies to norm-compact subsets of all these spaces.

\subsection[Stable recovery of linear factors]{Stable recovery of linear factors}\label{subsec:stable-recovery-linear-factors}

To approximate \(T_F=LFS\), we seek finite-rank approximations to \(S\) on the input set and to \(L\) on the corresponding image set.
The norm and rank conditions in the next definition concern the operator itself.
These quantities can be finite even when neither the domain nor the codomain has the UAP.
In Section~\ref{subsec:operator-to-functional-reduction}, we use them to bound the approximation error for \(T_F\).

\begin{definition}\label{def:operator-UAP-rank}
    Let \(T\in\cB(X,Y)\), let \(\Lambda\geq1\), and let \(n,m\in\N\).
    Define
    \[
        k_T(\Lambda,n) = \sup_{\substack{E\subseteq X\\
                \dim E\leq n}} \inf\left\{ \operatorname{rank}A: A\in\cF(X,Y),\ A|_E=T|_E,\ \norm{A}\leq\Lambda\norm{T} \right\},
    \]
    and, for \(K\Subset X\),
    \[
        \sigma_m^\Lambda(T;K) = \inf_{\substack{A\in\cF(X,Y)\\
                \operatorname{rank}A\leq m\\
                \norm{A}\leq\Lambda\norm{T}}} \sup_{x\in K}\norm{Tx-Ax}.
    \]

\end{definition}
The first quantity is the normalized exact rank function.
The second is the stable finite-rank error on \(K\).
By considering the image of an admissible operator, we obtain the following width lower bound.

\begin{proposition}\label{prop:operator-image-width-lower-bound}
    Let \(T\in\cB(X,Y)\), \(K\Subset X\), \(m\in\N\), and \(\Lambda\geq1\).
    Then \(d_m(T(K))\leq\sigma_m^\Lambda(T;K).\)
\end{proposition}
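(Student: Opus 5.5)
The argument follows the proof of Proposition~\ref{prop:slc-width-lower-bound}, with \(T(K)\) in place of \(K\) and the range of an admissible operator serving as the model subspace in \(Y\). Recall that, by the paper's convention, \(d_m(T(K))\) is computed for the compact set \(T(K)\subseteq Y\) (it is compact because \(T\) is continuous), so the claim is an inequality between two infima.

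First I dispose of the degenerate case. If no operator is admissible, the infimum defining \(\sigma_m^\Lambda(T;K)\) is over the empty set, hence equals \(+\infty\), and there is nothing to prove. Note also that \(A=0\) is always admissible, since \(\operatorname{rank}0=0\leq m\) and \(\norm{0}\leq\Lambda\norm{T}\). So in fact \(\sigma_m^\Lambda(T;K)\leq\sup_{x\in K}\norm{Tx}<\infty\) and the infimum is never empty.

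Next, fix any admissible \(A\in\cF(X,Y)\), that is, \(\operatorname{rank}A\leq m\) and \(\norm A\leq\Lambda\norm T\). Set \(F=\operatorname{ran}A\subseteq Y\), so \(\dim F\leq m\). For each \(y\in T(K)\), write \(y=Tx\) with \(x\in K\). Since \(Ax\in F\),
\[
\dist(y,F)\leq\norm{Tx-Ax}.
\]
Taking the supremum over \(y\in T(K)\), equivalently over \(x\in K\), gives
\[
d_m(T(K))\leq\sup_{y\in T(K)}\dist(y,F)\leq\sup_{x\in K}\norm{Tx-Ax}.
\]
The first inequality holds because \(F\) is one of the subspaces of dimension at most \(m\) over which the infimum defining \(d_m\) is taken.

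Finally, taking the infimum over all admissible \(A\) yields \(d_m(T(K))\leq\sigma_m^\Lambda(T;K)\). The norm constraint \(\norm A\leq\Lambda\norm T\) plays no role; only the rank bound is used.

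There is no real obstacle here. The only points needing care are that \(T(K)\) is compact, so \(d_m(T(K))\) is defined as in Definition~\ref{def:compact-profiles}, and the case \(m=0\). When \(m=0\), the only admissible operator is \(A=0\), and both sides reduce to \(\sup_{x\in K}\norm{Tx}\), so the inequality holds with equality.
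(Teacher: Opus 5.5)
Your proposal is correct and follows essentially the same argument as the paper's proof: bound \(\dist(Tx,\operatorname{ran}A)\) by \(\norm{Tx-Ax}\) for each admissible \(A\), take the supremum over \(K\), and then the infimum over \(A\). Your additional checks (admissibility of \(A=0\), compactness of \(T(K)\), and the \(m=0\) case) are accurate but not needed for the argument.
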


\begin{proof}
    Let \(A\in\cF(X,Y)\) be admissible.
    Then we have \(\dim\operatorname{ran}A\leq m\) and \(\dist(Tx,\operatorname{ran}A)\leq\norm{Tx-Ax}\) for all \(x\in K\).
    By taking the supremum over \(K\), followed by the infimum over \(A\), we obtain the required inequality.
\end{proof}

The next proposition bounds the operator rank function by the rank function of either the domain or the codomain.

\begin{proposition}\label{prop:operator-UAP-endpoint-domination}
    Let \(T\in\cB(X,Y)\), let \(\Lambda\geq1\), and let \(n\in\N\).
    Then
    \[ k_T(\Lambda,n) \leq \min\{k_X(\Lambda,n),k_Y(\Lambda,n)\}. \]
\end{proposition}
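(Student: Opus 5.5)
We prove the two inequalities \(k_T(\Lambda,n)\leq k_X(\Lambda,n)\) and \(k_T(\Lambda,n)\leq k_Y(\Lambda,n)\) separately, by composing \(T\) with an extension operator on the domain side or on the codomain side. If the corresponding space rank function is \(+\infty\) there is nothing to prove, so we assume it is finite. We also assume \(T\neq0\); if \(T=0\), then \(A=0\) is admissible with rank \(0\), and \(k_T(\Lambda,n)=0\).

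For the domain bound, fix \(E\subseteq X\) with \(\dim E\leq n\). Choose \(B\in\cF(X)\) with \(B|_E=I_E\), \(\norm B\leq\Lambda\), and \(\operatorname{rank}B\leq k_X(\Lambda,n)\); this is possible since the inner infimum in the definition of \(k_X\) is attained. Put \(A=TB\in\cF(X,Y)\). Then \(A|_E=T|_E\), \(\norm A\leq\Lambda\norm T\), and \(\operatorname{rank}A\leq\operatorname{rank}B\leq k_X(\Lambda,n)\). Taking the supremum over \(E\) gives \(k_T(\Lambda,n)\leq k_X(\Lambda,n)\).

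For the codomain bound, fix \(E\subseteq X\) with \(\dim E\leq n\), and put \(G=T(E)\subseteq Y\), so that \(\dim G\leq n\). Choose \(C\in\cF(Y)\) with \(C|_G=I_G\), \(\norm C\leq\Lambda\), and \(\operatorname{rank}C\leq k_Y(\Lambda,n)\). Put \(A=CT\). For \(e\in E\) we have \(Te\in G\), hence \(Ae=CTe=Te\), so \(A|_E=T|_E\). Moreover \(\norm A\leq\Lambda\norm T\) and \(\operatorname{rank}A\leq\operatorname{rank}C\leq k_Y(\Lambda,n)\). Taking the supremum over \(E\) gives the second inequality. Combining the two gives the minimum.

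No step is a real obstacle. The only points needing care are the conventions: the empty infimum is \(+\infty\), and the inner infimum in the definition of \(k_X\) or \(k_Y\) is attained when finite because ranks are integers. The normalization \(\Lambda\norm T\) in the definition of \(k_T\) matches the bound \(\norm{TB}\leq\Lambda\norm T\), which is what makes the composition argument work.
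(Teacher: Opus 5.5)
Your proof is correct and matches the paper's argument: you compose \(T\) with a UAP extension operator on \(X\) (giving \(TB\)) or with one on \(Y\) fixing \(T(E)\) (giving \(CT\)), and the agreement on \(E\), the norm bound \(\Lambda\norm T\), and the rank bound all pass through the composition. Your separate treatment of \(T=0\) is harmless but unnecessary, since the composition argument already covers that case.
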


\begin{proof}
    Fix \(E\subseteq X\) with \(\dim E\leq n\).
    For \(k_X(\Lambda,n)<\infty\), choose \(R\in\cF(X)\) satisfying \eqref{eq:uap-extension-conditions} with \(A=R\).
    Then \(TR\) agrees with \(T\) on \(E\), has norm at most \(\Lambda\norm{T}\), and has no larger rank.
    Since the subspace was arbitrary, we obtain \(k_T(\Lambda,n)\leq k_X(\Lambda,n)\).

    For \(k_Y(\Lambda,n)<\infty\), since \(\dim T(E)\leq n\), choose \(P\in\cF(Y,Y)\) satisfying \eqref{eq:uap-extension-conditions} with \(X,E,A\) replaced by \(Y,T(E),P\), respectively.
    Then \(PT\) agrees with \(T\) on \(E\), has norm at most \(\Lambda\norm{T}\), and has no larger rank.
    Since the subspace was arbitrary, we obtain the second estimate.
\end{proof}

\begin{example}\label{ex:operator-uap-without-space-uap}
    Let \(Z\) be a Banach space without the UAP, put \(X=Z\oplus_2\ell_2\), and define \(T:X\to X\) by \(T(z,h)=(0,h)\).
    Then \(X\) does not have the UAP.
    The operator \(T\) has norm one, infinite rank, and \( k_T(1,n)=n \), for all \(n\).
\end{example}

By the correction lemma, we can pass from approximate agreement to exact agreement, with the following rank and norm bounds.

\begin{corollary}\label{cor:approximate-to-exact-rank}
    Let \(T\in\cB(X,Y)\), let \(\Gamma\geq0\), let \(n\in\N_+\), and let \(\delta>0\).
    Suppose there is \(M(n,\delta)\in\N\) such that every \(E\subseteq X\) with \(\dim E\leq n\) admits \(S_{E,\delta}\in\cF(X,Y)\) satisfying
    \[ \norm{(T-S_{E,\delta})|_E}\leq\delta\norm{T},\qquad \operatorname{rank}S_{E,\delta}\leq M(n,\delta),\qquad \norm{S_{E,\delta}}\leq\Gamma\norm{T}. \]
    Then \(k_T(\max\{1,\Gamma+n\delta\},n) \leq M(n,\delta)+n.\)
    In particular,
    \[ k_T(\Gamma+1,n)\leq M(n,1/n)+n. \]
\end{corollary}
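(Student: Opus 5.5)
The plan is to apply Lemma~\ref{lem:finite-dimensional-correction} directly to each approximate agreement supplied by the hypothesis, and then normalize the resulting norm bound to the form required in the definition of \(k_T\).

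Fix \(E\subseteq X\) with \(\dim E\leq n\). If \(\dim E=0\), the zero operator agrees with \(T\) on \(E\), so the inner infimum in the definition of \(k_T\) is \(0\) and there is nothing to prove. So assume \(d=\dim E\) satisfies \(1\leq d\leq n\).

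The hypothesis gives \(S=S_{E,\delta}\in\cF(X,Y)\) with
\[
\norm{(T-S)|_E}\leq\delta\norm{T},\qquad \operatorname{rank}S\leq M(n,\delta),\qquad \norm{S}\leq\Gamma\norm{T}.
\]
Apply Lemma~\ref{lem:finite-dimensional-correction} with \(m=M(n,\delta)\), \(C=\Gamma\norm{T}\) and \(\varepsilon=\delta\norm{T}\), and with the lemma's dimension taken to be \(d\). This yields \(A\in\cF(X,Y)\) satisfying
\[
A|_E=T|_E,\qquad \operatorname{rank}A\leq M(n,\delta)+d\leq M(n,\delta)+n,
\]
and
\[
\norm{A}\leq\Gamma\norm{T}+d\delta\norm{T}\leq(\Gamma+n\delta)\norm{T}\leq\max\{1,\Gamma+n\delta\}\,\norm{T}.
\]
Hence \(A\) is admissible in the inner infimum defining \(k_T(\max\{1,\Gamma+n\delta\},n)\). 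Since \(E\) was an arbitrary subspace of dimension at most \(n\), taking the supremum over \(E\) gives the first bound.

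The max with \(1\) is needed only because the definition of \(k_T\) requires \(\Lambda\geq1\); enlarging \(\Lambda\) keeps \(A\) admissible. If \(T=0\), every bound holds trivially with \(A=0\).

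For the particular case, take \(\delta=1/n\), so that \(\Gamma+n\delta=\Gamma+1\). Since \(\Gamma\geq0\), we have \(\Gamma+1\geq1\) and the max equals \(\Gamma+1\), which gives \(k_T(\Gamma+1,n)\leq M(n,1/n)+n\).

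There is no genuine obstacle. The only points needing care are that the correction lemma is applied with the actual dimension \(d\leq n\), so the rank and norm bounds only improve, and the trivial cases \(\dim E=0\) or \(T=0\).
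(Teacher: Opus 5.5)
Your proposal is correct and follows the paper's proof essentially verbatim: apply Lemma~\ref{lem:finite-dimensional-correction} with \(C=\Gamma\norm{T}\) and \(\varepsilon=\delta\norm{T}\) on each \(E\) with \(1\leq\dim E\leq n\), bound the rank by \(M(n,\delta)+n\) and the norm by \((\Gamma+n\delta)\norm{T}\), take the supremum over \(E\), and then set \(\delta=1/n\). Your separate handling of \(\dim E=0\) and \(T=0\), and your remark on the \(\max\{1,\cdot\}\), are harmless additions that the paper leaves implicit.
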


\begin{proof}
    Let \(E\subseteq X\) with \(1\leq q=\dim E\leq n\).
    By Lemma~\ref{lem:finite-dimensional-correction} with \(\varepsilon=\delta\norm T\) and \(C=\Gamma\norm T\), we obtain \(A_E\) which agrees with \(T\) on \(E\) and satisfies \(\operatorname{rank}A_E\leq M(n,\delta)+q\leq M(n,\delta)+n\) and \(\norm{A_E}\leq(\Gamma+q\delta)\norm T\leq(\Gamma+n\delta)\norm T\).
    Since these bounds hold for every \(E\), we have the first estimate.
    By taking \(\delta=1/n\), we get the second estimate.
\end{proof}

The correction may change the original measurement structure.
For a prescribed rank bound \(f\), we define \(\mathbf u_f(T)\) to record the admissible norm bounds.
Here we use an absolute norm bound, whereas the bound in \(k_T\) is normalized by the norm of the operator.

\begin{definition}\label{def:operator-ideal-gauge}\label{def:operator-gauge-UAP}
    Let \(f:\N\to\N\) be nondecreasing with \(f(0)=0\).
    For \(T\in\cB(X,Y)\), define
    \[
        \mathbf u_f(T) = \inf\left\{ C\geq0: \begin{array}{l}
            \text{for every finite-dimensional }E\subseteq X,\text{ there is }A_E\in\cF(X,Y), \text{ so that} \\
            A_E|_E=T|_E,\quad \operatorname{rank}A_E\leq f(\dim E),\quad \norm{A_E}\leq C
        \end{array}
        \right\}.
    \]
    Let \(\omega\) be a vanishing gauge and let \(m\in\N\).
    For \(K\Subset X\), define
    \[
        \mathcal E_m^\omega(T;K)=\inf\left\{\sup_{x\in K}\norm{Tx-Px}:\begin{array}{l}
            P:X\to Y\text{ uniformly continuous},\\
            P(X)\text{ relatively compact},\ \dim\Span P(X)\leq m,\ \omega_P\leq\omega
        \end{array}\right\}.
    \]
   We say that \(T\) has the \((\omega,f)\)-UAP if \(\mathcal E_{f(\dim E)}^\omega(T;RB_E)=0\) for every finite-dimensional \(E\subseteq X\) and every \(R>0\).
    It has the \(\omega\)-UAP if this holds for some such \(f\), and the Gauge-UAP if it has the \(\omega\)-UAP for some vanishing gauge \(\omega\).
\end{definition}

By the same arguments as in Proposition~\ref{prop:UAP-coder-realization} and Theorem~\ref{thm:common-gauge-recovery-UAP}, we obtain the following operator version.

\begin{corollary}\label{prop:operator-UAP-input-bound}\label{cor:operator-gauge-UAP}
    Let \(T\in\cB(X,Y)\).
    \begin{enumerate}
        \item For \(\Lambda\geq1\), \(K\Subset X\), and \(n\in\N\) with \(k_T(\Lambda,n)<\infty\), we have
        \[ d_{k_T(\Lambda,n)}(T(K))\leq\sigma_{k_T(\Lambda,n)}^\Lambda(T;K)\leq(1+\Lambda)\norm T\,d_n(K). \]
        \item Let \(L\) be given by \eqref{eq:gauge-asymptotic-slope}.
        Then \(T\) has the \((\omega,f)\)-UAP if and only if \(\mathbf u_f(T)\leq L\).
        If \(T\neq0\) and these conditions hold, every \(K\Subset X\) and every \(n\in\N\) satisfy
        \[ d_{f(n)}(T(K))\leq\mathcal E_{f(n)}^\omega(T;K)\leq\sigma_{f(n)}^{L/\norm T}(T;K)\leq(\norm T+L)d_n(K). \]
        Consequently, \(T\) has the Gauge-UAP if and only if \(\mathbf u_f(T)<\infty\) for some such \(f\).
    \end{enumerate}
\end{corollary}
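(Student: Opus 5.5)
Part (1) follows the proof of Proposition~\ref{prop:UAP-coder-realization}. The lower bound is Proposition~\ref{prop:operator-image-width-lower-bound}. For the upper bound, fix \(\eta>0\) and choose \(E\subseteq X\) with \(\dim E\leq n\) and \(\sup_{x\in K}\dist(x,E)<d_n(K)+\eta\). By the definition of \(k_T(\Lambda,n)\), there is \(A\in\cF(X,Y)\) with \(A|_E=T|_E\), \(\norm A\leq\Lambda\norm T\), and \(\operatorname{rank}A\leq k_T(\Lambda,n)\). For \(x\in K\), pick \(y\in E\) close to \(x\) and estimate
\[ \norm{Tx-Ax}\leq\norm{T(x-y)}+\norm{A(y-x)}\leq(1+\Lambda)\norm T\,\norm{x-y}. \]
Taking the supremum over \(K\) and letting \(\eta\downarrow0\) gives the claim.

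For part (2), the forward direction copies the first half of the proof of Theorem~\ref{thm:common-gauge-recovery-UAP}. Fix \(E\) and set \(m=f(\dim E)\). Choose admissible \(P_N\) that approximate \(T\) within \(1\) on \(N^2B_E\), and rescale:
\[ Q_N(x)=N^{-1}\bigl(P_N(Nx)-P_N(0)\bigr). \]
Then \(\norm{Q_Nx}\leq\omega(\norm x)\), \(\omega_{Q_N}(t)\leq N^{-1}\omega(Nt)\), and \(\norm{Q_Ne-Te}<2/N\) for \(e\in E\) once \(N\) is large. A pointwise weak-star cluster point \(Q:X\to Y^{**}\) is \(L\)-Lipschitz by \eqref{eq:gauge-asymptotic-slope} and agrees with \(T\) on \(E\). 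The determinant argument shows that \(F=\Span Q(X)\) has dimension at most \(m\); note \(T(E)\subseteq F\). Lemma~\ref{lem:finite-range-linearization} then produces \(A\in\cF(X,Y)\) with \(A|_E=T|_E\), \(\operatorname{rank}A\leq m\), and \(\norm A\leq L+\eta\). Since \(\eta\) is arbitrary, \(\mathbf u_f(T)\leq L\).

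For the converse and the chain of inequalities, assume \(\mathbf u_f(T)\leq L\) and \(T\neq0\). Take \(A_\tau\) with \(\norm{A_\tau}\leq L+\tau\) agreeing with \(T\) on a near-optimal \(E\), and shrink it to \(B_\tau=LA_\tau/(L+\tau)\). As in the theorem,
\[ \sup_{x\in K}\norm{Tx-B_\tau x}\leq\frac{L}{L+\tau}(\norm T+L+\tau)\delta+\frac{\tau}{L+\tau}\norm T R_K, \]
and letting \(\tau\downarrow0\) gives \(\sigma^{L/\norm T}_{f(n)}(T;K)\leq(\norm T+L)d_n(K)\). The middle inequality uses the same radial truncation \(P_a=(1+a)^{-1}h_aB\), with \(h_a\) now acting on \(\operatorname{ran}B\subseteq Y\). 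This map has \(\omega_{P_a}(t)\leq Lt\leq\omega(t)\), relatively compact image of dimension at most \(f(n)\), and \(P_ax=Bx/(1+a)\) on \(K\) once \(M>LR_K\). The first inequality is the range argument. Applying the chain to \(RB_E\), where \(d_{\dim E}(RB_E)=0\), yields the \((\omega,f)\)-UAP.

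For the final equivalence: if \(\mathbf u_f(T)=C<\infty\), take \(\omega(t)=(C+1)t\), whose asymptotic slope is \(L=C+1\). The converse uses the first implication.

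The main obstacle is the weak-star limit step. The approximants are only approximate on \(E\), and exactness is recovered only in the limit, so it must be checked that \(T(E)\subseteq F\) holds inside \(Y^{**}\) so that Lemma~\ref{lem:finite-range-linearization} applies, and that the rank bound \(\dim F\leq m\) survives the passage to the limit.
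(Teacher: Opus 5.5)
Your proposal is correct and follows the paper's route: the paper proves this corollary by transcribing the proofs of Proposition~\ref{prop:UAP-coder-realization} and Theorem~\ref{thm:common-gauge-recovery-UAP} to operators, exactly as you do (rescaling to \(Q_N\), the weak-star limit with the determinant rank bound, Lemma~\ref{lem:finite-range-linearization}, and the radial truncation \(P_a\)). The one case you leave out, \(T=0\) in the converse, is trivial because \(P\equiv0\) is admissible.
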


The family \((\mathbf u_f)_f\) has the following ideal properties.

\begin{proposition}\label{prop:operator-ideal-gauge}
    Let \(f:\N\to\N\) be nondecreasing with \(f(0)=0\).
    Let \(A\in\cB(X_0,X)\), \(T\in\cB(X,Y)\), and \(B\in\cB(Y,Y_0)\), and suppose that \(\mathbf u_f(T)<\infty\).
    Then
    \[ \mathbf u_f(BTA) \leq \norm{B}\,\mathbf u_f(T)\,\norm{A}. \]
    If \(S,T\in\cB(X,Y)\), \(f,g:\N\to\N\) are nondecreasing with \(f(0)=g(0)=0\), and both \(\mathbf u_f(S)\) and \(\mathbf u_g(T)\) are finite, then
    \[ \mathbf u_{f+g}(S+T) \leq \mathbf u_f(S)+\mathbf u_g(T). \]
\end{proposition}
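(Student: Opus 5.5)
Both inequalities follow from the definition of \(\mathbf u_f\) by composing or adding admissible extensions. Throughout, we fix \(C>\mathbf u_f(T)\), so that every finite-dimensional \(E\subseteq X\) admits \(A_E\in\cF(X,Y)\) with \(A_E|_E=T|_E\), \(\operatorname{rank}A_E\leq f(\dim E)\), and \(\norm{A_E}\leq C\). At the end we let \(C\downarrow\mathbf u_f(T)\).

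For the composition estimate, let \(E_0\subseteq X_0\) be finite-dimensional and put \(E=A(E_0)\subseteq X\). Then \(\dim E\leq\dim E_0\), and since \(f\) is nondecreasing, \(f(\dim E)\leq f(\dim E_0)\). We set \(D=B A_E A\in\cF(X_0,Y_0)\). For \(e\in E_0\), we have \(Ae\in E\), hence \(De=BA_E(Ae)=BTAe\). The rank satisfies \(\operatorname{rank}D\leq\operatorname{rank}A_E\leq f(\dim E_0)\), and \(\norm D\leq\norm B\,C\,\norm A\). Since \(E_0\) was arbitrary, \(\mathbf u_f(BTA)\leq\norm B\,C\,\norm A\). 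Letting \(C\downarrow\mathbf u_f(T)\) gives the first inequality. The degenerate cases \(A=0\) or \(B=0\) need no separate treatment: \(D=0\) is admissible, because \(f(\dim E_0)\geq0\).

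For the sum estimate, fix \(C_1>\mathbf u_f(S)\) and \(C_2>\mathbf u_g(T)\). Let \(E\subseteq X\) be finite-dimensional, and choose \(A^S_E\) and \(A^T_E\) admissible for \(S\) and \(T\) on the same subspace \(E\). Then \(A_E=A^S_E+A^T_E\) agrees with \(S+T\) on \(E\). Its rank is at most \(f(\dim E)+g(\dim E)=(f+g)(\dim E)\), and \(\norm{A_E}\leq C_1+C_2\). The function \(f+g\) is nondecreasing and vanishes at \(0\), so \(\mathbf u_{f+g}\) is defined, and \(\mathbf u_{f+g}(S+T)\leq C_1+C_2\). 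Letting \(C_1,C_2\) decrease to their infima gives the second inequality.

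None of these steps is a real obstacle. The only points that need attention are the use of monotonicity of \(f\) to compare \(f(\dim A(E_0))\) with \(f(\dim E_0)\), and the fact that the definition of \(\mathbf u_f\) is an infimum, which is why we pass through \(C>\mathbf u_f\) rather than using \(\mathbf u_f(T)\) itself as a norm bound.
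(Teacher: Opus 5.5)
Your proposal is correct and follows the paper's proof essentially verbatim: for the composition estimate you pass to the image subspace \(A(E_0)\), use monotonicity of \(f\) to compare ranks, and take \(BA_EA\); for the sum you add the two admissible extensions on the same subspace. In both cases you let the constants decrease to the infima, exactly as in the paper.
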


\begin{proof}
    Fix \(C>\mathbf u_f(T)\), and let \(E\subseteq X_0\) be finite-dimensional.
    Put \(F=A(E)\).
    Choose \(R_F\in\cF(X,Y)\) such that \(R_F|_F=T|_F\), \(\operatorname{rank}R_F\leq f(\dim F)\leq f(\dim E)\), and \(\norm{R_F}\leq C.\)
    Then \(BR_FA\) agrees with \(BTA\) on \(E\), has the required rank bound, and has norm at most \(\norm{B}C\norm{A}\).
    By the definition of \(\mathbf u_f\) and by letting \(C\downarrow\mathbf u_f(T)\), we obtain the composition estimate.

    For the addition estimate, fix \(C_S>\mathbf u_f(S)\) and \(C_T>\mathbf u_g(T)\).
    On a finite-dimensional \(E\subseteq X\), we can choose finite-rank operators \(R_E^S\) and \(R_E^T\) with these bounds.
    Their sum agrees with \(S+T\) on \(E\), has rank at most \(f(\dim E)+g(\dim E)\), and has norm at most \(C_S+C_T\).
    By letting both constants decrease to their respective infima, we obtain the addition estimate.
\end{proof}

We have related geometric model dimensions to stable recovery ranks, both for spaces and for the linear factors of a target operator.
The next section describes the encoding and recovery maps in classical sampling and truncation schemes, and uses their approximation estimates to bound the rank functions.

    \section{Stable recovery from classical approximation theory}\label{sec:unit-ball-UAP-coders}

Point sampling, cell averages, and wavelet and Gabor truncations define finite-dimensional encoders and reconstruction maps.
We describe these maps and their norm and approximation bounds in the notation of Section~\ref{sec:UAP}.
By finite-rank correction, operator-norm approximation estimates then give bounds for the operator rank functions.
The table summarizes the constructions in Appendix~\ref{app:classical-realizations} and the wavelet construction below under the hypotheses stated there.
The error rates are measured in the target norm on the unit ball of the source space.
For point sampling, \(E\subseteq C(K)\), \(n=\dim E\), \(\Gamma>1\) is fixed, and \(C_\Gamma\) depends only on \(\Gamma\).
\begin{center}
    \small
    \begin{tabular}{@{}llll@{}}
        \hline
        Construction & Spaces & Code dimension & Approximation \\
        \hline
        Point sampling & \(C(K)\to C(K)\) & \(\leq C_\Gamma^n\) & Exact on \(E\) \\
        Cell averages & \(\mathcal A\to L_p(\Omega)\) & \(O(2^{Jd})\) & Error \(O(2^{-Js})\) \\
        Gabor truncation & \(M^{s_0}_{p,q}\to M^{s_1}_{p,q}\) & \(O(R^{2d})\) & Error \(O(R^{-(s_0-s_1)})\) \\
        Wavelet truncation & \(H^{s_0}\to H^{s_1}\) & \(O(2^{Jd})\) & Error \(O(2^{-J(s_0-s_1)})\) \\
        \hline
    \end{tabular}
\end{center}
Here \(\mathcal A=B^s_{p,\infty}(\Omega)\) or \(W^{s,p}(\Omega)\), under the corresponding averaging assumptions.
The modulation spaces are defined on \(\R^d\), and the Sobolev spaces in the wavelet row are defined on \(\mathbb T^d\).
Appendix~\ref{app:detailed-operator-learning-examples} uses cell averages to compute recovery errors and neural parameter bounds for a nonlinear integral operator.
For cell averages, the encoder consists of normalized integrals and the decoder forms a piecewise constant function; their composition is a conditional expectation.
For Gabor and wavelet truncations, the encoder gives the retained coefficients and the decoder forms the corresponding finite sum.
With the coefficient norms used in these constructions, both maps are bounded uniformly in the truncation parameter.
We consider exact recovery on a prescribed model and approximation of the target operator.
Finite-rank correction gives exact agreement on an arbitrary finite-dimensional subspace, with coordinate maps determined by the corrected operator.

\begin{definition}\label{def:three-coder-notions}
    Let \(T:X_0\to X_1\) be bounded and linear, and let \(\mathcal T\) be a directed set.
    For each \(\tau\in\mathcal T\), let \(Z_\tau\) be finite dimensional and let \(C_\tau:X_0\to Z_\tau\) and \(D_\tau:Z_\tau\to X_1\) be bounded linear maps.
    Let \(S_\tau=D_\tau C_\tau\).
    \begin{enumerate}
        \item The reduction is exact on the finite-dimensional model space \(V_\tau\subseteq X_0\) if \(S_\tau|_{V_\tau}=T|_{V_\tau}\).
        \item The family gives \emph{operator-norm approximation} if \(\norm{T-S_\tau}\to0\).
        For \(K\Subset X_0\), it gives \emph{approximation on \(K\)} if \(\sup_{x\in K}\norm{Tx-S_\tau x}\to0\).
        \item A finite-rank operator \(A_E\) agrees with \(T\) on a finite-dimensional space \(E\subseteq X_0\) if \(A_E|_E=T|_E\).
    \end{enumerate}
\end{definition}

The finite-dimensional correction lemma gives encoders and decoders, together with a uniform rank bound for a family of operator approximations.
\begin{proposition}\label{prop:correction-to-exact-coder}\label{prop:stable-family-exactification}
    Let \(T\in\cB(X_0,X_1)\).
    \begin{enumerate}
        \item Let \(E\subseteq X_0\) have dimension \(q\geq1\), let \(S\in\cF(X_0,X_1)\), and let \(m\in\N\) and \(\delta,\Gamma_S\geq0\).
        Suppose that \(\norm{(T-S)|_E}\leq\delta\), \(\operatorname{rank}S\leq m\), and \(\norm S\leq\Gamma_S\).
        Then there are a finite-dimensional space \(Z_E\) and bounded linear maps \(C_E:X_0\to Z_E\) and \(D_E:Z_E\to X_1\) such that
        \[ D_EC_E|_E=T|_E, \qquad \dim Z_E\leq m+q, \qquad \norm{D_EC_E}\leq\Gamma_S+q\delta. \]
        \item Let \(\mathcal T\) be an index set, let \(\Gamma\geq0\), and let \(N:\mathcal T\to\N\) and \(\varepsilon:\mathcal T\to[0,\infty)\).
        Suppose that \(S_\tau=D_\tau C_\tau\) factors through a finite-dimensional space \(Z_\tau\) for each \(\tau\in\mathcal T\), with \(\dim Z_\tau\leq N(\tau)\), \(\norm{S_\tau}\leq\Gamma\norm T\), and \(\norm{T-S_\tau}\leq\varepsilon(\tau)\norm T\).
        If \(n\geq1\) and \(\varepsilon(\tau)\leq1/n\), then \(k_T(\Gamma+1,n)\leq N(\tau)+n\).
    \end{enumerate}
\end{proposition}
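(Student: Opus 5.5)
For part (1) I will reuse the construction in the proof of Lemma~\ref{lem:finite-dimensional-correction} and then read off a factorization. Put \(R=(T-S)|_E\), choose an Auerbach basis \((e_i,e_i^*)_{i=1}^q\) of \(E\), and take Hahn--Banach extensions \(\widetilde e_i^*\in X_0^*\) with \(\norm{\widetilde e_i^*}=1\). Then set \(A=S+\sum_{i=1}^q\widetilde e_i^*\otimes Re_i\), which satisfies \(A|_E=T|_E\), \(\operatorname{rank}A\leq m+q\), and \(\norm A\leq\Gamma_S+q\delta\); the last bound uses \(\norm{Re_i}\leq\delta\norm{e_i}=\delta\). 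I then factor \(A\) through its own range. Let \(Z_E=\operatorname{ran}A\), let \(C_E:X_0\to Z_E\) be \(A\) with its codomain restricted, and let \(D_E:Z_E\to X_1\) be the inclusion. Then \(D_EC_E=A\), \(\dim Z_E=\operatorname{rank}A\leq m+q\), and the norm bound carries over unchanged.

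If one prefers to keep the given measurement structure of \(S\) when \(S=D C\) through some \(Z\), I would take \(Z_E=Z\oplus\R^q\). The encoder is \(C_E x=(Cx,(\widetilde e_i^*(x))_i)\) and the decoder is \(D_E(z,a)=Dz+\sum_i a_iRe_i\). Either way the required estimates follow, because only \(D_EC_E\) is constrained, and the trivial range factorization is enough.

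For part (2), fix \(n\geq1\) and \(\tau\) with \(\varepsilon(\tau)\leq1/n\). The case \(T=0\) is trivial, since \(A=0\) is admissible. For any \(E\subseteq X_0\) with \(\dim E\leq n\), the operator \(S_\tau\) satisfies \(\norm{(T-S_\tau)|_E}\leq\norm{T-S_\tau}\leq\norm T/n\), \(\operatorname{rank}S_\tau\leq\dim Z_\tau\leq N(\tau)\), and \(\norm{S_\tau}\leq\Gamma\norm T\). These are exactly the hypotheses of Corollary~\ref{cor:approximate-to-exact-rank}, with \(M(n,1/n)=N(\tau)\) and \(S_{E,1/n}=S_\tau\) chosen independently of \(E\). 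That corollary then gives \(k_T(\max\{1,\Gamma+n\cdot(1/n)\},n)=k_T(\Gamma+1,n)\leq N(\tau)+n\), where \(\Gamma+1\geq1\).

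Equivalently, one can apply part (1) with \(\delta=\norm T/n\) and \(\Gamma_S=\Gamma\norm T\) to get \(\norm{A_E}\leq(\Gamma+q/n)\norm T\leq(\Gamma+1)\norm T\) and rank at most \(N(\tau)+q\leq N(\tau)+n\), and then take the supremum over \(E\).

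The main care point is bookkeeping rather than any real obstacle. First, rank is bounded by \(\dim Z_\tau\) because \(S_\tau\) factors through \(Z_\tau\). Second, the normalization in \(k_T\) is relative to \(\norm T\), while part (1) uses absolute constants. Third, the case \(\dim E=0\) is handled by taking \(A=0\).
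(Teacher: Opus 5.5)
Your proof is correct and is essentially the paper's: part (1) applies the correction of Lemma~\ref{lem:finite-dimensional-correction} and then factors \(A_E\) through its own range, with the inclusion as decoder; part (2) applies part (1) with \(S=S_\tau\), \(\delta=\norm T/n\), \(\Gamma_S=\Gamma\norm T\), and your route through Corollary~\ref{cor:approximate-to-exact-rank} is the same argument in different packaging. The only caveat concerns your optional factorization through \(Z\oplus\R^q\): it gives \(\dim Z_E=\dim Z+q\), which meets the bound \(m+q\) only when \(\dim Z\leq m\) (true for \(Z_\tau\) in part (2), but not implied by \(\operatorname{rank}S\leq m\) alone), so your remark that the estimates follow either way needs that qualification.
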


\begin{proof}
    For the first assertion, by Lemma~\ref{lem:finite-dimensional-correction}, we obtain an operator \(A_E\) with the required agreement, rank, and norm bounds.
    Let \(Z_E=A_E(X_0)\) with its norm inherited from \(X_1\), let \(C_Ex=A_Ex\), and let \(D_E\) be the inclusion.
    Since \(D_EC_E=A_E\), we obtain the required encoder and decoder.
    For the second assertion, let \(E\subseteq X_0\) have dimension \(1\leq q\leq n\).
    By the first assertion with \(S=S_\tau\), \(\delta=\norm T/n\), \(m=N(\tau)\), and \(\Gamma_S=\Gamma\norm T\), we obtain \(A_E|_E=T|_E\), \(\operatorname{rank}A_E\leq N(\tau)+q\), and \(\norm{A_E}\leq(\Gamma+q/n)\norm T\).
    Therefore, by the definition of \(k_T\), we obtain the asserted rank bound.
\end{proof}

The next corollary converts an algebraic operator-norm approximation rate into a bound for \(k_T\).
\begin{corollary}\label{cor:algebraic-stable-family-exactification}
    Let \(T\in\cB(X_0,X_1)\).
    Suppose that \((S_h)_{0<h\leq1}\), where \(S_h=D_hC_h\), satisfies
    \[ \dim Z_h\leq C_{\mathrm{app}}h^{-a},\qquad \norm{S_h}\leq\Gamma_{\mathrm{app}}\norm{T},\qquad \norm{T-S_h}\leq C_{\mathrm{err}}h^s\norm{T}, \]
    where \(C_{\mathrm{app}},C_{\mathrm{err}},a,s>0\).
    Then, for every \(n\geq1\), we have
    \[ k_T(\Gamma_{\mathrm{app}}+1,n) \leq C_{\mathrm{app}}2^a(1+C_{\mathrm{err}}n)^{a/s}+n. \]
\end{corollary}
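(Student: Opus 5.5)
The plan is to apply Proposition~\ref{prop:stable-family-exactification}(2) to the one-parameter family \((S_h)_{0<h\leq1}\). We choose \(h\) so that the relative error satisfies \(\varepsilon(h)=C_{\mathrm{err}}h^s\leq 1/n\), and then bound \(\dim Z_h\) by the prescribed function of \(h\). Proposition~\ref{prop:stable-family-exactification}(2) then gives \(k_T(\Gamma_{\mathrm{app}}+1,n)\leq \dim Z_h+n\) for any admissible \(h\). So the whole task is to pick one explicit admissible \(h\in(0,1]\) and estimate \(C_{\mathrm{app}}h^{-a}\).

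First, the constraint \(h\leq1\) prevents us from simply taking \(h=(C_{\mathrm{err}}n)^{-1/s}\), since that value may exceed \(1\). To avoid a case split, we set
\[ h_n=\frac12\,(1+C_{\mathrm{err}}n)^{-1/s}, \]
which lies in \((0,1]\) because \(1+C_{\mathrm{err}}n\geq1\). The factor \(1/2\) is harmless and produces exactly the constant \(2^a\) in the claim.

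Next, we check the error condition:
\[ C_{\mathrm{err}}h_n^s=2^{-s}\frac{C_{\mathrm{err}}}{1+C_{\mathrm{err}}n}\leq \frac{C_{\mathrm{err}}}{1+C_{\mathrm{err}}n}\leq\frac1n. \]
The last inequality is equivalent to \(nC_{\mathrm{err}}\leq 1+C_{\mathrm{err}}n\), which always holds. Hence \(\norm{T-S_{h_n}}\leq n^{-1}\norm T\), so the hypotheses of Proposition~\ref{prop:stable-family-exactification}(2) hold with \(\Gamma=\Gamma_{\mathrm{app}}\), \(\varepsilon(h_n)\leq1/n\), and \(N(h_n)=\lfloor C_{\mathrm{app}}h_n^{-a}\rfloor\).

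Finally, we compute \(C_{\mathrm{app}}h_n^{-a}=C_{\mathrm{app}}2^a(1+C_{\mathrm{err}}n)^{a/s}\). This gives
\[ k_T(\Gamma_{\mathrm{app}}+1,n)\leq N(h_n)+n\leq C_{\mathrm{app}}2^a(1+C_{\mathrm{err}}n)^{a/s}+n, \]
which is the claim.

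The only genuine care point is keeping \(h\) inside the parameter range \((0,1]\) while still forcing the relative error below \(1/n\). The choice of \(h_n\) with the \(1\) added inside the power handles both requirements at once. If one prefers the sharper choice \(h=\min\{1,(C_{\mathrm{err}}n)^{-1/s}\}\), the same bound follows after splitting into the cases \(C_{\mathrm{err}}n\leq1\) and \(C_{\mathrm{err}}n>1\). The proof will just use \(h_n\) as defined above.
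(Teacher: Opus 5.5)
Your proof is correct and follows the paper's argument. Both proofs pick one \(h_n\in(0,1]\) with \(C_{\mathrm{err}}h_n^s\leq 1/n\) and \(h_n^{-a}\leq 2^a(1+C_{\mathrm{err}}n)^{a/s}\), then invoke Proposition~\ref{prop:stable-family-exactification}(2). The only difference is cosmetic: the paper takes a dyadic \(h_n=2^{-J}\) with \(h_n\leq(1+C_{\mathrm{err}}n)^{-1/s}<2h_n\), while you take \(h_n=\tfrac12(1+C_{\mathrm{err}}n)^{-1/s}\) directly, which gives the same bound.
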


\begin{proof}
    We choose \(h_n=2^{-J}\) such that \(h_n\leq(1+C_{\mathrm{err}}n)^{-1/s}<2h_n\).
    Thus \(C_{\mathrm{err}}h_n^s\leq1/n\) and \(\operatorname{rank}S_{h_n}\leq\dim Z_{h_n}\leq C_{\mathrm{app}}2^a(1+C_{\mathrm{err}}n)^{a/s}\).
    By applying Proposition~\ref{prop:stable-family-exactification} with \(\tau=h_n\), we obtain the result.
\end{proof}

\begin{remark}
    If \(T\ne0\) and \(\norm{A_E}\leq M\) uniformly in \(E\), we may take \(\max\{1,M/\norm T\}\) as the normalized constant in \(k_T\) of Definition~\ref{def:operator-UAP-rank}.
\end{remark}

For wavelet truncation, the coefficient estimates give uniform bounds for the encoder and decoder and relate the truncation level to the approximation error.
By Corollary~\ref{cor:algebraic-stable-family-exactification}, these estimates give an operator rank bound.
Let \(T:H^{s_0}(\mathbb T^d)\hookrightarrow H^{s_1}(\mathbb T^d)\), where \(0\leq s_1<s_0<\infty\), and let \((\psi_\lambda)\) be the periodized tensor-product Meyer orthonormal basis, including the constant function at the coarse level.
For every finite \(s\geq0\), by \cite[Sections 2--4, Theorems 3 and 4]{Meyer1987}, we have
\[ \norm f_{H^s}^2\asymp\sum_\lambda2^{2s|\lambda|}|c_\lambda(f)|^2, \qquad c_\lambda(f)=\langle f,\psi_\lambda\rangle_{L_2}, \]
where \(|\lambda|\) is the dyadic level and is zero for the coarse term.
Put \(C_Jf=(c_\lambda(f))_{|\lambda|\leq J}\), \(D_Ja=\sum_{|\lambda|\leq J}a_\lambda\psi_\lambda\), and \(P_J=D_JC_J\).
Equip the coefficient space \(Z_J\) with \(\norm a_{Z_J}^2=\sum_{|\lambda|\leq J}2^{2s_0|\lambda|}|a_\lambda|^2\).
By the norm equivalence above and \(s_1<s_0\), the maps \(C_J:H^{s_0}\to Z_J\) and \(D_J:Z_J\to H^{s_1}\) are bounded uniformly in \(J\).
Since there are \(O(2^{jd})\) wavelets at level \(j\), and \(2^{2s_1|\lambda|}\leq2^{-2J(s_0-s_1)}2^{2s_0|\lambda|}\) for \(|\lambda|>J\), we obtain \(\operatorname{rank}P_J\lesssim2^{Jd}\), and \(\norm{T-P_J}_{H^{s_0}\to H^{s_1}}\lesssim2^{-J(s_0-s_1)}.\)

These constructions give finite-dimensional encoders and decoders with norm, dimension, and error bounds.
In Section~\ref{sec:operator-learning-nn-complexity}, we use these bounds together with compact-set complexity and the target gauge to estimate the error of an encoder--network--decoder approximation.

    \section{Stable reductions and neural approximation of operators}\label{sec:operator-learning-nn-complexity}

We now use compact-set complexity, the target gauge, and stable recovery to construct an encoder--network--decoder approximation with a prescribed error bound.
The widths determine the model dimensions, the gauge determines the required input accuracy, and the UAP rank functions bound the ranks of the linear reductions.
We then approximate the reduced map by a finite-dimensional neural network.
For \(T_F=LFS\), we impose the rank conditions on \(S,L\), so the construction does not require the UAP for all the spaces in the factorization.
A Barron representation supplies input functionals and output vectors directly, and its sampling bounds give a further neural approximation estimate.

\subsection{Error bounds for fixed encoders and decoders}\label{subsec:operator-to-functional-reduction}

We first fix the coordinate maps and separate input recovery, output recovery, and the decoded network error.
We then use widths and UAP rank bounds to choose the linear maps with the required recovery errors.

Let \(K\Subset X\), and let \(T:X\to Y\) be continuous.
Fix input maps \(C_m^X:X\to\ell_\infty^m\), \(R_m^X:\ell_\infty^m\to X\), and output maps \(C_q^Y:Y\to\ell_\infty^q\), \(R_q^Y:\ell_\infty^q\to Y\).
We use the notation \(D_m=C_m^XK\), \(K_m=K\cup R_m^XC_m^XK\), and \(\Phi_{m,q}=C_q^YTR_m^X\).
The two reconstruction errors are
\[ \gamma_X(K,m)=\sup_{x\in K}\norm{x-R_m^XC_m^Xx}_X\quad \text{and}\quad \gamma_Y(T,K;m,q)=\sup_{x\in K} \norm{T(R_m^XC_m^Xx)-R_q^YC_q^YT(R_m^XC_m^Xx)}_Y. \]
Since the set \(K_m\) is compact, we know that the local modulus \(\omega_{T,K_m}\) vanishes at zero.

Put \(\kappa_{q,2}=\norm{R_q^Y:\ell_2^q\to Y}\).
Since \(B_{\ell_2^q}\subseteq B_{\ell_\infty^q}\), we have \(\kappa_{q,2}\leq\norm{R_q^Y:\ell_\infty^q\to Y}\).

\begin{proposition}\label{prop:operator-vector-reduction}\label{prop:master-coordinate-complexity}
    For every map \(N:\R^m\to\R^q\), we have
    \begin{align*}
        \sup_{x\in K}\norm{T(x)-R_q^YN(C_m^Xx)}_Y \leq \omega_{T,K_m}(\gamma_X(K,m))+\gamma_Y(T,K;m,q)+\sup_{z\in D_m}\norm{R_q^Y(\Phi_{m,q}(z)-N(z))}_Y.
    \end{align*}
    If \(T\) has a global gauge \(\omega_T\), the first term may be replaced by \(\omega_T(\gamma_X(K,m))\).
    Write \(\Phi_{m,q}=(\phi_1,\ldots,\phi_q)\).
    If \(N=(N_1,\ldots,N_q)\) satisfies \(\sup_{z\in D_m}\abs{\phi_j(z)-N_j(z)}\leq\eta_j\) for \(1\leq j\leq q\), then the last term is at most \(\kappa_{q,2}(\sum_{j=1}^q\eta_j^2)^{1/2}\).
\end{proposition}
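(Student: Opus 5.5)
The proof is a three-term telescoping decomposition with the triangle inequality. Fix \(x\in K\) and write \(\tilde x=R_m^XC_m^Xx\) and \(z=C_m^Xx\in D_m\). Then \(R_q^Y\Phi_{m,q}(z)=R_q^YC_q^YT(\tilde x)\), and
\[ T(x)-R_q^YN(z)=\bigl(T(x)-T(\tilde x)\bigr)+\bigl(T(\tilde x)-R_q^YC_q^YT(\tilde x)\bigr)+R_q^Y\bigl(\Phi_{m,q}(z)-N(z)\bigr). \]
We bound the three terms separately and then take the supremum over \(x\in K\).

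For the first term, both \(x\) and \(\tilde x\) lie in \(K_m\), and \(\norm{x-\tilde x}\leq\gamma_X(K,m)\). By the definition of the local modulus, \(\norm{T(x)-T(\tilde x)}\leq\omega_{T,K_m}(\gamma_X(K,m))\). If \(T\) has a global gauge, the same step with \(\omega_T\) in place of \(\omega_{T,K_m}\) gives the replacement; note that \(\omega_{T,K_m}\leq\omega_T\). The second term is bounded by \(\gamma_Y(T,K;m,q)\) directly from its definition, because the supremum there runs over the same points \(\tilde x\). The third term is bounded by \(\sup_{z\in D_m}\norm{R_q^Y(\Phi_{m,q}(z)-N(z))}_Y\), since \(z\in D_m\).

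For the coordinatewise refinement, set \(v=\Phi_{m,q}(z)-N(z)\in\R^q\). Its entries satisfy \(\abs{v_j}\leq\eta_j\), so \(\norm v_{\ell_2^q}\leq(\sum_j\eta_j^2)^{1/2}\). Viewing \(R_q^Y\) as a linear map on \(\ell_2^q\), we get \(\norm{R_q^Yv}_Y\leq\kappa_{q,2}\norm v_{\ell_2^q}\), which gives the stated bound. Here we use that \(R_q^Y\) is linear, as the norm notation \(\kappa_{q,2}\) presupposes.

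No step is a real obstacle. The only point needing care is to define the output recovery error at the reconstructed inputs \(\tilde x\) and not at \(x\), so that the second term matches \(\gamma_Y\) exactly.
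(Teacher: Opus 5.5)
Your proof is correct and is the paper's own argument: the three-term identity through \(\tilde x=R_m^XC_m^Xx\), with the terms bounded in turn by the local modulus on \(K_m\), by the definition of \(\gamma_Y\) at the reconstructed inputs, and by the supremum over \(D_m\), followed by \(\norm{R_q^Yv}_Y\leq\kappa_{q,2}\norm{v}_2\) for the coordinatewise bound. One small precision is that linearity of \(R_q^Y\) is already needed in the three-term identity itself, to write \(R_q^YC_q^YT(\tilde x)-R_q^YN(z)=R_q^Y(\Phi_{m,q}(z)-N(z))\), and not only in the coordinatewise estimate.
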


\begin{proof}
    Let \(x\in K\), and put \(z=C_m^Xx\) and \(x_m=R_m^Xz\).
    Then we have
    \[ T(x)-R_q^YN(z)=T(x)-T(x_m)+T(x_m)-R_q^YC_q^YT(x_m) +R_q^Y(\Phi_{m,q}(z)-N(z)). \]
    By the definition of the local modulus, we obtain the stated bound for the first term.
    By the definition of the output reconstruction error, we also obtain the bound for the second term.
    Thus, by the triangle inequality and by taking the supremum over \(x\in K\), we obtain the first estimate.
    Since \(\norm{R_q^Yv}_Y\leq\kappa_{q,2}\norm v_2\), we also obtain the coordinate estimate.
\end{proof}

The last term of this error estimate is measured in the output norm.
For \(q\geq1\) and \(\kappa_{q,2}>0\), by choosing \(\eta_j=\eta/(\kappa_{q,2}\sqrt q)\), we make this term at most \(\eta\).

Let \(T_F=LFS:E\to X\), where \(S:E\to Z\) and \(L:Y\to X\) are bounded linear maps, and \(F:Z\to Y\) has a global vanishing gauge \(\omega_F\).
Fix \(K\Subset E\).
Since \(Su=Sv\) implies \(T_F(u)=T_F(v)\), the target depends on the input through the linear factor.
More generally, \(\norm{T_F(u)-T_F(v)}_X\leq\norm L\,\omega_F(\norm{S(u-v)}_Z)\).
Thus we approximate \(S\) on \(K\) and \(L\) on the compact image \(F(SK)\), so the recovery assumptions concern these two factors.
Write finite-rank approximations of the linear factors as \(\widetilde S=UC\) and \(\widetilde L=RV\), through \(\R^m\) and \(\R^q\), respectively, and put \(\Phi_F=VFU\).
Let \(N_F:\R^m\to\R^q\).
Put \(H=F(SK)\).
By the identity
\( T_F(u)-\widetilde L F(\widetilde Su)=(L-\widetilde L)F(Su)+\widetilde L\bigl(F(Su)-F(\widetilde Su)\bigr), \)
we have
\begin{align*}
    &\sup_{u\in K}\norm{T_F(u)-RN_F(Cu)}_X\\*
    \leq&\norm{\widetilde L}\omega_F\!\left(\sup_{u\in K}\norm{Su-\widetilde Su}_Z\right) +\sup_{y\in H}\norm{Ly-\widetilde Ly}_X +\sup_{z\in CK}\norm{R(\Phi_F(z)-N_F(z))}_X.
\end{align*}
Let \(\Lambda_S,\Lambda_L\geq1\), and fix \(n,r\in\N\) such that \(k_S(\Lambda_S,n)\) and \(k_L(\Lambda_L,r)\) are finite.
By Corollary~\ref{prop:operator-UAP-input-bound}(1), for every \(\varepsilon>0\), we can choose these maps so that
\begin{align*}
    \sup_{u\in K}\norm{Su-\widetilde Su}_Z &\leq(1+\Lambda_S)\norm S\,d_n(K;E)+\varepsilon,\\
    \sup_{y\in H}\norm{Ly-\widetilde Ly}_X &\leq(1+\Lambda_L)\norm L\,d_r(H;Y)+\varepsilon.
\end{align*}
We may take \(m\leq k_S(\Lambda_S,n)\) and \(q\leq k_L(\Lambda_L,r)\), with \(\norm{\widetilde S}\leq\Lambda_S\norm S\) and \(\norm{\widetilde L}\leq\Lambda_L\norm L\).

\begin{remark}\label{rem:factor-ideal-estimates}
    Let \(f,g\) be as in Definition~\ref{def:operator-ideal-gauge}, and suppose that \(\mathbf u_f(S),\mathbf u_g(L)<\infty\).
    For bounded linear maps \(A:E_0\to E\) and \(B:X\to X_0\), we have \(BT_FA=(BL)F(SA)\).
    By Proposition~\ref{prop:operator-ideal-gauge} and the definition of approximation numbers, we obtain
    \begin{align*}
        \mathbf u_f(SA)\leq\norm A\mathbf u_f(S),&\qquad \mathbf u_g(BL)\leq\norm B\,\mathbf u_g(L),\\
        a_j(SA)\leq\norm Aa_j(S),&\qquad  a_j(BL)\leq\norm B a_j(L).
    \end{align*}
    Thus the same functions \(f,g\) control the ranks after bounded linear changes of input and output.
    Polynomial upper bounds for the approximation numbers retain the same exponents.
\end{remark}

\begin{remark}\label{rem:factor-approximation-numbers}
    By Definition~\ref{def:operator-image-realization}, for \(m,q\in\N_+\) and \(0<\delta\leq1\), we can choose \(\widetilde S,\widetilde L\) of ranks at most \(m,q\) so that, on \(B_E\), the input and output reduction errors have sum at most
    \[ \norm L\,\omega_F(a_{m+1}(S)+\delta) +\bigl[\norm{F(0)}_Y+\omega_F(2\norm S+1)\bigr]\bigl[a_{q+1}(L)+\delta\bigr]. \]
    If \(\omega_F(t)\leq Bt^\beta\), with \(B>0\), \(0<\beta\leq1\), and \(a_j(S)\lesssim j^{-\alpha}\), \(a_j(L)\lesssim j^{-\gamma}\), with \(\alpha,\gamma>0\), this sum is \(O(m^{-\alpha\beta}+q^{-\gamma})\) for \(0<\delta\leq\min\{m^{-\alpha},q^{-\gamma},1\}\).

    Under the same H\"older condition, if \(S\) is compact, we have \(e_j(\overline{T_F(B_E)};X)\leq\norm L\,B\,e_j(S)^\beta\).
    By Corollary~\ref{cor:carl-polynomial}, a bound \(s_j(S)\lesssim j^{-s}\), \(s>0\), with \(s_j\in\{a_j,c_j,d_j\}\), therefore gives \(e_j(\overline{T_F(B_E)};X)\lesssim j^{-s\beta}\).
\end{remark}

\subsection{Recovery ranks for the linear factors}\label{subsec:compact-widths-uap-selected}\label{subsec:operator-level-summary}

A global vanishing gauge determines the required input accuracy for the rank selection below.
For a vanishing gauge \(\omega\) and \(\eta>0\), put
\[
    \delta_\omega(\eta)= \begin{cases}
        \frac12\sup\{t>0:\omega(t)<\eta\},&\text{if the supremum is finite},\\
        1,&\text{if the supremum is infinite}.
    \end{cases}
\]
Since \(\omega\) is nondecreasing and vanishes at zero, we have \(\delta_\omega(\eta)>0\) and \(\omega(\delta_\omega(\eta))<\eta\).

For \(K\Subset X\), \(\Lambda\geq1\), and \(\delta>0\), put \(n=r_K(\delta/(2(1+\Lambda)))\) and \(m=k_X(\Lambda,n)\), and suppose that \(m<\infty\).
By Proposition~\ref{prop:UAP-coder-realization} and \(d_n(K)\leq\delta/(2(1+\Lambda))\), there is \(A\in\cF(X)\) such that
\[ \operatorname{rank}A\leq m,\qquad \norm A\leq\Lambda,\qquad \sup_{x\in K}\norm{x-Ax}<\delta. \]

For any \(A\in\cF(X,Y)\) with \(d=\dim A(X)\leq m\), choose an Auerbach system \((u_i,u_i^*)_{i=1}^d\) in \(A(X)\).
Put \(Cx=(u_1^*(Ax),\ldots,u_d^*(Ax),0,\ldots,0)\) and \(Ra=\sum_{i=1}^d a_i u_i\).
Then \(RC=A\), \(\norm{C:X\to\ell_\infty^m}\leq\norm A\), and \(\norm{R:\ell_\infty^m\to Y}\leq d\leq m\).
By the Cauchy--Schwarz inequality, \(\norm{R:\ell_2^m\to Y}\leq\sqrt d\leq\sqrt m\).

We now choose finite-rank approximations of \(S\) and \(L\) in \(T_F=LFS\) for the prescribed error tolerances.
We first choose the output recovery on \(F(SK)\), then use its norm bound and the gauge to determine the input accuracy.
\begin{theorem}\label{thm:UAP-profile-operator-template}
    Let \(K\Subset E\) be nonempty, let \(S:E\to Z\) and \(L:Y\to X\) be nonzero bounded linear maps, and let \(F:Z\to Y\) have a global vanishing gauge \(\omega_F\).
    Let \(\Lambda_S,\Lambda_L\geq1\), and assume that \(k_S(\Lambda_S,n)\) and \(k_L(\Lambda_L,n)\) are finite for every \(n\in\N\).
    Fix positive errors \(\varepsilon_{\rm in},\varepsilon_{\rm out},\eta\), and put \(H=F(SK)\), \(n_L=r_H(\varepsilon_{\rm out}/[2(1+\Lambda_L)\norm L])\), and \(q=k_L(\Lambda_L,n_L)\).
    There is \(\widetilde L=RV\), through \(\ell_\infty^q\), with \(\norm{\widetilde L}\leq\Lambda_L\norm L\) and
    \[ \norm V\leq\Lambda_L\norm L,\qquad \norm R\leq q,\qquad \sup_{y\in H}\norm{Ly-\widetilde Ly}_X<\varepsilon_{\rm out}. \]
    Moreover, \(\norm{R:\ell_2^q\to X}\leq\sqrt q\).
    Put \(\delta_S=\delta_{\omega_F}(\varepsilon_{\rm in}/[\Lambda_L\norm L])\), \(n_S=r_K(\delta_S/[2(1+\Lambda_S)\norm S])\), and \(m=k_S(\Lambda_S,n_S)\).
    There is \(\widetilde S=UC\), through \(\ell_\infty^m\), with \(\norm{\widetilde S}\leq\Lambda_S\norm S\) and
    \[ \norm C\leq\Lambda_S\norm S,\qquad \norm U\leq m,\qquad \sup_{x\in K}\norm{Sx-\widetilde Sx}_Z<\delta_S. \]

    Put \(\Phi_F=VFU\).
    If \(N:\R^m\to\R^q\) satisfies \(\sup_{z\in CK}\norm{R(\Phi_F(z)-N(z))}_X\leq\eta\), then
    \[ \sup_{x\in K}\norm{T_F(x)-RN(Cx)}_X <\varepsilon_{\rm in}+\varepsilon_{\rm out}+\eta. \]
\end{theorem}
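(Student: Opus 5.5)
The proof assembles results already in place: Corollary~\ref{prop:operator-UAP-input-bound}(1) gives the two finite-rank approximations, the Auerbach factorization preceding the theorem gives the coordinate maps, and the three-term error identity for \(T_F\) gives the final bound. I treat the output factor first, then the input factor, then combine.

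\emph{Output factor.} Since \(S\) is bounded and \(F\) is continuous, \(H=F(SK)\) is compact and nonempty. Put \(\varepsilon'=\varepsilon_{\rm out}/[2(1+\Lambda_L)\norm L]\). Because \(d_n(H)\to0\), \(n_L=r_H(\varepsilon')\) is finite and \(d_{n_L}(H)\leq\varepsilon'\). By hypothesis \(q=k_L(\Lambda_L,n_L)<\infty\).

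Fix \(\zeta>0\) and choose \(E_L\subseteq Y\) with \(\dim E_L\leq n_L\) and \(\sup_{y\in H}\dist(y,E_L)<\varepsilon'+\zeta\). The definition of \(k_L\) gives \(\widetilde L\in\cF(Y,X)\) with \(\widetilde L|_{E_L}=L|_{E_L}\), \(\norm{\widetilde L}\leq\Lambda_L\norm L\) and \(\operatorname{rank}\widetilde L\leq q\). For \(y\in H\), pick \(e\in E_L\) close to \(y\); then
\[
\norm{Ly-\widetilde Ly}\leq(\norm L+\norm{\widetilde L})\norm{y-e}<(1+\Lambda_L)\norm L(\varepsilon'+\zeta).
\]
Choosing \(\zeta\) small makes this at most \(\varepsilon_{\rm out}/2+\text{small}<\varepsilon_{\rm out}\), which gives the strict inequality.

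Now factor \(\widetilde L=RV\) via the Auerbach system on \(\widetilde L(Y)\). This gives \(\norm{V:Y\to\ell_\infty^q}\leq\norm{\widetilde L}\leq\Lambda_L\norm L\), \(\norm{R:\ell_\infty^q\to X}\leq d\leq q\), and \(\norm{R:\ell_2^q\to X}\leq\sqrt q\). Padding with zeros handles the case \(d<q\).

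\emph{Input factor.} The argument is the same. Here \(\delta_S>0\) and \(\omega_F(\delta_S)<\varepsilon_{\rm in}/(\Lambda_L\norm L)\). Put \(n_S=r_K(\delta_S/[2(1+\Lambda_S)\norm S])\) and \(m=k_S(\Lambda_S,n_S)\). The same extension argument gives \(\widetilde S\) with
\[
\sup_{x\in K}\norm{Sx-\widetilde Sx}\leq(1+\Lambda_S)\norm S(\text{width}+\zeta)<\delta_S,
\]
together with the factorization \(\widetilde S=UC\) and the stated bounds on \(C\) and \(U\).

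\emph{Error estimate.} Using the identity
\[
T_F(x)-RN(Cx)=(L-\widetilde L)F(Sx)+\widetilde L\bigl(F(Sx)-F(\widetilde Sx)\bigr)+R\bigl(\Phi_F(Cx)-N(Cx)\bigr),
\]
the three terms are bounded as follows.
\begin{itemize}
\item The first term is less than \(\varepsilon_{\rm out}\), since \(F(Sx)\in H\).
\item The second term is at most \(\Lambda_L\norm L\,\omega_F(\norm{Sx-\widetilde Sx})\leq\Lambda_L\norm L\,\omega_F(\delta_S)<\varepsilon_{\rm in}\), using monotonicity of \(\omega_F\).
\item For the third term, note \(\widetilde L F(\widetilde Sx)=RVFUCx=R\Phi_F(Cx)\). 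This term is therefore at most \(\eta\) by hypothesis.
\end{itemize}
The bounds hold for each \(x\in K\), and the strict inequalities survive the supremum because \(K\) is compact.

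The only delicate point is keeping the inequalities strict. The widths are attained only as infima, and the supremum over \(K\) could erase strictness. Both issues are handled by the factor \(1/2\) built into the choices of \(n_L\) and \(n_S\), which leaves slack for \(\zeta\), and by compactness of \(K\). A degenerate case is \(q=0\) or \(m=0\): then \(\widetilde L=0\) or \(\widetilde S=0\), and the same bounds hold trivially.
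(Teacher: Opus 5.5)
Your proof is correct and follows the paper's route: you re-derive the estimate of Corollary~\ref{prop:operator-UAP-input-bound}(1) for \(L\) on \(H\) and for \(S\) on \(K\), factor through \(\ell_\infty\) by the Auerbach construction, and close with the three-term decomposition and \(\Lambda_L\norm L\,\omega_F(\delta_S)<\varepsilon_{\rm in}\). One small correction: the inequality stays strict after taking the supremum because of your uniform constants \((1+\Lambda_L)\norm L(\varepsilon'+\zeta)<\varepsilon_{\rm out}\) and \(\Lambda_L\norm L\,\omega_F(\delta_S)<\varepsilon_{\rm in}\), not because \(K\) is compact, since \(N\) need not be continuous and so the supremum need not be attained.
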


\begin{proof}
    Since \(H=F(SK)\) is compact, Corollary~\ref{prop:operator-UAP-input-bound}(1) and the choice of \(n_L\) give \(\widetilde L\) with the stated norm and rank bounds and error less than \(\varepsilon_{\rm out}\).
    By the same corollary and the choice of \(n_S\), we can select \(\widetilde S\) with the stated norm and rank bounds and error less than \(\delta_S\).
    By the preceding Auerbach construction, we obtain the coordinate maps and their norm bounds.
    Since \(\norm{\widetilde L}\omega_F(\delta_S)\leq\Lambda_L\norm L\,\omega_F(\delta_S)<\varepsilon_{\rm in}\), the error decomposition in Section~\ref{subsec:operator-to-functional-reduction} gives the result.
\end{proof}

For \(S=I_E\) and \(L=I_X\), this theorem gives the corresponding space UAP estimate.
By a direct calculation and Proposition~\ref{prop:polynomial-rate-inversion}, we obtain the following rank bounds.
\begin{corollary}\label{cor:uap-operator-polynomial-ranks}
    Assume the hypotheses of Theorem~\ref{thm:UAP-profile-operator-template} with \(\omega_F(t)=B_Ft^\beta\), where \(B_F>0\) and \(0<\beta\leq1\).
    Suppose that, for every \(j\in\N_+\), we have
    \[ d_j(K)\leq A_Kj^{-\sigma_S},\quad k_S(\Lambda_S,j)\leq B_Sj^{\gamma_S},\quad d_j(H)\leq A_Hj^{-\sigma_L},\quad k_L(\Lambda_L,j)\leq B_Lj^{\gamma_L}, \]
    where \(H=F(SK)\) and all displayed constants are positive.
    Put
    \begin{align*}
        \overline n_S=\left\lceil\left[4(1+\Lambda_S)\norm S A_K\left(\frac{\Lambda_L\norm L B_F}{\varepsilon_{\rm in}}\right)^{1/\beta}\right]^{1/\sigma_S}\right\rceil,\qquad
        \overline n_L=\left\lceil\left[\frac{2(1+\Lambda_L)\norm L A_H}{\varepsilon_{\rm out}}\right]^{1/\sigma_L}\right\rceil.
    \end{align*}
    Then \(n_S\leq\overline n_S\), \(n_L\leq\overline n_L\), and \(m\leq B_S\overline n_S^{\gamma_S}\), \(q\leq B_L\overline n_L^{\gamma_L}\).
\end{corollary}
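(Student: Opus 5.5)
The plan is to apply Proposition~\ref{prop:polynomial-rate-inversion}(1) to each width bound, using explicit lower bounds for the target accuracies. Then the definition of \(r_K\) gives \(r_K(\varepsilon)\leq n_\varepsilon\). Finally, monotonicity of the rank functions in \(n\) transfers the bounds to \(m\) and \(q\).

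We begin with the output side, which is direct. By definition, \(n_L=r_H(\varepsilon_{\rm out}/[2(1+\Lambda_L)\norm L])\). Proposition~\ref{prop:polynomial-rate-inversion}(1), applied with \(A=A_H\), \(s=\sigma_L\) and \(\varepsilon=\varepsilon_{\rm out}/[2(1+\Lambda_L)\norm L]\), gives \(d_{\overline n_L}(H)\leq\varepsilon\). Since \(r_H(\varepsilon)\) is the least such index, \(n_L\leq\overline n_L\).

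For the input side, the key step is a lower bound on \(\delta_S=\delta_{\omega_F}(\varepsilon_{\rm in}/[\Lambda_L\norm L])\). Put \(\theta=\varepsilon_{\rm in}/[\Lambda_L\norm L]\). If the supremum in the definition of \(\delta_{\omega_F}\) is infinite, then \(\delta_S=1\). Otherwise, every \(t<(\theta/B_F)^{1/\beta}\) satisfies \(\omega_F(t)=B_Ft^\beta<\theta\). Hence the supremum is at least \(t_0=(\theta/B_F)^{1/\beta}\), and \(\delta_S\geq t_0/2\). The infinite case cannot occur for an unbounded power gauge, so in all cases \(\delta_S\geq\frac12(\varepsilon_{\rm in}/[\Lambda_L\norm LB_F])^{1/\beta}\).

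The accuracy used for \(n_S\) is therefore
\[
\frac{\delta_S}{2(1+\Lambda_S)\norm S}\geq\left[4(1+\Lambda_S)\norm S\left(\frac{\Lambda_L\norm LB_F}{\varepsilon_{\rm in}}\right)^{1/\beta}\right]^{-1}=:\varepsilon_S .
\]
Proposition~\ref{prop:polynomial-rate-inversion}(1) with \(A=A_K\), \(s=\sigma_S\) and \(\varepsilon=\varepsilon_S\) yields \(d_{\overline n_S}(K)\leq\varepsilon_S\leq\delta_S/[2(1+\Lambda_S)\norm S]\). Hence \(n_S\leq\overline n_S\).

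For the ranks, note that \(k_S(\Lambda_S,\cdot)\) is nondecreasing in \(n\), because it is a supremum over a larger family of subspaces. This gives \(m=k_S(\Lambda_S,n_S)\leq k_S(\Lambda_S,\overline n_S)\leq B_S\overline n_S^{\gamma_S}\), using \(\overline n_S\geq1\). The same argument gives \(q\leq B_L\overline n_L^{\gamma_L}\).

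The only delicate point is the degenerate case \(n_S=0\) or \(n_L=0\), which arises when the width already vanishes at index \(0\). In that case the bound on \(n\) is trivial. The rank is \(k(\Lambda,0)=0\), since \(A=0\) agrees with the operator on \(\{0\}\), so the rank bound is also trivial. The main obstacle, mild as it is, is the bookkeeping of the factor \(4=2\cdot2\) coming from the \(\tfrac12\) in \(\delta_{\omega}\).
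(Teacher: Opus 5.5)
Your proposal is correct and is exactly the ``direct calculation and Proposition~\ref{prop:polynomial-rate-inversion}'' argument the paper invokes. You compute \(\delta_S=\tfrac12\bigl(\varepsilon_{\rm in}/[\Lambda_L\norm L B_F]\bigr)^{1/\beta}\) (your lower bound is in fact an equality), apply part~(1) of that proposition so that \(d_{\overline n_S}(K)\) and \(d_{\overline n_L}(H)\) fall below the required tolerances, and use monotonicity of \(k_S(\Lambda_S,\cdot)\) and \(k_L(\Lambda_L,\cdot)\) in \(n\) to pass to \(m\) and \(q\); that monotonicity already covers \(n_S=0\) or \(n_L=0\), so your separate degenerate case is harmless but unnecessary.
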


The compact set \(H=F(SK)\) and the output rank bound are independent of the input recovery.

\subsection{Neural parameter bounds for the reduced map}\label{subsec:neural-parameter-bounds}\label{subsec:gauge-baseline-functional-learning}

We use the factorizations of \(\widetilde S\) and \(\widetilde L\) to choose coordinates for the network.
Since \(\Phi_F=VFU\), we have \(\norm{\Phi_F(z)-\Phi_F(w)}_\infty\leq\norm V\,\omega_F(\norm U\,\norm{z-w}_\infty)\).
The coordinate norms may depend on the dimensions; they control the radius of \(CK\), the gauge of \(\Phi_F\), and the decoded error.
We apply the gauge-preserving extension and quantitative ReLU estimate in Appendix~\ref{app:reduced-map-modules} to approximate each coordinate.

Write \(\Phi_F=(\phi_1,\ldots,\phi_q)\), put \(D_m=CK\), and let \(\kappa_{q,2}=\norm{R:\ell_2^q\to X}\leq\sqrt q\).
Assume \(m,q\geq1\) and \(\kappa_{q,2}>0\) in the parameter formulas below.
Each coordinate of \(\Phi_F\) has the global gauge \(\Omega(t)=\Lambda_L\norm L\,\omega_F(mt)\).
If \(\omega_F(t)=B_Ft^\beta\), we may use the common H\"older constant \(B_\Phi=\Lambda_L\norm L\,B_Fm^\beta\).
In the formulas below, \(Q_m(B)=[-B,B]^m\), \(R_K=\sup_{x\in K}\norm x_E\), and \(a_m,c_m,W_m\) are the constants from Proposition~\ref{prop:quantitative-continuous-relu}.

Let \(\eta_j\) denote the scalar network errors.
By Proposition~\ref{prop:master-coordinate-complexity}, we obtain a bound for the decoded error.
We sum the parameter counts of the scalar networks.

The parameter counts below include all scalar weight and bias slots in the inner ReLU networks and their combination into a vector-valued network, including slots whose chosen values are zero.
For prescribed parameter bounds for each coordinate, the inner graph is fixed independently of the target functions.
We count the parameters used to represent \(C,U,V,R\) only when stated.

We use \(\mathcal R(t)=\min\{w\in\N_+:w\log(e+w)\geq t\}\) from \eqref{eq:relu-budget-inversion}.
Suppose \(D_m\subseteq Q_m(B)\), and let \(\eta_j>0\) be the coordinate errors.
Let \(\Omega_j\) be a vanishing gauge of \(\phi_j\) on \(Q_m(B)\).
By \eqref{eq:relu-modulus-estimate}, we have the total parameter bound
\[ \sum_{j=1}^q \max\left\{ W_m,\, \mathcal R\!\left( \left( \frac{2Bc_m}{\delta_{\Omega_j}(\eta_j/a_m)} \right)^m \right) \right\}. \]
For each coordinate, this parameter bound makes the argument of \(\Omega_j\) in \eqref{eq:relu-modulus-estimate} at most \(\delta_{\Omega_j}(\eta_j/a_m)\).
Hence the coordinate error is smaller than \(\eta_j\).
If the coordinates are \(\beta\)-H\"older with common constant \(B_\Phi\) and errors \(\eta_j=\eta/(\kappa_{q,2}\sqrt q)\), then by \eqref{eq:relu-modulus-estimate} it is enough to use
\[ q\max\left\{ W_m,\, \mathcal R\!\left( \left( \frac{ a_mB_\Phi(2Bc_m)^\beta\kappa_{q,2}\sqrt q }{\eta} \right)^{m/\beta} \right) \right\} \]
parameters.
Put \(t=(\kappa_{q,2}\sqrt q/\eta)^{m/\beta}\).
By \eqref{eq:relu-budget-inversion}, the parameter bound for H\"older coordinates is \(O_m(q(1+t/\log(e+t)))\) when \(B,B_\Phi,\beta\) are fixed.

In Theorem~\ref{thm:UAP-profile-operator-template}, we take \(B=\Lambda_S\norm S R_K\) and select \(m\) and \(q\) from widths and UAP rank functions.
To obtain a bound involving only the final accuracy, we also need to control the dimension-dependent constants in the finite-dimensional approximation theorem.

In Proposition~\ref{prop:master-coordinate-complexity}, we can also use a scalar Barron approximation bound, provided that the chosen scalar coordinate of the reduced map has the required representation and the bound controls its uniform error.
Section~\ref{subsec:barron-profiles-vector-operators} treats direct representations of \(T\).

\subsection{Barron representations and sampling bounds}\label{subsec:barron-profiles-vector-operators}

For a target with a Barron representation, we construct an encoder--network--decoder approximation by sampling its input functionals and output vectors.
This construction does not require reconstructing the input compact set first.
We first describe the representation space and its norm, then estimate the number of samples and the resulting error.

Classical Barron and variation spaces are studied in \cite{Barron1993,MaWuE2022,SiegelXu2023Variation}.
Korolev~\cite{Korolev2021} considers Banach-valued networks with lattice positive-part activation.
Dummer, Heeringa, and Iglesias~\cite{DummerHeeringaIglesias2025} develop vector-valued integral representations and representer theorems.
Yang and Shi~\cite{YangShi2026} study approximation and generalization for encoder--decoder variation spaces between Hilbert spaces.

Let \(\sigma(t)=\max\{t,0\}\) and \(\mathcal P_X=B_{X^*}^{w^*}\times[-1,1]\).
\begin{definition}\label{def:intrinsic-vector-barron}
    An operator \(T:X\to Y\) is a vector-valued Barron operator with a Bochner-integrable density if, for a finite positive Radon measure \(\nu\) on \(\mathcal P_X\), we have \(T(x)=\int_{\mathcal P_X}\sigma(\ell(x)+b)y(\ell,b)\,d\nu(\ell,b)\) for \(x\in X\), where \(y:\mathcal P_X\to Y\) is Bochner measurable and \(\int_{\mathcal P_X}\norm{y(\theta)}_Y\,d\nu(\theta)<\infty\).
    We denote the class of these operators by \(\mathcal B_X(Y)\), and define
    \[ \norm{T}_{\mathcal B_X(Y)} =\inf_{\nu,y}\left\{ \int_{\mathcal P_X}\norm{y(\theta)}_Y\,d\nu(\theta): T(x)=\int_{\mathcal P_X}\sigma(\ell(x)+b)y(\ell,b)\,d\nu(\ell,b),\ x\in X \right\}. \]
\end{definition}

The nuclear representation below identifies the linear structure and completeness of \(\mathcal B_X(Y)\).
It also gives a vector-measure description when the output space has the Radon--Nikod\'ym property.
For \(x\in X\), put \(\varphi_x(\ell,b)=\sigma(\ell(x)+b)\).
By the representation theorem for nuclear operators \cite[Proposition~5.30]{Ryan2002}, \(T\in\mathcal B_X(Y)\) if and only if \(T(x)=Q\varphi_x\) for some nuclear operator \(Q:C(\mathcal P_X)\to Y\), and \(\norm{T}_{\mathcal B_X(Y)}=\inf\{\norm Q_{\mathrm{nuc}}:Q\varphi_x=T(x)\text{ for all }x\in X\}.\)
Since \(\norm{\varphi_x}_\infty\leq1+\norm x_X\), the map \(Q\mapsto(x\mapsto Q\varphi_x)\) has closed kernel.
Hence \(\mathcal B_X(Y)\) is a Banach quotient of the nuclear operator space.
If \(Y\) has the Radon--Nikod\'ym property, the same norm is \(\inf|\mu|(\mathcal P_X)\) over all \(Y\)-valued Radon measures of finite variation with \(T(x)=\int_{\mathcal P_X}\varphi_x\,d\mu\).

Proposition~\ref{prop:scalar-barron-finite-restrictions} describes the scalar representation norm through finite restrictions and its behaviour under linear reductions.
These properties allow finite-dimensional restrictions to identify a global representation with a controlled norm.
For a finite neural realization, the sampling estimate below supplies a bound on the number of hidden neurons and norm bounds for the resulting encoder and decoder.

The space \(Y\) has type \(p_Y\in(1,2]\) with constant \(T_{p_Y}(Y)\) if \[\mathbb E_\varepsilon\norm{\sum_{i=1}^N\varepsilon_i v_i}_Y^{p_Y} \leq T_{p_Y}(Y)^{p_Y}\sum_{i=1}^N\norm{v_i}_Y^{p_Y}.\]
The following proposition applies the standard sampling argument in spaces of nontrivial type; see~\cite{DonahueGurvitsDarkenSontag1997} and~\cite[Theorem~4.5]{NeufeldSchmocker2023}.
The uniform estimate also uses the bounded differences inequality~\cite{McDiarmid1989}.
\begin{proposition}\label{thm:type-p-vector-barron-sampling}
    Let \(K\Subset X\) be nonempty, and let \(Y\) have type \(p_Y\) with type constant \(T_{p_Y}(Y)\).
    Let \((\nu,y)\) be a representation of \(T:X\to Y\) with a Bochner-integrable density, and put \(B_0=\int_{\mathcal P_X}\norm{y(\theta)}_Y\,d\nu(\theta)>0\) and \(R_K=\sup_{x\in K}\norm x_X\).
    Define \(d\pi(\theta)=B_0^{-1}\norm{y(\theta)}_Y\,d\nu(\theta)\) and \(u(\theta)=y(\theta)/\norm{y(\theta)}_Y\) on \(\{y\ne0\}\).
    For independent samples \(\theta_i=(\ell_i,b_i)\) from \(\pi\), set
    \[ T_N(x)=\frac{B_0}{N}\sum_{i=1}^N\sigma(\ell_i(x)+b_i)u(\theta_i). \]
    With \(C_Nx=(\ell_i(x))_{i=1}^N\), \(\mathcal N_N(z)=(\sigma(z_i+b_i))_{i=1}^N\), and \(D_Na=(B_0/N)\sum_{i=1}^Na_i u(\theta_i)\), we have almost surely \(T_N=D_N\mathcal N_N C_N\), \(\norm{C_N:X\to\ell_\infty^N}\leq1\), and \(\norm{D_N:\ell_\infty^N\to Y}\leq B_0.\)
    For every Borel probability measure \(\lambda\) on \(K\), we have
    \[ \left(\mathbb E\norm{T-T_N}_{L^{p_Y}(\lambda;Y)}^{p_Y}\right)^{1/p_Y} \leq2T_{p_Y}(Y)B_0(R_K+1)N^{-(1-1/p_Y)}. \]
    For a fixed input, take \(\lambda=\delta_x\).
    For every \(\delta>0\), one deterministic sample satisfies
    \begin{align*}
        \sup_{x\in K}\norm{T(x)-T_N(x)}_Y \leq B_0(R_K+1)\left[ 2T_{p_Y}(Y)N^{-(1-1/p_Y)} +\sqrt{\frac{2\log\bigl(2N(K,(\delta/2)B_X)\bigr)}{N}}\right]+2B_0\delta.
    \end{align*}
\end{proposition}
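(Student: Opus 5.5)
The plan is the standard Maurey--Barron sampling argument. We first verify the structural claims, then symmetrize to get the \(L^{p_Y}\) bound, and finally use a finite net with a bounded differences concentration to get the uniform bound.

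\textbf{Structural claims.} Off the null set \(\{y=0\}\) (which has \(\pi\)-measure zero), we have \(T(x)=\mathbb E_\pi[B_0\sigma(\ell(x)+b)u(\theta)]\). So \(T_N(x)\) is an empirical mean of i.i.d.\ copies of \(Z_x(\theta)=B_0\varphi_x(\theta)u(\theta)\) with mean \(T(x)\). The factorization \(T_N=D_N\mathcal N_NC_N\) is immediate. Since \(\norm{\ell_i}\leq1\), we get \(\norm{C_N}\leq1\), and \(\norm{D_Na}\leq (B_0/N)\sum_i\abs{a_i}\leq B_0\norm a_\infty\). Also \(\norm{Z_x}\leq B_0(1+\norm x)\leq B_0(R_K+1)\) on \(K\).

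\textbf{\(L^{p_Y}\) bound.} Fix \(x\in K\). By symmetrization with an independent copy and Rademacher signs,
\[ \mathbb E\norm{T_N(x)-T(x)}^{p_Y}\leq 2^{p_Y}N^{-p_Y}\mathbb E\norm{\sum_i\varepsilon_iZ_x(\theta_i)}^{p_Y}\leq 2^{p_Y}T_{p_Y}(Y)^{p_Y}N^{1-p_Y}B_0^{p_Y}(R_K+1)^{p_Y}, \]
where the last step uses the type inequality conditionally on the samples. Integrating over \(\lambda\) with Fubini (joint measurability comes from Bochner measurability and continuity in \(x\)) and taking the \(p_Y\)-th root gives the stated rate. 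The case \(\lambda=\delta_x\) is the special case for a fixed input.

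\textbf{Uniform bound.} This is the main obstacle. Take a \(\delta/2\)-cover of \(K\) by \(M=N(K,(\delta/2)B_X)\) balls, and pick points \(x_j\in K\) within \(\delta\) of each ball's elements. The map \(x\mapsto\varphi_x\) is \(1\)-Lipschitz into \(C(\mathcal P_X)\), so both \(T\) and \(T_N\) are \(B_0\)-Lipschitz. Replacing any \(x\) by its \(x_j\) therefore costs at most \(2B_0\delta\). For each \(j\), the variable \(g_j=\norm{T_N(x_j)-T(x_j)}\) changes by at most \(2B_0(R_K+1)/N\) when one sample is changed. McDiarmid's inequality then gives
\[ \bP\bigl(g_j>\mathbb E g_j+s\bigr)\leq\exp\!\left(-\frac{Ns^2}{2B_0^2(R_K+1)^2}\right). \]
Also \(\mathbb E g_j\leq 2T_{p_Y}(Y)B_0(R_K+1)N^{-(1-1/p_Y)}\) by Jensen and the previous step. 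Choose \(s=B_0(R_K+1)\sqrt{2\log(2M)/N}\). A union bound over the \(M\) points gives failure probability at most \(M/(2M)<1\), so some deterministic sample satisfies all \(M\) bounds at once. Adding the \(2B_0\delta\) net error yields the stated inequality.

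The delicate points are the exact bounded-difference constant and the measurability needed to apply Fubini and McDiarmid. The first fixes the constant inside the square root; the second is handled by the Bochner measurability of \(u\).
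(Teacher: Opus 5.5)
Your proposal is correct and follows essentially the same route as the paper's proof. That route is symmetrization plus the type inequality for the pointwise \(L^{p_Y}\) bound, integrated against \(\lambda\). It then uses an internal \(\delta\)-net taken from a \(\delta/2\)-cover, the bounded differences inequality with increments \(2B_0(R_K+1)/N\), the choice \(s=B_0(R_K+1)\sqrt{2\log(2M)/N}\) with a union bound, and finally the \(B_0\)-Lipschitz continuity of \(T\) and \(T_N\), which costs \(2B_0\delta\).
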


The proof is given in Appendix~\ref{app:barron-representations}.
For uniform error at most \(\eta>0\), we choose \(\delta=\eta/(6B_0)\) in the preceding bound.
It is enough that
\[ N\geq\max\left\{ \left(\frac{6T_{p_Y}(Y)B_0(R_K+1)}{\eta}\right)^{p_Y/(p_Y-1)}, \frac{18B_0^2(R_K+1)^2}{\eta^2} \log\left(2N\left(K,\frac{\eta}{12B_0}B_X\right)\right) \right\}. \]
Here \(N\) is the number of hidden neurons.
The scalar parameter count also depends on how we represent the sampled functionals and output vectors.

The next section gives common reductions for operator classes and parameter lower bounds for their finite-dimensional neural approximations.

    \section{Common reductions and parameter bounds for operator classes}\label{sec:compact-factorization-lower-bound-gauges}

We study the operators \(T_F=LFS:E\to X\), where \(S:E\to Z\) and \(L:Y\to X\) are bounded linear maps between real Banach spaces and \(F\) belongs to a class \(\mathcal F\subseteq C(Z,Y)\).
For fixed \(K\) and accuracy, we require the maps \(C,U,V,R\) and the network graph to be the same for every \(F\in\mathcal F\), with weights and biases depending on \(F\).
We first choose common reductions using a common gauge and a compact set containing \(F(SK)\) for every \(F\in\mathcal F\).
We then use scalar approximation estimates to choose the common network graph.
Input packing and independent H\"older coordinates then give parameter lower bounds for the class.
Throughout, \(K\subseteq E\) is nonempty and bounded; compactness is assumed when needed.
\subsection{Common linear reductions for operator classes}\label{subsec:common-class-reductions}

The next corollary follows from the construction in Theorem~\ref{thm:UAP-profile-operator-template}.
\begin{corollary}\label{prop:factorized-uap-upper}
    Let \(K\Subset E\), and suppose that the maps in \(\mathcal F\) admit a common global vanishing gauge \(\omega\).
    Assume that \(H=\overline{\bigcup_{F\in\mathcal F}F(SK)}\) is compact.
    Let \(\Lambda_S,\Lambda_L\geq1\), \(n,r\in\N_+\), and \(\delta>0\), with \(k_S(\Lambda_S,n)<\infty\) and \(k_L(\Lambda_L,r)<\infty\).
    There is a finite-rank map \(\widetilde L=RV\) factoring through \(\R^q\), such that \(q\leq k_L(\Lambda_L,r)\), \(\norm{\widetilde L}\leq\Lambda_L\norm L\), and
    \[ \sup_{y\in H}\norm{Ly-\widetilde Ly}_X <(1+\Lambda_L)\norm L\,d_r(H;Y)+\delta. \]
    There is a finite-rank map \(\widetilde S=UC\) factoring through \(\R^m\), such that \(m\leq k_S(\Lambda_S,n)\), \(\norm{\widetilde S}\leq\Lambda_S\norm S\), and
    \[ \sup_{x\in K}\norm{Sx-\widetilde Sx}_Z <(1+\Lambda_S)\norm S\,d_n(K;E)+\delta. \]
    The coordinate maps can be chosen so that \(\norm C\leq\Lambda_S\norm S\), \(\norm U\leq m\), \(\norm V\leq\Lambda_L\norm L\), and \(\norm R\leq q\), where the coordinate spaces have their \(\ell_\infty\)-norms.
    For \(F\in\mathcal F\), put \(\Phi_F=VFU\).
    Then every map \(N:\R^m\to\R^q\) satisfies
    \begin{align*}
        &\sup_{x\in K}\norm{T_F(x)-RN(Cx)}_X\\
        \leq&\Lambda_L\norm L\,\omega\!\left((1+\Lambda_S)\norm S\,d_n(K;E)+\delta\right)
        +(1+\Lambda_L)\norm L\,d_r(H;Y)+\delta+\sup_{z\in CK}\norm{R(\Phi_F(z)-N(z))}_X.
    \end{align*}
\end{corollary}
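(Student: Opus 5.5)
The plan is to repeat the construction in the proof of Theorem~\ref{thm:UAP-profile-operator-template}. The only change is that the single image \(F(SK)\) is replaced by the compact set \(H\), which contains every \(F(SK)\) with \(F\in\mathcal F\). The common gauge \(\omega\) then stands in for \(\omega_F\).

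For the output factor, apply Corollary~\ref{prop:operator-UAP-input-bound}(1) to \(L\), the compact set \(H\), and the index \(r\); this is legitimate because \(k_L(\Lambda_L,r)<\infty\). The corollary gives \(\sigma^{\Lambda_L}_{k_L(\Lambda_L,r)}(L;H)\leq(1+\Lambda_L)\norm L\,d_r(H;Y)\). By the definition of \(\sigma\) as an infimum, we can pick \(\widetilde L\in\cF(Y,X)\) with \(\operatorname{rank}\widetilde L\leq k_L(\Lambda_L,r)\), \(\norm{\widetilde L}\leq\Lambda_L\norm L\), and error on \(H\) strictly below \((1+\Lambda_L)\norm L\,d_r(H;Y)+\delta\). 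The input factor is handled the same way. Apply the corollary to \(S\), \(K\), and \(n\) to get \(\widetilde S\) with rank at most \(k_S(\Lambda_S,n)\), norm at most \(\Lambda_S\norm S\), and error on \(K\) below \((1+\Lambda_S)\norm S\,d_n(K;E)+\delta\). Neither choice depends on \(F\); this is the whole point of working with \(H\).

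Next, factor both maps through coordinates with the Auerbach construction described just before Theorem~\ref{thm:UAP-profile-operator-template}. Let \(q=\dim\widetilde L(Y)\) and \(m=\dim\widetilde S(E)\), and take Auerbach systems in the two ranges. This gives \(\widetilde L=RV\) with \(\norm{V:Y\to\ell_\infty^q}\leq\norm{\widetilde L}\leq\Lambda_L\norm L\) and \(\norm R\leq q\). Likewise \(\widetilde S=UC\) with \(\norm C\leq\Lambda_S\norm S\) and \(\norm U\leq m\). The ranks \(m\) and \(q\) inherit the bounds \(k_S(\Lambda_S,n)\) and \(k_L(\Lambda_L,r)\).

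For the error estimate, fix \(F\in\mathcal F\), a map \(N\), and a point \(x\in K\). Since \(RN(Cx)=\widetilde LF(\widetilde Sx)+R(\Phi_F(Cx)-N(Cx))\), we can use the identity from Section~\ref{subsec:operator-to-functional-reduction}:
\[ T_F(x)-RN(Cx)=(L-\widetilde L)F(Sx)+\widetilde L\bigl(F(Sx)-F(\widetilde Sx)\bigr)+R\bigl(\Phi_F(Cx)-N(Cx)\bigr). \]
Bound the three terms separately.
\begin{enumerate}
    \item Since \(F(Sx)\in F(SK)\subseteq H\), the first term is below \((1+\Lambda_L)\norm L\,d_r(H;Y)+\delta\).
    \item The second term is at most \(\Lambda_L\norm L\,\omega(\norm{Sx-\widetilde Sx})\). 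Because \(\omega\) is nondecreasing, this is at most \(\Lambda_L\norm L\,\omega\bigl((1+\Lambda_S)\norm S\,d_n(K;E)+\delta\bigr)\).
    \item The third term is bounded by its supremum over \(CK\).
\end{enumerate}
Taking the supremum over \(x\in K\) gives the displayed estimate.

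There is no real obstacle here. The only point needing care is checking that every choice is independent of \(F\). This holds because \(\widetilde L\) is chosen using only \(H\), and \(\widetilde S\) is chosen using only \(K\). The gauge enters only through the common bound \(\omega_F\leq\omega\) together with the monotonicity of \(\omega\).
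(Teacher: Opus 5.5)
Your proof is correct and follows the paper's argument essentially verbatim: Corollary~\ref{prop:operator-UAP-input-bound}(1) applied to \(L\) on \(H\) and to \(S\) on \(K\), then the Auerbach factorization, then the same three-term identity bounded using \(F(Sx)\in H\), \(\norm{\widetilde L}\leq\Lambda_L\norm L\), and the monotone common gauge. One cosmetic slip: since \(R\Phi_F(Cx)=\widetilde LF(\widetilde Sx)\), the correct relation is \(RN(Cx)=\widetilde LF(\widetilde Sx)-R(\Phi_F(Cx)-N(Cx))\), with a minus sign, although your displayed identity for \(T_F(x)-RN(Cx)\) is correct as written.
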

The proof is given in Appendix~\ref{app:lower-bound-machinery}.

For \(m,q\geq1\), each scalar coordinate of \(\Phi_F\) has the common gauge \(t\mapsto\Lambda_L\norm L\,\omega(mt)\), and \(CK\subseteq[-\Lambda_S\norm S R_K,\Lambda_S\norm S R_K]^m\), where \(R_K=\sup_{x\in K}\norm x_E\).
Thus the scalar estimates in Section~\ref{sec:operator-learning-nn-complexity} give one network graph for the reduced maps at each accuracy, with only its weights and biases depending on \(F\).
An error bound \(\varepsilon>0\) follows if each of the three error terms is at most \(\varepsilon/3\).

The same argument applies to the absolute norm bounds in Definition~\ref{def:operator-ideal-gauge}.
For \(C_S>\mathbf u_f(S)\) and \(C_L>\mathbf u_g(L)\), we can choose \(m\leq f(n)\), \(q\leq g(r)\), \(\norm{\widetilde S}\leq C_S\), and \(\norm{\widetilde L}\leq C_L\).
The resulting reduction error satisfies
\begin{align*}
    \sup_{x\in K}\norm{T_F(x)-\widetilde L F(\widetilde Sx)}_X
    &\leq C_L\,\omega\!\left((\norm S+C_S)d_n(K;E)+\delta\right)+(\norm L+C_L)d_r(H;Y)+\delta.
\end{align*}

\subsection{Parameter lower bounds from input packing}\label{subsec:factorized-parameter-lower}

For bit-complexity lower bounds for the full class of Lipschitz operators, see~\cite{Lanthaler2024Information}.

\begin{definition}\label{def:covering-packing-entropy}
    Let \((M,d)\) be a metric space and let \(A\subseteq M\).
    We denote by \(\mathsf N(A,d,\varepsilon)\) the least cardinality of closed \(d\)-balls of radius \(\varepsilon\) needed to cover \(A\).
    We denote by \(\mathsf P(A,d,\varepsilon)\) the largest cardinality of an \(\varepsilon\)-separated subset of \(A\).
\end{definition}
Trivially, for any compact \(A\), \(\mathsf P(A,d,3\varepsilon) \leq \mathsf N(A,d,\varepsilon) \leq \mathsf P(A,d,\varepsilon).\)
\begin{definition}\label{def:pseudo-dimension}
    Let \(\mathcal F\subseteq\R^\Omega\) be a real-valued function class.
    The class \(\mathcal F\) pseudo-shatters \(x_1,\ldots,x_n\in\Omega\) if there are thresholds \(b_1,\ldots,b_n\in\R\) such that, for every \(\theta\in\{0,1\}^n\), there is \(f_\theta\in\mathcal F\) satisfying
    \[ f_\theta(x_j)>b_j\ (\theta_j=1),\quad \text{while} \quad f_\theta(x_j)<b_j\ (\theta_j=0). \]
    The pseudo-dimension \(\operatorname{Pdim}(\mathcal F)\) is the supremum of such \(n\).
\end{definition}

We use ReLU networks with vector-valued affine outputs \cite[Definition~3.1]{KutyniokPetersenRaslanSchneider2022} and the fixed graph convention of \cite[Section~1.3]{BartlettHarveyLiawMehrabian2019}.
Let \(\mathcal{NN}_{W,D}\) consist of maps \(N_\theta=R\Phi_\theta C:K\to X\), where \(C:E\to\R^a\) and \(R:\R^b\to X\) are fixed bounded linear maps.
The maps \(\Phi_\theta:\R^a\to\R^b\) have one fixed graph of depth at most \(D\) and at most \(W\) trainable weights and biases, including those assigned the value zero.
For the lower bounds, we impose the following condition.

\begin{assumption}\label{ass:scalar-readout-realization}
    Let \(r_1,\ldots,r_m\in X^*\) be fixed, and put
    \[ \mathcal{NN}_{W,D}^{\mathrm{scal}} =\{(i,u)\mapsto r_i(N(u)):N\in\mathcal{NN}_{W,D}\} \]
    on \(\{1,\ldots,m\}\times K\).
    There exists a constant \(C_{\rm arch}\geq1\), common to all network classes under consideration and independent of \(m,W,D,\tau,\varepsilon\), with the following property.
    After a fixed finite-dimensional encoding of \(u\), this class admits a representation by one scalar ReLU graph with at most \(C_{\rm arch}W\) weights and biases in total, of which at most \(W\) are trainable.
    The graph has at most \(C_{\rm arch}W\) computation units and depth at most \(C_{\rm arch}D\).
    The graph and encoding are independent of \(\theta\).
\end{assumption}

Since \(r_i\circ R\) is a fixed linear functional on \(\R^b\), we can compute \(r_i(N_\theta(u))\) from the finite-dimensional network output by a fixed linear map.
This assumption requires one scalar ReLU graph to do so with \(i\) as an additional input, with size \(O(W)\) and depth \(O(D)\).

By \cite{BartlettHarveyLiawMehrabian2019}, we obtain the following estimate.
\begin{lemma}\label{lem:scalarized-readout-pdim}
    Under Assumption~\ref{ass:scalar-readout-realization}, we have
    \[ \operatorname{Pdim}(\mathcal{NN}_{W,D}^{\mathrm{scal}}) \leq C_{\rm pd}WD(1+\log W), \]
    where \(C_{\rm pd}\) depends only on \(C_{\rm arch}\).
\end{lemma}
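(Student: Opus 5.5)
The plan is to reduce Lemma~\ref{lem:scalarized-readout-pdim} directly to the pseudo-dimension bound of Bartlett, Harvey, Liaw, and Mehrabian for piecewise-linear networks with a fixed graph. That theorem states: for a fixed ReLU architecture with \(\bar W\) parameters, \(\bar U\) computation units and depth \(\bar D\), the class of real functions obtained by varying the parameters has pseudo-dimension (equivalently, VC-dimension of the thresholded class) at most \(C\bar W\bar D\log \bar W\) for an absolute constant \(C\), when \(\bar W\geq 2\). A convenient form is \(O(\bar W\bar L\log\bar U)\), with \(\bar L\leq\bar D\) the number of layers. We first note that \(\mathcal{NN}_{W,D}^{\mathrm{scal}}\) is a class of functions on the domain \(\Omega=\{1,\ldots,m\}\times K\). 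By Assumption~\ref{ass:scalar-readout-realization}, there is a fixed encoding \(\iota:\Omega\to\R^{p}\), independent of \(\theta\), and a fixed scalar ReLU graph \(G\) such that every function in the class has the form \(g_\vartheta\circ\iota\), where \(g_\vartheta\) is realized by \(G\) with trainable parameter vector \(\vartheta\).

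The first step is to observe that pseudo-shattering passes through a fixed encoding. Suppose points \(\omega_1,\ldots,\omega_n\in\Omega\) are pseudo-shattered by the class with thresholds \(b_j\). The encoded points \(\iota(\omega_j)\) must then be distinct, since two equal encoded points would give equal values for every function and so could not be separated. Hence \(\iota(\omega_1),\ldots,\iota(\omega_n)\) are pseudo-shattered by \(\{g_\vartheta\}\) with the same thresholds. It follows that \(\operatorname{Pdim}(\mathcal{NN}_{W,D}^{\mathrm{scal}})\leq\operatorname{Pdim}(\{g_\vartheta\})\).

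The second step handles the parameters that are not trainable. These weights and biases are fixed constants of the graph, so the class \(\{g_\vartheta\}\) is parametrized by at most \(W\) real parameters. However, the growth-function argument of Bartlett et al.\ counts, in each layer, the polynomial degree in the trainable parameters and the number of units. Fixed constants only alter the coefficients of the piecewise polynomials, not their degrees or the number of sign patterns. We therefore apply their bound with \(W\) parameters, at most \(C_{\rm arch}W\) units and depth at most \(C_{\rm arch}D\). This yields
\[ \operatorname{Pdim}\leq C\,W\,(C_{\rm arch}D)\log\bigl(e\,C_{\rm arch}W\bigr)\leq C_{\rm pd}WD(1+\log W), \]
using \(\log(eC_{\rm arch}W)\leq(1+\log C_{\rm arch})(1+\log W)\). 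The factor \(1+\log W\) also covers the degenerate case \(W=1\).

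The main obstacle is justifying that the cited theorem applies when only a subset of the parameters is trainable. If the fixed-graph convention there requires every slot to be free, the fallback is to treat all \(C_{\rm arch}W\) slots as trainable. Enlarging the parameter set can only increase the pseudo-dimension, so this still gives a bound of order \(C_{\rm arch}W\cdot C_{\rm arch}D\cdot\log(C_{\rm arch}W)\), which has the same form with a larger constant depending only on \(C_{\rm arch}\). This fallback makes the dependence on trainability harmless. A smaller point to check is that pseudo-dimension of the real-valued class equals the VC-dimension of \((\omega,b)\mapsto\operatorname{sign}(g(\omega)-b)\). Subtracting the threshold \(b\) can be realized by adding one input unit and one fixed edge, so the counts of parameters, units and depth increase only by absolute constants.
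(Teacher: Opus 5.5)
Your proposal is correct and follows the paper's proof. The paper likewise composes with the fixed encoding, applies Bartlett--Harvey--Liaw--Mehrabian's Theorem~7 together with their pseudo-dimension reduction, and handles the non-trainable slots by your fallback: it lets all \(O(C_{\rm arch}W)\) coefficients vary, which gives a larger class, so your primary route of counting only the \(W\) trainable parameters inside the growth-function argument is not needed.
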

The proof is given in Appendix~\ref{app:lower-bound-machinery}.

The lower estimate uses targets with independently varying values on separated inputs.
The output factor and the scalar readouts preserve this independence in the parameter-counting argument.

For a metric space \((A,d)\), a vanishing gauge \(\omega\), and \(B>0\), put
\[ \mathcal H_{\omega,B}(A)=\{h\in C(A):\norm{h}_\infty\leq B,\ \omega_h\leq\omega\}. \]

For the lower bounds below, fix \(0<\beta\leq1\) and take \(\omega(t)=Bt^\beta\).
We choose a totally bounded set \(A\subseteq S(K)\), with the metric inherited from \(Z\).
Fix \(q\in\N_+\), a bounded linear map \(J:\ell_\infty^q\to Y\), and \(\tau,C_+,C_->0\) such that
\begin{equation}\label{eq:factor-output-bounds}
    \frac{\tau}{C_-}\norm v_\infty\leq\norm{LJv}_X \leq C_+\tau\norm v_\infty,\qquad v\in\ell_\infty^q.
\end{equation}
By the Hahn--Banach theorem applied to the inverse coordinate functionals on \(LJ(\ell_\infty^q)\), we can fix \(\Pi:X\to\ell_\infty^q\) with \(\Pi LJ=I\) and \(\norm\Pi\leq C_-/\tau\).

\begin{theorem}\label{thm:factorized-operator-lower-bound}
    Suppose that, for every \(h_1,\ldots,h_q\in\mathcal H_{\omega,B}(A)\), there is \(F_h\in\mathcal F\) satisfying
    \[ F_h(z)=J(h_1(z),\ldots,h_q(z)) \qquad\text{for }z\in A. \]
    Let \(\mathcal{NN}_{W,D}\) satisfy Assumption~\ref{ass:scalar-readout-realization} for \(r_i=e_i^*\Pi\), \(1\leq i\leq q\), and uniformly \(\varepsilon\)-approximate \(\{T_F:F\in\mathcal F\}\) on \(K\).
    Let \(c_\beta>0\) be the constant in Lemma~\ref{lem:metric-holder-lower-gauge}.
    If
    \[ 0<\rho(\varepsilon):= \left(\frac{2C_-\varepsilon}{c_\beta B\tau}\right)^{1/\beta}\leq1, \]
    then
    \[ WD(1+\log W)\geq\frac{q}{2C_{\rm pd}} \mathsf P\left(A,\norm{\cdot}_Z,\rho(\varepsilon)\right). \]
\end{theorem}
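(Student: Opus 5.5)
The plan is to reduce the statement to a pseudo-shattering argument for the scalarized class \(\mathcal{NN}_{W,D}^{\mathrm{scal}}\) and then invoke Lemma~\ref{lem:scalarized-readout-pdim}.

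\textbf{Step 1: a packing and bump functions.} Put \(\rho=\rho(\varepsilon)\) and fix a maximal \(\rho\)-separated set \(z_1,\ldots,z_P\subseteq A\), so that \(P=\mathsf P(A,\norm{\cdot}_Z,\rho)\). Each \(z_k=Su_k\) for some \(u_k\in K\) (possibly after replacing \(\rho\) by a slightly smaller value if \(A\) is only contained in \(S(K)\)). I expect Lemma~\ref{lem:metric-holder-lower-gauge} to supply the following: for every sign pattern \(\theta\in\{0,1\}^P\) there is \(h_\theta\in\mathcal H_{\omega,B}(A)\) with \(h_\theta(z_k)=\pm c_\beta B\rho^\beta\) according to \(\theta_k\). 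The natural construction is a sum of disjointly supported bumps \(\pm c\,B\max\{(\rho/2)^\beta-\norm{z-z_k}^\beta,0\}\), truncated so that \(\norm{h}_\infty\leq B\); here \(\rho\leq1\) keeps the sup-norm constraint compatible. With \(\rho(\varepsilon)\) as defined, the amplitude is \(c_\beta B\rho^\beta=2C_-\varepsilon/\tau\).

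\textbf{Step 2: the shattered set.} Consider the \(qP\) points \((i,u_k)\) and, for \(\Theta=(\theta^{(1)},\ldots,\theta^{(q)})\), take coordinates \(h_i=h_{\theta^{(i)}}\). The hypothesis then gives \(F_\Theta\in\mathcal F\) with \(F_\Theta(Su_k)=J(h_1(z_k),\ldots,h_q(z_k))\). Hence
\[ r_i(T_{F_\Theta}(u_k))=e_i^*\Pi LJ\,(h_j(z_k))_j=h_i(z_k)=\pm 2C_-\varepsilon/\tau. \]
If \(N\in\mathcal{NN}_{W,D}\) satisfies \(\sup_K\norm{T_{F_\Theta}-N}_X\leq\varepsilon\), then \(\norm{r_i}\leq\norm\Pi\leq C_-/\tau\) gives
\[ \abs{r_i(N(u_k))-h_i(z_k)}\leq C_-\varepsilon/\tau, \]
which is strictly smaller than the amplitude \(2C_-\varepsilon/\tau\). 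So \(r_i(N(u_k))\) has the sign prescribed by \(\Theta\), and threshold \(0\) pseudo-shatters all \(qP\) points. The points \((i,u_k)\) are distinct because the \(z_k\) are distinct.

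\textbf{Step 3: conclusion and main obstacle.} Lemma~\ref{lem:scalarized-readout-pdim} gives \(qP\leq C_{\rm pd}WD(1+\log W)\). The factor \(2\) in the statement suggests the intended argument uses only half of the points or has slack in the constants, so the claimed inequality follows. The main obstacle is Step 1: constructing Hölder bumps whose gauge is bounded by \(Bt^\beta\) globally, for pairs of points in different bumps and not just within one, while keeping amplitude \(c_\beta B\rho^\beta\). This is exactly the content of Lemma~\ref{lem:metric-holder-lower-gauge}. If that lemma only yields the sign pattern at about half the points, or at amplitude with radius \(\rho/2\), the factor \(2\) absorbs the loss.
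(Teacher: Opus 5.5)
Your proposal is correct and follows the paper's proof essentially step for step. You pick preimages $x_j\in K$ of separated points in $A\subseteq S(K)$ and assign an independent H\"older labeling from Lemma~\ref{lem:metric-holder-lower-gauge} to each of the $q$ coordinates. You then use $\Pi LJ=I$ and $\norm{\Pi}\leq C_-/\tau$ to keep the readout error $C_-\varepsilon/\tau=c_\beta B\rho^\beta/2$ below the margin, and conclude with Lemma~\ref{lem:scalarized-readout-pdim}.

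The factor $2$ in the paper comes exactly from the lemma's guarantee $s\geq\mathsf P(A,\norm{\cdot}_Z,\rho)/2$, so your use of all packing points with symmetric thresholds is a harmless variant. Two small corrections: take a separated set of maximum cardinality rather than an inclusion-maximal one, and note that no shrinking of $\rho$ is needed, since $A\subseteq S(K)$ gives the preimages directly.
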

The proof is given in Appendix~\ref{app:lower-bound-machinery}.

\subsection{Matching parameter bounds for fixed factors}\label{subsec:fixed-factor-rates}

Here \(S\) and \(LJ\) give a fixed finite-dimensional representation of every target in the class.
We estimate the number of parameters needed for the full class of H\"older nonlinearities with this representation.
Under the stated factor and architecture assumptions, the upper and lower bounds have the same order at any fixed sufficiently large depth.

\begin{assumption}\label{ass:factorized-holder-cube}
    Let \(d\in\N_+\), \(\ell>0\), and \(Z=\ell_\infty^d\).
    \begin{enumerate}
        \item[(i)] The input image satisfies \(S(K)=Q_d(\ell)=[-\ell,\ell]^d\).
        \item[(ii)] The output map \(LJ\) satisfies \eqref{eq:factor-output-bounds}.
        \item[(iii)] For every \(g=(g_1,\ldots,g_q)\in\mathcal H_{\omega,B}(Q_d(\ell))^q\), extend each \(g_i\) by Lemma~\ref{lem:scalar-infimal-extension} with \(\omega(t)=Bt^\beta\).
        Clip these extensions to \([-B,B]\) and denote the resulting functions by \(\widetilde g_i\).
        Put \(F_g(z)=J(\widetilde g_1(z),\ldots,\widetilde g_q(z))\).
        The target class is \(\mathfrak T_{S,L,J}^{\beta,B,\square}=\{T_{F_g}:g\in\mathcal H_{\omega,B}(Q_d(\ell))^q\}\), where \(T_{F_g}(x)=LJg(Sx)\) on \(K\).
        \item[(iv)] The encoder \(S\), decoder \(LJ\), and readouts \(e_i^*\Pi\) are fixed.
        The networks satisfy Assumption~\ref{ass:scalar-readout-realization} with one fixed \(C_{\rm arch}\), chosen large enough to include parallel combinations of the scalar networks in Proposition~\ref{prop:quantitative-continuous-relu}, clipped to \([-B,B]\).

    \end{enumerate}
\end{assumption}

Put \(\widehat L_d=L_d+6\).
For \(D\in\N_+\), let \(W_D^\square(\varepsilon)\) be the least positive integer \(W\) for which there is a fixed graph in this network class, with at most \(W\) trainable weights and biases and depth at most \(D\), such that every \(T\in\mathfrak T_{S,L,J}^{\beta,B,\square}\) is uniformly \(\varepsilon\)-approximated on \(K\) by one of its realizations.
\begin{theorem}\label{thm:factorized-holder-cube-sharp}
    Under Assumption~\ref{ass:factorized-holder-cube}, put \(\mathcal B(\varepsilon)=q[1+\ell^d(B\tau/\varepsilon)^{d/\beta}]\).
    If \(D\geq\widehat L_d\) and \(0<\varepsilon\leq c_\beta B\tau/(2C_-)\), then
    \begin{equation}\label{eq:factorized-holder-cube-bounds}
        c\frac{\mathcal B(\varepsilon)}{D(1+\log(e+\mathcal B(\varepsilon)))} \leq W_D^\square(\varepsilon) \leq C\frac{\mathcal B(\varepsilon)}{\log(e+\mathcal B(\varepsilon)/q)}.
    \end{equation}
    The constants depend only on \(d,\beta,C_+,C_-,C_{\rm pd}\), the fixed architecture convention, and the constants in Proposition~\ref{prop:quantitative-continuous-relu} for dimension \(d\).
\end{theorem}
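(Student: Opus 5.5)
The two inequalities in \eqref{eq:factorized-holder-cube-bounds} are proved separately: the lower bound from Theorem~\ref{thm:factorized-operator-lower-bound} with a packing estimate for the cube, and the upper bound by an explicit parallel construction of scalar ReLU networks.

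\textbf{Lower bound.} We apply Theorem~\ref{thm:factorized-operator-lower-bound} with \(A=S(K)=Q_d(\ell)\subseteq\ell_\infty^d\) and \(\mathcal F=\{F_g\}\).
For \(h\in\mathcal H_{\omega,B}(A)^q\), the map \(F_h\) of Assumption~\ref{ass:factorized-holder-cube}(iii) agrees with \(J(h_1,\dots,h_q)\) on \(A\).
The reason is that the infimal extension of Lemma~\ref{lem:scalar-infimal-extension} restricts to \(h_i\) on \(A\), and clipping to \([-B,B]\) changes nothing there because \(\norm{h_i}_\infty\leq B\).
So the hypothesis of that theorem holds.
The condition \(\varepsilon\leq c_\beta B\tau/(2C_-)\) is exactly \(\rho(\varepsilon)\leq1\).
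The theorem then gives
\[WD(1+\log W)\geq\frac{q}{2C_{\rm pd}}\,\mathsf P\bigl(Q_d(\ell),\norm\cdot_\infty,\rho\bigr).\]
A grid of spacing \(\rho\) in the cube shows that \(\mathsf P\geq\max\{1,\lfloor 2\ell/\rho\rfloor+1\}^d\gtrsim_d 1+\ell^d\rho^{-d}\).
This is \(\gtrsim 1+\ell^d(B\tau/\varepsilon)^{d/\beta}\), with constants depending on \(c_\beta\) and \(C_-\).
Hence \(WD(1+\log W)\gtrsim\mathcal B(\varepsilon)\).
To invert this into a bound on \(W\), we first note \(W\leq C\mathcal B(\varepsilon)\) whenever \(W\) is minimal, by the upper bound proved below; if \(W\) is larger, the claim is trivial.
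Then \(\log W\lesssim 1+\log(e+\mathcal B)\), and the lower bound in \eqref{eq:factorized-holder-cube-bounds} follows.

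\textbf{Upper bound.} Given \(T=LJg(S\cdot)\), we approximate each \(g_i\) on \(Q_d(\ell)\) by a scalar ReLU network \(N_i\) with uniform error \(\eta=\varepsilon/C_+\tau\).
Proposition~\ref{prop:quantitative-continuous-relu} gives this with depth \(L_d\), using the \(\beta\)-H\"older modulus and the budget inversion \eqref{eq:relu-budget-inversion}.
Rescaling \(Q_d(\ell)\) to a unit cube turns the modulus into \(B\ell^\beta t^\beta\).
The per-coordinate parameter count is then \(\max\{W_d,\mathcal R(c(\ell^\beta B/\eta)^{d/\beta})\}\), which is \(\lesssim 1+t/\log(e+t)\) with \(t=\ell^d(B\tau/\varepsilon)^{d/\beta}\).

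We clip each \(N_i\) to \([-B,B]\) with a fixed two-layer ReLU gadget; this accounts for the \(+6\) depth in \(\widehat L_d\) and does not increase the error.
The \(q\) networks are combined in parallel, padded by identity layers when \(D>\widehat L_d\), which is admissible by Assumption~\ref{ass:factorized-holder-cube}(iv).
The graph is independent of \(g\), since the inner graph of Proposition~\ref{prop:quantitative-continuous-relu} depends only on the parameter budget.
The uniform error on \(K\) is \(\norm{LJ(g-N)(Sx)}\leq C_+\tau\eta=\varepsilon\) by \eqref{eq:factor-output-bounds}.
Summing over coordinates gives \(W\lesssim q(1+t/\log(e+t))\).
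This is \(\lesssim\mathcal B(\varepsilon)/\log(e+\mathcal B(\varepsilon)/q)\), because \(\mathcal B/q=1+t\) and \(x\mapsto x/\log(e+x)\) is increasing.

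The main obstacle is bookkeeping rather than any single estimate.
One point is checking that the minimal \(W\) in the lower bound can be compared with \(\mathcal B\) so that the logarithm inverts cleanly.
The other is confirming that the constants of Proposition~\ref{prop:quantitative-continuous-relu}, after rescaling the cube, depend only on \(d\) and \(\beta\), so that \(\ell\) enters only through \(t\).
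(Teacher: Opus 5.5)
Your proposal is correct and follows the paper's proof essentially step for step. The upper bound uses $q$ parallel scalar networks from Proposition~\ref{prop:quantitative-continuous-relu}, clipped to $[-B,B]$, together with the budget inversion \eqref{eq:relu-budget-inversion}. The lower bound uses Theorem~\ref{thm:factorized-operator-lower-bound} with $A=Q_d(\ell)$, a product-grid packing, and the upper bound $W_D^\square(\varepsilon)\le C\mathcal B(\varepsilon)$ to control the logarithm. The only minor slip is that the $+6$ in $\widehat L_d$ does not come from clipping alone: in the paper it also covers the hat-function selector that realizes the scalar readouts required by Assumption~\ref{ass:scalar-readout-realization}, which Assumption~\ref{ass:factorized-holder-cube}(iv) lets you take as given.
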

The proof is given in Appendix~\ref{app:lower-bound-machinery}.

For fixed factors, \(\beta,B\), and \(D\geq\widehat L_d\), we have
\[ W_D^\square(\varepsilon)\asymp \frac{\varepsilon^{-d/\beta}}{\log(1/\varepsilon)} \qquad(\varepsilon\downarrow0). \]

In Appendix~\ref{app:detailed-operator-learning-examples}, we apply the same error decomposition to \(T_F=LFS\) with \(F(v)=\varphi\circ v\).
The regularity of \(S,L\) determines the recovery scales, and the Lipschitz constant of \(\varphi\) bounds the effect of the recovery errors.
Conditional expectations provide stable linear measurements and reconstruction.
The pointwise form of \(F\) allows us to obtain the network bound from parallel one-dimensional approximations.

    \appendix

\section{Point sampling, cell averages, and Gabor truncation}\label{app:classical-realizations}

The constructions below make the recovery estimates of Section~\ref{sec:unit-ball-UAP-coders} available through specified measurements.
For each scheme, we track the number of coordinates, the reconstruction norm, and the approximation error needed in the operator-learning estimates.

\subsection{Stable recovery by point sampling in \texorpdfstring{\(C(K)\)}{C(K)}}\label{subsec:unit-ball-coder-tools}
Here the measurements are restricted to point evaluations.
We ask how many samples are needed to fix a prescribed finite-dimensional space while keeping the reconstruction norm below a chosen bound.

\begin{theorem}\label{thm:finite-sampling-retraction}
    For every \(\Gamma>1\), there is a function \(N_\Gamma:\N\to\N\), with \(N_\Gamma(0)=0\), such that the following holds for every compact Hausdorff space \(K\).
    For every subspace \(E\subseteq C(K)\) of dimension \(d<\infty\), there are points \(t_1,\ldots,t_m\in K\), with \(m\leq N_\Gamma(d)\), and a bounded linear map \(D_E:\ell_\infty^m\to C(K)\) such that
    \[ D_EC_E|_E=I_E,\qquad \norm{D_E}\leq\Gamma,\qquad C_Ef=(f(t_1),\ldots,f(t_m)). \]
    Moreover, we can take \(N_\Gamma(d)\leq C_\Gamma^d\), where \(C_\Gamma>1\) depends only on \(\Gamma\).

    For fixed \(K\), such a finite sampling factorization with \(\norm{D_E}\leq1\) exists for every finite-dimensional \(E\subseteq C(K)\) if and only if \(K\) is finite.
\end{theorem}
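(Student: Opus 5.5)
The plan has three parts: reduce to a finite point set through a norming net, build a sampling projection through \(\ell_\infty^m\) with a Lipschitz partition of unity, and then treat the norm-one case separately.

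For the first part, fix \(\Gamma>1\) and let \(E\subseteq C(K)\) have dimension \(d\geq1\). Choose \(\theta\in(0,1)\) with \((1-\theta)^{-1}\leq\sqrt\Gamma\), or any splitting of \(\Gamma\) into two factors. The unit sphere of \(E\) is compact and, for each \(t\), evaluation \(\delta_t|_E\) lies in \(B_{E^*}\). The set \(\{\delta_t|_E:t\in K\}\) is a \(1\)-norming subset of \(E^*\), so its closed absolutely convex hull is \(B_{E^*}\). A standard volumetric argument in the \(d\)-dimensional space \(E^*\) gives a \(\theta\)-net of this set of size at most \((1+2/\theta)^d\). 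I obtain points \(t_1,\dots,t_m\in K\), \(m\leq(1+2/\theta)^d\), with \(\max_i|f(t_i)|\geq(1-\theta)\norm f_\infty\) for all \(f\in E\). Thus \(C_E:E\to\ell_\infty^m\) is an isomorphic embedding with \(\norm{C_E|_E^{-1}}\leq(1-\theta)^{-1}\) on its range \(G=C_E(E)\).

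For the second part, I need \(D_E:\ell_\infty^m\to C(K)\) extending the map \(G\to E\), \(C_Ef\mapsto f\), with norm at most \(\Gamma\). Plain Hahn--Banach extension of the coordinate functionals gives a constant growing with \(d\), so I will instead use the injectivity-type structure of \(\ell_\infty^m\). Define \(D_Ea=\sum_{i}a_i\phi_i\), where \(\phi_i\geq0\) is a partition of unity on \(K\) subordinate to neighbourhoods \(U_i\) of the points \(t_i\). This gives \(\norm{D_E}\leq1\) automatically. It also gives \(D_EC_Ef\approx f\) up to the oscillation of \(f\) on the \(U_i\), provided the \(t_i\) form a fine enough net in the pseudometric \(\rho(s,t)=\norm{(\delta_s-\delta_t)|_E}\). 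The points are therefore chosen as a \(\theta\)-net of \(K\) in \(\rho\). The image of \(K\) in \(B_{E^*}\) lies in a \(d\)-dimensional ball, so the net has size \(\leq(1+2/\theta)^d\). With \(U_i=\{\rho(\cdot,t_i)<2\theta\}\), I get \(\norm{(I-D_EC_E)|_E}\leq2\theta\). Lemma~\ref{lem:finite-dimensional-correction} would then cost a factor \(d\), so instead I make \(D_EC_E|_E\) exact as in the proof of Theorem~\ref{prop:stable-coder-characterization-UAP}. Set \(S=D_EC_E\). The operator \(S|_E:E\to C(K)\) is close to the inclusion, and the map \(P=(S|_E)^{-1}\) is defined on \(S(E)\). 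I will realize the inverse through the coordinates instead. Writing \(W=C_E(S|_E)^{-1}\) on \(S(E)\), I replace \(D_E\) by \(D_E'=D_E\circ M\), where \(M:\ell_\infty^m\to\ell_\infty^m\) is defined by \(M=I+(C_ED_E-I)^{\sim}\)-type Neumann series. Here \(C_ED_E\) maps \(\ell_\infty^m\) to itself with \(\norm{C_ED_E}\leq1\), and \(C_ED_E\) is close to the identity on \(G\). The step I expect to be the main obstacle is getting \(D'_EC_E|_E=I_E\) with norm \(\leq(1-2\theta)^{-1}\) without losing a factor \(d\). What I will try first is to choose the partition of unity with \(\phi_i(t_j)=\delta_{ij}\), using Urysohn functions on disjoint neighbourhoods. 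Then \(C_ED_E=I\) on \(\ell_\infty^m\), so \(D_E\) is an isometric lifting. The error on \(E\) is then a fixed operator \(\Delta=I-D_EC_E\) with \(\norm{\Delta|_E}\leq2\theta\) and \(C_E\Delta=0\). I would correct by a Neumann series \(D_E\sum_k(\cdots)\) acting on \(\operatorname{span}(E\cup\operatorname{ran}D_E)\). If that fails, I would fall back on the UAP of \(C(K)\) (Heinrich) to get an exact \(\Gamma\)-bounded finite-rank identity on \(E\) and then discretize it as above, accepting a \(C_\Gamma^d\) count.

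For the last part, the norm-one case, if \(K\) is finite then \(C(K)=\ell_\infty^{|K|}\) and sampling at all points with \(D_E=I\) works. Conversely, if \(K\) is infinite I would take a two-dimensional \(E\) spanned by \(1\) and a function \(g\) that attains its extrema only at limit configurations, for example one peaking at a non-isolated point. A norm-one \(D_E\) with \(D_EC_E=I_E\) forces \(D_Ee_i\) to act as positive kernels reproducing \(g\) from finitely many values. An extreme-point argument on \(D_E^*\delta_s\in B_{\ell_1^m}\) then gives a contradiction at points \(s\) near the accumulation point.
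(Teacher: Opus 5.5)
Your first step is fine and is equivalent to the paper's. The paper takes a maximal \(\delta\)-separated subset \(f_1,\dots,f_m\) of the unit sphere of \(E\) and picks \(t_j\) with \(\abs{f_j(t_j)}=1\); netting the evaluation functionals in \(E^*\) gives the same norming inequality and the same count.

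The genuine gap is the second step, which is the real content of the theorem, and you leave it open. Write \(Y=C_E(E)\subseteq\ell_\infty^m\) and \(U=(C_E|_E)^{-1}\). Any decoder has the form \((D_Ea)(t)=\sum_ja_jw_j(t)\) with a continuous kernel \(t\mapsto w(t)\in\ell_1^m\), and \(\norm{D_E}=\sup_t\norm{w(t)}_1\). Exactness on \(E\) means precisely \(w(t)|_Y=U^*\delta_t\) for every \(t\). A partition of unity ignores this constraint, and none of your repairs restores it without loss.

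Your three repairs each fail. First, \(D_E\circ M\) has range in \(\operatorname{span}\{\phi_1,\dots,\phi_m\}\), which in general does not contain \(E\), so no \(M\) gives \(D_EMC_E|_E=I_E\). Second, the correction in the proof of Theorem~\ref{prop:stable-coder-characterization-UAP} uses \(V=I+(S-I)Q\) with \(\norm Q\leq d\). It therefore costs the same factor \(d\) as Lemma~\ref{lem:finite-dimensional-correction}, forcing \(\theta\sim1/d\) and \((1+Cd)^d\) points rather than \(C_\Gamma^d\). Third, the UAP fallback gives an \(A\) with \(A|_E=I_E\) that does not factor through point evaluations, so discretizing it reopens the same approximate-to-exact problem.

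The missing idea is a continuous, norm-controlled lift of \(t\mapsto U^*\delta_t\) through the restriction map \(r:\ell_1^m\to Y^*\), which maps \(B_{\ell_1^m}\) onto \(B_{Y^*}\) by Hahn--Banach. The paper takes a Bartle--Graves selection \(s:Y^*\to\ell_1^m\), continuous and positively homogeneous with \(rs=I\) and \(\norm s\leq1+\varepsilon\), and sets \(w(t)=s(U^*\delta_t)\). This is continuous because \(Y\) is finite-dimensional. It gives \((D_EC_Ef)(t)=\langle C_Ef,rs(U^*\delta_t)\rangle=f(t)\) exactly, and \(\norm{D_E}\leq(1+\varepsilon)/(1-\delta)=\Gamma\) with the original \((1+2/\delta)^d\) points, so no correction step is needed. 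The nonlinearity of \(s\) is harmless because it acts in \(t\), not in \(a\).

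If you want to avoid citing Bartle--Graves, your partition-of-unity idea can be made to work at the level of the kernel. Glue Hahn--Banach lifts of \(U^*\delta_{t_k'}\) over a fine net, then repeat on the residual \(U^*\delta_t-r(w^{(1)}(t))\) with geometrically decreasing tolerances and sum. That is a selection argument, not an operator Neumann series on \(\ell_\infty^m\).

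Your converse also fails as stated. For \(E=\operatorname{span}\{1,g\}\) with \(g\) nonconstant, sample at points \(t_\pm\) where \(g\) attains its maximum \(g_+\) and minimum \(g_-\); these exist because \(K\) is compact. Decode with the nonnegative continuous weights \((g_+-g(t))/(g_+-g_-)\) and \((g(t)-g_-)/(g_+-g_-)\). This is a norm-one sampling factorization reproducing \(1\) and \(g\), so positivity yields no contradiction for any two-dimensional space containing the constants.

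The paper instead uses \(E=\operatorname{span}\{1,h,h^2\}\) with \(h\) of infinite range, obtained by a Baire category argument when \(K\) is infinite. As you note, \(P1=1\) and \(\norm{D_E}\leq1\) force \(p_j(t)\geq0\) and \(\sum_jp_j(t)=1\). Since \((h-h(t))^2\in E\) for each fixed \(t\), exactness gives \(0=\sum_jp_j(t)(h(t_j)-h(t))^2\). Hence \(h(K)\subseteq\{h(t_1),\dots,h(t_m)\}\), a contradiction.
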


\begin{proof}
    Let \(\Gamma>1\).
    Suppose that \(\dim E=d\geq1\), and let \(0<\delta<1\).
    By taking a maximal \(\delta\)-separated subset \(\{f_1,\ldots,f_m\}\) of the unit sphere, we obtain a \(\delta\)-net.
    By the disjoint-ball volume estimate, we have \(m\leq(1+2/\delta)^d\).
    Choose \(t_j\in K\) with \(\abs{f_j(t_j)}=1\).
    For every \(f\in E\) with \(\norm{f}_\infty=1\), we can choose \(j\) with \(\norm{f-f_j}_\infty\leq\delta\).
    Then \(\abs{f(t_j)}\geq1-\delta\).
    By homogeneity, we have
    \[ \norm{f}_\infty\leq(1-\delta)^{-1}\max_{1\leq j\leq m}\abs{f(t_j)},\qquad f\in E. \]
    Thus \(C_E|_E\) is an isomorphic embedding.
    Put \(Y=C_E(E)\subseteq\ell_\infty^m\) and \(U=(C_E|_E)^{-1}:Y\to C(K)\).
    Then \(\norm{U}\leq(1-\delta)^{-1}\).

    Let \(r:\ell_1^m=(\ell_\infty^m)^*\to Y^*\) be restriction.
    By the Hahn--Banach theorem, we have \(r(B_{\ell_1^m})=B_{Y^*}\), for closed unit balls.
    Hence \(Y^*\) is isometric to \(\ell_1^m/\ker r\) with its quotient norm.
    By the Bartle--Graves theorem \cite{BartleGraves1952,BenyaminiLindenstrauss2000}, for every \(\varepsilon>0\), we can choose a continuous positively homogeneous map \(s:Y^*\to\ell_1^m\) such that \(rs=I_{Y^*}\) and \(\norm{s}\leq(1+\varepsilon).\)
    Since \(Y\) is finite-dimensional, we know that \(t\mapsto U^*\delta_t\) is norm continuous, because its coordinates in any basis of \(Y\) are continuous functions of \(t\).
    We write \(s(U^*\delta_t)=(w_1(t),\ldots,w_m(t))\) and define
    \[ (D_Ea)(t)=\sum_{j=1}^m a_jw_j(t),\qquad a\in\ell_\infty^m,\quad t\in K. \]
    Since the functions \(w_j\) are independent of \(a\), \(D_E\) is linear.
    Moreover, we have
    \[ \norm{D_E}\leq\sup_{t\in K}\sum_{j=1}^m\abs{w_j(t)} \leq(1+\varepsilon)\norm{U}\leq\frac{1+\varepsilon}{1-\delta}. \]
    For every \(f\in E\), by restriction to \(Y\), we have \((D_EC_Ef)(t)=\langle C_Ef,rs(U^*\delta_t)\rangle =\langle UC_Ef,\delta_t\rangle=f(t).\)
    For the same operator on the whole ambient space, we have
    \[ (D_EC_Ef)(t)=\sum_{j=1}^m f(t_j)w_j(t),\qquad f\in C(K), \]
    and \(\norm{D_EC_E}\leq\norm{D_E}\), since \(\norm{C_E}\leq1\).
    Let \(\delta=(\Gamma-1)/(2\Gamma)\) and \(\varepsilon=(\Gamma-1)/2\).
    Then \((1+\varepsilon)/(1-\delta)=\Gamma\).
    By the bound for the net, we obtain the claimed estimate with \(C_\Gamma=1+4\Gamma/(\Gamma-1)\), \(N_\Gamma(d)=\lfloor C_\Gamma^d\rfloor\) for \(d\geq1\), and \(N_\Gamma(0)=0\).

    Suppose that the norm-one assertion holds and that \(K\) is infinite.
    For \(n\geq1\), let \(F_n\subseteq C(K)\) consist of functions taking at most \(n\) values.
    Each \(F_n\) is closed and has empty interior, since the Tietze extension theorem allows arbitrarily small perturbations with distinct values at \(n+1\) prescribed distinct points.
    By the Baire category theorem, we can choose \(h\in C(K)\) with infinite range.

    Apply the assertion to \(E=\Span\{1,h,h^2\}\), and put \(P=D_EC_E\) and \(p_j(t)=(D_Ee_j)(t)\).
    Since \(P1=1\) and \(\norm{D_E}\leq1\), we have \(\sum_jp_j(t)=1\) and \(\sum_j\abs{p_j(t)}=\norm{\delta_tD_E}\leq1\), so \(p_j(t)\geq0\).
    Since \(P\) fixes \(h\) and \(h^2\), we obtain
    \[ 0=P\bigl((h-h(t))^2\bigr)(t) =\sum_{j=1}^m p_j(t)(h(t_j)-h(t))^2. \]
    Hence \(h(K)\subseteq\{h(t_1),\ldots,h(t_m)\}\), a contradiction.
    Conversely, if \(K\) is finite, evaluation at all its points identifies \(C(K)\) isometrically with \(\ell_\infty^{|K|}\), and the inverse map gives the required decoder.
\end{proof}

Since \((I-D_EC_E)|_E=0\) and \(\norm{D_EC_E}\leq\Gamma\), we have \(\norm{f-D_EC_Ef}_\infty\leq(1+\Gamma)\operatorname{dist}(f,E)\) for every \(f\in C(K)\).

\begin{remark}
    For every \(\Gamma>1\), we use the operators \(T_E=D_EC_E\) to obtain the UAP of real \(C(K)\) in sampling form: \(T_E|_E=I_E\), \(\norm{T_E}\leq\Gamma\), and \(\operatorname{rank}T_E\leq N_\Gamma(\dim E)\).
    Their ranges contain \(E\) but need not equal \(E\).
\end{remark}

\subsection{Finite-dimensional recovery by conditional expectations}\label{subsec:Lp-conditional-expectation-coders}

Conditional expectations fix piecewise constant functions, and we can correct them to fix a prescribed finite-dimensional \(E\).
A smoothness bound will give uniform control of the number of partition atoms.

We use cell averages to define the basic conditional-expectation coder on \(L_p\).
We write \(p'\) for the conjugate exponent, that is, \(p^{-1}+p'^{-1}=1\).
When \(p=1\), we use \(\mu(A)^{-1/p'}=1\).

\begin{proposition}\label{prop:Lp-conditional-expectation-coder}
    Let \((\Omega,\mu)\) be a finite measure space, let \(1\leq p<\infty\), and let \(\mathcal P=\{A_1,\dots,A_m\}\) be a finite measurable partition of \(\Omega\) with \(0<\mu(A_j)<\infty\).
    Define
    \[ C_{\mathcal P}f = \left(\mu(A_j)^{-1/p'}\int_{A_j}f\,\rmd\mu\right)_{j=1}^m, \qquad D_{\mathcal P}a = \sum_{j=1}^m\mu(A_j)^{-1/p}a_j\mathbf 1_{A_j}. \]
    Then \(C_{\mathcal P}:L_p(\Omega)\to\ell_p^m\) and \(D_{\mathcal P}:\ell_p^m\to L_p(\Omega)\) satisfy \(\norm{C_{\mathcal P}}\leq1\text{ and } \norm{D_{\mathcal P}}=1,\) and
    \[ D_{\mathcal P}C_{\mathcal P}f = \sum_{j=1}^m \frac{1}{\mu(A_j)} \left(\int_{A_j}f\,\rmd\mu\right)\mathbf 1_{A_j} = \mathbb E[f\mid\sigma(\mathcal P)]. \]
    If \(S_{\mathcal P}=\Span\{\mathbf 1_{A_1},\dots,\mathbf 1_{A_m}\},\) then \(D_{\mathcal P}C_{\mathcal P}f=f\) for every \(f\in S_{\mathcal P}\).
\end{proposition}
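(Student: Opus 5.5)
The plan is to verify the four claims directly from the definitions, using Hölder's inequality on each cell and the disjointness of the cells. The claims are the bound on \(C_{\mathcal P}\), the isometric property of \(D_{\mathcal P}\), the conditional-expectation identity, and exactness on \(S_{\mathcal P}\).

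First, for \(\norm{C_{\mathcal P}}\leq1\) I would apply Hölder on \(A_j\):
\[ \abs{\int_{A_j}f\,\rmd\mu}\leq\mu(A_j)^{1/p'}\Bigl(\int_{A_j}\abs f^p\,\rmd\mu\Bigr)^{1/p}. \]
This gives \(\abs{(C_{\mathcal P}f)_j}^p\leq\int_{A_j}\abs f^p\,\rmd\mu\). For \(p=1\) the convention \(\mu(A_j)^{-1/p'}=1\) makes this the trivial bound \(\abs{\int_{A_j}f}\leq\int_{A_j}\abs f\). Summing over \(j\) and using that \(\mathcal P\) partitions \(\Omega\) gives \(\norm{C_{\mathcal P}f}_p^p\leq\norm f_p^p\).

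Second, for \(D_{\mathcal P}\) I would use disjointness of the \(A_j\) to compute
\[ \norm{D_{\mathcal P}a}_p^p=\sum_j\mu(A_j)^{-1}\abs{a_j}^p\mu(A_j)=\norm a_p^p. \]
So \(D_{\mathcal P}\) is an isometry, and its norm equals one since \(m\geq1\).

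Third, the composition follows from the exponent identity \(\mu(A_j)^{-1/p'}\mu(A_j)^{-1/p}=\mu(A_j)^{-1}\). This holds for \(p>1\) because \(1/p+1/p'=1\), and for \(p=1\) under the stated convention, since then \(\mu(A_j)^{-1/p}=\mu(A_j)^{-1}\). The displayed formula is the standard expression of \(\mathbb E[f\mid\sigma(\mathcal P)]\) when \(\sigma(\mathcal P)\) is generated by finitely many atoms of positive finite measure. I would check this by noting that the right-hand side is \(\sigma(\mathcal P)\)-measurable and has the same integral as \(f\) over each \(A_j\), hence over every union of cells.

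Finally, for \(f=\sum_k c_k\mathbf 1_{A_k}\) we have \(\int_{A_j}f\,\rmd\mu=c_j\mu(A_j)\), so \(D_{\mathcal P}C_{\mathcal P}f=f\).

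None of these steps is a real obstacle. The only point needing care is the \(p=1\) convention in the exponent bookkeeping.
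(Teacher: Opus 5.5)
Your proposal is correct and takes the same route as the paper: Hölder's inequality on each atom bounds \(C_{\mathcal P}\), disjointness of the supports makes \(D_{\mathcal P}\) an isometry, and the composition is identified directly as the conditional expectation, which therefore fixes \(S_{\mathcal P}\). You also spell out the \(p=1\) exponent convention and check that the formula is the conditional expectation, two details the paper leaves to the definitions.
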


\begin{proof}
    By H\"older's inequality on each atom, we have \(\norm{C_{\mathcal P}f}_{\ell_p^m}^p\leq\sum_j\int_{A_j}\abs f^p\,\rmd\mu=\norm f_{L_p}^p\).
    Since the supports are disjoint, we have \(\norm{D_{\mathcal P}a}_{L_p}^p=\sum_j\abs{a_j}^p\).
    By the definitions, \(D_{\mathcal P}C_{\mathcal P}\) is the conditional expectation, and thus fixes \(S_{\mathcal P}\).
\end{proof}

We write \(\mathbb E[f\mid\mathcal P]\) for the finite conditional expectation \(D_{\mathcal P}C_{\mathcal P}f\).
By the density of simple functions, we obtain approximation on a fixed finite-dimensional subspace.
We then apply the finite-dimensional correction to obtain exact agreement on that subspace.

\begin{proposition}\label{prop:Lp-fixed-subspace-conditional-expectation}
    Let \((\Omega,\mu)\) be a finite measure space, let \(1\leq p<\infty\), and let \(E\subseteq L_p(\Omega)\) be \(n\)-dimensional, where \(n\geq1\).
    For every \(\varepsilon>0\), there are a finite partition \(\mathcal P\), with \(m\) atoms, and \(A_E\in\cF(L_p(\Omega))\) such that
    \[ \norm{(I-D_{\mathcal P}C_{\mathcal P})|_E}<\varepsilon, \qquad A_E|_E=I_E,\qquad \operatorname{rank}A_E\leq m+n, \qquad \norm{A_E}\leq1+n\varepsilon. \]
\end{proposition}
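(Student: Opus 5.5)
The plan has two steps: first find a partition \(\mathcal P\) for which the conditional expectation \(\mathbb E[\cdot\mid\mathcal P]\) is close to the identity on \(E\) in operator norm, and then apply Lemma~\ref{lem:finite-dimensional-correction} with \(T=I\).

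For the first step, fix a basis of \(E\). Choose an Auerbach basis \((e_i,e_i^*)_{i=1}^n\) of \(E\), so that every \(f=\sum_i a_ie_i\in E\) with \(\norm f\leq1\) satisfies \(\abs{a_i}\leq1\). Then
\[ \norm{(I-D_{\mathcal P}C_{\mathcal P})|_E}\leq\sum_{i=1}^n\norm{e_i-\mathbb E[e_i\mid\mathcal P]}_{L_p}, \]
so it suffices to have \(\norm{e_i-\mathbb E[e_i\mid\mathcal P]}_{L_p}<\varepsilon/n\) for each \(i\).

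Next, approximate each \(e_i\) by a simple function. Since simple functions are dense in \(L_p(\mu)\) for \(p<\infty\) on a finite measure space, choose simple functions \(s_i\) with \(\norm{e_i-s_i}_{L_p}<\varepsilon/(2n)\). Let \(\mathcal P\) be the common refinement of the level-set partitions of \(s_1,\ldots,s_n\), discarding null atoms by merging them into a positive-measure atom. This gives finitely many atoms, say \(m\), each of positive finite measure. Each \(s_i\) is then constant on the atoms up to null sets, so \(s_i\in S_{\mathcal P}\) in \(L_p\).

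Now use the fact that conditional expectations are contractions on \(L_p\). Proposition~\ref{prop:Lp-conditional-expectation-coder} gives \(\norm{D_{\mathcal P}C_{\mathcal P}}\leq1\) and \(\mathbb E[s_i\mid\mathcal P]=s_i\). Hence
\[ \norm{e_i-\mathbb E[e_i\mid\mathcal P]}\leq\norm{e_i-s_i}+\norm{\mathbb E[s_i-e_i\mid\mathcal P]}\leq2\norm{e_i-s_i}<\varepsilon/n. \]
Summing over \(i\) gives \(\norm{(I-D_{\mathcal P}C_{\mathcal P})|_E}<\varepsilon\), which is the first assertion.

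For the correction step, apply Lemma~\ref{lem:finite-dimensional-correction} with \(T=I_{L_p}\), \(S=D_{\mathcal P}C_{\mathcal P}\), \(C=1\), rank bound \(m\), and error \(\varepsilon'\) equal to the achieved strict bound, which is less than \(\varepsilon\). This yields \(A_E\) with \(A_E|_E=I_E\), \(\operatorname{rank}A_E\leq m+n\), and \(\norm{A_E}\leq1+n\varepsilon\).

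The only delicate point is handling null or degenerate atoms, so that the partition satisfies the hypothesis \(0<\mu(A_j)<\infty\) of Proposition~\ref{prop:Lp-conditional-expectation-coder}. Merging null level sets into a positive-measure atom changes \(s_i\) only on a null set. If \(\mu(\Omega)=0\), then \(L_p\) is trivial, contradicting \(n\geq1\), so a positive-measure atom always exists.
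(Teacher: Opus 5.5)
Your proposal is correct and follows essentially the same route as the paper. Both arguments use an Auerbach basis, approximate its elements by simple functions measurable for a common finite partition, use contractivity of the conditional expectation to get $\norm{(I-D_{\mathcal P}C_{\mathcal P})|_E}<\varepsilon$, and then apply the finite-dimensional correction with $T=I$. The paper invokes the correction through Proposition~\ref{prop:correction-to-exact-coder}, which also records the factorization $A_E=D_EC_E$, while your extra care with null atoms and with using the achieved strict bound in the correction are harmless refinements.
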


\begin{proof}
    Let \((e_i,e_i^*)_{i=1}^n\) be an Auerbach basis for \(E\).
    Since simple functions are dense in \(L_p(\Omega)\), we can choose simple functions \(s_i\), all measurable with respect to a common finite partition \(\mathcal P\), such that \(\norm{e_i-s_i}_{L_p}<\eta\text{ for }1\leq i\leq n.\)
    Since \(D_{\mathcal P}C_{\mathcal P}s_i=s_i\) and \(\norm{D_{\mathcal P}C_{\mathcal P}}\leq1\), we have \(\norm{e_i-D_{\mathcal P}C_{\mathcal P}e_i}_{L_p}\leq2\eta.\)
    If \(x\in B_E\) and \(x=\sum_{i=1}^n a_ie_i\), then \(\abs{a_i}=\abs{e_i^*(x)}\leq1\).
    Thus we have \(\norm{x-D_{\mathcal P}C_{\mathcal P}x}_{L_p}\leq2n\eta.\)
    By taking \(\eta<\varepsilon/(2n)\), we obtain the approximation estimate.
    Put \(S=D_{\mathcal P}C_{\mathcal P}\).
    By Proposition~\ref{prop:correction-to-exact-coder}, with \(T=I_{L_p(\Omega)}\), \(\delta=\norm{(I-S)|_E}<\varepsilon\), \(\Gamma_S=1\), and \(\operatorname{rank}S\leq m\), we obtain the asserted \(A_E=D_EC_E\).
\end{proof}

\begin{remark}
    The number of atoms in \(\mathcal P\) of Proposition~\ref{prop:Lp-fixed-subspace-conditional-expectation} may depend on the particular subspace \(E\), as well as on \(\dim E\) and \(\varepsilon\).
    A UAP rank bound therefore requires an atom count uniform over subspaces of a given dimension.
    By taking \(\varepsilon=1/n\), we have \(\norm{A_E}\leq2\).
    The operator \(A_E\) factors through \(Z_E=A_E(L_p(\Omega))\), but it need not factor through the original cell-average measurements.
\end{remark}

\begin{definition}\label{def:averaging-doubling}
    Fix \(1\leq p<\infty\) and \(0<s\leq1\).
    Let \(\Omega\subseteq\R^d\) be a bounded domain with Lebesgue measure.
    In each of the two definitions below, we require constants \(C_{\#,\Omega},C_{\mathrm{av},\Omega,p,s}>0\) and finite measurable partitions \(\mathcal P_J=\{A_{J,1},\ldots,A_{J,N_J}\}\), \(J\geq0\), with \(0<|A_{J,\nu}|<\infty\) and \(N_J\leq C_{\#,\Omega}2^{Jd}\).
    The constants and partitions may be chosen independently in the two definitions.
    Put \(P_J^\Omega=\mathbb E[\,\cdot\,\mid\mathcal P_J]\).

    The domain satisfies the \(B^s_{p,\infty}\) averaging estimates if such constants and partitions satisfy
    \[ \norm{f-P_J^\Omega f}_{L_p(\Omega)} \leq C_{\mathrm{av},\Omega,p,s}2^{-Js}\norm{f}_{B^s_{p,\infty}(\Omega)},\qquad f\in B^s_{p,\infty}(\Omega). \]
    It satisfies the \(W^{s,p}\) averaging estimates if such constants and partitions satisfy
    \begin{equation}\label{eq:sobolev-averaging-estimate}
        \norm{f-P_J^\Omega f}_{L_p(\Omega)} \leq C_{\mathrm{av},\Omega,p,s}2^{-Js}\norm{f}_{W^{s,p}(\Omega)},\qquad f\in W^{s,p}(\Omega).
    \end{equation}
\end{definition}

For the cube \(Q=[0,1]^d\) with dyadic partitions, the Besov averaging estimate holds for \(1\leq p<\infty\) and \(0<s<1\); see \cite{DeVoreLorentz1993,Triebel1983}.
The Sobolev estimate holds for \(0<s\leq1\), including \(s=1\) by the cellwise Poincar\'e inequality.
We first use Besov smoothness to obtain a uniform atom count.

\begin{proposition}\label{prop:Lp-smooth-dyadic-conditional-expectation}
    Let \(\Omega\subseteq\R^d\) be a bounded domain with Lebesgue measure, let \(1\leq p<\infty\), and let \(0<s\leq1\).
    Assume that \(\Omega\) satisfies the \(B^s_{p,\infty}\) averaging estimates, and let \((\mathcal P_J)\) be the corresponding partitions.
    For a finite-dimensional subspace \(E\subseteq L_p(\Omega)\), set
    \[ M_s^\Omega(E) = \sup_{f\in B_E}\norm{f}_{B^s_{p,\infty}(\Omega)}. \]
    If \(M_s^\Omega(E)\leq M\), then for every \(\varepsilon>0\) there is a conditional expectation \(P_J^\Omega\) such that
    \[ \norm{(I-P_J^\Omega)|_E}\leq\varepsilon,\qquad \operatorname{rank}P_J^\Omega \leq 2^dC_{\#,\Omega} \left(1+\frac{C_{\mathrm{av},\Omega,p,s}M}{\varepsilon}\right)^{d/s}. \]
    If \(q=\dim E\geq1\), then there are bounded linear maps \(L_p(\Omega)\xrightarrow{C_E}Z_E\xrightarrow{D_E}L_p(\Omega)\) such that
    \[ D_EC_E|_E=I_E, \qquad \norm{D_EC_E}\leq2, \qquad \dim Z_E\leq 2^dC_{\#,\Omega}(1+C_{\mathrm{av},\Omega,p,s}Mq)^{d/s}+q. \]
\end{proposition}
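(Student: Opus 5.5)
The plan is to choose the dyadic level \(J\) directly from the averaging estimate, and then apply the finite-dimensional correction of Proposition~\ref{prop:correction-to-exact-coder}.

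For the first assertion, fix \(\varepsilon>0\) and \(f\in B_E\). The \(B^s_{p,\infty}\) averaging estimate gives
\[ \norm{f-P_J^\Omega f}_{L_p(\Omega)}\leq C_{\mathrm{av},\Omega,p,s}2^{-Js}M. \]
It therefore suffices to take the least \(J\geq0\) with \(2^{-Js}\leq\varepsilon/(C_{\mathrm{av},\Omega,p,s}M)\). If \(M=0\) or the ratio is at least one, take \(J=0\). Otherwise minimality of \(J\geq1\) gives \(2^{-(J-1)s}>\varepsilon/(C_{\mathrm{av},\Omega,p,s}M)\), that is,
\[ 2^{J}<2\bigl(C_{\mathrm{av},\Omega,p,s}M/\varepsilon\bigr)^{1/s}. \]
In both cases \(2^{Jd}\leq2^d(1+C_{\mathrm{av},\Omega,p,s}M/\varepsilon)^{d/s}\). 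Since the range of \(P_J^\Omega\) is spanned by the indicators of the atoms of \(\mathcal P_J\), we have \(\operatorname{rank}P_J^\Omega\leq N_J\leq C_{\#,\Omega}2^{Jd}\). This yields the stated rank bound, and the error bound holds for every \(f\in B_E\), hence for the restricted operator norm.

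For the second assertion, let \(q=\dim E\geq1\) and apply the first part with \(\varepsilon=1/q\). This gives \(S=P_J^\Omega\) with \(\norm{(I-S)|_E}\leq1/q\) and rank at most \(2^dC_{\#,\Omega}(1+C_{\mathrm{av},\Omega,p,s}Mq)^{d/s}\). Since \(S\) is a conditional expectation, Proposition~\ref{prop:Lp-conditional-expectation-coder} gives \(\norm S\leq1\).

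Now apply Proposition~\ref{prop:correction-to-exact-coder}(1) with \(T=I_{L_p(\Omega)}\), \(\delta=1/q\), \(\Gamma_S=1\), and \(m=\operatorname{rank}S\). This produces \(Z_E\), \(C_E\), \(D_E\) with
\[ D_EC_E|_E=I_E,\qquad \dim Z_E\leq m+q,\qquad \norm{D_EC_E}\leq1+q\cdot(1/q)=2. \]
These are exactly the stated bounds.

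The only delicate point is the integer rounding when choosing \(J\) and the degenerate cases \(M=0\) or large \(\varepsilon\), and the factor \(2^d\) and the summand \(1\) absorb these. The rest is a direct combination of earlier results.
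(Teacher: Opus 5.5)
Your proposal is correct and follows essentially the same route as the paper: choose the least dyadic level \(J\) allowed by the Besov averaging estimate, bound \(\operatorname{rank}P_J^\Omega\leq N_J\leq C_{\#,\Omega}2^{Jd}\), and then apply the finite-dimensional correction of Proposition~\ref{prop:correction-to-exact-coder} with \(\varepsilon=\delta=1/q\) and \(\Gamma_S=1\). Your explicit handling of the rounding in \(J\) and of the degenerate cases \(M=0\) or \(\varepsilon\geq C_{\mathrm{av},\Omega,p,s}M\) only fills in details that the paper leaves implicit.
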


\begin{proof}
    Since \(P_J^\Omega\) is the conditional expectation associated with \(\mathcal P_J\), we have \(\norm{P_J^\Omega}_{L_p\to L_p}\leq1\), and its range is the space of functions which are constant on the atoms of \(\mathcal P_J\).
    Therefore, we have \(\operatorname{rank}P_J^\Omega\leq N_J\leq C_{\#,\Omega}2^{Jd}\).
    If \(M_s^\Omega(E)\leq M\), then, by the Besov averaging hypothesis, we have \(\norm{(I-P_J^\Omega)|_E} \leq C_{\mathrm{av},\Omega,p,s}M2^{-Js}\).
    Choose the least \(J\geq0\) for which the right side is at most \(\varepsilon\).
    Then we have
    \[ 2^{Jd} \leq 2^d\left(1+\frac{C_{\mathrm{av},\Omega,p,s}M}{\varepsilon}\right)^{d/s}. \]
    For \(q=\dim E\geq1\), we make this choice with \(\varepsilon=1/q\).
    By Proposition~\ref{prop:correction-to-exact-coder}, with \(T=I_{L_p(\Omega)}\), \(S=P_J^\Omega\), \(\delta=1/q\), \(m=\operatorname{rank}P_J^\Omega\), and \(\Gamma_S=1\), we obtain \(D_EC_E|_E=I_E\), \(\norm{D_EC_E}\leq2\), and \(\dim Z_E\leq\operatorname{rank}P_J^\Omega+q\).
    By the preceding rank bound, we obtain the assertion.
\end{proof}

For \(\dim E\leq n\) and \(M_s^\Omega(E)\leq M\), the last bound is uniform.
The same argument applies when the averaging estimate is stated with a Sobolev norm.

\begin{proposition}\label{prop:Lp-regular-domain-sobolev-version}
    Let \(\Omega\subseteq\R^d\) be a bounded domain with Lebesgue measure, let \(1\leq p<\infty\), and let \(0<s\leq1\).
    Assume that \(\Omega\) satisfies the \(W^{s,p}\) averaging estimates, and let \((\mathcal P_J)\) be the corresponding partitions.
    For a finite-dimensional subspace \(E\subseteq L_p(\Omega)\), set
    \[ M_{s,W}^\Omega(E) = \sup_{f\in B_E}\norm{f}_{W^{s,p}(\Omega)}. \]
    If \(M_{s,W}^\Omega(E)\leq M\), then for every \(\varepsilon>0\) there is a conditional expectation \(P_J^\Omega\) such that
    \[ \norm{(I-P_J^\Omega)|_E}\leq\varepsilon,\qquad \operatorname{rank}P_J^\Omega \leq 2^dC_{\#,\Omega} \left(1+\frac{C_{\mathrm{av},\Omega,p,s}M}{\varepsilon}\right)^{d/s}. \]
    If \(q=\dim E\geq1\), then there are bounded linear maps \(L_p(\Omega)\xrightarrow{C_E}Z_E\xrightarrow{D_E}L_p(\Omega)\) such that
    \[ D_EC_E|_E=I_E, \qquad \norm{D_EC_E}\leq2, \qquad \dim Z_E\leq 2^dC_{\#,\Omega}(1+C_{\mathrm{av},\Omega,p,s}Mq)^{d/s}+q. \]
\end{proposition}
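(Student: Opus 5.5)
The argument is the proof of Proposition~\ref{prop:Lp-smooth-dyadic-conditional-expectation}, with the Besov seminorm replaced by the Sobolev norm. The averaging hypothesis is used only through the operator-norm bound on \(E\). The partition structure and the correction step do not change.

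First, \(P_J^\Omega=\mathbb E[\,\cdot\mid\mathcal P_J]\) is a conditional expectation on a finite partition. Proposition~\ref{prop:Lp-conditional-expectation-coder} therefore gives \(\norm{P_J^\Omega}_{L_p\to L_p}\leq1\). Its range is the span of the indicators of the atoms, so \(\operatorname{rank}P_J^\Omega\leq N_J\leq C_{\#,\Omega}2^{Jd}\). Next, take \(f\in B_E\). Then \(\norm f_{W^{s,p}(\Omega)}\leq M_{s,W}^\Omega(E)\leq M\), and \eqref{eq:sobolev-averaging-estimate} yields
\[ \norm{(I-P_J^\Omega)|_E}\leq C_{\mathrm{av},\Omega,p,s}M2^{-Js}. \]
This is the only point where the Sobolev hypothesis enters. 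Implicitly, every element of \(E\) lies in \(W^{s,p}(\Omega)\), which is forced by finiteness of \(M\).

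Let \(J\geq0\) be the least integer with \(C_{\mathrm{av},\Omega,p,s}M2^{-Js}\leq\varepsilon\). If \(J=0\) the rank bound is immediate. Otherwise minimality gives \(2^{(J-1)s}<C_{\mathrm{av},\Omega,p,s}M/\varepsilon\), hence
\[ 2^{Jd}\leq2^d\bigl(C_{\mathrm{av},\Omega,p,s}M/\varepsilon\bigr)^{d/s}\leq2^d\bigl(1+C_{\mathrm{av},\Omega,p,s}M/\varepsilon\bigr)^{d/s}. \]
Combined with \(\operatorname{rank}P_J^\Omega\leq C_{\#,\Omega}2^{Jd}\), this gives the first assertion.

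For the exact coder, let \(q=\dim E\geq1\) and take \(\varepsilon=1/q\). Apply Proposition~\ref{prop:correction-to-exact-coder}(1) with \(T=I_{L_p(\Omega)}\), \(S=P_J^\Omega\), \(\delta=1/q\), \(m=\operatorname{rank}P_J^\Omega\), and \(\Gamma_S=1\). This produces \(C_E,D_E\) with \(D_EC_E|_E=I_E\), \(\norm{D_EC_E}\leq1+q\cdot(1/q)=2\), and \(\dim Z_E\leq m+q\). Substituting the rank bound with \(\varepsilon=1/q\) gives the stated dimension estimate.

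No step is a genuine obstacle, since everything is routine bookkeeping. The only places needing care are the \(J=0\) case in the choice of \(J\) and the requirement that \(E\subseteq W^{s,p}(\Omega)\) before \eqref{eq:sobolev-averaging-estimate} can be invoked.
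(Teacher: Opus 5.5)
Your proposal is correct and follows the paper's own proof: you bound \(\norm{(I-P_J^\Omega)|_E}\) via the Sobolev averaging estimate and take the least admissible \(J\) for the rank bound. You then apply the finite-rank correction of Proposition~\ref{prop:correction-to-exact-coder}(1) with \(\delta=1/q\), which is exactly the scale choice and correction the paper imports from Proposition~\ref{prop:Lp-smooth-dyadic-conditional-expectation}.
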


\begin{proof}
    By the Sobolev averaging hypothesis, we have \(\norm{P_J^\Omega}\leq1\), \(\operatorname{rank}P_J^\Omega\leq C_{\#,\Omega}2^{Jd}\), and \(\norm{(I-P_J^\Omega)|_E}\leq C_{\mathrm{av},\Omega,p,s}M2^{-Js}\).
    Thus, by the scale choice and correction in the proof of Proposition~\ref{prop:Lp-smooth-dyadic-conditional-expectation}, we obtain the assertion.
\end{proof}

For \(\dim E\leq n\) and \(M_{s,W}^\Omega(E)\leq M\), we obtain a uniform rank bound.

In Appendix~\ref{app:detailed-operator-learning-examples}, we apply these conditional expectations to the linear factors of a nonlinear integral operator.
The averaging estimate and Proposition~\ref{prop:correction-to-exact-coder} give explicit operator UAP rank bounds for those factors.

\subsection{Gabor truncation in modulation spaces}\label{subsec:gabor-phase-space-coder}

The phase-space weight controls the Gabor truncation error, and the lattice count controls the rank.
The frame identities and inner products in this subsection use the standard complex convention.
For the coding and rank estimates, the function and coefficient spaces are regarded as real Banach spaces.

Fix \(a,b>0\), let \(\Lambda=a\Z^d\times b\Z^d\), and write \(z=(x,\xi)\in\R^{2d}\).
The time-frequency shift is 
\[\pi(z)f(t)=e^{2\pi i\xi\cdot t}f(t-x).\]
For \(g,\gamma\) and \(\lambda\in\Lambda\), put \(g_\lambda=\pi(\lambda)g\) and \(\gamma_\lambda=\pi(\lambda)\gamma\).
We use \(M^s_{p,q}(\R^d)\) for the modulation space with the full phase-space weight \(m_s(z)=\langle z\rangle^s=(1+\norm{z}_2^2)^{s/2}\).
Let \(1\leq p,q<\infty\) and \(s_0>s_1\), write \(\beta=s_0-s_1\), and let \(T:M^{s_0}_{p,q}(\R^d)\hookrightarrow M^{s_1}_{p,q}(\R^d)\).
For \(c=(c_{k,n})_{k,n\in\Z^d}\), where \(c_{k,n}=c_{(ak,bn)}\), set
\[ \norm{c}_{\ell^{p,q}_{m_s}(\Lambda)} = \left( \sum_{n\in\Z^d} \left( \sum_{k\in\Z^d} \abs{c_{k,n}}^p\langle(ak,bn)\rangle^{sp} \right)^{q/p} \right)^{1/q}. \]
For \(R\geq1\), define \(\Lambda_R=\{\lambda\in\Lambda:\norm{\lambda}_2\leq R\}\) and \(m_R=\abs{\Lambda_R}.\)

Put \(v_\rho(z)=(1+\norm{z}_2)^\rho\).
We write \(M^1_{v_\rho}\) for the modulation space with both mixed exponents equal to one and weight \(v_\rho\).

\begin{proposition}\label{prop:gabor-banach-frame-input}
    Let \(\rho\geq\max\{\abs{s_0},\abs{s_1}\}\).
    Suppose that \(g,\gamma\in M^1_{v_\rho}(\R^d)\), and that \((\pi(\lambda)g)_{\lambda\in\Lambda}\) and \((\pi(\lambda)\gamma)_{\lambda\in\Lambda}\) are dual Gabor frames for \(L_2(\R^d)\).
    Then the maps \(C_gf=(\inp{f}{g_\lambda})_{\lambda\in\Lambda}\) and \(D_\gamma c=\sum_{\lambda\in\Lambda}c_\lambda\gamma_\lambda\) are bounded between \(M^{s_i}_{p,q}(\R^d)\) and \(\ell^{p,q}_{m_{s_i}}(\Lambda)\) for \(i=0,1\).
    Thus there are constants \(B_{s_i},S_{s_i}>0\) such that
    \[ \norm{C_gf}_{\ell^{p,q}_{m_{s_i}}}\leq B_{s_i}\norm{f}_{M^{s_i}_{p,q}}, \qquad \norm{D_\gamma c}_{M^{s_i}_{p,q}} \leq S_{s_i}\norm{c}_{\ell^{p,q}_{m_{s_i}}}. \]
    Moreover, \(D_\gamma C_gf=f\) in \(M^{s_i}_{p,q}(\R^d)\).
    The synthesis series converges unconditionally in that norm.
\end{proposition}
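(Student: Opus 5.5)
The plan is to reduce everything to the standard Gabor-frame theory on the unweighted time--frequency side: bound the analysis operator \(C_g\) through the short-time Fourier transform, and bound the synthesis operator \(D_\gamma\) through a discrete-to-continuous convolution estimate. Throughout we use that \(m_s\) is \(v_{|s|}\)-moderate, i.e.
\[ m_s(z+w)\leq C\,m_s(z)\,v_{|s|}(w), \]
and \(v_{|s|}\leq v_\rho\) because \(\rho\geq\max\{|s_0|,|s_1|\}\). We also fix a window \(\varphi\in M^1_{v_\rho}\) (say a Gaussian) that defines the \(M^{s}_{p,q}\) norm via \(V_\varphi f\); different windows in \(M^1_{v_\rho}\) give equivalent norms.

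\textbf{Analysis bound.} Since \(\langle f,g_\lambda\rangle=V_gf(\lambda)\), we need to sample \(V_gf\) on \(\Lambda\). The change-of-window inequality \(|V_gf|\leq\|\varphi\|_2^{-2}\,|V_\varphi f|*|V_g\varphi|\) expresses \(V_gf\) as a convolution. The factor \(V_g\varphi\) lies in the Wiener amalgam \(W(L^\infty,L^1_{v_\rho})\), because \(g,\varphi\in M^1_{v_\rho}\). The sampling inequality for such convolutions then gives
\[ \|V_gf|_\Lambda\|_{\ell^{p,q}_{m_s}}\lesssim\||V_\varphi f|*|V_g\varphi|\|_{W(L^\infty,L^{p,q}_{m_s})}\lesssim\|V_\varphi f\|_{L^{p,q}_{m_s}}\,\|V_g\varphi\|_{W(L^\infty,L^1_{v_\rho})}. \]
The second inequality is the amalgam convolution estimate \(L^{p,q}_{m}*W(L^\infty,L^1_v)\subseteq W(L^\infty,L^{p,q}_m)\), which holds for \(v\)-moderate \(m\) in the mixed-norm form (first \(x\), then \(\xi\)). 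This gives the constant \(B_{s_i}\).

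\textbf{Synthesis bound.} Write
\[ V_\varphi(D_\gamma c)(z)=\sum_\lambda c_\lambda V_\varphi\gamma_\lambda(z), \qquad |V_\varphi\gamma_\lambda(z)|=|V_\varphi\gamma(z-\lambda)|. \]
Then \(|V_\varphi D_\gamma c|\leq\sum_\lambda|c_\lambda|\,|V_\varphi\gamma|(\cdot-\lambda)\), a discrete--continuous convolution. Its \(L^{p,q}_{m_s}\) norm is bounded by \(\|c\|_{\ell^{p,q}_{m_s}}\,\|V_\varphi\gamma\|_{W(L^\infty,L^1_{v_\rho})}\), using moderateness to move the weight onto \(c\). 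This gives \(S_{s_i}\).

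\textbf{Convergence and reconstruction.} For finitely supported \(c\) the sum is finite. Since \(p,q<\infty\), finitely supported sequences are dense in \(\ell^{p,q}_{m_s}\), and the bound above shows that partial sums over any exhausting net are Cauchy. This gives unconditional convergence and extends \(D_\gamma\) consistently. The identity \(D_\gamma C_g f=f\) holds on \(L_2\) by the duality of the frames, in particular on the Schwartz class or on \(M^1_{v_\rho}\). That class is dense in \(M^{s_i}_{p,q}\) for \(p,q<\infty\), and \(D_\gamma C_g\) is bounded there, so the identity extends to all \(f\in M^{s_i}_{p,q}\).

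\textbf{Main obstacle.} The amalgam convolution and sampling estimates must be checked in the mixed \(\ell^{p,q}\) ordering with the full phase-space weight, rather than taken only in their \(p=q\) form. We would cite Gr\"ochenig's results (Lemma~11.1.2 and Theorem~12.2.4 of \emph{Foundations of Time-Frequency Analysis}), which cover exactly \(v\)-moderate weights and solid mixed-norm spaces. We would verify two points: that the sampling of \(W(L^\infty,\cdot)\) functions on the lattice \(a\Z^d\times b\Z^d\) respects the \(x\)-then-\(\xi\) order, and that the constants depend only on \(a,b,\rho\), the windows, and \(s_i\).
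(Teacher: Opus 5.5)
Your proposal is correct and takes the same route as the paper. The paper simply invokes the standard Banach-frame and atomic-decomposition theorem of Gr\"ochenig and Heil, using exactly your two observations: \(\rho\geq\max\{|s_0|,|s_1|\}\) makes each \(m_{s_i}\) \(v_\rho\)-moderate, and \(p,q<\infty\) give density of finitely supported sequences, hence unconditional norm convergence. Your change-of-window, amalgam-sampling and discrete-convolution steps are the unpacked proof of that cited theorem. One point to state explicitly when you extend \(D_\gamma C_gf=f\) from Schwartz functions: the \(L_2\)-limit and the \(M^{s_i}_{p,q}\)-limit of the series coincide, because both spaces embed continuously into \(\mathcal S'\).
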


Here the coefficients use the standard extension of the \(L_2\) pairing.
The proposition is the standard Banach-frame and atomic-decomposition theorem \cite{Grochenig2001,Heil2011}.
The restriction on \(\rho\) gives the required \(v_\rho\)-moderateness, \(m_{s_i}(z+w)\lesssim v_\rho(z)m_{s_i}(w)\), and the finite exponents give norm convergence.
We set \(C_{\mathrm{Gab}}=S_{s_1}B_{s_0}\).
Let \(r_Rc=(c_\lambda)_{\lambda\in\Lambda_R}\), \(e_R\) be zero extension to \(\Lambda\), and \(Q_R=e_Rr_R\).

\begin{lemma}\label{lem:gabor-tail-counting}
    For \(c\in\ell^{p,q}_{m_{s_0}}(\Lambda)\), we have
    \[ \norm{(I-Q_R)c}_{\ell^{p,q}_{m_{s_1}}} \leq \langle R\rangle^{-\beta} \norm{c}_{\ell^{p,q}_{m_{s_0}}}. \]
    There is a lattice counting constant \(C_{\#,\Lambda,d}>0\) such that \(m_R\leq C_{\#,\Lambda,d}R^{2d}\) for \(R\geq1\).
\end{lemma}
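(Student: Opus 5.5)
The tail estimate reduces to a pointwise comparison of weights, and the lattice count to a volume argument. For the tail, I write \(c_{k,n}\) for the coefficient at \(\lambda=(ak,bn)\). The operator \((I-Q_R)c\) keeps exactly the coefficients with \(\norm{\lambda}_2>R\) and sets the others to zero. For every such \(\lambda\), we have \(\langle\lambda\rangle=(1+\norm{\lambda}_2^2)^{1/2}>(1+R^2)^{1/2}=\langle R\rangle\). Since \(\beta=s_0-s_1>0\), this gives
\[ \langle\lambda\rangle^{s_1}=\langle\lambda\rangle^{s_0}\langle\lambda\rangle^{-\beta}\leq\langle R\rangle^{-\beta}\langle\lambda\rangle^{s_0}. \]
Hence, entrywise in the double sum defining \(\norm{\cdot}_{\ell^{p,q}_{m_{s_1}}}\), we get \(\abs{((I-Q_R)c)_{k,n}}\langle(ak,bn)\rangle^{s_1}\leq\langle R\rangle^{-\beta}\abs{c_{k,n}}\langle(ak,bn)\rangle^{s_0}\). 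The mixed norm is monotone under pointwise domination of nonnegative arrays, because the inner \(\ell^p\) sums and the outer \(\ell^q\) sum are both monotone. It is also positively homogeneous. Applying these two properties gives the first inequality directly, with constant exactly one. No step here is delicate; I only need to note that the weight \(m_s\) is a function of \(\norm{\lambda}_2\) alone, so the truncation region matches the weight comparison.

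For the counting bound, I place the closed box \(\lambda+[0,a)^d\times[0,b)^d\) at each \(\lambda\in\Lambda_R\). These boxes are pairwise disjoint, and each has volume \(a^db^d\). Each box has diameter \(\sqrt{d(a^2+b^2)}=:\kappa\), so every box lies in the Euclidean ball of radius \(R+\kappa\) in \(\R^{2d}\). Comparing volumes gives
\[ m_R\,a^db^d\leq\omega_{2d}(R+\kappa)^{2d}, \]
where \(\omega_{2d}\) is the volume of the unit ball. For \(R\geq1\), we have \(R+\kappa\leq(1+\kappa)R\). Hence \(m_R\leq C_{\#,\Lambda,d}R^{2d}\) with \(C_{\#,\Lambda,d}=\omega_{2d}(1+\kappa)^{2d}/(ab)^d\), which depends only on \(a,b,d\).

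The only point needing care is the first estimate for infinite arrays. Monotonicity of the mixed norm should be applied first to the finite partial sums, and then extended by the monotone convergence of these sums. This is routine, and neither step presents a real obstacle.
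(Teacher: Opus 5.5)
Your proposal is correct and is essentially the paper's argument: on the complement of \(\Lambda_R\), the weight comparison \(\langle\lambda\rangle^{s_1}\leq\langle R\rangle^{-\beta}\langle\lambda\rangle^{s_0}\), combined with monotonicity and homogeneity of the mixed norm, gives the tail bound with constant one. Your cell-volume comparison is an explicit version of the paper's count of lattice cells meeting the ball, with a constant depending only on \(a,b,d\). The only slip is cosmetic: the boxes \(\lambda+[0,a)^d\times[0,b)^d\) are half-open rather than closed, and this is what makes them pairwise disjoint; closed boxes would overlap only on null sets, so the volume comparison is unaffected either way.
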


\begin{proof}
    For \(\lambda\notin\Lambda_R\), we have \(\langle\lambda\rangle^{s_1}=\langle\lambda\rangle^{-\beta}\langle\lambda\rangle^{s_0} \leq\langle R\rangle^{-\beta}\langle\lambda\rangle^{s_0}\).
    Taking the mixed \(\ell^{p,q}\) norm gives the tail estimate with constant \(1\).
    And by counting lattice cells meeting the ball, we have the second estimate with a constant depending on the lattice covolume and cell diameter.
\end{proof}

\begin{remark}
    The finite Gabor sum \(P_R=D_\gamma Q_RC_g\) is a bounded finite-rank recovery map and need not satisfy \(P_R^2=P_R\).
    Corollary~\ref{cor:GaborExactUAP} supplies exact agreement on any prescribed finite-dimensional subspace by correction.
    In \(L_2(\R^d)\), the orthogonal projection onto \(V_R=\Span_{\R}\{\gamma_\lambda,i\gamma_\lambda:\lambda\in\Lambda_R\}\) also gives a norm-one recovery map exact on \(V_R\).
    It is computed from the finite Gram system, using the pseudoinverse when the atoms are linearly dependent.
\end{remark}

\begin{proposition}\label{prop:gabor-approximate-coder}
    Under the hypotheses of Proposition~\ref{prop:gabor-banach-frame-input}, for \(R\geq1\), define the Gabor encoder--decoder maps
    \[ M^{s_0}_{p,q}(\R^d)\xrightarrow{C_R}\ell^{p,q}_{m_{s_0}}(\Lambda_R) \xrightarrow{D_R}M^{s_1}_{p,q}(\R^d) \]
    given by
    \[ C_Rf=r_RC_gf=(\inp{f}{g_\lambda})_{\lambda\in\Lambda_R},\qquad D_Ra=D_\gamma e_Ra=\sum_{\lambda\in\Lambda_R}a_\lambda\gamma_\lambda. \]
    Then \(\norm{C_R}\leq B_{s_0}\) and \(\norm{D_R}\leq S_{s_1}\).
    The operator \(P_R=D_RC_R\) satisfies
    \begin{align*}
        \operatorname{rank}P_R\leq2m_R\leq2C_{\#,\Lambda,d}R^{2d}, \quad \norm{P_R}_{M^{s_0}_{p,q}\to M^{s_1}_{p,q}}\leq C_{\mathrm{Gab}},\quad\norm{T-D_RC_R}_{M^{s_0}_{p,q}\to M^{s_1}_{p,q}} \leq C_{\mathrm{Gab}}\langle R\rangle^{-\beta}.
    \end{align*}
\end{proposition}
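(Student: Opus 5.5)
The plan is to derive all three estimates from the frame bounds in Proposition~\ref{prop:gabor-banach-frame-input} together with the tail and counting estimates of Lemma~\ref{lem:gabor-tail-counting}. For the encoder, the truncation \(r_R\) takes the norm \(\ell^{p,q}_{m_{s_0}}(\Lambda)\) to the restricted norm \(\ell^{p,q}_{m_{s_0}}(\Lambda_R)\) and has norm at most one. Hence \(\norm{C_R}\leq\norm{C_g:M^{s_0}_{p,q}\to\ell^{p,q}_{m_{s_0}}}\leq B_{s_0}\).

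The decoder needs slightly more care, because \(D_R\) is measured from the \(m_{s_0}\)-weighted space into \(M^{s_1}_{p,q}\). I would write \(D_R=D_\gamma e_R\) and use the synthesis bound for \(i=1\):
\[ \norm{D_Ra}_{M^{s_1}_{p,q}}\leq S_{s_1}\norm{e_Ra}_{\ell^{p,q}_{m_{s_1}}}. \]
It then remains to compare the two weights on \(\Lambda_R\). Since \(s_1<s_0\) and \(\langle\lambda\rangle\geq1\), we have \(\langle\lambda\rangle^{s_1}\leq\langle\lambda\rangle^{s_0}\). This gives \(\norm{e_Ra}_{\ell^{p,q}_{m_{s_1}}}\leq\norm{a}_{\ell^{p,q}_{m_{s_0}}(\Lambda_R)}\), hence \(\norm{D_R}\leq S_{s_1}\). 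Composing the two bounds yields \(\norm{P_R}\leq S_{s_1}B_{s_0}=C_{\mathrm{Gab}}\).

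For the rank, the coefficient space \(\ell^{p,q}_{m_{s_0}}(\Lambda_R)\) is \(\C^{m_R}\), which has real dimension \(2m_R\). Since \(P_R\) factors through this space, \(\operatorname{rank}_\R P_R\leq 2m_R\). The lattice count in Lemma~\ref{lem:gabor-tail-counting} then gives \(2m_R\leq 2C_{\#,\Lambda,d}R^{2d}\).

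For the error, the reconstruction identity \(D_\gamma C_gf=f\) from Proposition~\ref{prop:gabor-banach-frame-input} holds with convergence in \(M^{s_0}_{p,q}\). Convergence also holds in \(M^{s_1}_{p,q}\), because \(f\in M^{s_0}_{p,q}\subseteq M^{s_1}_{p,q}\) and \(C_gf\in\ell^{p,q}_{m_{s_1}}\). Therefore
\[ Tf-P_Rf=D_\gamma(I-Q_R)C_gf. \]
Applying the \(s_1\) synthesis bound and then the tail estimate of Lemma~\ref{lem:gabor-tail-counting} gives
\[ \norm{Tf-P_Rf}_{M^{s_1}_{p,q}}\leq S_{s_1}\langle R\rangle^{-\beta}\norm{C_gf}_{\ell^{p,q}_{m_{s_0}}}\leq C_{\mathrm{Gab}}\langle R\rangle^{-\beta}\norm{f}_{M^{s_0}_{p,q}}. \]

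The main point to check is the justification of \(D_\gamma C_gf=f\) in the \(M^{s_1}\) norm. This relies on the unconditional convergence of the synthesis series for coefficients in \(\ell^{p,q}_{m_{s_1}}\), which holds because the exponents are finite. Everything else is direct norm bookkeeping.
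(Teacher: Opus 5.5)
Your proposal is correct and follows the paper's proof essentially step by step. Both use the contractivity of restriction, zero extension and the weight inclusion \(\ell^{p,q}_{m_{s_0}}\hookrightarrow\ell^{p,q}_{m_{s_1}}\) for the map norms, the real dimension \(2m_R\) of the coefficient space for the rank, and the identity \(T-P_R=D_\gamma(I-Q_R)C_g\) together with the tail estimate of Lemma~\ref{lem:gabor-tail-counting} for the error.
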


\begin{proof}
    Since the coefficient space has real dimension \(2m_R\), we have \(\operatorname{rank}P_R\leq2m_R\).
    By Lemma~\ref{lem:gabor-tail-counting}, we obtain the lattice count.
    Since restriction, zero extension, and the inclusion from the \(s_0\)-weighted coefficient space into the \(s_1\)-weighted space are contractive, the analysis and synthesis bounds give the stated map norms and \(\norm{P_R}\leq S_{s_1}B_{s_0}=C_{\mathrm{Gab}}\).
    By the tail estimate applied to \(T-P_R=D_\gamma(I-Q_R)C_g\), we have \(\norm{T-P_R}_{M^{s_0}_{p,q}\to M^{s_1}_{p,q}}\leq S_{s_1}\langle R\rangle^{-\beta}B_{s_0}\).
\end{proof}

The lattice count also gives \(\norm{T-D_RC_R}_{M^{s_0}_{p,q}\to M^{s_1}_{p,q}} \leq C_{\mathrm{Gab}}C_{\#,\Lambda,d}^{\beta/(2d)}m_R^{-\beta/(2d)}.\)

\begin{corollary}\label{cor:GaborExactUAP}
    Let \(E\subseteq M^{s_0}_{p,q}(\R^d)\) be \(n\)-dimensional, \(n\geq1\).
    Under the hypotheses of Proposition \ref{prop:gabor-banach-frame-input}, there is a finite-rank operator \(A_E:M^{s_0}_{p,q}(\R^d)\to M^{s_1}_{p,q}(\R^d)\) such that
    \[ A_E|_E=T|_E,\quad \norm{A_E}_{M^{s_0}_{p,q}\to M^{s_1}_{p,q}}\leq C_{\mathrm{Gab}}+1, \quad \operatorname{rank}A_E \leq C_{\#,\Lambda,d}2^{2d+1} (1+C_{\mathrm{Gab}}n)^{2d/(s_0-s_1)}+n. \]
\end{corollary}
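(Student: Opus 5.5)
The plan is to apply Corollary~\ref{cor:algebraic-stable-family-exactification} to the Gabor truncations of Proposition~\ref{prop:gabor-approximate-coder}, after a reparametrization \(h=R^{-1}\) or, more directly, after repeating its dyadic scale selection with \(R\) in place of \(h^{-1}\). Normalize by \(\norm T\): since \(T\) is the inclusion and a nonzero operator, \(\norm T>0\). It is simplest to work with absolute bounds and apply Lemma~\ref{lem:finite-dimensional-correction} directly, since the stated norm bound \(C_{\mathrm{Gab}}+1\) is absolute rather than normalized.

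First I select the scale. Put \(R_n=2^J\) with \(J\in\N\) chosen least such that \(2^J\geq(1+C_{\mathrm{Gab}}n)^{1/\beta}\), where \(\beta=s_0-s_1\). Then \(R_n\geq1\) and \(R_n<2(1+C_{\mathrm{Gab}}n)^{1/\beta}\). Since \(\langle R\rangle\geq R\), Proposition~\ref{prop:gabor-approximate-coder} gives
\[ \norm{T-P_{R_n}}\leq C_{\mathrm{Gab}}R_n^{-\beta}\leq \frac{C_{\mathrm{Gab}}}{1+C_{\mathrm{Gab}}n}\leq\frac1n. \]
It also gives \(\norm{P_{R_n}}\leq C_{\mathrm{Gab}}\) and
\[ \operatorname{rank}P_{R_n}\leq 2C_{\#,\Lambda,d}R_n^{2d}\leq 2C_{\#,\Lambda,d}2^{2d}(1+C_{\mathrm{Gab}}n)^{2d/\beta}=C_{\#,\Lambda,d}2^{2d+1}(1+C_{\mathrm{Gab}}n)^{2d/(s_0-s_1)}. \]

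Next I correct to exact agreement on \(E\). Apply Lemma~\ref{lem:finite-dimensional-correction} to the \(n\)-dimensional \(E\), with \(S=P_{R_n}\), \(\varepsilon=1/n\) (since \(\norm{(T-S)|_E}\leq\norm{T-S}\leq1/n\)), \(C=C_{\mathrm{Gab}}\), and \(m=\operatorname{rank}P_{R_n}\). This yields \(A_E\) with \(A_E|_E=T|_E\), \(\norm{A_E}\leq C_{\mathrm{Gab}}+n\cdot(1/n)=C_{\mathrm{Gab}}+1\), and \(\operatorname{rank}A_E\leq m+n\), which are exactly the three asserted bounds.

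The only delicate points are bookkeeping. The first is the real-versus-complex rank, which is already absorbed by the factor \(2m_R\) in Proposition~\ref{prop:gabor-approximate-coder}. The second is that the correction lemma must be applied with real-linear Hahn--Banach extensions on the spaces regarded as real Banach spaces, as that subsection stipulates. The dyadic rounding is what produces the factor \(2^{2d}\), so I expect no genuine obstacle beyond checking that \(R_n\geq1\), so that the lattice count in Lemma~\ref{lem:gabor-tail-counting} applies.
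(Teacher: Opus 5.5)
Your proof is correct and follows essentially the same route as the paper. You choose a scale \(R_n\geq1\) with \(C_{\mathrm{Gab}}\langle R_n\rangle^{-\beta}\leq 1/n\) and \(R_n^{2d}\leq 2^{2d}(1+C_{\mathrm{Gab}}n)^{2d/\beta}\), invoke Proposition~\ref{prop:gabor-approximate-coder}, and then apply the finite-dimensional correction; the paper cites Proposition~\ref{prop:correction-to-exact-coder}, which is Lemma~\ref{lem:finite-dimensional-correction} repackaged. The only difference is cosmetic: the paper takes \(R_n=\max\{1,\lceil(C_{\mathrm{Gab}}n)^{1/\beta}\rceil\}\) in place of your dyadic \(2^J\). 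Your choice to work with absolute rather than \(\norm{T}\)-normalized bounds is exactly what produces the constant \(C_{\mathrm{Gab}}+1\).
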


\begin{proof}
    Let \(R_n=\max\{1,\lceil(C_{\mathrm{Gab}}n)^{1/\beta}\rceil\}\).
    Then \(C_{\mathrm{Gab}}\langle R_n\rangle^{-\beta}\leq1/n\) and \(R_n^{2d}\leq2^{2d}(1+C_{\mathrm{Gab}}n)^{2d/\beta}\).
    Proposition~\ref{prop:gabor-approximate-coder} gives, for \(S=P_{R_n}\), \(\norm{S}\leq C_{\mathrm{Gab}}\), \(\norm{(T-S)|_E}\leq1/n\), and \(\operatorname{rank}S\leq2C_{\#,\Lambda,d}R_n^{2d}\).
    Proposition~\ref{prop:correction-to-exact-coder} gives exact agreement on \(E\), adding at most \(1\) to the norm bound and \(n\) to the rank bound.
\end{proof}

For \(T\ne0\), Corollary~\ref{cor:GaborExactUAP} gives the normalized operator-UAP constant \(\max\{1,(C_{\mathrm{Gab}}+1)/\norm{T}\}\).

\section{Neural approximation and Barron sampling}\label{app:reduced-map-modules}

This appendix supplies the neural approximation bounds used after linear reduction and the direct sampling argument for Barron operators.
Gauge-preserving extension allows us to apply finite-dimensional network estimates to coordinates initially defined only on an encoded compact set.

\subsection{Gauge extensions and ReLU parameter bounds}

We first extend the scalar coordinates on the encoded compact set without changing their gauge and then prove the ReLU estimate used in the main text.

\begin{lemma}\label{lem:scalar-infimal-extension}
    Let \(D\) be a nonempty subset of a normed space \(Z\).
    Let \(g:D\to\R\) be a function, and let \(L\geq0\).
    Let \(\omega\) be a gauge, and assume \(\abs{g(z)-g(w)}\leq\omega(L\norm{z-w})\) for \(z,w\in D\).
    Then
    \[ \widetilde g(z)=\inf_{w\in D}\{g(w)+\omega(L\norm{z-w})\},\qquad z\in Z, \]
    extends \(g\) and satisfies \(\abs{\widetilde g(z)-\widetilde g(z')}\leq\omega(L\norm{z-z'})\) for \(z,z'\in Z\).
\end{lemma}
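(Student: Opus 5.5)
This is the standard McShane-type infimal convolution, and the argument uses only that \(\omega\) is nondecreasing and subadditive with \(\omega(0)=0\). The proof has four steps, taken in order.

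\emph{Step 1: finiteness.} Fix \(z\in Z\) and a base point \(w_0\in D\). For every \(w\in D\), monotonicity and subadditivity of \(\omega\) together with the triangle inequality give
\[ \omega(L\norm{z-w_0})\leq\omega(L\norm{z-w})+\omega(L\norm{w-w_0}). \]
Combining this with the hypothesis \(g(w)\geq g(w_0)-\omega(L\norm{w-w_0})\) yields
\[ g(w)+\omega(L\norm{z-w})\geq g(w_0)-\omega(L\norm{w-w_0})+\omega(L\norm{z-w})\geq g(w_0)-\omega(L\norm{z-w_0}). \]
So the infimum defining \(\widetilde g(z)\) is bounded below by a finite number. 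It is bounded above by \(g(w_0)+\omega(L\norm{z-w_0})\). Hence \(\widetilde g\) is real-valued.

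\emph{Step 2: extension.} For \(z\in D\), take \(w=z\) in the infimum and use \(\omega(0)=0\); this gives \(\widetilde g(z)\leq g(z)\). For the reverse inequality, the hypothesis gives \(g(w)+\omega(L\norm{z-w})\geq g(z)\) for every \(w\in D\). Therefore \(\widetilde g(z)=g(z)\).

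\emph{Step 3: gauge estimate.} Fix \(z,z'\in Z\) and \(w\in D\). By the triangle inequality, monotonicity, and subadditivity,
\[ \omega(L\norm{z-w})\leq\omega(L\norm{z-z'})+\omega(L\norm{z'-w}). \]
Adding \(g(w)\) to both sides and taking the infimum over \(w\) gives \(\widetilde g(z)\leq\widetilde g(z')+\omega(L\norm{z-z'})\). Exchanging \(z\) and \(z'\) gives the symmetric inequality, and together they yield the claimed bound on \(\abs{\widetilde g(z)-\widetilde g(z')}\).

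The only step needing care is finiteness in Step 1, because without it the difference \(\widetilde g(z)-\widetilde g(z')\) would not be defined. Step 1 handles this using the same subadditivity estimate as Step 3. The degenerate case \(L=0\) needs no separate treatment: then \(g\) is constant on \(D\), and \(\widetilde g\) equals that constant.
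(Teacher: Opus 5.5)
Your proof is correct and follows the paper's argument essentially step for step. Finiteness comes from a lower bound through a base point \(w_0\in D\). Extension comes from choosing \(w=z\) together with the hypothesis. The gauge bound comes from \(\omega(L\norm{z-w})\leq\omega(L\norm{z-z'})+\omega(L\norm{z'-w})\), followed by taking the infimum over \(w\) and swapping \(z,z'\).

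There is one small slip in Step 1. Your chain actually needs \(\omega(L\norm{w-w_0})\leq\omega(L\norm{w-z})+\omega(L\norm{z-w_0})\), not the displayed \(\omega(L\norm{z-w_0})\leq\omega(L\norm{z-w})+\omega(L\norm{w-w_0})\). The needed inequality follows from the same triangle-inequality, monotonicity, and subadditivity argument, so only the displayed instance needs relabeling.
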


\begin{proof}
    Let \(w_0\in D\).
    By the gauge assumption and subadditivity, we have \(g(w)+\omega(L\norm{z-w}) \geq g(w_0)-\omega(L\norm{z-w_0})\), so the infimum is finite.
    If \(z\in D\), by choosing \(w=z\), we have \(\widetilde g(z)\leq g(z)\).
    By the assumed estimate, we have \(g(w)+\omega(L\norm{z-w})\geq g(z)\) for every \(w\in D\).
    Hence \(\widetilde g=g\) on \(D\).

    By subadditivity and monotonicity of \(\omega\), we have \(\omega(L\norm{z-w}) \leq\omega(L\norm{z'-w})+\omega(L\norm{z-z'})\).
    By taking the infimum over \(w\in D\) and then interchanging \(z\) and \(z'\), we obtain the required estimate.
\end{proof}

Let \(Q_m(B)=[-B,B]^m\), with the \(\ell_\infty^m\)-norm.
A ReLU network below is a finite feedforward network with activation \(\sigma(t)=\max\{t,0\}\).
We count all trainable scalar weight and bias slots in the chosen architecture, including slots whose value is zero.

For finite-dimensional smooth functions, Lu, Shen, Yang, and Zhang \cite{LuShenYangZhang2021} give ReLU approximation bounds with explicit width and depth dependence.
Jiao, Wang, and Yang \cite{JiaoWangYang2023} study approximation under network weight-norm constraints, with applications to regression and GANs.
We use Theorem~1.1 of Shen, Yang, and Zhang \cite{ShenYangZhang2022Optimal} to obtain a constant-depth approximation bound in terms of the modulus of continuity.

\begin{proposition}\label{prop:quantitative-continuous-relu}
    For every \(m\in\N_+\), there are \(a_m,c_m>0\) and \(W_m,L_m\in\N_+\) with the following property.
    For each \(B>0\) and integer \(W\geq W_m\), there is a fixed ReLU architecture on \(\R^m\), with at most \(W\) scalar weight and bias slots and depth at most \(L_m\), such that every \(g\in C(Q_m(B))\) has a realization \(N:\R^m\to\R\) satisfying
    \begin{equation}\label{eq:relu-modulus-estimate}
        \norm{g-N}_\infty \leq a_m\omega_g\!\left(2Bc_m[W\log(e+W)]^{-1/m}\right).
    \end{equation}
\end{proposition}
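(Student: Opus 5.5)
The plan is to derive the statement from the constant-depth case of Theorem~1.1 of Shen, Yang, and Zhang \cite{ShenYangZhang2022Optimal}, combined with an affine change of variables and an inversion of the parameter count. In the form we use, that theorem states the following. Let \(N\in\N_+\), and let \(f\in C([0,1]^m)\) have modulus \(\omega_f\) taken with respect to the Euclidean metric. Then a ReLU network of width \(C_1\max\{m\lfloor N^{1/m}\rfloor,N+2\}\) and depth \(11\cdot1+C_2\) (the case \(L=1\) of their depth parameter) achieves
\[ \norm{f-\phi}_{L^\infty([0,1]^m)}\leq 131\sqrt m\,\omega_f\bigl((N^2\log_3(N+2))^{-1/m}\bigr). \]
The architecture depends only on \(m\) and \(N\), and only the weights depend on \(f\). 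I would fix \(L_m=11+C_2\) as the depth bound.

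\textbf{Step 1: rescaling.} Given \(B>0\) and \(g\in C(Q_m(B))\), put \(f(x)=g(2Bx-B\mathbf 1)\) on \([0,1]^m\). If \(\norm{x-y}_2\leq t\), then \(\norm{x-y}_\infty\leq t\), and the images under the affine map are at \(\ell_\infty\)-distance at most \(2Bt\). Hence the Euclidean modulus of \(f\) is at most \(\omega_g(2Bt)\), where \(\omega_g\) is the \(\ell_\infty^m\)-modulus used in \eqref{eq:relu-modulus-estimate}. The inverse affine map \(z\mapsto(z+B\mathbf 1)/(2B)\) is absorbed into the first affine layer of the network for \(f\). This changes the weights but not the graph or the slot count, so \(N=\phi\circ(\text{affine})\) is a realization of the same fixed architecture.

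\textbf{Step 2: counting and inverting.} A fully connected graph of width \(w_N=C_1\max\{m\lfloor N^{1/m}\rfloor,N+2\}\) and fixed depth \(L_m\) has at most
\[ P(N)\leq L_m(w_N+1)^2+w_N(m+1)\leq K_mN^2 \]
slots, where \(K_m\) depends only on \(m\). Put \(W_m=P(1)\). For \(W\geq W_m\), take the largest \(N\) with \(P(N)\leq W\), and pad with unused zero slots if one insists on exactly \(W\). Maximality gives \(K_m(N+1)^2\geq P(N+1)>W\), so \(N^2\geq W/(4K_m)\). Since \(\log_3(N+2)\gtrsim\log(e+W)\) with constants depending only on \(m\), we obtain
\[ N^2\log_3(N+2)\geq c_m^{-m}W\log(e+W). \]
Hence \((N^2\log_3(N+2))^{-1/m}\leq c_m[W\log(e+W)]^{-1/m}\). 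Since \(\omega_g\) is nondecreasing, combining this with Step~1 yields \eqref{eq:relu-modulus-estimate} with \(a_m=131\sqrt m\).

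\textbf{Main obstacle.} The expected difficulty is the bookkeeping in Step~2. First, the slot count must be bounded by \(O_m(N^2)\) uniformly, including the case of small \(N\), where the term \(m\lfloor N^{1/m}\rfloor\) dominates the width; this is handled by enlarging \(K_m\) and \(W_m\). Second, the logarithmic comparison \(\log(N+2)\gtrsim\log(e+W)\) must be checked using \(W\leq K_mN^2\). Both are routine but fix the constants \(c_m\) and \(W_m\). Everything else is a direct consequence of the cited theorem and the monotonicity of the modulus.
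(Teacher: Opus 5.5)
Your proposal is correct and follows the paper's proof essentially step by step. Both arguments use the same affine rescaling to \([0,1]^m\) with \(\omega_{f,2}(r)\leq\omega_g(2Br)\), then Theorem~1.1 of Shen--Yang--Zhang with \(p=\infty\) and depth parameter \(1\). Both bound the slot count of the padded fully connected graph by \(O_m(N^2)\) and invert \(W\mapsto N\) using the monotonicity of \(\omega_g\). You take the largest admissible \(N\), while the paper takes \(N_0=\lfloor\sqrt{W/K_m}\rfloor\); this difference is immaterial.

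The paper makes one point explicit that you fold into the citation. The cited theorem only bounds width and depth, so a graph fixed independently of \(g\) has to be built by adding zero-output neurons and identity maps between hidden layers. These identity maps are realizable by ReLU because hidden-layer outputs are nonnegative. Also, under the paper's convention, \(L_m\) counts the final affine output layer as well.
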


\begin{proof}
    Let \(f(u)=g(2Bu-B\mathbf1)\) on \([0,1]^m\).
    If \(\norm{u-v}_2\leq r\), then \(\norm{2B(u-v)}_\infty\leq2Br\).
    Thus the Euclidean modulus of \(f\) satisfies \(\omega_{f,2}(r)\leq\omega_g(2Br)\).
    By \cite[Theorem~1.1]{ShenYangZhang2022Optimal}, with \(p=\infty\) and the depth parameter equal to \(1\), for every \(N_0\in\N_+\), we have a ReLU approximation of \(f\) with at most \(29+2m\) hidden layers, width at most \(3^{m+3}\max\{m\lfloor N_0^{1/m}\rfloor,N_0+2\}\), and error at most
    \[ 131\sqrt m\,\omega_g\!\left(2B[N_0^2\log_3(N_0+2)]^{-1/m}\right). \]
    Since this width is at most \(A_mN_0\) for a constant \(A_m\), we can add neurons with zero output and identity maps between hidden layers to obtain one fixed fully connected graph.
    The hidden-layer outputs are nonnegative, so the added identity maps are realized by ReLU.
    If we count every weight and bias slot in this graph, the total is at most \(K_mN_0^2\), for a constant \(K_m\) depending only on \(m\).

    For \(W\) sufficiently large, choose \(N_0=\lfloor\sqrt{W/K_m}\rfloor\).
    Then
    \[ K_mN_0^2\leq W,\qquad N_0^2\log_3(N_0+2)\geq b_mW\log(e+W) \]
    for some \(b_m>0\).
    By the monotonicity of \(\omega_g\), we obtain the stated estimate after choosing \(a_m,c_m,W_m\) accordingly.
    The substitution \(u=(z+B\mathbf1)/(2B)\) changes only the weights and biases of the first layer, so the number of parameter slots is unchanged.
    We include the final affine output layer in the depth bound \(L_m\).
\end{proof}

For \(t\geq0\), put
\begin{equation}\label{eq:relu-budget-inversion}
    \mathcal R(t)=\min\{w\in\N_+:w\log(e+w)\geq t\}, \qquad \mathcal R(t)\asymp 1+\frac{t}{\log(e+t)}.
\end{equation}
Indeed, for \(t\geq1\), we have
\[ \frac{t}{\log(e+t+1)} \leq\mathcal R(t) \leq\left\lceil\frac{2t}{\log(e+t)}\right\rceil. \]
For the lower estimate, use \(\mathcal R(t)\leq\lceil t\rceil\).
For the upper estimate, the integer on the right is at least \(\sqrt t\), since \(\log(e+t)\leq2\sqrt t\).
Its logarithmic factor is therefore at least \(\frac{1}{2}\log(e+t)\).
Since \(\mathcal R(t)=1\) for \(0\leq t\leq1\), we obtain the stated comparison for all \(t\geq0\).

By \eqref{eq:relu-modulus-estimate} and the definition of \(\delta_\omega\) in Section~\ref{subsec:compact-widths-uap-selected}, we obtain a uniform error smaller than \(\eta\) whenever
\[ W\geq\max\left\{W_m,\, \mathcal R\!\left(\left[\frac{2Bc_m}{\delta_{\omega_g}(\eta/a_m)}\right]^m\right)\right\}. \]
Indeed, the argument of \(\omega_g\) in \eqref{eq:relu-modulus-estimate} is then at most \(\delta_{\omega_g}(\eta/a_m)\).
The constants \(a_m,c_m,W_m,L_m\) may grow with \(m\).
We retain them when the dimension selected by the UAP depends on the target accuracy.

\subsection{Barron representation norms and sampling estimates}\label{app:barron-representations}

The scalar representation norm below is determined by restrictions to finite sets and finite-dimensional subspaces.
This gives a criterion for the existence of a global representation and its norm.
Realization through finite-dimensional coders also requires estimates for their stability, dimension, and recovery error.
The converse statement records the encoder norm needed to pass from pointwise-convergent reduced representations to a representation on the original set.
For a real Banach space \(Z\), \(S\subseteq Z\), and \(f:S\to\R\), let \(\norm f_{\mathcal B_Z(S)}\) be the infimum of \(\norm\mu_{\mathrm{TV}}\) over finite signed Radon measures on \(\mathcal P_Z=B_{Z^*}^{w^*}\times[-1,1]\) satisfying \(f(z)=\int_{\mathcal P_Z}\sigma(\ell(z)+b)\,d\mu(\ell,b)\) for \(z\in S\).
An empty infimum is \(+\infty\).
We write \(\mathcal B_Z\) when \(S=Z\), and \(\mathcal B_m(S)=\mathcal B_{\ell_\infty^m}(S)\).
For the variation norm and its compactness properties, see \cite[Sections~2.1--2.2]{Bach2017} and~\cite{MaWuE2022}.

\begin{proposition}\label{prop:scalar-barron-finite-restrictions}
    Let \(S\subseteq X\) and \(f:S\to\R\).
    The defining infimum is attained whenever \(\norm f_{\mathcal B_X(S)}<\infty\), and
    \[
        \norm f_{\mathcal B_X(S)} =\sup_{\substack{A\subseteq S\\
                A\ \mathrm{finite}}} \norm{f|_A}_{\mathcal B_X(A)}.
    \]
    For every \(F:X\to\R\) and bounded linear map \(R:Z\to X\), we have
    \begin{align*}
        \norm F_{\mathcal B_X} =\sup_{\substack{E\subseteq X\\
                \dim E<\infty}} \norm{F|_E}_{\mathcal B_E},\qquad \norm{F\circ R}_{\mathcal B_Z} \leq\max\{1,\norm R\}\norm F_{\mathcal B_X}.
    \end{align*}
    Conversely, suppose that bounded linear maps \(C_m:X\to\ell_\infty^m\) and \(R_m:\ell_\infty^m\to X\) satisfy \(\sup_m\norm{C_m}\leq\Lambda<\infty\), \(\sup_m\norm{F\circ R_m}_{\mathcal B_m(C_mS)}\leq B<\infty,\) and \(F(R_mC_mx)\to F(x)\) for every \(x\in S\).
    Then we have \(\norm{F|_S}_{\mathcal B_X(S)}\leq\max\{1,\Lambda\}B.\)
\end{proposition}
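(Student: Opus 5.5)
The plan is to work throughout in the dual pairing \(C(\mathcal P_X)^*=\mathcal M(\mathcal P_X)\). By the Banach--Alaoglu theorem, \(\mathcal P_X=B_{X^*}^{w^*}\times[-1,1]\) is compact. For each \(x\in X\), the function \(\varphi_x(\ell,b)=\sigma(\ell(x)+b)\) is continuous on \(\mathcal P_X\). Consequently, the representing condition \(f(x)=\int\varphi_x\,d\mu\) is weak-star closed in \(\mu\).

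\emph{Attainment.} Take a minimizing sequence \((\mu_k)\) with \(\norm{\mu_k}_{\mathrm{TV}}\downarrow\norm f_{\mathcal B_X(S)}\). It is bounded in \(\mathcal M(\mathcal P_X)\), so it has a weak-star cluster point \(\mu\). Pairing against each \(\varphi_x\), \(x\in S\), shows that \(\mu\) represents \(f\). Weak-star lower semicontinuity of the total variation norm gives \(\norm\mu_{\mathrm{TV}}\leq\norm f_{\mathcal B_X(S)}\).

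\emph{Finite restrictions.} The inequality \(\geq\) is immediate, since a representation on \(S\) is also a representation on every finite \(A\subseteq S\). For \(\leq\), let \(\beta\) be the supremum and assume \(\beta<\infty\). For each finite \(A\subseteq S\), put
\[ M_A=\{\mu:\norm\mu_{\mathrm{TV}}\leq\beta,\ \mu\text{ represents }f\text{ on }A\}. \]
By attainment, \(M_A\) is nonempty, and it is a weak-star closed subset of a compact ball. Since \(M_{A\cup A'}\subseteq M_A\cap M_{A'}\), the family has the finite intersection property. Any \(\mu\) in the full intersection represents \(f\) on all of \(S\) with norm at most \(\beta\).

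\emph{Composition.} Put \(c=\max\{1,\norm R\}\), and use the identity \(\sigma(\ell(Rz)+b)=c\,\sigma\bigl((R^*\ell/c)(z)+b/c\bigr)\). The map \((\ell,b)\mapsto(R^*\ell/c,b/c)\) is weak-star continuous from \(\mathcal P_X\) into \(\mathcal P_Z\). Pushing forward \(c\mu\) along it therefore represents \(F\circ R\), on \(Z\) or on any \(R^{-1}(S)\), with total variation at most \(c\norm\mu_{\mathrm{TV}}\).

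\emph{Finite-dimensional subspaces.} The inequality \(\geq\) is the composition estimate with \(R\) the inclusion \(E\hookrightarrow X\), which has norm one. For \(\leq\), I will use the finite-restriction formula. Given a finite \(A\subseteq X\), put \(E=\Span A\) and take a representing measure \(\mu\) on \(\mathcal P_E\). The restriction map \(r:\mathcal P_X\to\mathcal P_E\) is continuous and, by Hahn--Banach, surjective. I lift \(\mu\) separately on its Jordan parts. For \(\mu^\pm\), the positive functional \(g\circ r\mapsto\int g\,d\mu^\pm\) is defined on the closed subspace \(\{g\circ r:g\in C(\mathcal P_E)\}\subseteq C(\mathcal P_X)\). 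I extend it norm-preservingly; because the constant \(1\) lies in this subspace, the extension stays positive. This yields \(\widetilde\mu\) with \(r_\#\widetilde\mu=\mu\) and \(\norm{\widetilde\mu}_{\mathrm{TV}}\leq\norm\mu_{\mathrm{TV}}\). Since \(\varphi_x=\varphi^E_x\circ r\) for \(x\in E\), we get \(\norm{F|_A}_{\mathcal B_X(A)}\leq\norm{F|_E}_{\mathcal B_E}\). Taking the supremum over \(A\) finishes this part. I expect this lifting to be the main technical point.

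\emph{Converse.} Fix \(\varepsilon>0\). For each \(m\), attainment gives a measure on \(\mathcal P_{\ell_\infty^m}\) representing \(F\circ R_m\) on \(C_mS\) with total variation at most \(B\). Apply the composition step with \(C_m\) in place of \(R\) and with the subset \(S\). This gives \(\nu_m\) on \(\mathcal P_X\) with \(\norm{\nu_m}_{\mathrm{TV}}\leq\max\{1,\Lambda\}B\) and \(\int\varphi_x\,d\nu_m=F(R_mC_mx)\) for \(x\in S\). Take a weak-star cluster point \(\nu\) along a subnet. Then \(\int\varphi_x\,d\nu=\lim F(R_mC_mx)=F(x)\) for each \(x\in S\), and lower semicontinuity gives \(\norm{F|_S}_{\mathcal B_X(S)}\leq\max\{1,\Lambda\}B\).
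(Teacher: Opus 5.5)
Your proposal is correct and follows the paper's proof essentially step for step. Both arguments use weak-star compactness of total-variation balls for attainment, the finite intersection property for finite restrictions, the pushforward under \((\ell,b)\mapsto c^{-1}(R^*\ell,b)\) with ReLU homogeneity for composition, a Hahn--Banach/Riesz lift along the restriction map \(\mathcal P_X\to\mathcal P_E\), and a weak-star cluster point for the converse. The differences are cosmetic: you obtain the \(\geq\) half of the subspace formula from the composition estimate with the inclusion, which is the paper's pushforward under \(\rho_E\); and your lifting of the Jordan parts separately is more than needed, since a single norm-preserving extension of \(g\circ r\mapsto\int g\,d\mu\) already gives the lift.
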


\begin{proof}
    For each \(x\in X\), the ridge function \((\ell,b)\mapsto\sigma(\ell(x)+b)\) is continuous on \(\mathcal P_X\).
    By weak-star compactness of the total-variation balls, the infimum is attained when finite.
    By the finite intersection property, the norm is determined by its restrictions to finite subsets.
    For finite-dimensional \(E\subseteq X\), the restriction \(\rho_E:\mathcal P_X\to\mathcal P_E\) is a continuous surjection by the Hahn--Banach theorem.
    Thus \(h\mapsto h\circ\rho_E\) is an isometry.
    By the Hahn--Banach theorem and the Riesz representation theorem, every measure on \(\mathcal P_E\) lifts to one on \(\mathcal P_X\) with the same norm.
    By this lift with \(E=\operatorname{span}A\), for finite \(A\subseteq X\), the norm is bounded above by the supremum of the finite-dimensional restriction norms.
    For the reverse inequality, we can push measures forward under \(\rho_E\).

    For the composition estimate, put \(a_R=\max\{1,\norm R\}\).
    By ReLU homogeneity, \(a_R\) times the pushforward of a measure representing \(F\) under \((\ell,b)\mapsto a_R^{-1}(R^*\ell,b)\) represents \(F\circ R\).
    For the converse, choose measures \(\nu_m\) of norm at most \(B\) representing \(F\circ R_m\) on \(C_mS\).
    By applying the same pushforward construction to \(C_m\), we obtain measures \(\mu_m\) on \(\mathcal P_X\) representing \(x\mapsto F(R_mC_mx)\) on \(S\), with
    \[ \norm{\mu_m}_{\mathrm{TV}} \leq\max\{1,\norm{C_m}\}\norm{\nu_m}_{\mathrm{TV}} \leq\max\{1,\Lambda\}B. \]
    By weak-star compactness, a subnet converges to a measure with the same norm bound.
    Then this measure represents \(F|_S\) by the assumed pointwise convergence.
\end{proof}

If \(F\) is continuous and \(R_mC_mx\to x\) on \(S\), then the pointwise convergence assumption follows directly.
Under their respective approximation hypotheses, the reduction results in the main text supply rank and error bounds for the recovery maps.
For a represented vector-valued operator, the following sampling argument controls the number of neurons directly.

\begin{proof}[Proof of Proposition~\ref{thm:type-p-vector-barron-sampling}]
    Put \(A=B_0(R_K+1)\) and \(Z_i(x)=B_0\sigma(\ell_i(x)+b_i)u(\theta_i)\).
    Then \(T(x)=\mathbb EZ_i(x)\), \(\norm{Z_i(x)}_Y\leq A\), and \(T_N(x)=N^{-1}\sum_iZ_i(x)\).
    By \(\norm{\ell_i}\leq1\) and \(\norm{u(\theta_i)}_Y=1\), we obtain the encoder and decoder norm bounds directly.
    By symmetrization and the type inequality, we have
    \[ \mathbb E\norm{T(x)-T_N(x)}_Y^{p_Y} \leq\frac{2^{p_Y}T_{p_Y}(Y)^{p_Y}}{N^{p_Y}} \sum_{i=1}^N\mathbb E\norm{Z_i(x)}_Y^{p_Y} \leq\bigl(2T_{p_Y}(Y)A\bigr)^{p_Y}N^{1-p_Y}. \]
    By integrating against \(\lambda\), we obtain the \(L^{p_Y}(\lambda;Y)\) estimate.

    Choose an internal \(\delta\)-net \(x_1,\ldots,x_M\) from an external \(\delta/2\)-cover, so that \(M\leq N(K,(\delta/2)B_X)\).
    For \(F_j=\norm{T(x_j)-T_N(x_j)}_Y\), a change in one sample changes \(F_j\) by at most \(2A/N\), and \(\mathbb EF_j\leq2T_{p_Y}(Y)A N^{-(1-1/p_Y)}\).
    By the bounded differences inequality and a union bound, we have
    \[ \mathbb P\left\{\max_{j\leq M}F_j> 2T_{p_Y}(Y)A N^{-(1-1/p_Y)}+t\right\} \leq M\exp\left(-\frac{Nt^2}{2A^2}\right). \]
    We choose
    \[ t=A\sqrt{\frac{2\log(2M)}{N}}. \]
    The right-hand side is at most \(1/2\).
    Thus there is a realization of \(T_N\) satisfying the corresponding bound at all net points.
    Since \(T\) and \(T_N\) are \(B_0\)-Lipschitz, we extend this bound to all of \(K\) with an additional error of \(2B_0\delta\).
\end{proof}

\section{Proofs of the bounds for operator classes}\label{app:lower-bound-machinery}

We prove the class estimates by combining common reductions for the upper bound with separated inputs and scalar readouts for the lower bound.
We use the resulting pseudo-dimension lower bound to estimate the number of trainable parameters.

\begin{proof}[Proof of Lemma~\ref{lem:scalarized-readout-pdim}]
    By Assumption~\ref{ass:scalar-readout-realization}, the scalar readouts have a common ReLU representation with \(O(W)\) coefficients and computation units and depth \(O(D)\).
    If we allow its fixed coefficients to vary as well, we obtain a larger network class.
    By \cite[Theorem~7]{BartlettHarveyLiawMehrabian2019} and the pseudo-dimension reduction of the same paper, its pseudo-dimension is at most \(C_{\rm pd}WD(1+\log W)\).
    By composition with the fixed input map, the same bound holds for the original class.
\end{proof}

\begin{proof}[Proof of Corollary~\ref{prop:factorized-uap-upper}]
    By Corollary~\ref{prop:operator-UAP-input-bound}(1), applied first to \(L\) on \(H\) and then to \(S\) on \(K\), we obtain common maps \(\widetilde L,\widetilde S\) with the stated norm, rank, and error bounds.
    By the Auerbach construction in Section~\ref{subsec:compact-widths-uap-selected}, we obtain the coordinate maps and their norm bounds.
    For each \(F\in\mathcal F\) and \(x\in K\), we have
    \begin{align*}
        T_F(x)-RN(Cx)
        &=(L-\widetilde L)F(Sx)+\widetilde L\bigl(F(Sx)-F(\widetilde Sx)\bigr)+R\bigl(\Phi_F(Cx)-N(Cx)\bigr).
    \end{align*}
    Since \(F(Sx)\in H\) and \(\norm{\widetilde L}\leq\Lambda_L\norm L\), we apply the common gauge bound and the recovery estimates, then take the supremum over \(K\) to obtain the result.
\end{proof}

\begin{lemma}\label{lem:metric-holder-lower-gauge}
    Let \(0<\beta\leq1\).
    There is \(c_\beta>0\), depending only on \(\beta\), with the following property.
    For every totally bounded metric space \((A,d)\), \(B>0\), and \(0<r\leq1\), put \(\omega(t)=Bt^\beta\).
    There are an \(r\)-separated set \(\{a_1,\ldots,a_s\}\subseteq A\), with \(s\geq \mathsf P(A,d,r)/2\), thresholds \(b_1,\ldots,b_s\), and functions \(h_\theta\in\mathcal H_{\omega,B}(A)\), \(\theta\in\{0,1\}^s\), such that
    \[ h_\theta(a_j)\leq b_j-c_\beta Br^\beta\ \text{if }\theta_j=0, \qquad h_\theta(a_j)\geq b_j+c_\beta Br^\beta\ \text{if }\theta_j=1. \]
\end{lemma}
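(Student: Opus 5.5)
We construct the separated set and then bump functions centered at its points. Let \(\{x_1,\ldots,x_P\}\subseteq A\) be an \(r\)-separated set of maximal cardinality \(P=\mathsf P(A,d,r)\); this is finite because \(A\) is totally bounded. We take all of these points as the \(a_j\), so \(s=P\geq P/2\). The factor \(1/2\) is therefore slack. We keep it in case the construction has to be thinned, for instance if we wish to use radius \(r/2\) bumps with \(r\)-separation after a greedy selection. The plan does not need this.

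Put \(\kappa=(r/2)^\beta\). For each \(j\), define the bump
\[ \phi_j(x)=\max\{\kappa-d(x,a_j)^\beta,0\}. \]
Since \(0<\beta\leq1\), the map \(t\mapsto t^\beta\) is subadditive on \([0,\infty)\). Hence \(x\mapsto d(x,a_j)^\beta\) has modulus at most \(t^\beta\), and truncation at \(0\) preserves this bound. The support of \(\phi_j\) is contained in the open ball of radius \(r/2\) about \(a_j\), so the supports are pairwise disjoint by \(r\)-separation. For \(\theta\in\{0,1\}^s\), set
\[ h_\theta=\tfrac{B}{2}\sum_{j}(2\theta_j-1)\phi_j, \]
and take \(b_j=0\). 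Then \(h_\theta(a_j)=\pm\tfrac B2\kappa=\pm 2^{-1-\beta}Br^\beta\), which gives \(c_\beta=2^{-1-\beta}\). The sup bound also holds: \(\norm{h_\theta}_\infty\leq \tfrac B2\kappa\leq B\) because \(r\leq1\).

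The main step is the gauge bound \(\omega_{h_\theta}\leq Bt^\beta\) for the sum of disjoint bumps. If \(x\) and \(y\) lie in the same support, or one of them lies outside every support, the single-bump estimate gives \(\abs{h_\theta(x)-h_\theta(y)}\leq\tfrac B2 d(x,y)^\beta\). If \(x\in\supp\phi_j\) and \(y\in\supp\phi_k\) with \(j\ne k\), we write
\[ \abs{h_\theta(x)-h_\theta(y)}\leq\tfrac B2\bigl(\phi_j(x)+\phi_k(y)\bigr). \]
Since \(\phi_j(x)=\phi_j(x)-\phi_j(y)\leq d(x,y)^\beta\) and similarly \(\phi_k(y)\leq d(x,y)^\beta\), the total is at most \(Bd(x,y)^\beta\). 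The factor \(\tfrac B2\) was chosen precisely to absorb this doubling. Continuity of \(h_\theta\) follows from the modulus bound, so \(h_\theta\in\mathcal H_{\omega,B}(A)\).

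This case analysis is the only delicate point. If one preferred amplitude \(B\) instead of \(B/2\), it could fail, which is why the constant is halved. All constants depend only on \(\beta\).
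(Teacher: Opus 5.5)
Your proof is correct and takes essentially the same route as the paper: you place disjointly supported H\"older bumps of the form \(\bigl(\rho^\beta-d(\cdot,a_j)^\beta\bigr)_+\) at the points of a maximal \(r\)-separated set and combine them according to \(\theta\). The paper instead uses nonnegative bumps of radius \(r/3\) with coefficients \(\theta_j\in\{0,1\}\) and thresholds at half height, so the difference across two supports is bounded by a maximum rather than a sum and no halving is needed; this gives \(c_\beta=1/(8\cdot3^\beta)\), whereas your signed version with threshold \(0\) gives the better constant \(2^{-1-\beta}\).
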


\begin{proof}
    Choose the stated \(r\)-separated set and put \(\psi_j(a)=(1-(3d(a,a_j)/r)^\beta)_+\).
    These functions have disjoint supports and H\"older constant at most \((3/r)^\beta\).
    Let \(a_r=Br^\beta/(4\cdot3^\beta)\).
    Then \(h_\theta=a_r\sum_j\theta_j\psi_j\) belongs to \(\mathcal H_{\omega,B}(A)\).
    At \(a_j\), take \(b_j=a_r/2\).
    We obtain the assertion with \(c_\beta=1/(8\cdot3^\beta)\).
\end{proof}

\begin{proof}[Proof of Theorem~\ref{thm:factorized-operator-lower-bound}]
    Put \(r=\rho(\varepsilon)\).
    By Lemma~\ref{lem:metric-holder-lower-gauge}, we obtain an \(r\)-separated set \(\{z_1,\ldots,z_s\}\subseteq A\), with \(s\geq\frac12\mathsf P(A,\norm{\cdot}_Z,r)\), and scalar functions which realize every binary labeling with margin \(c_\beta Br^\beta\).
    Since \(A\subseteq S(K)\), we can choose \(x_j\in K\) such that \(Sx_j=z_j\).
    We assign one binary labeling to each of the \(q\) coordinates.
    By the assumption on \(\mathcal F\), the resulting functions \(h_1,\ldots,h_q\) are realized by some \(F_h\in\mathcal F\).

    For \(\theta=(\theta_{ij})\in\{0,1\}^{q\times s}\), choose \(h_i=h_{(\theta_{i1},\ldots,\theta_{is})}\) from Lemma~\ref{lem:metric-holder-lower-gauge}.
    The points \(x_j\) and thresholds \(b_j\) are independent of \(\theta\).
    Since \(Sx_j=z_j\) and \(\Pi LJ=I\), we have \(e_i^*\Pi T_{F_h}(x_j)=h_i(z_j)\) for every \(i,j\).

    Let \(N\in\mathcal{NN}_{W,D}\) approximate \(T_{F_h}\) with error at most \(\varepsilon\) on \(K\).
    Since \(\Pi LJ=I\) and \(\norm\Pi\leq C_-/\tau\), we have
    \[ \abs{e_i^*\Pi N(x_j)-h_i(z_j)} \leq\frac{C_-\varepsilon}{\tau} =\frac{c_\beta Br^\beta}{2}. \]

    By the choice of \(h_i\), we obtain
    \begin{align*}
        e_i^*\Pi N(x_j)\leq b_j-\frac{c_\beta Br^\beta}{2}<b_j
        \quad (\theta_{ij}=0), \qquad\text{and}\qquad
        e_i^*\Pi N(x_j)\geq b_j+\frac{c_\beta Br^\beta}{2}>b_j
        \quad (\theta_{ij}=1).
    \end{align*}
    The encoder, decoder, readouts, and network graph are fixed as \(\theta\) varies.
    Only the trainable parameter values may change.
    Thus the scalar readout class pseudo-shatters the \(qs\) points \((i,x_j)\).
    By Lemma~\ref{lem:scalarized-readout-pdim}, we have \(qs\leq C_{\rm pd}WD(1+\log W)\).
    The lower bound for \(s\) gives the required estimate.
\end{proof}

For nonempty \(A\subseteq\R^d\), \(\omega(t)=Bt^\beta\), and \(0<\beta\leq1\), Lemma~\ref{lem:scalar-infimal-extension}, followed by clipping to \([-B,B]\), extends each \(g\in\mathcal H_{\omega,B}(A)\) to \(\widetilde g\in\mathcal H_{\omega,B}(\R^d)\) with respect to the \(\ell_\infty^d\)-metric \cite{McShane1934}.

Given scalar networks \(N_j\), \(1\leq j\leq q\), clip their outputs by \(v_j=\sigma(N_j+B)-\sigma(N_j-B)-B\).
Then \(v\in[-B,B]^q\), and the error does not increase because each target coordinate lies in \([-B,B]\).
For the selector input \(t\), put
\begin{align*}
    \chi_j(t)&=\sigma(t-j+1)-2\sigma(t-j)+\sigma(t-j-1),\\
    \operatorname{Sel}(t,v)&=\sum_{j=1}^q \bigl[\sigma(v_j-B(1-\chi_j(t)))-\sigma(-v_j-B(1-\chi_j(t)))\bigr].
\end{align*}
For integers \(1\leq i\leq q\), we have \(\chi_j(i)=\delta_{ij}\).
The inactive terms vanish because \(\abs{v_j}\leq B\), and the active term equals \(v_i\).
Hence \(\operatorname{Sel}(i,v)=v_i\).
Since \(\Pi LJ=I\), the scalar readout of \(x\mapsto LJv(Sx)\) is exactly \(\operatorname{Sel}(i,v(Sx))\).

For the graph implementation, first replace each final affine output \(N_j\) by the two ReLU outputs \(\sigma(N_j)\) and \(\sigma(-N_j)\), counting the copied weights and biases as separate slots.
Their difference is then passed to the clipping formula using fixed coefficients.
If the original scalar graphs have \(W_0\) weight and bias slots in total, the resulting clipped network has a trainable slot count \(W\leq2W_0\).
The fixed clipping block keeps its output in \([-B,B]^q\) for every parameter choice.
The added clipping, hat, and selector operations use \(O(q)\) units and fixed coefficients.
Six additional layers suffice, including the affine combinations and output sum.
The scalar graph therefore has \(O(W+q)\) coefficients and units and depth at most \(L_d+6\).
Its input coordinates come from the fixed encoder \(S\), and the identity \(\Pi LJ=I\) removes the Banach-valued decoder from the scalar graph.
All fixed coefficients added by the clipping and selector blocks depend only on the fixed factors and \(B\).
Since \(q\leq W\), Assumption~\ref{ass:scalar-readout-realization} holds with a fixed absolute choice of \(C_{\rm arch}\).

\begin{proof}[Proof of Theorem~\ref{thm:factorized-holder-cube-sharp}]
    Put \(t=\ell^d(B\tau/\varepsilon)^{d/\beta}\), so that \(\mathcal B(\varepsilon)=q(1+t)\).

    Fix \(g=(g_1,\ldots,g_q)\in\mathcal H_{\omega,B}(Q_d(\ell))^q\).
    Since \(\omega_{g_i}(u)\leq Bu^\beta\), the scalar estimate gives an error at most \(a_dB(2\ell c_d)^\beta[w\log(e+w)]^{-\beta/d}\) for a network with \(w\geq W_d\) slots.
    Thus the error is at most \(\varepsilon/(2C_+\tau)\) whenever \(w\log(e+w)\geq[2a_dC_+\tau B(2\ell c_d)^\beta/\varepsilon]^{d/\beta}\).
    By \eqref{eq:relu-modulus-estimate}, each coordinate \(g_i\) admits an approximation with error at most \(\varepsilon/(2C_+\tau)\) and at most the following number of parameters:
    \begin{align*}
        w=\max\left\{W_d,\, \mathcal R\!\left( \left[\frac{2a_dC_+\tau B(2\ell c_d)^\beta}{\varepsilon}\right]^{d/\beta} \right)\right\}\leq C\,\frac{1+t}{\log(e+1+t)}.
    \end{align*}
    The last inequality follows from \eqref{eq:relu-budget-inversion}.
    The scalar architecture depends on the required accuracy and the fixed parameters, but not on \(g_i\).
    We clip each coordinate to \([-B,B]\) and use the parallel combination and selector above.
    The trainable count is at most \(2qw\), including the copied output slots.
    The clipping and selector blocks add \(O(q)\) units and fixed coefficients.
    The depth is at most \(\widehat L_d\leq D\).

    Let \(v=(v_1,\ldots,v_q)\) be the clipped network output.
    Since clipping does not increase the coordinate errors and \(S(K)=Q_d(\ell)\), we have
    \begin{align*}
        \sup_{x\in K}\norm{T_{F_g}(x)-LJv(Sx)}_X
        \leq C_+\tau\sup_{z\in Q_d(\ell)}\norm{g(z)-v(z)}_\infty
        =C_+\tau\max_{1\leq i\leq q}\norm{g_i-v_i}_\infty\leq\frac{\varepsilon}{2}.
    \end{align*}
    Consequently,
    \[ W_D^\square(\varepsilon) \leq C\,\frac{q(1+t)}{\log(e+1+t)} =C\,\frac{\mathcal B(\varepsilon)}{\log(e+\mathcal B(\varepsilon)/q)}. \]

    For the lower bound, apply Theorem~\ref{thm:factorized-operator-lower-bound} with \(A=Q_d(\ell)=S(K)\).
    By the definition of the class and Lemma~\ref{lem:scalar-infimal-extension}, followed by clipping to \([-B,B]\), the required independent coordinates belong to the class.
    Put \(r=\rho(\varepsilon)\leq1\).
    A product grid of step \(r\) in \(Q_d(\ell)\) gives
    \begin{align*}
        \mathsf P(Q_d(\ell),\norm{\cdot}_\infty,r) \geq\left(1+\left\lfloor\frac{2\ell}{r}\right\rfloor\right)^d\geq c\left[1+\ell^d\left(\frac{B\tau}{\varepsilon}\right)^{d/\beta}\right].
    \end{align*}
    Indeed, \(1+\lfloor2\ell/r\rfloor\geq\max\{1,2\ell/r\}\).

    By the definition of \(r\), we have \(\ell^dr^{-d}=(c_\beta/(2C_-))^{d/\beta}t\).
    Thus the packing number is bounded below by a fixed multiple of \(1+t\).
    Theorem~\ref{thm:factorized-operator-lower-bound} applies to every admissible graph that approximates the whole class.
    We therefore obtain
    \[ W_D^\square(\varepsilon)D\bigl(1+\log W_D^\square(\varepsilon)\bigr) \geq c\mathcal B(\varepsilon). \]
    The upper estimate gives \(W_D^\square(\varepsilon)\leq C\mathcal B(\varepsilon)\).
    Hence the logarithmic factor is at most a fixed multiple of \(1+\log(e+\mathcal B(\varepsilon))\), which gives the stated lower bound.
    Since the factors are fixed, we have \(t\to\infty\) as \(\varepsilon\downarrow0\), and thus \[\frac{\log(e+q(1+t))}{\log(e+1+t)}\longrightarrow1.\]

    Moreover, \(\mathcal B(\varepsilon)\asymp\varepsilon^{-d/\beta}\) and \(\log(e+1+t)\asymp\log(1/\varepsilon)\).
    By \eqref{eq:factorized-holder-cube-bounds}, with \(D\) fixed, we obtain the stated asymptotic estimate.
\end{proof}

\section{Recovery and parameter bounds for a nonlinear integral operator}\label{app:detailed-operator-learning-examples}
We consider a nonlinear integral operator of the form \(T_F=LFS\).
Compositions of linear integral operators and pointwise nonlinearities also occur in neural operator constructions \cite{KovachkiEtAl2023}.
For a prescribed approximation error, we choose the averaging scales and then bound the number of scalar network parameters.
Under the assumptions on the factors below, constant inputs also give a parameter lower bound for the class.

Let \(\Omega=(0,1)^d\), let \(1<p<\infty\), and let \(0<s\leq1\).
For \(A\in\{S,L\}\), let \((Au)(x)=\int_\Omega k_A(x,y)u(y)\,\rmd y\) define a nonzero bounded linear map on the real space \(L_p(\Omega)\).
Assume that \(\norm{Au}_{W^{s,p}(\Omega)}\leq M_A\norm{u}_{L_p(\Omega)}\) for constants \(M_S,M_L>0\).
For \(C^1\) kernels on \(\overline\Omega\times\overline\Omega\), this estimate holds with \(s=1\), by differentiation under the integral and H\"older's inequality.
Let \(\varphi:\R\to\R\) satisfy \(\varphi(0)=0\) and \(\abs{\varphi(a)-\varphi(b)}\leq L_\varphi\abs{a-b}\).
Then \(F(v)=\varphi\circ v\) has global gauge \(\omega_F(t)=L_\varphi t\), and
\[ T_F(u)(x)=\int_\Omega k_L(x,y) \varphi\!\left(\int_\Omega k_S(y,z)u(z)\,\rmd z\right)\,\rmd y. \]
For example, we may take \(k_S(x,y)=k_L(x,y)=e^{-\norm{x-y}_2^2}\) and \(\varphi(t)=\tanh t\), with \(s=1\) and \(L_\varphi=1\).

For \(J\geq0\), let \(\mathcal P_J\) be the dyadic partition of \(\Omega\) into \(m_J=2^{Jd}\) cubes.
Write \(C_J,R_J\) for the normalized encoder and decoder in Proposition~\ref{prop:Lp-conditional-expectation-coder}, and put \(P_J=R_JC_J\).
Thus \(\norm{C_J:L_p\to\ell_p^{m_J}}\leq1\), \(R_J:\ell_p^{m_J}\to L_p\) is an isometry, and \(P_J\) is a norm-one projection.
Put \(a_A=C_{\mathrm{av},\Omega,p,s}M_A\) for \(A\in\{S,L\}\).

\begin{proposition}\label{prop:integral-factor-uap-ranks}
    For \(A\in\{S,L\}\) and \(J\geq0\), we have
    \[ \norm{A-P_JA}\leq a_A2^{-Js},\qquad \norm{P_JA}\leq\norm A,\qquad \operatorname{rank}(P_JA)\leq2^{Jd}. \]
    Consequently, for every \(n\in\N_+\), we have \(k_A(2,n)\leq2^d\left(1+\frac{a_A}{\norm A}n\right)^{d/s}+n\).
\end{proposition}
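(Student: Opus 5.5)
The three estimates follow from the averaging estimate for the dyadic partition together with the smoothing hypothesis on \(A\). The rank bound then comes from Corollary~\ref{cor:algebraic-stable-family-exactification}, or equivalently from Proposition~\ref{prop:stable-family-exactification}, applied with an explicit scale choice.

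\textbf{Operator estimates.} Fix \(A\in\{S,L\}\) and \(u\in L_p(\Omega)\). Since \(Au\in W^{s,p}(\Omega)\) with \(\norm{Au}_{W^{s,p}}\leq M_A\norm u_{L_p}\), the Sobolev averaging estimate \eqref{eq:sobolev-averaging-estimate} for the dyadic partitions of the cube gives
\[ \norm{Au-P_JAu}_{L_p}\leq C_{\mathrm{av},\Omega,p,s}2^{-Js}M_A\norm u_{L_p}. \]
This is \(\norm{A-P_JA}\leq a_A2^{-Js}\); here we use that the paper records this estimate for the cube with dyadic partitions, \(0<s\leq1\), with \(N_J=2^{Jd}\). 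Since \(P_J\) is a norm-one conditional expectation by Proposition~\ref{prop:Lp-conditional-expectation-coder}, we get \(\norm{P_JA}\leq\norm A\). Since \(\operatorname{ran}P_J\) is spanned by the \(2^{Jd}\) cell indicators, \(\operatorname{rank}(P_JA)\leq2^{Jd}\). Moreover, \(P_JA=R_J(C_JA)\) factors through \(Z_J=\ell_p^{m_J}\) of dimension \(2^{Jd}\).

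\textbf{Rank bound.} Fix \(n\geq1\) and write the error as \(\norm{A-P_JA}\leq(a_A/\norm A)2^{-Js}\norm A\). Apply Proposition~\ref{prop:stable-family-exactification}(2) with \(\Gamma=1\), \(N(J)=2^{Jd}\), and \(\varepsilon(J)=(a_A/\norm A)2^{-Js}\), using the least \(J\geq0\) with \(\varepsilon(J)\leq1/n\). This yields \(k_A(2,n)\leq2^{Jd}+n\). It remains to bound \(2^{Jd}\) for this minimal \(J\):
\begin{itemize}
\item If \(J=0\), then \(2^{Jd}=1\leq2^d(1+\cdots)^{d/s}\).
\item If \(J\geq1\), minimality gives \((a_A/\norm A)2^{-(J-1)s}>1/n\). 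Hence \(2^{J-1}<(na_A/\norm A)^{1/s}\leq(1+na_A/\norm A)^{1/s}\), and so \(2^{Jd}\leq2^d(1+na_A/\norm A)^{d/s}\).
\end{itemize}
This gives the stated bound.

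The only point needing care is confirming that the Sobolev averaging estimate is available on \((0,1)^d\) with exactly these dyadic partitions and the constant \(C_{\mathrm{av},\Omega,p,s}\). The paper asserts it via cellwise Poincar\'e for \(s=1\) and the cited literature for \(s<1\), so we take it as given. Everything else is bookkeeping.
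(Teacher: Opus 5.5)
Your proposal is correct and follows the paper's proof. Both use the averaging estimate combined with the smoothing hypothesis, the same minimal choice of \(J\), and finite-rank correction; you invoke Proposition~\ref{prop:stable-family-exactification}(2), which packages exactly the application of Lemma~\ref{lem:finite-dimensional-correction} that the paper performs directly, and your case split at the minimal \(J\) just spells out the paper's ``by minimality'' step.
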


\begin{proof}
    By \eqref{eq:sobolev-averaging-estimate} and the smoothing assumption, we have \(\norm{(I-P_J)Au}_{L_p}\leq a_A2^{-Js}\norm{u}_{L_p}\).
    The other two bounds follow from the contractivity and rank of \(P_J\).
    Choose the least \(J\geq0\) such that \(a_A2^{-Js}\leq\norm A/n\).
    By minimality, \(2^{Jd}\leq2^d(1+a_An/\norm A)^{d/s}\).
    For each \(E\subseteq L_p(\Omega)\) with \(1\leq\dim E\leq n\), Lemma~\ref{lem:finite-dimensional-correction} gives \(A_E|_E=A|_E\), with \(\norm{A_E}\leq\norm A+(\dim E)\norm A/n\leq2\norm A\) and \(\operatorname{rank}A_E\leq2^{Jd}+n\).
    By Definition~\ref{def:operator-UAP-rank}, we obtain the claimed rank bound.
\end{proof}

Thus Theorem~\ref{thm:UAP-profile-operator-template} applies with \(\Lambda_S=\Lambda_L=2\).
For this example, we can also use the explicit reductions \(\widetilde S_J=P_JS=R_J(C_JS)\) and \(\widetilde L_H=P_HL=R_H(C_HL)\).
Let \(K\Subset L_p(\Omega)\) be nonempty, and put \(R_K=\sup_{u\in K}\norm{u}_{L_p}\), \(m=2^{Jd}\), and \(q=2^{Hd}\).
Put \(\Phi_{J,H}=C_HLFR_J:\ell_p^m\to\ell_p^q\).
Then \(R_H\Phi_{J,H}(C_JSu)=P_HLF(P_JSu)\).
The input measurements \(C_JSu\) are bounded linear functionals of \(u\).

\begin{proposition}\label{prop:integral-operator-recovery}
    Let \(\eta>0\).
    If \(N:\R^m\to\R^q\) satisfies \(\sup_{z\in C_JS(K)}\norm{\Phi_{J,H}(z)-N(z)}_{\ell_p^q}\leq\eta\), then
    \begin{equation}\label{eq:integral-operator-error}
        \sup_{u\in K}\norm{T_F(u)-R_HN(C_JSu)}_{L_p} \leq L_\varphi R_K \bigl(\norm L\,a_S2^{-Js}+\norm S\,a_L2^{-Hs}\bigr)+\eta.
    \end{equation}
    For every \(\varepsilon_{\rm in},\varepsilon_{\rm out}>0\), we can choose \(J,H\) so that the right-hand side is at most \(\varepsilon_{\rm in}+\varepsilon_{\rm out}+\eta\), with
    \begin{equation}\label{eq:integral-coding-ranks}
        \begin{aligned}
            m\leq2^d\left(1+\frac{L_\varphi R_K\norm L\,a_S}{\varepsilon_{\rm in}}\right)^{d/s},\qquad q\leq2^d\left(1+\frac{L_\varphi R_K\norm S\,a_L}{\varepsilon_{\rm out}}\right)^{d/s}.
        \end{aligned}
    \end{equation}
\end{proposition}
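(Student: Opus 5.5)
The plan is to decompose the error exactly as in the factorized estimate of Section~\ref{subsec:operator-to-functional-reduction}, with \(\widetilde S=P_JS\) and \(\widetilde L=P_HL\). For \(u\in K\), write
\[ T_F(u)-R_HN(C_JSu)=(L-P_HL)F(Su)+P_HL\bigl(F(Su)-F(P_JSu)\bigr)+R_H\bigl(\Phi_{J,H}(C_JSu)-N(C_JSu)\bigr), \]
using the identity \(R_H\Phi_{J,H}(C_JSu)=P_HLF(P_JSu)\) recorded before the statement. Each term is then bounded separately, and the three bounds are added after taking the supremum over \(K\).

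\emph{Third term.} Since \(R_H\) is an isometry from \(\ell_p^q\) into \(L_p\), this term is at most \(\norm{\Phi_{J,H}(z)-N(z)}_{\ell_p^q}\le\eta\) with \(z=C_JSu\in C_JS(K)\).

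\emph{Second term.} Use \(\norm{P_HL}\le\norm L\) from Proposition~\ref{prop:integral-factor-uap-ranks}, together with the Lipschitz bound \(\norm{F(v)-F(w)}_{L_p}\le L_\varphi\norm{v-w}_{L_p}\). Together with \(\norm{(S-P_JS)u}\le a_S2^{-Js}\norm u\le a_S2^{-Js}R_K\), this gives \(\norm L L_\varphi R_Ka_S2^{-Js}\).

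\emph{First term.} Proposition~\ref{prop:integral-factor-uap-ranks} gives \(\norm{(L-P_HL)F(Su)}\le a_L2^{-Hs}\norm{F(Su)}_{L_p}\). Since \(\varphi(0)=0\), we have \(\norm{F(Su)}\le L_\varphi\norm{Su}\le L_\varphi\norm S R_K\). This gives the second summand, and summing the three bounds yields \eqref{eq:integral-operator-error}.

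\emph{Choice of \(J,H\).} Take the least \(J\ge0\) with \(L_\varphi R_K\norm L a_S2^{-Js}\le\varepsilon_{\rm in}\), and similarly the least \(H\) with \(L_\varphi R_K\norm S a_L2^{-Hs}\le\varepsilon_{\rm out}\). If \(J=0\), then \(m=1\le 2^d\). Otherwise minimality gives \(2^{(J-1)s}<L_\varphi R_K\norm La_S/\varepsilon_{\rm in}\), so
\[ m=2^{Jd}<2^d\bigl(L_\varphi R_K\norm La_S/\varepsilon_{\rm in}\bigr)^{d/s}\le 2^d\bigl(1+L_\varphi R_K\norm La_S/\varepsilon_{\rm in}\bigr)^{d/s}. \]
The bound for \(q\) follows in the same way, giving \eqref{eq:integral-coding-ranks}.

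The only point needing care is the first term. There we must bound \(F(Su)\) in norm rather than invoke a width of \(H=F(SK)\), and this is exactly where \(\varphi(0)=0\) is used. The rest is routine.
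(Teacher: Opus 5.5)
Your proposal is correct and follows the paper's proof essentially line by line. It uses the same split through \(R_H\Phi_{J,H}(C_JSu)=P_HLF(P_JSu)\), the bounds \(\norm{P_HL}\le\norm L\) and \(\norm{F(v)}_{L_p}\le L_\varphi\norm v_{L_p}\) (from \(\varphi(0)=0\)), the isometry of \(R_H\), and the minimal choice of \(J,H\) with the same dyadic count.
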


\begin{proof}
    Since \(\norm{F(v)}_{L_p}\leq L_\varphi\norm v_{L_p}\), we have
    \begin{align*}
        \norm{T_F(u)-P_HLF(P_JSu)}_{L_p} &\leq\norm{P_HL}\,L_\varphi\norm{(I-P_J)Su}_{L_p} +a_L2^{-Hs}\norm{F(Su)}_{L_p}\\
        &\leq L_\varphi R_K \bigl(\norm L\,a_S2^{-Js}+\norm S\,a_L2^{-Hs}\bigr)
    \end{align*}
    for \(u\in K\).
    The decoder \(R_H:\ell_p^q\to L_p\) is an isometry, so the network contributes at most \(\eta\).
    For \eqref{eq:integral-coding-ranks}, choose the least \(J,H\geq0\) for which the two displayed terms are at most \(\varepsilon_{\rm in}\) and \(\varepsilon_{\rm out}\), respectively.
    The same dyadic count as in Proposition~\ref{prop:integral-factor-uap-ranks} gives the result.
\end{proof}

Put \(g_J(t)=m^{-1/p}\varphi(m^{1/p}t)\), so that \(\omega_{g_J}(t)\leq L_\varphi t\).
Since each cell has measure \(1/m\), we have \(FR_Jz=R_J(g_J(z_1),\ldots,g_J(z_m))\).
Put \(B=\max\{1,\norm S R_K\}\).
The matrix \(M_{H,J}=C_HLR_J:\ell_p^m\to\ell_p^q\) has norm at most \(\norm L\), and \(C_JS(K)\subseteq Q_m(B)\).

We use \(\mathcal R\) from \eqref{eq:relu-budget-inversion} and the constants \(a_1,c_1,W_1,L_1\) from Proposition~\ref{prop:quantitative-continuous-relu} in dimension one.
We write \(W_{\rm train}\) for the slot count of the ReLU network \(\Psi_J\) with fixed decoder \(D_{H,J}=R_HM_{H,J}\), and \(W_{\rm all}\) for the slot count of \(N=M_{H,J}\Psi_J\), including the matrix coefficients and all zero-valued slots.
Both counts exclude the fixed maps \(C_JS,R_H\).

\begin{corollary}\label{cor:integral-operator-parameter-bounds}
    Under the preceding assumptions, put \(r=d/s\) and fix \(L_\varphi\).
    For every \(\varepsilon>0\), each operator \(T_F=LFS\), with \(\varphi(0)=0\) and \(\omega_\varphi(t)\leq L_\varphi t\), admits a ReLU approximation with uniform error at most \(\varepsilon\) on \(K\).
    The graphs and fixed linear maps can be chosen independently of \(\varphi\), with
    \[ W_{\rm train} =O\!\left(\frac{\varepsilon^{-[1+r(1+1/p)]}}{\log(1/\varepsilon)}\right),\qquad W_{\rm all} =O\!\left(\frac{\varepsilon^{-[1+r(1+1/p)]}}{\log(1/\varepsilon)} +\varepsilon^{-2r}\right). \]
    The bounds hold as \(\varepsilon\downarrow0\), with respective depths at most \(L_1\) and \(L_1+1\).
    If \(S:L_p(\Omega)\to L_\infty(\Omega)\) is bounded, the exponent \(1+r(1+1/p)\) in both estimates can be replaced by \(r+1\).
    The implicit constants depend only on the fixed problem data and the one-dimensional approximation constants.
\end{corollary}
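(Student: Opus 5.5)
The plan is to combine the error bound \eqref{eq:integral-operator-error} of Proposition~\ref{prop:integral-operator-recovery} with coordinatewise one-dimensional approximation of the reduced map. First, the structure of the reduced map. By the identity \(FR_Jz=R_J(g_J(z_1),\ldots,g_J(z_m))\), we have \(\Phi_{J,H}(z)=M_{H,J}G_J(z)\) with \(G_J(z)=(g_J(z_1),\ldots,g_J(z_m))\). We therefore take \(N=M_{H,J}\Psi_J\), where \(\Psi_J\) is a parallel stack of \(m\) copies of one scalar ReLU graph on \([-B,B]\).

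Next we bound the network error. Since \(\norm{M_{H,J}:\ell_p^m\to\ell_p^q}\leq\norm L\), it suffices to have
\[ \sup_{z\in Q_m(B)}\norm{G_J(z)-\Psi_J(z)}_{\ell_p^m}\leq\eta/\norm L. \]
This in turn follows from the uniform scalar error \(\abs{g_J-\psi}\leq\eta m^{-1/p}/\norm L\) on \([-B,B]\). Here \(C_JS(K)\subseteq Q_m(B)\), each coordinate lies in \([-B,B]\), and \(\omega_{g_J}(t)\leq L_\varphi t\) uniformly in \(J\). We apply Proposition~\ref{prop:quantitative-continuous-relu} with dimension one. The bound is
\[ a_1L_\varphi 2Bc_1[w\log(e+w)]^{-1}\leq\eta m^{-1/p}/\norm L, \]
so by \eqref{eq:relu-budget-inversion} one scalar graph with \(w\asymp\mathcal R(Cm^{1/p}/\eta)\asymp m^{1/p}\eta^{-1}/\log(e+m^{1/p}/\eta)\) slots suffices. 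This graph depends only on \(L_\varphi,B,m,\eta\), not on \(\varphi\). The depth stays at \(L_1\) for \(\Psi_J\), and becomes \(L_1+1\) once the linear layer \(M_{H,J}\) is appended.

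For the counting, we split \(\varepsilon=\varepsilon_{\rm in}+\varepsilon_{\rm out}+\eta\) into thirds. By \eqref{eq:integral-coding-ranks}, \(m,q\lesssim\varepsilon^{-r}\). Then
\[ W_{\rm train}=mw\lesssim\varepsilon^{-r}\cdot\varepsilon^{-r/p}\varepsilon^{-1}/\log(1/\varepsilon)=\varepsilon^{-[1+r(1+1/p)]}/\log(1/\varepsilon), \]
where the logarithm is comparable to \(\log(1/\varepsilon)\) because its argument is a negative power of \(\varepsilon\). For \(W_{\rm all}\) we add the \(qm\lesssim\varepsilon^{-2r}\) entries of \(M_{H,J}\), plus \(O(q)\) output biases. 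The graphs, \(C_JS\), \(R_H\) and \(M_{H,J}\) are independent of \(\varphi\); only the scalar weights change.

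Finally, the \(L_\infty\) improvement, which is the main point needing care. The loss \(m^{1/p}\) comes from converting an \(\ell_p^m\) error into a sup error, together with the scaling \(g_J(t)=m^{-1/p}\varphi(m^{1/p}t)\). If \(S:L_p\to L_\infty\) is bounded, then the coordinates of \(C_JSu\) are \(m^{-1/p}\) times cell averages of \(Su\), so \(\abs{z_i}\leq m^{-1/p}\norm S_{p\to\infty}R_K\). We would therefore approximate \(g_J\) only on the interval \([-B'm^{-1/p},B'm^{-1/p}]\) with \(B'=\max\{1,\norm S_{p\to\infty}R_K\}\). The factor \(2B\) in the modulus estimate becomes \(2B'm^{-1/p}\), which exactly cancels the \(m^{1/p}\) in the required accuracy. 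Then \(w\asymp\varepsilon^{-1}/\log(1/\varepsilon)\), and the exponent becomes \(r+1\).

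The step to check most carefully is that Proposition~\ref{prop:quantitative-continuous-relu} applied on the shrunken interval gives a \(\varphi\)-independent graph. It does, since the architecture depends only on the interval and \(w\). We must also check that \(\log(e+\cdot)\asymp\log(1/\varepsilon)\) uniformly as \(\varepsilon\downarrow0\).
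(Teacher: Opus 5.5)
Your proposal is correct and follows essentially the same route as the paper: you factor \(\Phi_{J,H}=M_{H,J}G_J\) and approximate the single scalar map \(g_J\), which is \(L_\varphi\)-Lipschitz uniformly in \(J\), to accuracy \(\eta m^{-1/p}/\norm{L}\) on \([-B,B]\) by one \(\varphi\)-independent graph stacked \(m\) times; in the \(L_\infty\) case you shrink the interval to \([-m^{-1/p}B_\infty,m^{-1/p}B_\infty]\) so that the factor \(m^{1/p}\) cancels. The only detail the paper makes explicit that you skip is how \(M_{H,J}\) is appended after affine branch outputs in a ReLU graph: it writes each output as \(\sigma(h_J)-\sigma(-h_J)\) and appends the affine layer \([M_{H,J},-M_{H,J}]\), which at most doubles the branch slots and adds \(2qm+q\) slots, so your orders of magnitude and the depth \(L_1+1\) are unchanged.
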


\begin{proof}
    Take \(\varepsilon_{\rm in}=\varepsilon_{\rm out}=\eta=\varepsilon/3\) and the least scales in Proposition~\ref{prop:integral-operator-recovery}.
    By \eqref{eq:integral-coding-ranks}, we have \(m,q=O(\varepsilon^{-r})\).
    Put
    \[ w=\max\left\{W_1,\mathcal R\!\left( \frac{2Ba_1c_1L_\varphi\norm L\,m^{1/p}}{\eta}\right)\right\}. \]
    Define \(\Psi_J(z)=(h_J(z_1),\ldots,h_J(z_m))\), using a scalar network \(h_J\) from Proposition~\ref{prop:quantitative-continuous-relu} with at most \(w\) slots and depth at most \(L_1\).
    For \(N=M_{H,J}\Psi_J\), the scalar estimate and \(\norm{x}_{\ell_p^m}\leq m^{1/p}\norm{x}_\infty\) give
    \[ \sup_{z\in C_JS(K)}\norm{\Phi_{J,H}(z)-N(z)}_{\ell_p^q} \leq\frac{2Ba_1c_1L_\varphi\norm L\,m^{1/p}}{w\log(e+w)} \leq\eta. \]
    Proposition~\ref{prop:integral-operator-recovery} gives error at most \(\varepsilon\) for \(D_{H,J}\Psi_J(C_JSu)=R_HN(C_JSu)\).
    Also, \(\norm{D_{H,J}}\leq\norm L\) since \(R_H\) is an isometry.
    Counting the \(m\) disjoint branches separately gives \(W_{\rm train}\leq mw\) and depth at most \(L_1\).

    To include the matrix operation, replace each final scalar output by \(\sigma(h_J)\) and \(\sigma(-h_J)\), and append the affine layer \([M_{H,J},-M_{H,J}]\).
    This at most doubles the branch slots and adds \(2qm\) weights, \(q\) bias slots, and one layer.
    Hence \(W_{\rm all}\leq2mw+2qm+q\), with depth at most \(L_1+1\).
    The graphs are common to the class because the scalar architecture is independent of \(\varphi\).
    Using \(m,q=O(\varepsilon^{-r})\) and \eqref{eq:relu-budget-inversion} in these counts gives the stated parameter bounds.

    If \(S:L_p\to L_\infty\) is bounded, put \(B_\infty=\max\{1,\norm{S:L_p\to L_\infty}R_K\}\).
    The normalized cell averages satisfy \(C_JS(K)\subseteq Q_m(m^{-1/p}B_\infty)\).
    Applying the same scalar estimate to \(g_J\) on \([-m^{-1/p}B_\infty,m^{-1/p}B_\infty]\) allows us to replace \(w\) by
    \[ w_\infty=\max\left\{W_1,\mathcal R\!\left( \frac{2B_\infty a_1c_1L_\varphi\norm L}{\eta}\right)\right\}. \]
    Since \(w_\infty=O(\varepsilon^{-1}/\log(1/\varepsilon))\), the same counts give the improved bounds.
\end{proof}

For the Gaussian kernel, \(\abs{k_S}\leq1\) and \(\abs\Omega=1\), so H\"older's inequality gives \(\norm{S:L_p\to L_\infty}\leq1\) and we may use \(B_\infty=\max\{1,R_K\}\).
By taking \(s=1\) in these bounds and using \(d\geq1\), we obtain
\[ W_{\rm train}=O\!\left(\frac{\varepsilon^{-(d+1)}}{\log(1/\varepsilon)}\right), \qquad W_{\rm all}=O(\varepsilon^{-2d}). \]

Under the factor conditions below, constant inputs give an exact scalar representation of the operator class.
We obtain the parameter lower bound from this scalar restriction.

\begin{proposition}\label{prop:integral-operator-lower-bound}
    Suppose that \(S\mathbf1=\mathbf1\), \(v=L\mathbf1\ne0\), and \(K_b:=\{t\mathbf1:\abs t\leq b\}\subseteq K\), where \(b>0\).
    Let \(\varphi\) range over all functions with \(\varphi(0)=0\) and \(\omega_\varphi(t)\leq L_\varphi t\), for fixed \(L_\varphi>0\).
    Put \(\tau=\norm v_{L_p}\), and choose \(\ell\in L_p(\Omega)^*\) with \(\norm\ell=1\) and \(\ell(v)=\tau\).
    If \(\mathcal{NN}_{W,D}\) satisfies Assumption~\ref{ass:scalar-readout-realization} for this single readout and uniformly \(\varepsilon\)-approximates all corresponding \(T_F\) on \(K\), then
    \begin{equation}\label{eq:integral-parameter-lower-bound}
        WD(1+\log W)\geq\frac{bL_\varphi\tau}{16C_{\rm pd}\varepsilon}, \qquad 0<\varepsilon\leq\frac{bL_\varphi\tau}{16}.
    \end{equation}
\end{proposition}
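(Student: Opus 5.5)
The plan is to reduce the class to a one-dimensional Lipschitz class on the segment \([-b,b]\) and then rerun the pseudo-shattering argument behind Theorem~\ref{thm:factorized-operator-lower-bound}, with the single scalar readout \(\ell\).

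\textbf{Reduction to constant inputs.} For \(u=t\mathbf1\in K_b\) we have \(Su=t\mathbf1\) and \(F(Su)=\varphi(t)\mathbf1\). By linearity of \(L\), \(T_F(t\mathbf1)=\varphi(t)v\), and therefore \(\ell(T_F(t\mathbf1))=\tau\varphi(t)\). Hence each admissible \(\varphi\) is recovered, up to the factor \(\tau\), from the scalar readout on \(K_b\).

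\textbf{Shattering construction.} Choose the spacing \(r=16\varepsilon/(L_\varphi\tau)\), which is at most \(b\) by the hypothesis on \(\varepsilon\). Let \(t_j=jr\) for \(j=1,\ldots,s\) with \(s=\lfloor b/r\rfloor\geq1\), so the points \(t_j\) lie in \((0,b]\). For \(\theta\in\{0,1\}^s\), define
\[
\varphi_\theta(t)=\sum_j\theta_j\,\frac{L_\varphi}{2}\bigl(r/2-\abs{t-t_j}\bigr)_+ .
\]
Each bump lives on \((t_j-r/2,t_j+r/2)\), and these supports are disjoint and avoid \(0\). Consequently \(\varphi_\theta(0)=0\) and \(\omega_{\varphi_\theta}(t)\leq L_\varphi t\). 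At the node \(t_j\), the value is \(L_\varphi r/4\) if \(\theta_j=1\) and \(0\) otherwise. Take thresholds \(b_j=\tau L_\varphi r/8\). Then the readout of the target equals \(b_j\pm\tau L_\varphi r/8=b_j\pm2\varepsilon\).

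\textbf{Pseudo-dimension count.} Let \(N\) be an \(\varepsilon\)-approximant. Since \(\norm\ell=1\), \(\abs{\ell(N(t_j\mathbf1))-\tau\varphi_\theta(t_j)}\leq\varepsilon\). So \(\ell\circ N\) lies strictly above \(b_j\) when \(\theta_j=1\) and strictly below when \(\theta_j=0\). Thus the scalar class \(\mathcal{NN}^{\rm scal}_{W,D}\), with \(m=1\) and \(r_1=\ell\), pseudo-shatters the \(s\) points \((1,t_j\mathbf1)\). Lemma~\ref{lem:scalarized-readout-pdim} then gives \(s\leq C_{\rm pd}WD(1+\log W)\).

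\textbf{Main obstacle.} The delicate step is the constant bookkeeping in the final inequality. We need \(s\geq b/(2r)=bL_\varphi\tau/(32\varepsilon)\), whereas the target bound is \(bL_\varphi\tau/(16\varepsilon)\). The first thing to try is a finer spacing that keeps the margin exactly \(\varepsilon\). Use the full slope \(L_\varphi\) for the bumps, and take \(t_j=jr\) for \(j=1,\ldots,\lfloor b/r\rfloor\) with a smaller \(r\), for instance \(r=8\varepsilon/(L_\varphi\tau)\). Then \(\lfloor b/r\rfloor\geq b/(2r)=bL_\varphi\tau/(16\varepsilon)\) whenever \(b\geq r\), which holds under \(\varepsilon\leq bL_\varphi\tau/16\). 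The bump height \(L_\varphi r/2\) then gives margin \(\tau L_\varphi r/4=2\varepsilon\) around the midpoint threshold, which exceeds \(\varepsilon\) as required. I would check this choice carefully so that the stated constant \(16\) comes out exactly.
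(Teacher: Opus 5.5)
Your proposal is correct once you adopt the refined choice from your last paragraph (slope \(L_\varphi\), half-width \(r/2=4\varepsilon/(L_\varphi\tau)\), spacing \(r=8\varepsilon/(L_\varphi\tau)\), threshold \(2\varepsilon\)); your initial half-slope construction only yields the constant \(32\), so write up the refined one. That refined version is essentially the paper's proof: it uses \(\delta=r/2\), places the bump centers at \(2j\delta\) instead of \((2j-1)\delta\), and feeds the same count \(\lfloor b/(2\delta)\rfloor\geq b/(4\delta)\) into Lemma~\ref{lem:scalarized-readout-pdim}.
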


\begin{proof}
    We have \(T_F(t\mathbf1)=\varphi(t)v\).
    Put \(\delta=4\varepsilon/(L_\varphi\tau)\), \(n=\lfloor b/(2\delta)\rfloor\), and \(t_j=(2j-1)\delta\).
    For \(\theta\in\{0,1\}^n\), define \(\varphi_\theta(t)=L_\varphi\sum_{j=1}^n \theta_j(\delta-\abs{t-t_j})_+.\)
    The triangular bumps have disjoint interiors of their supports in \([0,b]\).
    Thus \(\varphi_\theta(0)=0\) and \(\omega_{\varphi_\theta}(t)\leq L_\varphi t\).
    At the inputs \(t_j\mathbf1\), the readout of the target takes independently the values \(0\) and \(4\varepsilon\).
    Thus the scalar readout class pseudo-shatters these \(n\) inputs with thresholds \(2\varepsilon\).
    By Lemma~\ref{lem:scalarized-readout-pdim}, \(C_{\rm pd}WD(1+\log W)\geq n\geq b/(4\delta)=bL_\varphi\tau/(16\varepsilon)\), where the second inequality uses \(b/(2\delta)\geq2\).
\end{proof}

On \(K=K_b\), this lower bound has a matching upper bound.
The fixed encoder \(C(u)=\int_\Omega u\) and decoder \(R(a)=av\) reduce the problem exactly to scalar approximation on \([-b,b]\).
Let \(W_D^{\rm int}(\varepsilon)\) be the least slot count of one fixed scalar ReLU graph of depth at most \(D\) whose realizations, with these fixed maps, approximate the whole operator class on \(K_b\) within \(\varepsilon\).
The scalar readout condition holds because \(\ell R=\tau I\).
For \(D\geq L_1\), Proposition~\ref{prop:quantitative-continuous-relu} gives
\[ W_D^{\rm int}(\varepsilon)\leq \max\left\{W_1,\mathcal R\!\left( \frac{2ba_1c_1L_\varphi\tau}{\varepsilon}\right)\right\}. \]
By this estimate, \eqref{eq:integral-parameter-lower-bound}, and \eqref{eq:relu-budget-inversion}, for fixed \(D\geq L_1\) and fixed problem data, we obtain \[W_D^{\rm int}(\varepsilon)\asymp\frac{\varepsilon^{-1}}{\log(1/\varepsilon)} \qquad \text{as} \quad \varepsilon\downarrow0.\]

The additional factor conditions hold for the normalized Gaussian kernels
\( k_S(x,y)=k_L(x,y)= e^{-\norm{x-y}_2^2}/\int_\Omega e^{-\norm{x-z}_2^2}\,\rmd z. \)
These kernels are \(C^1\) and satisfy \(S\mathbf1=L\mathbf1=\mathbf1\), so the smoothing assumption holds with \(s=1\) and \(\tau=1\).

    \section*{Acknowledgments}
	Rui Liu and Jie Shen were partially supported by the National Natural Science Foundation of China (Grant No. 12471131).

    \bibliographystyle{elsarticle-num}
    \bibliography{cite}

\end{document}